\newtheorem{theorem}{Theorem}[section]
\newtheorem{lemma}[theorem]{Lemma}
\newtheorem{corollary}[theorem]{Corollary}
\newtheorem{proposition}[theorem]{Proposition}
\theoremstyle{remark}
\newtheorem{remark}[theorem]{Remark}
\theoremstyle{definition}
\newtheorem{definition}[theorem]{Definition}
\numberwithin{equation}{section}
\newcommand{\norm}[1]{\Vert#1\Vert}
\DeclareMathOperator{\Cdb}{{\mathbb C}}
\DeclareMathOperator{\Rdb}{{\mathbb R}}
\DeclareMathOperator{\Ndb}{{\mathbb N}}
\begin{document}

\title[Jordan operator algebras]{Jordan operator algebras: basic theory}

\author{David P. Blecher}
\address{Department of Mathematics, University of Houston, Houston, TX
77204-3008, USA}
\email[David P. Blecher]{dblecher@math.uh.edu}

\author{Zhenhua Wang}
\address{Department of Mathematics, University of Houston, Houston, TX
77204-3008, USA}
\email[Zhenhua Wang]{zhenwang@math.uh.edu}
\thanks{We acknowledge partial support by the National Science Foundation.}

\begin{abstract}
 Jordan operator algebras are norm-closed spaces of operators  on a Hilbert space
which are closed under the Jordan product.
The discovery of the present paper is that there exists a huge and tractable theory of possibly nonselfadjoint Jordan operator   algebras; they are far more similar to associative
operator algebras than was  suspected.   We initiate the theory of such
algebras.
\end{abstract}
%% maketitle must follow the abstract.

\maketitle                   

\section{Introduction} 
An (associative) {\em operator algebra} is a closed associative subalgebra of $B(H)$, for a
complex Hilbert space $H$.    By a {\em  Jordan operator algebra} we
 mean a  norm-closed  {\em  Jordan subalgebra} $A$ of $B(H)$,  
namely a norm-closed   subspace closed under the 
`Jordan product' $a \circ b = \frac{1}{2}(ab+ba)$. Or equivalently,
 with $a^2 \in A$ for all $a \in A$ (that is, $A$ is
closed under squares; the equivalence
 uses the identity $a \circ b = \frac{1}{2} ((a+b)^2 -a^2 -b^2)$).   
Selfadjoint Jordan operator algebras arose in the work of Jordan, von Neumann, and Wigner on the axiomatic
foundations of quantum mechanics.  One expects the `observables' in a quantum 
system to constitute a (real) Jordan algebra, and if one also wants a good functional
calculus and spectral theory one is led to such 
selfadjoint Jordan algebras (known as JC*-algebras).  Nowadays however the interest in Jordan algebras
and related objects is almost exclusively from pure mathematicians (see e.g.\ \cite{Mc}).   
Despite this interest, we are aware of only one paper in the literature 
that discusses Jordan operator algebras
in our sense above, namely the excellent work of Arazy and Solel \cite{AS}.   

The present paper is a step in the direction
of extending the selfadjoint Jordan theory to nonselfadjoint Jordan operator algebras.
 Our main discovery is that there exists an enormous and tractable theory of possibly nonselfadjoint Jordan operator   algebras; they are far more similar to associative 
operator algebras than was  suspected.   Since much of this
parallels the huge existing theory of associative operator algebras there is 
quite a lot to do, and 
we map out here some foundational and main parts of this endeavor.   We also show here
 and in the sequel \cite{BNj} 
that, remarkably, most of the recent theory from papers
of Blecher, Read, Neal, and others (see e.g.\ \cite{BHN, BRI, BRII, BRord, BNII,Bnpi})
generalizes to Jordan operator algebras.
   We are able to include a rather large number of results 
in a relatively short manuscript since many proofs are similar to their
operator algebra counterparts, and thus we often need only discuss the 
new points that arise (which are usually similar to the new points in some previous proofs).
Several of the more interesting questions and challenging aspects of the theory remain to be explored, some examples of
 these are listed at the end of our paper.  However 
one of the biggest open questions which we had at the time of submission 
has now been solved and will be presented together with 
its very many consequences in the sequel with Neal \cite{BNj}.  Many complementary facts and additional theory
will be forthcoming in a work in progress \cite{ZWdraft} by the second author,
and in his PhD thesis.  
In another direction, in \cite{BNjproj} we study
contractive projections on Jordan operator algebras, finding 
Jordan variants of many of the results in \cite{BNpac}, and some 
improvements on a couple of results of that paper, etc.   Indeed the latter project provided the impetus for the present investigation.

\subsection{Structure of our paper}  \label{stru} 
In 
the remaining part of this Section 1 we give some background
and notation,  for example some facts about the related class
of JC*-algebras, second duals of Jordan operator algebras, etc.  Section 2 is largely concerned with
Jordan variants of `classical' or older facts 
from the theory of (associative)
operator algebras.
  It would be useful if the reader was 
familiar with some of the latter theory, as may be found e.g.\ in
\cite{BLM}, where the reader will also find the basic notation and facts from operator space theory.   For example we begin Section 2 with a preliminary
abstract (operator space) characterization of Jordan operator algebras (another is given in the sequel \cite{BNj}).  
Although there are known characterizations of (associative)
operator algebras
(see e.g.\ \cite{BRS,BLM}), there is no
characterization of the Jordan algebra variant that we are aware of
in the literature.  We then discuss unitization and real positivity
in Jordan operator algebras, universal (associative) algebras
enveloping a Jordan operator algebra, contractive approximate identities
(or {\em cai} for short), Cohen factorization for Jordan algebras, and Jordan representations.
We recall that a Jordan representation is a {\em Jordan homomorphism} into $B(H)$
for a Hilbert space $H$.  A  Jordan homomorphism $T : A \to B$ between
Jordan algebras
is of course  a linear map satisfying $T(ab+ba) = T(a) T(b) + T(b) T(a)$ for $a, b \in A$, or equivalently,
that $T(a^2) = T(a)^2$ for all $a \in A$ (the equivalence follows by applying $T$ to $(a+b)^2$). One of
our results for example is that if $A$ has a
{\em Jordan  contractive approximate identity}
(or J-cai for short), that is a net
$(e_t)$  of contractions with $e_t \circ a \to a$ for all $a \in A$,
then $A$ has a real positive (that is, accretive)
net that acts as a cai for the ordinary product in
any containing generated $C^*$-algebra.  This is useful in studying  
functionals and states, and multiplier algebras, towards the end of Section 2.

Section 3 has a common theme of hereditary subalgebras, open projections,
ideals and $M$-ideals.      We define a   {\em hereditary subalgebra}  of a Jordan operator algebra $A$, or {\em HSA} of $A$ for short, to be a Jordan subalgebra possessing a J-cai (this was defined above), which satisfies
$aAa \subset D$  for any $a \in D$ (or equivalently, by replacing $a$ by $a \pm c$, such that
if $a,c \in D$
and $b \in A$ then $abc  + cba \in D$). 
(We show in Theorem  \ref{Ididnt} that HSA's are precisely the
the sets of form  $\overline{\{ x A x : x \in C \}}$  for some  convex
set $C$ of real positive elements of $A$.) 
    Hereditary subalgebras of $C^*$-algebras play a huge role in modern 
$C^*$-algebra theory.  
  At the  time our paper was written 
our theory of  hereditary subalgebras and open projections could only get to a certain point,
since we were blocked  by 
not  having a suitable variant 
of Hay's theorem from \cite{Hay} (see also \cite{BHN,Bnpi}).     Subsequently this obstacle was overcome and
will appear in \cite{BNj}; this also has a raft of applications in noncommutative topology, etc.   We also note that  if $A$ is a JC*-algebra then one may use positive elements in the usual 
operator theoretic sense in the definition and theory of hereditary subalgebra, or of open projections, as opposed to 
working with real positive elements.   We leave this case of the theory to the 
interested reader.  
   Section 4 develops further aspects 
of the theory of real positive elements and
real positive maps in the setting of Jordan operator algebras, ending with a Banach-Stone theorem for Jordan operator algebras.  

We now turn to  background and notation.

\subsection{JC*-algebras}  
The selfadjoint variant of a 
Jordan operator algebra (that is,  a closed selfadjoint subspace of a $C^*$-algebra which is closed under squares) is exactly what is known in the literature as a
JC*-algebra. 
We describe some
basic background
and results about JC*-algebras (see also e.g.\ the texts \cite{Mc,Rod,RPa,Up, Up87,SOJ}
for more details and background to some of the objects mentioned here
in passing).  Using the relation (\ref{abc}) below with $b$ replaced by $b^*$ 
one can see that JC*-algebras
are JC*-triples (that is, closed subspaces of $B(H,K)$, for Hilbert spaces $H$ and $K$, closed under the operation $x x^* x$;
 JC*-triples were originally called J*-algebras in \cite{Har2}), 
and hence are JB*-triples, indeed  are JB*-algebras.  We direct the reader
to  e.g.\ \cite{BFT}
and its sequels by Bunce and Timoney for definitions and 
more on the operator space structure of 
JC*-triples, see also e.g.\ 
the related papers of Neal and Russo \cite{NeRu,NR05,NR14}.      
Such Jordan algebras 
 are nonassociative but there are various variants of the associative law, for example the 
JB*-triple identity, which we will not describe here.   
JC*-algebras are  complexifications of its selfadjoint part, which is a JC-algebra in the
sense of Topping \cite{Top}.  (Conversely, Wright has shown (see \cite{M})
that the complexification of any JB-algebra is a JB*-algebra, in the sense that there 
is a JB*-algebra norm on the complexification.)   
JC*-algebras are very close to $C^*$-algebras, for example they have a positive and increasing  (in the usual
senses) Jordan contractive approximate identity;
see \cite[Proposition 3.5.23]{Rod}.    

Any selfadjoint element
$x$ in a JC*-algebra $A$ generates a $C^*$-algebra and hence is a difference
of two positive elements.   Thus $A = {\rm Span}(A_+)$ in this case.
J*-algebras containing the identity operator are just the unital JC*-algebras, as may be seen 
using
the fact that J*-algebras are closed under  the operation $x y^* x$ (see
p.\ 17 in \cite{Har1}).    There is a similar statement for J*-algebras containing a unitary in the sense of \cite{Har2}
(see e.g.\  \cite[Proposition 7.1]{Har2}).

A contractive Jordan morphism $T$  from a  JC*-algebra $A$
into $B(H)$ preserves the adjoint $*$.    Indeed for
 any selfadjoint element
 $x$ in a JC*-algebra, $T_{| C^*(x)}$ is a $*$-homomorphism
 so $T(x)$ is selfadjoint.
 Thus $T$  is easily seen to be a
 J*-morphism on $A$ in the sense of  \cite{Har2}.   It follows from results in that paper that $T$ is
isometric iff it is one-to-one.     

Of course a Jordan ideal of  a Jordan algebra $A$ 
is a subspace $E$ with  $\eta \circ \xi \in E$
for $\eta \in E, \xi \in A$.  The  Jordan ideals of a  JC*-algebra $A$
coincide with the JB*-ideals of $A$ when $A$ is regarded as a JC*-triple \cite{Har2}.
That is, if $J$ is a closed subspace of a JC*-algebra $A$ such that $a \circ J \subset J$,
then $ab^*c + c b^* a \in J$ whenever $a, b, c \in A$ and at least one of these are in $J$.   This
implies that $J = J^*$ (take $a = c$ in a Jordan approximate identity for $A$, recalling
that every  JC*-algebra has a positive Jordan approximate identity \cite[Proposition 3.5.23]{Rod}).

An interesting class of JC*-algebras are the Cartan factors of type IV (spin factors, see e.g.\ p.\ 16--20 of  \cite{Har1}): these are 
 selfadjoint operator spaces $A$ in $B(H)$ such that $x^2 \in \mathbb{C} I_H$ for all $x\in A$.
 They are isomorphic to a Hilbert space, and contain
no projections except $I$.     They may be constructed
by finding a set of selfadjoint unitaries $\{ u_i: i \in S \}$ with $u_i \circ u_j = 0$ if $i \neq j$
and setting $A = {\rm Span} \{ I, u_i : i \in S \}$.

\subsection{General facts about Jordan operator algebras}   \label{gf}
Jordan subalgebras of commutative (associative)
 operator algebras are of course ordinary (commutative associative)
operator algebras on a Hilbert space, and the Jordan product is the 
ordinary product.    In particular if $a$ is an element in a 
Jordan operator algebra $A$ inside a $C^*$-algebra  $B$,
 then the closed Jordan algebra
generated by $a$ in $A$ equals the closed operator algebra
generated by $a$ in $B$.    We write this as oa$(a)$.

Associative operator algebras and JC*-algebras are of course Jordan operator algebras.
So is $\{ a \in A : a = a^T \}$, for any subalgebra $A$ of $M_n$.      More generally, given a homomorphism $\pi$ and an antihomomorphism $\theta$
on an associative operator algebra $A$, $\{ a \in A : \pi(a) = \theta(a) \}$ is a
  Jordan operator algebra. 
As another example we mention the Jordan subalgebra $\{ (x,q(x)) : x \in B(H) \}$ of 
$B(H) \oplus^\infty Q(H)^{\rm op}$.   Here $q : B(H) \to Q(H)$ is the canonical quotient map onto 
the Calkin algebra.    This space is clearly closed under squares.
 This example has appeared in operator space theory, for example it 
is an operator space with a predual but no operator space predual, and hence is not 
representable completely isometrically and weak* homeomorphically as a weak* closed 
space of Hilbert space operators.   More examples will be considered elsewhere, e.g.\ in 
\cite{BNjproj} we show that the ranges of various natural classes 
of contractive projections on operator algebras 
are Jordan operator algebras, and several other examples are given in \cite{BNj}.

If  $A$ is a Jordan operator subalgebra  of $B(H)$, then the {\em diagonal}
$\Delta(A) = A \cap A^*$  is a JC*-algebra.   If $A$ is
unital then as a  a JC*-algebra $\Delta(A)$ 
  is independent of the Hilbert space $H$.   That is, if $T : A \to B(K)$ is an isometric Jordan homomorphism 
for a Hilbert space $K$, then $T$ restricts to an  isometric Jordan $*$-homomorphism from 
$A \cap A^*$ onto $T(A) \cap T(A)^*$.  This follows from a fact in the last section about contractive Jordan morphisms between  JC*-algebras
preserving the adjoint.   This is also true for nonunital Jordan algebras as we shall see later after Corollary
\ref{MeyerJ}. 
An element $q$ in a Jordan operator algebra $A$
 is called  a {\em projection} if $q^2 = q$ and $\| q \| = 1$
(so these are just the orthogonal projections on the 
Hilbert space $A$ acts on, which are in $A$).    Clearly $q \in \Delta(A)$. 
A projection $q$ in a Jordan operator algebra $A$ will be called
{\em central} if $qxq = q \circ x$ for all $x \in A$.
For $x \in A$, using the $2 \times 2$ `matrix picture' of $x$ with respect to $q$ one sees that 
\begin{equation} \label{qox} qxq = q \circ x \; \; \; \; \; \; \textrm{if and only if}  \; \; \; \; \; \; qx = xq = qxq 
\end{equation}  
with the products $qx$ and $xq$ taken in any $C^*$-algebra containing $A$ as a
Jordan subalgebra).     Note that this implies and is equivalent to that $q$ is central in any generated
(associative) operator algebra, or in a generated $C^*$-algebra.   This notion is independent of the particular generated
(associative) operator algebra since it is captured by the intrinsic formula $qxq = q \circ x$ for $x \in A$.

In a Jordan operator algebra
we have the Jordan identity
$$ (x^2 \circ y) \circ x = x^2 \circ (y \circ x).$$ 
For $a, b, c$ in a Jordan operator algebra
we have 
\begin{equation}  \label{abc} abc + cba = 2 [(a \circ b) \circ c + 
a \circ (b \circ c)] - 2 (a \circ c) \circ b .  \end{equation}
     Hence
if we define a Jordan ideal to be a  subspace $J$  of a Jordan algebra $A$ such that $A \circ J \subset J$, 
then $abc + c b a \in J$ whenever $a, b, c \in A$ and at least one of these are in $J$.       Thus
$A/J$ is a Jordan algebra, but we do not believe it is in general a Jordan operator algebra without extra
conditions on $J$.   
 Putting $a = c$ in the identity (\ref{abc}) above gives 
$2 aba = (ab+ba) a + a (ab+ba) - [a^2 b + b a^2] \in A$,
or 
\begin{equation}  \label{aba}
aba = 2 (a \circ b) \circ a -  a^2 \circ b.
\end{equation}

By a $C^*$-{\em cover} of a Jordan operator algebra we mean a pair
$(B,j)$ consisting of a $C^*$-algebra $B$ generated by $j(A)$, for a completely  isometric Jordan homomorphism
 $j : A \to B$.

Let $A$ be a Jordan subalgebra of a $C^*$-algebra $B$.  Then we may equip the second dual $A^{\ast\ast}$ with a Jordan Arens product as follows. Consider $a\in A, \varphi\in A^*$ and $\eta, \nu \in A^{**}$ . Let $a\circ \varphi$
($=\varphi \circ a$)  be the element of $A^*$ defined by
\begin{align*}
         \langle a\circ \varphi, b\rangle =\langle \varphi, \frac{ab+ba}{2}\rangle
\end{align*}
for any $b\in A$. Then let $\eta\circ \varphi  (=\varphi \circ \eta)$ be the element of $A^*$ defined by
\begin{align*}
\langle \eta\circ \varphi, a \rangle =\langle \eta, a \circ \varphi \rangle.
\end{align*} By definition, the Arens Jordan  product on $A^{**}$ is given by
\begin{align*}
\langle \eta\circ \nu, \varphi \rangle= \langle \eta, \nu \circ \varphi \rangle.
\end{align*} 
This is equal to the Jordan  product in $A^{**}$ coming from the associative
Arens  product  in $B^{**}$, and we have $\eta\circ \nu =  \nu \circ \eta$.  
Indeed suppose that $a_s\in A,$ $b_t\in A$ such that $a_s\to \eta$ and $b_t\to \nu$ in 
the weak* topology. For any $\varphi\in A^{\ast}$ let  $\hat{\varphi}\in B^{\ast}$ be a Hahn-Banach extension.
Note that 
\begin{align*}
	2\langle \eta\circ \nu, \varphi \rangle&= 2\langle \eta, \nu \circ \varphi \rangle=\lim_s\langle \, \nu\circ \varphi, a_s\rangle\\
	&=2\lim_s\lim_t \,
\langle b_t\circ \varphi, a_s\rangle=\lim_s\lim_t \, \langle \varphi, a_sb_t+b_ta_s\rangle.   
\end{align*} 
Note that 
\begin{align*}
2\langle \eta\circ_B \nu, \hat{\varphi}\rangle &=\langle (\eta\nu+\nu\eta), \hat{\varphi}\rangle\\
&=\langle \eta\nu,\hat{\varphi}\rangle+\langle \nu\eta, \hat{\varphi}\rangle\\
&=\langle\eta, \nu\hat{\varphi} \rangle+\langle \nu, \eta\hat{\varphi} \rangle \\
&=\lim_s \, \langle \nu\hat{\varphi}, a_s\rangle+\lim_s\langle \eta \hat{\varphi} , 
b_t \rangle\\  
&=\lim_s\lim_t \, \langle \hat{\varphi}a_s, b_t \rangle + \lim_s\lim_t \, \langle \hat{\varphi} b_t
, a_s \rangle\\
&=\lim_s\lim_t \, \langle \hat{\varphi}, a_sb_t+b_ta_s\rangle\\
&=\lim_s\lim_t \, \langle \varphi, a_sb_t+b_ta_s \rangle .
\end{align*}

Thus the Jordan product in $A^{\ast\ast}$ agrees on $A^{\ast\ast}$
 with the  Jordan  product coming from the associative Arens  product  in $B^{**}$.
We also see from this that $\eta\circ \nu =  \nu \circ \eta$.  
 In any case, the bidual of a Jordan operator algebra is a Jordan operator algebra,
and may be viewed as a Jordan subalgebra of the von Neumann algebra $B^{**}$.

JW*-algebras (that is, weak* closed JC*-algebras) are closed under meets and joins of projections (see \cite[Theorem 6.4]{Top} or \cite[Lemma 4.2.8]{HS}; 
one may also see this since meets and joins may be defined in terms of
 limits formed from $q_1  \cdots q_{n-1} q_n q_{n-1} \cdots
q_1$ and $(q_1 + \cdots q_n)^{\frac{1}{n}}$, both of which make sense in any Jordan Banach algebra).  
Since for any Jordan operator algebra $A$ we have that $A^{**}$ is a Jordan operator algebra
with diagonal 
$\Delta(A^{**})$ a JW*-algebra, it follows that $A^{**}$ is also closed under meets and joins of projections.
In particular $p \vee q$ is the weak* limit of  the sequence
$(p+q)^{\frac{1}{n}}$, for projections $p, q \in A^{**}$.  

By the analogous proof for the operator algebra
case (see 2.5.5 in \cite{BLM}), any contractive (resp.\ completely contractive)
Jordan homomorphism from a
Jordan operator algebra $A$ into a
weak* closed Jordan operator algebra $M$ extends
uniquely to a  weak* continuous
contractive (resp.\ completely contractive)
Jordan homomorphism $\tilde{\pi}:
A^{\ast\ast}\to M.$ 
\section{General theory of Jordan operator algebras}

\subsection{A characterization of  unital Jordan operator algebras}

The following  is an operator space characterization of unital
(or approximately unital) Jordan operator algebras (resp.\ JC*-algebras).
It references however a containing 
operator space $B$, which may be taken to be 
a $C^*$-algebra if one wishes.

\begin{theorem} \label{jbrs} Let $A$ be a unital operator
space (resp.\ operator system) with a bilinear map $m : A \times A \to B$ which is
completely contractive in the sense of
Christensen and Sinclair (see e.g.\ the first paragraph of {\rm 1.5.4} in \cite{BLM}).   Here  $B$ is a unital  
operator space containing $A$ as a unital-subspace 
(so $1_B \in A$) completely isometrically.
 Define $a \circ b = \frac{1}{2}(m(a,b) + m(b,a))$,
and suppose that $A$ is closed under this operation.
 Assume also that   $m(1,a) = m(a,1) = a$ for $a \in A$.   Then $A$ is a unital Jordan operator algebra
(resp.\  JC*-algebra) with Jordan product $a \circ b$.
\end{theorem}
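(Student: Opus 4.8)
The plan is to reduce $m$ to its Christensen--Sinclair factorization, to normalize that factorization against the unit conditions, and then --- following the proof of the (unital) operator algebra characterization, see e.g.\ Section 2.3 of \cite{BLM} --- to build a completely isometric Jordan representation of $(A,\circ)$. The one genuinely new ingredient is that, since $A$ is closed only under $\circ$ and not under $m$, the symmetrization must be carried through the whole argument.

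First I would represent $B$, hence $A$, concretely: $B\subseteq B(H)$ with $A$ a unital subspace (a unital operator subsystem in the ``resp.'' case). Since $m:A\times A\to B(H)$ is completely contractive in the sense of Christensen and Sinclair, the Christensen--Sinclair representation (see the discussion around {\rm 1.5.4} in \cite{BLM}) provides a Hilbert space $K$ and complete contractions $\phi:A\to B(K,H)$ and $\psi:A\to B(H,K)$ with $m(a,b)=\phi(a)\psi(b)$ for all $a,b\in A$. The hypotheses translate into $\phi(1)\psi(a)=\phi(a)\psi(1)=a$ and $\phi(1)\psi(1)=I_H$, from which one checks in turn that $\psi(1)$ is an isometry, that $\psi(1)\phi(1)$ is a norm-one idempotent, hence the orthogonal projection $P$ of $K$ onto $\psi(1)H$, and (after identifying $H$ with $\psi(1)H$) that $\phi(1)=\psi(1)^{*}=P$. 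Writing $\phi^{\perp}(a)=\phi(a)(I-P)$ and $\psi^{\perp}(a)=(I-P)\psi(a)$, which are complete contractions vanishing at $1$, a short computation then gives
\[
m(a,b)=ab+\phi^{\perp}(a)\psi^{\perp}(b),\qquad\text{hence}\qquad a\circ b=\tfrac12(ab+ba)+\tfrac12\bigl(\phi^{\perp}(a)\psi^{\perp}(b)+\phi^{\perp}(b)\psi^{\perp}(a)\bigr),
\]
all products taken in $B(H)$.

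Next I would produce a Hilbert space $\mathcal H$ and a unital completely isometric Jordan homomorphism $\pi:(A,\circ)\to B(\mathcal H)$; its range is then a norm-closed subspace of $B(\mathcal H)$ closed under the operator Jordan product, i.e.\ a unital Jordan operator algebra onto which $\pi$ is a completely isometric Jordan isomorphism, and in the operator system case $\pi$ is moreover $\ast$-preserving (a unital complete isometry of operator systems is a complete order isomorphism), so the range is a JC*-algebra. To build $\pi$ I would look for it in block form $\pi(a)=\bigl[\begin{smallmatrix} a & \phi^{\perp}(a)\\ \psi^{\perp}(a) & s(a)\end{smallmatrix}\bigr]$ on $\mathcal H=H\oplus\mathcal H'$: the $(1,1)$ entry of $\tfrac12(\pi(a)\pi(b)+\pi(b)\pi(a))$ is then precisely $a\circ b$ by the displayed identity, while the remaining entries force the auxiliary complete contraction $s:A\to B(\mathcal H')$ to obey a relation of exactly the same shape, namely $s(a\circ b)=s(a)\circ s(b)+\tfrac12\bigl(\psi^{\perp}(a)\phi^{\perp}(b)+\psi^{\perp}(b)\phi^{\perp}(a)\bigr)$, together with a compatibility with $\phi^{\perp},\psi^{\perp}$. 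So one iterates, adjoining a new Hilbert space and a new pair of error maps at each stage and letting $\mathcal H$ be the resulting direct sum --- it is the closure of $A$ under $\circ$ that makes each stage of this recursion well-posed --- and then one checks that the resulting $\pi$ is completely contractive (hence, agreeing with the inclusion in the $(1,1)$-corner, a complete isometry), is unital, and has symmetrized products that telescope correctly.

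This last construction is the part I expect to be hardest. In the associative operator algebra setting one can sidestep much of it via the left regular representation $a\mapsto m(a,\cdot)$, but that map is unavailable here precisely because $m$ does not preserve $A$; so the iterated (in effect, universal) construction above must be carried out in earnest, with its convergence, its complete isometry, and the bookkeeping of the symmetrization all verified directly. In the operator system / JC*-algebra case nothing further is needed, since the $\ast$-structure then comes for free from the fact that unital complete isometries between operator systems are complete order isomorphisms.
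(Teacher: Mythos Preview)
Your first step --- the Christensen--Sinclair factorization and its normalization against the unit --- is fine and standard; the identity $m(a,b)=ab+\phi^{\perp}(a)\psi^{\perp}(b)$ (products in $B(H)$) is correct and useful.

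The second step, however, has a genuine gap. Once you fix the off-diagonal entries $\phi^{\perp},\psi^{\perp}$ of your block map $\pi$, the Jordan homomorphism condition forces not only the $(2,2)$-relation you wrote for $s$, but also the $(1,2)$- and $(2,1)$-relations
\[
\phi^{\perp}(a\circ b)=\tfrac12\bigl(a\,\phi^{\perp}(b)+b\,\phi^{\perp}(a)\bigr)+\tfrac12\bigl(\phi^{\perp}(a)s(b)+\phi^{\perp}(b)s(a)\bigr),
\]
and its $\psi^{\perp}$ companion. These are constraints that $s$ must satisfy for \emph{all} $a,b$, with the left sides already determined; you have not explained why a completely contractive $s$ with this property exists. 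In the associative BRS setting the analogous recursion closes because the associativity $m(a,m(b,c))=m(m(a,b),c)$ produces exactly the required cancellations, but here $m$ does not even map into $A$, so no such identity is available and closure under $\circ$ alone does not visibly supply one. Your remark that ``closure of $A$ under $\circ$ makes each stage of this recursion well-posed'' is therefore the point that needs a real argument, and at present it is missing; the same applies to the claim that the resulting $\pi$ is completely contractive.

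The paper sidesteps all of this by working in the injective envelope. One extends the inclusion $A\hookrightarrow I(A)$ to a unital complete contraction $u:B\to I(A)$ and then extends $u\circ m$ to a completely contractive bilinear $\tilde m:I(A)\times I(A)\to I(A)$ (injectivity of the Haagerup tensor product). Rigidity forces $\tilde m(1,\cdot)=\tilde m(\cdot,1)=\mathrm{id}$, and then the nonassociative BRS theorem together with the Banach--Stone theorem for operator algebras identifies $\tilde m$ with the $C^*$-product on $I(A)$. Hence $u(m(a,b))=ab$ in $I(A)$, so $a\circ b$ is literally the Jordan product in the $C^*$-algebra $I(A)$, and $A$ is a Jordan subalgebra (self-adjoint in the operator system case since the embedding $A\hookrightarrow I(A)$ is then a complete order embedding). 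This is a two-line argument once one invokes $I(A)$, and it avoids entirely the compatibility problem that blocks your iterative construction.
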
   \begin{proof}  We will use 
the injective envelope $I(A)$ and its properties
(see e.g.\ Chapter 4 of \cite{BLM}).
By injectivity, the canonical morphism $A \to I(A)$ extends to a unital completely contractive 
$u : B \to I(A)$.   
By injectivity again, i.e.\ the well known extension theorem 
for completely contractive bilinear maps/the injectivity of the 
Haagerup tensor product, and the universal property
of that  tensor product,
we can use $u \circ m$ to induce a linear complete contraction
$\tilde{m} : I(A) \otimes_h I(A) \to I(A)$.   It is known that $I(A)$ is a unital $C^*$-algebra
(see e.g.\ \cite[Corollary 4.2.8 (1)]{BLM}).   By rigidity of the injective envelope,
$\tilde{m}(1,x) = x = \tilde{m}(x,1)$ for all $x \in
I(A)$.   By the nonassociative case of the
BRS theorem (see e.g.\ 4.6.3 in \cite{BLM}), together with the Banach-Stone theorem for operator algebras (see
 e.g.\ 8.3.13 in \cite{BLM}), $\tilde{m}$
must be the canonical product
map.    Hence for $a, b \in A$,  $u (m(a,b))$ is the product taken in $I(A)$.     Since $$u(m(a,b)) + u(m(b,a)) = u(m(a,b) + m(b,a)) = m(a,b) + m(b,a) = 2 a \circ b  \in A,$$
 $A$ is a unital Jordan subalgebra of $I(A)$.    If in addition $A$ is an operator system then
the embedding of $A$ in $I(A)$ is a complete order embedding by e.g.\ 1.3.3 in \cite{BLM}, so that
$A$ is a  JC*-subalgebra of $I(A)$.   \end{proof}

\begin{remark}
 The unwary reader might have expected a characterization in terms of a 
bilinear map $m : A \times A \to A$ on a unital operator space $A$ such that $m$
 is completely contractive  in the sense of
Christensen and Sinclair, and makes $A$ a Jordan algebra in the algebraic sense.   However it is easy to prove 
that under those hypotheses $A$ is completely
isometrically isomorphic to a commutative operator algebra.    To see this
notice that the  nonassociative case of the
BRS theorem mentioned in the proof shows that $A$ is an associative operator 
algebra.   Since $m(a,b) = m(b,a)$ for $a, b \in A$ we see that $A$ is commutative.
Actually this proof only used the `commutativity' equality in the last sentence, and 
complete contractivity of $m$, not the other aspects of being a Jordan product, such as the Jordan axiom.
We are indebted to the referee for pointing this out.	
\end{remark}

There is an `approximately unital' analogue of Theorem \ref{jbrs}.  Define an
{\em approximately unital} operator space $A$ to be a
subspace of an approximately unital operator algebra $B$,
such that $A$ contains a cai $(e_t)$ for $B$.
The hypothesis that $m(e_t,a) \to a$ and $m(a,e_t) \to 
 a$ for $a \in A$ is shown in the later result Lemma \ref{jcai} to be 
a reasonable one: in a Jordan operator algebra
with a cai satisfying $e_t  \circ a \to a$, one can 
find another cai satisfying 
$e_t a \to a$ and $a e_t  \to a$ with products here in any 
$C^*$-algebra or 
approximately
unital operator algebra 
containing $A$ as a closed Jordan subalgebra.

\begin{theorem} \label{jbrsau} Let $A$ be  an approximately 
unital operator
space (resp.\ operator system) containing a cai $(e_t)$ for an operator algebra $B$ 
as above.   Let $m : A \times A \to B$ be a 
completely contractive bilinear map in the sense of
Christensen and Sinclair.
 Define $a \circ b = \frac{1}{2}(m(a,b) + m(b,a))$,
and suppose that $A$ is closed under this operation.
 Assume also that   $m(e_t,a) \to a$ and $m(a,e_t) \to
 a$ for $a \in A$.   Then $A$ is a 
 Jordan operator algebra
(resp.\  JC*-algebra) with Jordan product $a \circ b$, and $e_t  \circ a \to a$ for $a \in A$.
\end{theorem}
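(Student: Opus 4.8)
The plan is to reduce Theorem \ref{jbrsau} to the unital case, Theorem \ref{jbrs}, by passing to second duals. First I would extend $m$ to a bilinear map $\tilde{m} : A^{**} \times A^{**} \to B^{**}$ that is again completely contractive in the sense of Christensen and Sinclair and is separately weak* continuous; this is the standard ``Arens-type'' extension of such a bilinear map to the biduals, arising from the bidual of the associated complete contraction $A \hotimes A \to B$ composed with the canonical map $A^{**} \hotimes A^{**} \to (A \hotimes A)^{**}$ (see the material on the Haagerup tensor product in \cite{BLM}), exactly as in the associative operator algebra analogue. Here $B^{**}$ is a unital dual operator algebra; writing $e$ for the weak* limit of a subnet of the cai $(e_t)$, we have $e = 1_{B^{**}}$ because $(e_t)$ is a cai for $B$, and $e$ lies in $A^{**}$ once the latter is identified, as usual, weak*-homeomorphically and completely isometrically with the weak*-closed subspace $A^{\perp\perp}$ of $B^{**}$. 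Thus $A^{**}$ is a unital operator space sitting unitally completely isometrically in the unital operator space $B^{**}$.

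Next I would verify the remaining hypotheses of Theorem \ref{jbrs} for $(A^{**}, \tilde{m}, B^{**})$. That $\tilde{m}(e, \eta) = \tilde{m}(\eta, e) = \eta$ for all $\eta \in A^{**}$ follows from the hypotheses $m(e_t, a) \to a$ and $m(a, e_t) \to a$ together with separate weak* continuity of $\tilde{m}$ and weak* density of $A$ in $A^{**}$: one first obtains it for $\eta = a \in A$ by passing to the limit along the subnet defining $e$, and then for general $\eta$ by weak* density. That $A^{**}$ is closed under $\eta \circ \nu = \frac{1}{2}(\tilde{m}(\eta,\nu) + \tilde{m}(\nu,\eta))$ follows because $A$ is closed under $\circ$: each of the maps $\eta \mapsto \eta \circ \nu$ and $\nu \mapsto \eta \circ \nu$ is weak* continuous, so the set of elements sent into the weak*-closed subspace $A^{**}$ is itself weak* closed, and a two-step weak*-density argument (first $\nu \in A$, then $\nu \in A^{**}$) gives the claim. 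Theorem \ref{jbrs} now shows that $A^{**}$ is a unital Jordan operator algebra with Jordan product $\circ$.

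To conclude, $\tilde{m}$ restricts to $m$ on $A \times A$, so $A$ is a norm-closed Jordan subalgebra of the Jordan operator algebra $A^{**}$ and hence is itself a Jordan operator algebra, with Jordan product $a \circ b = \frac{1}{2}(m(a,b) + m(b,a))$. Finally, $e_t \circ a = \frac{1}{2}(m(e_t, a) + m(a, e_t)) \to \frac{1}{2}(a + a) = a$ for each $a \in A$, which is the last assertion. The JC*-algebra (operator system) case is entirely parallel: one arranges $B$ to be a $C^*$-algebra, uses that the bidual of an operator system is a unital operator system, applies the operator-system conclusion of Theorem \ref{jbrs} to get that $A^{**}$ is a JC*-algebra, and restricts to the norm-closed, selfadjoint, square-closed subspace $A$.

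I expect the only genuine obstacle to be the first step: checking that the extension $\tilde{m}$ to the biduals remains completely contractive in the Christensen--Sinclair sense and is separately weak* continuous. This is, however, a known purely operator-space fact with no Jordan content, and is the same point that arises in the approximately unital version of the associative BRS theorem; everything after it is routine weak*-limit bookkeeping reducing matters to Theorem \ref{jbrs}.
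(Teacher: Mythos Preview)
Your proposal is correct and follows essentially the same route as the paper: extend $m$ to a separately weak*-continuous, completely contractive $\tilde{m}$ on biduals, verify via weak*-density that $\tilde{m}(e,\cdot)=\tilde{m}(\cdot,e)=\mathrm{id}$ and that $A^{**}$ is closed under the symmetrized product, then apply Theorem~\ref{jbrs} to $A^{**}$ and restrict to $A$. Your write-up is in fact more explicit about the approximation steps than the paper's, which simply invokes ``standard approximation arguments'' at the corresponding point.
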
   \begin{proof}    Let $e = 1_{B^{**}} \in A^{**}$.
We consider the canonical weak* continuous extension
$\tilde{m} : A^{**} \times A^{**} \to B^{**}$.
By standard approximation arguments
$\tilde{m}(e , a) = \tilde{m}(a,e) = a$ for all $a \in A^{**}$,
and $\tilde{m}(a,b) + \tilde{m}(b,a)  \in A^{**}$ for all $a, b\in A^{**}$.
By Theorem \ref{jbrs} we have that 
$A^{**}$ is a unital Jordan operator algebra
(resp.\  JC*-algebra) with Jordan product $\frac{1}{2}
(\tilde{m}(a,b) + \tilde{m}(b,a))$.
Hence $A$ is a Jordan operator algebra 
with Jordan product $a \circ b$.
Clearly $e_t  \circ a \to a$.
\end{proof}

Approximately
unital Jordan  operator algebras will be studied in much greater detail 
later.

\subsection{Meyer's theorem, unitization, and real positive elements} 
\label{MeyerRP}  

The following follows from Meyer's theorem on the  unitization of operator algebras (see e.g.\ 2.1.13 and 2.1.15
in \cite{BLM}).

\begin{proposition}  \label{MeyerJ1}  If $A$ and $B$
are Jordan subalgebras of $B(H)$ and $B(K)$  respectively, with $I_H \notin A$, and if $T : A \to B$ is a contractive (resp.\ isometric)
 Jordan homomorphism, then there is a unital  contractive (resp.\ isometric)
 Jordan homomorphism extending $T$ from $A + \Cdb I_H$ to $B + \Cdb I_K$ (for the isometric case
we also need $I_K \notin B$).  \end{proposition}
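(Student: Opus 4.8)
The plan is to reduce this to Meyer's theorem for associative operator algebras by passing to the associative operator algebras generated by $A$ and $B$, being careful to choose the containing operator algebras compatibly with the map $T$. First I would let $C$ be the (associative, not-necessarily-selfadjoint) operator algebra generated by $A$ inside $B(H)$, and similarly let $D$ be the operator algebra generated by $B$ inside $B(K)$. The essential point is that a contractive (resp.\ isometric) Jordan homomorphism $T : A \to B$ need not extend to a homomorphism $C \to D$; but one circumvents this by using a universal object. Concretely, a Jordan homomorphism $A \to B(K)$ is, by the classical structure theory, related to an honest associative representation of a suitable universal (associative) operator algebra enveloping $A$ — this universal envelope is exactly one of the tools the introduction promises to develop later in Section 2. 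Alternatively, and perhaps more cleanly for a proof at this stage, one embeds $A$ in $B(H) \oplus B(H)^{\mathrm{op}}$ via $a \mapsto (a, a)$: this is an isometric Jordan homomorphism onto a Jordan subalgebra that is the ``diagonal'' of the associative operator algebra generated by the two coordinate copies, and Jordan homomorphisms out of $A$ lift to honest homomorphisms out of this associative hull.

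The key steps, in order, would be: (1) Fix an associative operator algebra $\mathcal{A}$ generated by (an isometric copy of) $A$, chosen so that the inclusion $A \hookrightarrow \mathcal{A}$ is completely isometric and so that contractive Jordan homomorphisms out of $A$ correspond to contractive homomorphisms out of $\mathcal{A}$; do the same for $B$, getting $\mathcal{B}$. (2) Use this correspondence to promote $T : A \to B$ to a contractive (resp.\ isometric) associative homomorphism $\pi : \mathcal{A} \to \mathcal{B}$. (3) Since $I_H \notin A$, verify that $I_H \notin \mathcal{A}$ as well — here one must be slightly careful: if $I_H$ happened to lie in the generated associative algebra but not in $A$, Meyer's theorem would still apply to $\mathcal{A}$, but the conclusion we want is about $A + \mathbb{C} I_H$, so it suffices that Meyer's unitization of $\mathcal{A}$ restricts correctly. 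Apply Meyer's theorem (2.1.13, 2.1.15 in \cite{BLM}) to get a unital contractive (resp.\ isometric) homomorphism $\pi^1 : \mathcal{A} + \mathbb{C} I_H \to \mathcal{B} + \mathbb{C} I_K$. (4) Restrict $\pi^1$ to $A + \mathbb{C} I_H$: it is automatically a unital Jordan homomorphism into $\mathcal{B} + \mathbb{C} I_K$, and its range lies in $B + \mathbb{C} I_K$ since $\pi^1$ is unital and extends $T$ with $T(A) \subseteq B$. It is contractive (resp.\ isometric) because $\pi^1$ is.

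The main obstacle I anticipate is step (1)–(2): producing an associative operator algebra through which the Jordan homomorphism factors, in a way that is genuinely functorial enough to transport $T$. The cleanest route is the $a \mapsto (a,a) \in B(H) \oplus B(H)^{\mathrm{op}}$ trick together with the observation that the associative operator algebra generated by $\{(a,0) : a \in A\} \cup \{(0,a) : a \in A\}$ — or rather a suitable associative algebra with a completely isometric embedding of $A$ onto a Jordan subalgebra that is the fixed set of a period-two antiautomorphism — carries enough structure that Jordan morphisms extend. However, since the introduction explicitly defers the theory of ``universal (associative) algebras enveloping a Jordan operator algebra'' to later in Section 2, the most honest version of this proof at the point where Proposition \ref{MeyerJ1} sits may instead argue directly: given a completely isometric Jordan representation of $A$ on some $H'$ with $I_{H'} \notin A$, one checks by hand that the formula defining Meyer's extension norm on $A + \mathbb{C} I$ is independent of the representation (this is exactly the content of Meyer's uniqueness statement), and then that a contractive Jordan homomorphism $T$ does not increase this norm — the latter because $T$ extends to a contractive Jordan homomorphism on biduals (as recorded at the end of the background section), where one has a unit $e = 1_{B^{**}}$ available, reducing the estimate to the unital case already understood. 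I expect the verification that $T$ is norm-nonincreasing for Meyer's unitization norm to be the technical heart, and it will run closely parallel to the associative argument in \cite{BLM}, so only the genuinely new Jordan points need be spelled out.
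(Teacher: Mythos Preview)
Your approach is substantially more elaborate than necessary, and some of the proposed steps have genuine gaps. In particular, step (2) --- promoting $T$ to an associative homomorphism $\pi : \mathcal{A} \to \mathcal{B}$ --- is problematic in the isometric case: even if one uses the universal object $\mathrm{oa}_{\max}(A)$ (which, incidentally, is only constructed \emph{after} this proposition in the paper), the extension $\pi$ is guaranteed to be contractive, not isometric, and there is no reason it should land in any prescribed $\mathcal{B}$ rather than merely in $B(K)$. Your bidual alternative also does not work as stated: $A$ is not assumed approximately unital here, so $A^{**}$ need not have an identity, and in any case such an identity would not be $I_H$, which is what the norm $\|a + \lambda I_H\|$ refers to.

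The paper's proof is a one-line reduction that you overlooked: the only thing to check is $\|T(a) + \lambda 1_K\| \le \|a + \lambda 1_H\|$ for each \emph{fixed} $a \in A$ and $\lambda \in \Cdb$. But for fixed $a$, the closed associative operator algebra $\mathrm{oa}(a)$ generated by $a$ is commutative, so the restriction of $T$ to $\mathrm{oa}(a)$ is an honest algebra homomorphism into $\mathrm{oa}(T(a))$. Meyer's theorem for associative operator algebras then gives the inequality immediately. No universal envelope, no functoriality, no bidual is needed --- because the estimate involves only one element at a time, the Jordan/associative distinction simply disappears.
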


  \begin{proof}    It is only necessary to show that if   $a \in A$
is fixed, then
Claim: $\Vert T(a) + \lambda
1_K \Vert \leq \Vert a + \lambda
1_H \Vert$ for  $\lambda \in \Cdb$.    However the restriction of $T$ to oa$(a)$ is
an algebra  homomorphism into oa$(T(a))$, and so the Claim follows from  Meyers result. 
\end{proof}

\begin{corollary}[Uniqueness of unitization for Jordan operator algebras]  \label{MeyerJ}   The unitization 
$A^1$ of a Jordan operator algebra is unique up to 
isometric
 Jordan isomorphism.   
 \end{corollary}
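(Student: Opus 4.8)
The plan is to derive this directly from Proposition~\ref{MeyerJ1} by the standard ``extend in both directions and compose'' device, after first disposing of the trivial case. If $A$ already has a Jordan identity, i.e.\ a projection $e$ with $e\circ a=a$ for all $a\in A$, then writing $a$ in its $2\times 2$ matrix picture with respect to $e$ and using $ea+ae=2a$ one sees that $a=eae$ for every $a\in A$; thus $A=eAe\subseteq B(eH)$ is unital (with identity $e$), so $A^1=A$ and there is nothing to prove. Hence we may assume $A$ has no Jordan identity. The key observation is then that this property is intrinsic: in \emph{any} isometric Jordan isomorphism $\pi$ of $A$ onto a Jordan subalgebra $\pi(A)\subseteq B(K)$, the algebra $\pi(A)$ again has no Jordan identity, since $I_K\in\pi(A)$ would force $I_K$ to be a Jordan identity for $\pi(A)$ and hence $\pi^{-1}(I_K)$ one for $A$. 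In particular $I_H\notin A$ and $I_K\notin\pi(A)$, so the \emph{isometric} case of Proposition~\ref{MeyerJ1} is available in both directions.

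Now suppose $A$ is realized both as a Jordan subalgebra of $B(H)$ and, via an isometric Jordan isomorphism $\pi$, as a Jordan subalgebra $\pi(A)$ of $B(K)$, with unitizations $A^1=A+\Cdb I_H\subseteq B(H)$ and $\pi(A)^1=\pi(A)+\Cdb I_K\subseteq B(K)$ (each of these is norm-closed and closed under squares, hence is a Jordan operator algebra). By Proposition~\ref{MeyerJ1}, $\pi$ extends to a unital isometric Jordan homomorphism $\hat\pi:A^1\to\pi(A)^1$, and $\pi^{-1}$ extends to a unital isometric Jordan homomorphism $\sigma:\pi(A)^1\to A^1$. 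Then $\sigma\circ\hat\pi$ is a unital isometric Jordan homomorphism of $A^1$ into itself which restricts to the identity on $A$; since $A^1=\spn(A\cup\{I_H\})$ and $\sigma\circ\hat\pi$ is linear, fixes $A$ pointwise, and sends $I_H$ to $I_H$, it must equal $\mathrm{id}_{A^1}$. Symmetrically $\hat\pi\circ\sigma=\mathrm{id}_{\pi(A)^1}$. Therefore $\hat\pi$ is a bijective isometric Jordan homomorphism, i.e.\ an isometric Jordan isomorphism of $A^1$ onto $\pi(A)^1$, and since the second realization was arbitrary this shows $A^1$ is unique up to isometric Jordan isomorphism.

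The argument is short, and the only point that genuinely needs care is the reduction at the outset: the hypothesis ``$I_H\notin A$'' (and ``$I_K\notin B$'') of Proposition~\ref{MeyerJ1} must be guaranteed in every realization, which is precisely why one must first treat the case where $A$ has a Jordan identity and, in that case, take $A^1=A$ rather than naively adjoining a copy of $\Cdb I_H$. Everything else is routine: compositions of isometric Jordan homomorphisms are again isometric Jordan homomorphisms, and a unital linear map fixing a generating set is the identity.
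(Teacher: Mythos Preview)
Your argument for the nonunital case is correct and is exactly the paper's approach, just written out in full: the paper simply says ``this follows from Proposition~\ref{MeyerJ1}'' without spelling out the extend-both-ways-and-compose step. Your observation that ``$A$ has no norm-$1$ Jordan identity'' is intrinsic (so that $I_K\notin\pi(A)$ in \emph{every} isometric realization) is precisely the point needed to make Proposition~\ref{MeyerJ1} available in both directions, and your verification of this is clean.

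The only genuine discrepancy is in the unital case. You adopt the convention $A^1=A$ when $A$ already has a norm-$1$ Jordan identity, rendering that case vacuous. The paper instead treats the unitization in which $A$ sits with codimension $1$ even when $A$ is unital, and so must actually prove something: it observes that the identity $e$ of $A$ is a central projection in any such $A^1$, giving $\Vert a+\lambda 1\Vert=\max\{\Vert a+\lambda e\Vert,\,|\lambda|\}$, which is visibly independent of the realization. Your proof is correct under your convention but does not cover this; if you want to match the paper's statement you should add this short norm computation for the codimension-$1$ unitization of a unital $A$ rather than declaring $A^1=A$.
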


 \begin{proof}   If $A$ is nonunital then this follows from Proposition  \ref{MeyerJ1}.   If $A$ is unital, and 
$A^1$ is a unitization on which $A$ has codimension $1$, then since the identity $e$ of $A$ is easily 
seen to be a central 
projection in $A^1$ we have $\Vert a + \lambda 1 \Vert = \max \{ \Vert a + \lambda e \Vert , |\lambda| \}$.
\end{proof}

Because of Corollary \ref{MeyerJ}, for a Jordan operator algebra $A$ 
we can define unambiguously ${\mathfrak F}_A = \{ a \in A : \Vert 1 - a \Vert \leq 1 \}$.
The diagonal $\Delta(A) = A \cap A^* = 
\Delta(A^1) \cap A$ is a JC*-algebra,  as is easily seen, 
and now it is clear that as a JC*-algebra $\Delta(A)$ is independent of the particular Hilbert space
$A$ is represented on (since this is true for $\Delta(A^1)$ as we said in Subsection \ref{gf}).    That is, if $T : A \to B(K)$ is an isometric Jordan homomorphism 
for a Hilbert space $K$, then $T$ restricts to an  isometric Jordan $*$-homomorphism from 
$A \cap A^*$ onto $T(A) \cap T(A)^*$.  
Every  JC*-algebra is approximately
unital (see \cite[Proposition 3.5.23]{Rod}).

If $A$ is a unital Jordan operator subalgebra of $B(H)$, with $I_H \in A$ then $A_{\rm sa}$ makes sense,
and is independent of $H$, these are the hermitian elements in $A$ (that is, $\Vert \exp(it h) \Vert = 1$ for
real $t$; or equivalently $\varphi(h) \in \Rdb$ for all states $\varphi$ of $A$,
where by `state' we mean a unital contractive functional).   Similarly, ${\mathfrak r}_A$,
the real positive or accretive elements in $A$, may be defined as the 
set of $h \in A$ with  Re $\varphi(h) \geq 0$ for all states $\varphi$ of $A$.
This is equivalent to all the other usual conditions 
characterizing accretive elements
(see e.g.\ \cite[Lemma 2.4 and Proposition 6.6]{Bsan};
 some of these use the fact that the Jordan algebra generated by a single element and $1$ is
an algebra).     

If $A$ is a possibly nonunital 
Jordan operator algebra we define ${\mathfrak r}_A$ to be the
elements with positive real part--we call these the {\em real positive} or {\em accretive}  elements of $A$.  Since the unitization is well defined by Proposition  \ref{MeyerJ1}, so is
${\mathfrak r}_A$.   Alternatively, note that $A^1 + (A^1)^*$, and hence  $A + A^*$, is well defined as a unital selfadjoint
subspace independently (up to unital (positive) isometry) of the particular Hilbert space that $A^1$ is represented
isometrically and nondegenerately, by \cite[Proposition 1.2.8]{Arv}.   That is, 
a unital Jordan isometry $T : A^1 \to B(K)$ extends uniquely to a unital positive Jordan isometry $A^1 + (A^1)^*
\to T(A^1) + T(A^1)^*$.  
  Thus a statement such as
$a + b^* \geq 0$ makes sense whenever $a, b \in A$, and is independent of the particular $H$ on which $A$
is represented as above. This gives another way of seeing that  the set ${\mathfrak r}_A = \{ a \in A : a + a^* \geq 0 \}$ is
independent of the particular Jordan representation of $A$ too.   

We have $x \in {\mathfrak c}_A = \Rdb_+ {\mathfrak F}_A$  iff there is a positive
constant $C$ with $x^* x \leq C(x+x^*)$ (to see this note that $\| 1 - t x \|^2 \leq 1$ iff
$(1 - t x)^* (1 - t x) \leq 1$).    Also,  ${\mathfrak r}_A$
is a closed cone in $A$, hence is Archimidean (that is, $x$ and $x + ny \in {\mathfrak r}_A$ for all
$n \in \Ndb$ implies that $y \in {\mathfrak r}_A$).  On the other hand
${\mathfrak c}_A = \Rdb_+ {\mathfrak F}_A$ is not closed in general, but it is a proper cone
(that is, ${\mathfrak c}_A \cap (-{\mathfrak c}_A) = (0)$).     This follows from the proof of the analogous
operator algebra result in the
introduction to \cite{BRord}, since $1$ is an extreme point of the ball of any unital Jordan algebra $A$ since e.g.\ $A$ is a unital 
subalgebra of a unital $C^*$-algebra $B$ and $1$ is 
 extreme   in 
Ball$(B)$.

If $A$ is a nonunital Jordan subalgebra of a unital $C^*$-algebra $B$  then we 
can identify $A^1$ with $A + \Cdb 1_B$, and it follows that
${\mathfrak F}_A = {\mathfrak F}_B  \cap A$ and 
${\mathfrak r}_A = {\mathfrak r}_B  \cap A$.     
Hence if $A$ is a Jordan subalgebra of a Jordan operator algebra $B$
then ${\mathfrak r}_A = {\mathfrak r}_{B} \cap A$
and ${\mathfrak r}_A = {\mathfrak r}_{B} \cap A$.  

\subsection{Universal algebras of a Jordan operator algebra}  \label{univ}  
There are maximal and minimal associative algebras generated by a Jordan operator algebra $A$.
Indeed consider the direct sum $\rho$ of `all' contractive  
 (resp.\ completely contractive) Jordan representations
$\pi : A \to B(H_\pi)$.    There are standard ways to avoid the set theoretic issues with the `all' here--see
e.g.\ the proof of \cite[Proposition 2.4.2]{BLM}.  Let $C^*_{{\rm max}}(A)$ be the $C^*$-subalgebra of $B(\oplus_\pi \, H_\pi)$
generated by $\rho(A)$. 
For simplicity we describe the 
`contractive' case, that is the Banach space rather than operator space  variant of $C^*_{{\rm max}}(A)$.
If one needs the operator space version, simply replace word `contractive' by `completely contractive' in the
construction below.    
The compression map $B(\oplus_\pi \, H_\pi) \to B(H_\pi)$ is a $*$-homomorphism
when restricted to  $C^*_{{\rm max}}(A)$.   It follows that $C^*_{{\rm max}}(A)$ 
is the `biggest' $C^*$-cover of $A$: indeed
 $C^*_{{\rm max}}(A)$ has the universal
property that for every contractive   Jordan representation
$\pi : A \to B(H_\pi)$, there exists a unique $*$-homomorphism $\theta : C^*_{{\rm max}}(A)
\to B(H_\pi)$ with $\theta \circ \rho = \pi$.    One may also construct $C^*_{{\rm max}}(A)$  using the 
method in II.8.3.1 in \cite{Bla}.

We define oa$_{{\rm max}}(A)$  to be the operator algebra generated by $\rho(A)$ inside 
$C^*_{{\rm max}}(A)$.   Again we focus on the Banach space rather than operator space  variant,
if one needs the operator space version, simply replace word `contractive' by `completely contractive' below.
 This  has the universal
property that for every contractive Jordan representation
$\pi : A \to B(H_\pi)$, there exists a unique contractive   
homomorphism $\theta : {\rm oa}_{{\rm max}}(A)
\to B(H_\pi)$ with $\theta \circ \rho = \pi$.    

It follows that if $A$ is a Jordan subalgebra of an approximately unital operator algebra $C$ (resp.\ of a $C^*$-algebra $B$),  such that $A$ generates $C$ as an operator algebra (resp.\ $B$ as a $C^*$-algebra), 
then  there exists a unique  contractive homomorphism $\theta$ from  oa$_{{\rm max}}(A)$  (resp.\
$C^*_{{\rm max}}(A)$) into $C$  (resp.\ onto $B$) with $\theta(\rho(a)) = a$ for all $a \in A$.
Similarly, if $j : A \to C$ is a contractive 
 Jordan homomorphism
and $C$ is a closed Jordan subalgebra of a $C^*$-algebra $B$, then  there exists a unique  
contractive homomorphism $\theta$ from  oa$_{{\rm max}}(A)$  (resp.\
$C^*_{{\rm max}}(A)$) into $C$  (resp.\ into $B$) with $\theta(\rho(a)) = j(a)$ for all $a \in A$.
 
We now turn to the $C^*$-envelope, or `minimal' $C^*$-cover of $A$.   This is necessarily an
operator space object (we must use completely 
contractive homomorphisms).  
If $A$ is a unital Jordan operator algebra then there is a
unital complete isometry $j : A \to C^*_e(A) \subset I(A)$.
Let $i : A \to B$ be a completely isometric Jordan 
homomorphism into a $C^*$-algebra $B$ generated by $i(A)$.   Then 
$i(1)$ is a projection in $B$, and $i(1) \circ i(a) = i(a)$ for all $a \in A$.
This forces $i(a) i(1) = i(a)$ as in our discussion of central projections
in the introduction.  It follows easily that $i(1) = 1_B$, so that $i$ is
unital.
Let  $\theta : B \to C^*_e(A)$ be the $*$-homomorphism 
coming from the universal property of $C^*_e(A)$ (see e.g.\ \cite[Theorem 4.3.1]{BLM}), 
that is $j = \theta \circ i$.
We see that $j$ is a unital Jordan homomorphism, and so $A$ 
`is' a Jordan subalgebra of $C^*_e(A)$.
From its universal property as usual
 $C^*_e(A)$ is in some sense the `smallest' $C^*$-cover of $A$.  

If $A$ is an approximately  unital Jordan operator algebra we define $C^*_e(A)$  to be the 
$C^*$-algebra $D$ generated by $j(A)$ inside $(C^*_e(A^1),j)$, where $A^1$ is the unitization.   
We will discuss this further after we have studied the unitization in 
the approximately unital case in the next subsection.

Define oa$_e(A)$ to be the operator algebra generated by $j(A)$ in
$C^*_e(A)$.

Finally, there are universal JC*-algebra envelopes of a Jordan operator algebra $A$.
Namely, consider the JC*-subalgebra of $C^*_{\rm max}(A)$ generated by $A$.   
This  clearly has the universal
property that for every contractive  (again there is
a  completely contractive version that is almost identical) Jordan representation
$\pi : A \to B(H)$, there exists a unique contractive   
Jordan $*$-homomorphism $\theta : C^*_{{\rm max}}(A)
\to B(H_\pi)$ with $\theta \circ \rho = \pi$.
If $A$ is also  approximately  unital
we may also consider the JC*-subalgebra of $C^*_e(A)$ generated by $A$.
This will have a universal
property similar to that a few paragraphs up,
or in Proposition \ref{3auce} below, but addressing JC*-algebras $B$ 
generated by a completely isometric Jordan
homomorphic copy of $A$.

\subsection{Contractive approximate identities and consequences}

If $A$ is a Jordan operator subalgebra  of a $C^*$-algebra $B$  
 then we say that  a net $(e_t)$ in Ball$(A)$  is 
a $B$-{\em relative partial cai} for $A$ if
 $e_t a \to a$ and $a e_t \to a$  for all $a \in A$.  Here we are using the usual
product on $B$, 
which may not give an element in $A$, and may depend on $B$. 
Nonetheless the 
existence of such a cai is independent of $B$,
 as we shall see.   
We say that a net $(e_t)$ in Ball$(A)$  is 
a  {\em partial cai} for $A$ if 
for every $C^*$-algebra $B$ containing $A$ as
a Jordan subalgebra,  $e_t a \to a$ and $a e_t \to a$  for all $a \in A$,
using the product 
on $B$.   We say that 
$A$ is {\em approximately unital} if it has a partial cai.  

If $A$ is an operator algebra or Jordan operator algebra then we recall that  a net $(e_t)$ in Ball$(A)$  is 
a {\em Jordan cai} or {\em J-cai} for $A$ if  $e_t a + a e_t \to 2a$  for all $a \in A$.  

\begin{lemma} \label{jcai}  If $A$ is a Jordan operator subalgebra 
of a $C^*$-algebra $B$, then the following are equivalent:
\begin{itemize} \item [(i)] $A$ has a partial cai.
 \item [(ii)]  $A$ has a $B$-relative  partial cai. 
\item [(iii)]  
$A$ has a J-cai.
\item [(iv)]   $A^{**}$ has an identity $p$ of norm 1 with respect to  the Jordan Arens product  on $A^{**}$,
which coincides on $A^{**}$ with the restriction of the usual  product
in $B^{**}$.     Indeed $p$  is the identity of the von Neumann algebra
$C^*_B(A)^{**}$.  
\end{itemize}  If these 
hold then $p$ is an open projection in $B^{**}$ in the sense 
of Akemann {\rm \cite{Ake2}}, and any partial cai $(e_t)$ for $A$  
is a cai for $C^*_B(A)$ (and for ${\rm oa}_B(A)$), and every J-cai for $A$ converges weak* to 
$p$.    
\end{lemma}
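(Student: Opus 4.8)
The plan is to establish the cycle (i) $\Rightarrow$ (ii) $\Rightarrow$ (iii) $\Rightarrow$ (iv) $\Rightarrow$ (i), and to extract the three concluding assertions as by-products of the steps. The first two implications are immediate from the definitions: a partial cai is in particular a $B$-relative partial cai, and if $e_t a \to a$ and $a e_t \to a$ then $e_t \circ a = \frac{1}{2}(e_t a + a e_t) \to a$, so a $B$-relative partial cai is a J-cai.

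For (iii) $\Rightarrow$ (iv), I would take a J-cai $(e_t)$ and let $p$ be a weak* cluster point of $(e_t)$ in the weak*-compact ball of $A^{**}$, passing to a subnet with $e_t \to p$ weak*; then $\norm{p} \leq 1$, and $p \in A^{**}$ since $A^{**}$ is weak* closed in $B^{**}$. As left and right multiplication by a fixed element of the von Neumann algebra $B^{**}$ are weak* continuous, $e_t a \to pa$ and $a e_t \to ap$ weak*, hence $e_t \circ a \to \frac{1}{2}(pa+ap)$ weak*; comparing with $e_t \circ a \to a$ in norm (and using, as recorded in Subsection~\ref{gf}, that the Jordan Arens product on $A^{**}$ is the one induced by the associative Arens product on $B^{**}$) gives $p \circ a = a$ for all $a \in A$. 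Writing $a$ in its $2 \times 2$ matrix form relative to $p$ and $1_{B^{**}} - p$, exactly as in the discussion around~(\ref{qox}), this forces $pa = ap = a$. Thus $p$ is a norm-one idempotent in the von Neumann algebra $B^{**}$, hence a projection, and it acts as a two-sided identity on the weak*-dense $*$-subalgebra of $B^{**}$ generated by $A$; since multiplication by $p$ is weak* continuous, $p$ is the identity of $C^*_B(A)^{**}$, which proves (iv). Uniqueness of identities then shows that every weak* cluster point of every J-cai equals this same $p$, so every J-cai converges weak* to $p$. Finally, $p = 1_{C^*_B(A)^{**}}$ is the weak* limit of any cai of the $C^*$-algebra $C^*_B(A)$, i.e.\ of an increasing net of positive contractions in $C^*_B(A) \subseteq B$, so $p$ is open in $B^{**}$ in the sense of Akemann.

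For (iv) $\Rightarrow$ (i) I would work inside $C := C^*_{\mathrm{max}}(A)$, which admits a contractive $*$-homomorphism onto $C^*_D(A) \subseteq D$ for every $C^*$-algebra $D$ containing $A$ as a Jordan subalgebra. The $2\times 2$ matrix argument again shows that $p$ is also the identity of $C^{**}$. By Goldstine's theorem choose $(a_\lambda)$ in $\mathrm{Ball}(A)$ with $a_\lambda \to p$ weak* in $C^{**}$; then $a_\lambda a \to pa = a$ and $a a_\lambda \to ap = a$ weakly in $C$ for each $a \in A$, using one-sided weak* continuity of multiplication together with $pa = ap = a$. Since $f \mapsto (fa_i - a_i,\, a_i f - a_i)_{i=1}^{n}$ is affine on the convex set $\mathrm{Ball}(A)$, a routine convexity/Mazur argument upgrades weak to norm approximation, producing a net $(e_t) \subseteq \mathrm{Ball}(A)$ with $e_t a \to a$ and $a e_t \to a$ in $C$ for all $a \in A$; applying the $*$-homomorphisms $C \to C^*_D(A)$ shows these limits persist in every such $D$, so $(e_t)$ is a partial cai. (Should one want the real-positive cai promised in the Introduction, one instead draws the approximants from the convex set $\mathfrak{F}_A \cap \mathrm{Ball}(A) \subseteq \mathfrak{r}_A$; the extra input needed is that $p$ lies in the weak* closure of this set, which follows as in the associative case, with the openness of $p$ as the key ingredient.)

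It remains to check the last claim, that any partial cai $(e_t)$ is a cai for $C^*_B(A)$ and for $\mathrm{oa}_B(A)$. Set $h_t := 2 \cdot 1_{B^{**}} - (e_t + e_t^*) \geq 0$ (using $\norm{e_t} \leq 1$). From $(1-e_t)^*(1-e_t) \leq h_t$ we get $\norm{(1-e_t)y} \leq \norm{h_t^{1/2} y}$ for $y \in B^{**}$; and for $a, b \in A$ one checks $\norm{h_t^{1/2} a}^2 = \norm{a^* h_t a} \to 0$ and $\norm{h_t^{1/2} b^*}^2 = \norm{b h_t b^*} \to 0$ straight from $e_t a \to a$ and $a e_t \to a$. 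Since $h_t^{1/2}$ is bounded, $\norm{h_t^{1/2} y} \to 0$ for every $y$ in the $*$-algebra generated by $A$, hence for every $y \in C^*_B(A)$, which gives $e_t y \to y$; the relation $y e_t \to y$ follows symmetrically from $(1-e_t)(1-e_t)^* \leq h_t$. So $(e_t)$ is a cai for $C^*_B(A)$, and a fortiori for $\mathrm{oa}_B(A)$. I expect the principal obstacle to be (iv) $\Rightarrow$ (i): arranging that the resulting cai is genuinely independent of the ambient $C^*$-algebra, which is precisely what forces the detour through $C^*_{\mathrm{max}}(A)$. Everything else runs parallel to the associative operator algebra theory, the one genuinely Jordan-specific point being the elementary passage from $p \circ a = a$ to $pa = ap = a$.
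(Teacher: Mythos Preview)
Your approach matches the paper's closely, but there is one ordering issue in (iii) $\Rightarrow$ (iv). You invoke the $2\times 2$ matrix picture of $a$ relative to $p$ and $1-p$ (the argument around~(\ref{qox})) to pass from $p \circ a = a$ to $pa = ap = a$, and only \emph{afterwards} conclude that $p$ is an idempotent, hence a projection. But the matrix decomposition with respect to $p$ only makes sense once $p$ is already known to be a projection, so as written this is circular. The fix is the paper's: first extend $p \circ a = a$ from $a \in A$ to all $\eta \in A^{**}$, using separate weak* continuity of multiplication in $B^{**}$ and weak* density of $A$ in $A^{**}$; taking $\eta = p$ gives $p^2 = p$, so $p$ is a contractive idempotent in $B^{**}$, hence a projection, and \emph{then} (\ref{qox}) applies to yield $p\eta = \eta p = \eta$ for $\eta \in A^{**}$.

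Apart from this reordering, your argument is essentially the paper's, including the detour through $C^*_{\mathrm{max}}(A)$ in (iv) $\Rightarrow$ (i) and the Mazur/convexity upgrade from weak to norm convergence. Your treatment of the final claim---that a partial cai for $A$ is a cai for $C^*_B(A)$---via $h_t = 2 - e_t - e_t^*$ and the operator inequality $(1-e_t)^*(1-e_t) \le h_t$ is correct and spells out explicitly what the paper obtains by citing \cite[Lemma~2.1.6]{BLM}; this is a slightly more hands-on route to the same conclusion. Your argument for the openness of $p$ (via a positive cai of $C^*_B(A)$) is also a legitimate alternative to the paper's one-line appeal to $e_t = e_t p \to p$.
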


\begin{proof}   That (i) $\Rightarrow$ (ii) and  (ii) $\Rightarrow$ (iii) are obvious.

(iii) $\Rightarrow$ (iv) \ If $p$ is a weak* limit point of $(e_t)$,  then in the weak* limit we have 
$p a + a p = 2a$ for all $a \in A$, hence for all $a \in A^{**}$.
Thus  $p$ is an identity for the Jordan product of $A^{**}$.
In particular, $p^2 = p$, and so $p$ is an orthogonal projection in $B^{**}$.    It thenfollows from  (\ref{qox}) that $\eta p = p \eta = \eta$ for all $\eta \in A^{**}$
in the $B^{**}$ product.  So $p$ is an identity in the $B^{**}$ product on $A^{**}$ (which may not map into $A^{**}$ if
$A$ is not an operator algebra).   
By topology it now follows that  every J-cai for $A$ converges weak* to 
$p$.    

(iv) $\Rightarrow$ (i) \ Suppose that $p$ is an orthogonal projection in $A^{**}$.   Then by (\ref{qox}), $\eta p = p \eta = \eta$ 
for all $\eta \in A^{**}$ iff $p$ is an identity in the Jordan product on $A^{**}$.   We may replace $B$
with 
$D  = C^*_{\rm max}(A)$, letting $\rho$ be the usual inclusion of $A$ in
$D$.   We may then  follow a standard route to obtain a cai, see e.g.\ the last part of the proof of \cite[Proposition 2.5.8]{BLM}. That is we begin by choosing a net $(x_t)$ in ${\rm Ball}(A)$ with  $e_t \to p$ weak*.   In $D$ we have $a e_t \to a$ and $e_t a \to a$ weakly.
Thus for any finite set $F = \{ a_1, \cdots, a_n \} \subset A$ the zero vector is in the weak and norm closure in $D^{(2n)}$ of 
$$\{ a_1 u - a_1 , \cdots , a_n u - a_n, u a_1  - a_1 , \cdots , u a_n  - a_n  ) : u \in \Lambda \}.$$
From this one produces, by the  standard method in  e.g.\ the last part of the proof of \cite[Proposition 2.5.8]{BLM},
a $D$-relative partial cai $(e_t)$  for $A$ formed   from
convex combinations.    Suppose that $B$ is any $C^*$-algebra containing $A$ as a Jordan subalgebra
via a completely isometric inclusion $i : A \to B$, such that $B = C^*_B(i(A))$.  Then the existence of a canonical   $*$-homomorphism $\theta : C^*_{{\rm max}}(A)
\to B$ with $\theta \circ \rho = i$, gives $i(e_t) i(a) = \theta(\rho(e_t) \rho(a)) \to \theta(\rho(a)) =
 i(a)$ for $a \in A$,
and similarly $i(a) i(e_t) \to i(a)$.   
So $(e_t)$ is a partial cai for $A$.

If these hold, and if $A$ is a Jordan 
subalgebra of a $C^*$-algebra $B$, then since $e_t = e_t p \to p$ weak*, we have as in the operator algebra case that $p$ is open in $B^{**}$.     Note that $C^*_B(A)$ is a $C^*$-algebra with cai $(e_t)$ by 
\cite[Lemma 2.1.6]{BLM}, and so $p = 1_{C^*_B(A)^{**}}$.  \end{proof} 

\begin{remark} It follows from the last result that any $C^*$-cover $(B,j)$
of an approximately unital Jordan operator algebra
$A$ is a unital $C^*$-algebra if and only if $A$ is
unital.   Indeed 
if $B$ is unital then $1_B = \lim_t \, e_t \in A$.
 Similarly oa$_B(A)$ is unital if and only if $A$ is
unital. 
\end{remark}

 If $A$ is a Jordan operator algebra in a $C^*$-algebra (resp.\ operator algebra) $B$ and
$(e_t)$ is a partial cai for $A$, then it follows
from the above that 
$\{ T \in B : T e_t \to T, e_t T  \to T \}$ is a 
$C^*$-algebra with cai $(e_t)$ containing $A$.

\begin{theorem} \label{kapl}  If $A$ is an approximately unital  Jordan  operator algebra then
$A$ is an $M$-ideal in $A^1$.   Also ${\mathfrak F}_A$ is weak* dense in ${\mathfrak F}_{A^{**}}$
and ${\mathfrak r}_A$ is weak* dense in ${\mathfrak r}_{A^{**}}$. 
Finally, $A$ has a partial cai in $\frac{1}{2} {\mathfrak F}_A$.
\end{theorem}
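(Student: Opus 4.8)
The plan is to first establish the $M$-ideal statement, from which, together with Goldstine's theorem, the two weak* density assertions follow by adapting the associative argument, and then to extract the partial cai in $\frac12 {\mathfrak F}_A$ by a Hahn--Banach argument like the one in the proof of Lemma~\ref{jcai}. For the $M$-ideal part, recall from Lemma~\ref{jcai}\,(iv) that $A^{**}$ is unital with identity $p$, and that $\eta p = p\eta = \eta$ for all $\eta \in A^{**}$ inside any $C^*$-algebra containing $A^{**}$ as a Jordan subalgebra. Write $\tilde 1 = 1_{(A^1)^{**}}$ and regard $(A^1)^{**}$ as a Jordan subalgebra of a von Neumann algebra $W$ with $1_W = \tilde 1$. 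Every $x \in (A^1)^{**}$ is of the form $x = a + \lambda \tilde 1$ with $a \in A^{**}$, and then $p \circ x = a + \lambda p = p x p$; so by (\ref{qox}) the projection $p$, and hence $q := \tilde 1 - p$, is central in $(A^1)^{**}$, and therefore central in $W$. Thus $W = pW \oplus^\infty qW$, and restricting the block decomposition $x = pxp + qxq$ (here $pxp = a + \lambda p \in A^{**}$ and $qxq = \lambda q \in \Cdb q$) gives $(A^1)^{**} = A^{**} \oplus^\infty \Cdb q$ isometrically. Since $A$ is weak* dense in $A^{**}$ and $A^{**}$ is weak* closed in $(A^1)^{**}$, we have $A^{\perp\perp} = A^{**}$, which is thus an $M$-summand of $(A^1)^{**}$; that is, $A$ is an $M$-ideal in $A^1$. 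This follows the operator algebra proof of \cite[2.5.8]{BLM}, the only genuinely new point being the centrality of $p$, which comes directly from Lemma~\ref{jcai} and (\ref{qox}).

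For the density assertions, recall from the end of Subsection~\ref{MeyerRP} that ${\mathfrak F}_A = {\mathfrak F}_{A^{**}} \cap A$ and ${\mathfrak r}_A = {\mathfrak r}_{A^{**}} \cap A$. One proves that ${\mathfrak F}_A$ is weak* dense in ${\mathfrak F}_{A^{**}}$ exactly as for operator algebras (cf.\ \cite{BRI}): given $\eta \in {\mathfrak F}_{A^{**}}$, one combines a J-cai $(e_t)$ for $A$ with Goldstine's theorem to produce, via an iterated-limit argument, elements of $A$ that are weak* close to $\eta$. The new ingredient is the identity (\ref{aba}), which guarantees that the products of the form $ece$ ($e, c \in A$) occurring in the associative proof remain in the Jordan algebra $A$. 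The weak* density of ${\mathfrak r}_A$ in ${\mathfrak r}_{A^{**}}$ then follows: since ${\mathfrak c}_A = \Rdb_+ {\mathfrak F}_A$ and (as in the associative case) ${\mathfrak r}_{A^{**}}$ is the weak* closure of $\Rdb_+ {\mathfrak F}_{A^{**}}$, passing to weak* closures in $\Rdb_+ {\mathfrak F}_A \subseteq \Rdb_+ {\mathfrak F}_{A^{**}}$ shows that the weak* closure of ${\mathfrak c}_A$, hence of ${\mathfrak r}_A$, is all of ${\mathfrak r}_{A^{**}}$.

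For the partial cai in $\frac12 {\mathfrak F}_A$: since $\|p - 2p\| = \|p\| = 1$, we have $p \in \frac12 {\mathfrak F}_{A^{**}}$, so by the density just established $p$ lies in the weak* closure of the convex set $\frac12 {\mathfrak F}_A$, which is contained in ${\rm Ball}(A)$. Fix a net $(e_\alpha)$ in $\frac12 {\mathfrak F}_A$ with $e_\alpha \to p$ weak*. Working in $D = C^*_{{\rm max}}(A)$, so that $p = 1_{D^{**}}$ by Lemma~\ref{jcai}, we get $e_\alpha a \to a$ and $a e_\alpha \to a$ weakly in $D$ for every $a \in A$. Hence for each finite $F \subseteq A$, $0$ lies in the weak closure of the convex set $\{(ea - a, ae - a)_{a \in F} : e \in \frac12 {\mathfrak F}_A\}$, and so, by Mazur's theorem, in its norm closure. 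Just as in the proof of Lemma~\ref{jcai} this yields a $D$-relative partial cai for $A$ contained in $\frac12 {\mathfrak F}_A$, which the universal property of $C^*_{{\rm max}}(A)$ then promotes to a partial cai for $A$.

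I expect the main obstacle to be the weak* density of ${\mathfrak F}_A$ in ${\mathfrak F}_{A^{**}}$: this is the Kaplansky-density-type heart of the statement, already delicate for operator algebras, and carrying it over demands care that every product occurring in the associative argument be rewritten via (\ref{abc}) and (\ref{aba}) so as to remain inside the Jordan algebra, and that one work throughout with the real positive (rather than genuinely positive) contractive approximate identity which is all that is available at this level of generality.
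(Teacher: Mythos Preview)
Your treatment of the $M$-ideal assertion and of the partial cai in $\frac12{\mathfrak F}_A$ is correct and matches the paper's line exactly: centrality of $p$ in $(A^1)^{**}$ via Lemma~\ref{jcai} and (\ref{qox}) gives the $M$-summand decomposition $(A^1)^{**}=A^{**}\oplus^\infty\Cdb q$, and once density is known, the convex-combinations trick from the proof of Lemma~\ref{jcai} (with $\Lambda=\frac12{\mathfrak F}_A$) produces the cai. Your deduction of ${\mathfrak r}_A$-density from ${\mathfrak F}_A$-density is also fine.

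The gap is the ${\mathfrak F}_A$-density itself. You describe it only as ``Goldstine plus an iterated-limit argument'' involving products $ece$, with (\ref{aba}) as the new ingredient; but you do not say how such an argument actually manufactures elements of ${\mathfrak F}_A$. The obvious candidate $e_t(1-b)e_t$ (with $b\in{\rm Ball}(A^1)$ from Goldstine and $e_t$ a J-cai) lies in $A$ by (\ref{aba}), but there is no reason for $\|1-e_t(1-b)e_t\|=\|1-e_t^2+e_tbe_t\|$ to be $\le 1$ when $e_t$ is not a projection, so the iterated-limit scheme you allude to does not visibly land in ${\mathfrak F}_A$. The paper's route is different and does not use (\ref{aba}) at all: it invokes the $M$-ideal property just established together with the argument of \cite[Theorem~5.2]{BOZ}, and the single Jordan-specific check is that the ${\mathfrak F}$-transform $x\mapsto x(1+x)^{-1}$ sends ${\mathfrak r}_A$ into $\frac12{\mathfrak F}_A$, which holds because the computation takes place entirely in the commutative operator algebra ${\rm oa}(x)$. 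So the ``new point'' is a one-variable functional-calculus fact, not a triple-product identity. You should either spell out a concrete $ece$-based argument that genuinely controls $\|1-\,\cdot\,\|$, or replace that paragraph by the $M$-ideal/${\mathfrak F}$-transform route.
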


\begin{proof}   As in the operator algebra case, if $e$ is the identity of $A^{**}$ viewed as a (central) projection in $(A^1)^{**}$, then multiplication by 
$e$ is an $M$-projection
from $(A^1)^{**}$ onto $A^{**}$.  This gives the first assertion 
and the assertions about weak* density are
identical to the proof in \cite[Theorem 5.2]{BOZ}.  One does need to 
know that for $x \in {\mathfrak r}_A$ we have 
$x(1+x)^{-1} \in \frac{1}{2} {\mathfrak F}_A$, but this is easy as we 
can work in the operator algebra oa$(x)$.   

Note that the identity of $A^{**}$ is in  $\frac{1}{2} {\mathfrak F}_{A^{**}}$.
Hence by the just established weak* density,
 it may be approximated by a net in $\frac{1}{2} {\mathfrak F}_A$.  From this as in Lemma \ref{jcai} (but taking $\Lambda = \frac{1}{2} {\mathfrak F}_A$) one may construct a cai in $\frac{1}{2} {\mathfrak F}_A$ in a  standard way  from convex combinations.
Alternatively, one may copy the proof of  Read's theorem in \cite{Bnpi}
to get this. 
\end{proof}

\begin{remark} Indeed as in \cite[Theorem 2.4]{BRI} by taking $n$th roots
one may find in any approximately unital Jordan operator algebra,
a partial cai
which is {\em nearly positive} in the sense described in the introduction
of \cite{BRord}. 
\end{remark}
 
The following is a  `Kaplansky density' result for ${\mathfrak r}_{A^{**}}$:  

\begin{proposition}  \label{Kap} Let $A$ be an approximately unital Jordan operator algebra.  Then the set of contractions
in ${\mathfrak r}_A$ is weak* dense in the set of contractions
in ${\mathfrak r}_{A^{**}}$. 
\end{proposition}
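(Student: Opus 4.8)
The plan is to follow the proof of the operator-algebra analogue of this fact (cf.\ \cite{BRI}, \cite{BRord}); the genuinely new inputs are Theorem \ref{kapl} and the observation, already used repeatedly above, that every manipulation involving a single element $x$ of $A$ takes place inside $\mathrm{oa}(x)$, which by Subsection \ref{gf} is an ordinary commutative operator algebra, so that the usual holomorphic functional calculus and Cayley-transform estimates apply verbatim. I would begin with two elementary reductions. First, since $\mathfrak{r}_{A^{**}}$ is a cone and $\mathrm{Ball}(A^{**})$ is weak* closed, and a general contraction $\eta\in\mathfrak{r}_{A^{**}}$ is the weak* limit of the contractions $t\eta\in\mathfrak{r}_{A^{**}}$ as $t\nearrow 1$, it suffices to treat those $\eta$ with $\|\eta\|<1$. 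Second, writing $p$ for the identity of $A^{**}$ (Lemma \ref{jcai}) and replacing $\eta$ by $(1-\delta)\eta+\delta p\in\mathfrak{r}_{A^{**}}$ (still a contraction, tending to $\eta$ in norm as $\delta\to 0$), we may further assume $\eta+\eta^*\geq\delta p$ for some $\delta>0$; such an $\eta$ is invertible in $A^{**}$, with a uniform bound on $\|\eta^{-1}\|$ coming from $\delta$.

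Next I would transport the problem to $\mathfrak{F}$ via the Cayley transform $\kappa(x)=x(1+x)^{-1}=1-(1+x)^{-1}$, just as in the associative theory. For accretive $x$ one has $\|1-2\kappa(x)\|=\|(1-x)(1+x)^{-1}\|\leq 1$, so $\kappa$ maps $\mathfrak{r}_A$ into $\tfrac12\mathfrak{F}_A$ and $\mathfrak{r}_{A^{**}}$ into $\tfrac12\mathfrak{F}_{A^{**}}$ (and it keeps elements of $A$ inside $A$, since everything happens in $\mathrm{oa}(x)^1$); its partial inverse $\kappa^{-1}(y)=y(1-y)^{-1}$, defined whenever $1-y$ is invertible, satisfies, by a short computation in a $C^*$-cover, $\kappa^{-1}(y)+\kappa^{-1}(y)^*=\big((1-y)^{-1}\big)^*\big(2\operatorname{Re}(y)-2y^*y\big)(1-y)^{-1}$, so $\kappa^{-1}(y)\in\mathfrak{r}$ exactly when $y\in\tfrac12\mathfrak{F}$, and $\|\kappa^{-1}(y)\|\leq 1$ exactly when $\operatorname{Re}(y)\leq\tfrac12 p$; moreover $\operatorname{Re}(\kappa(\eta))\leq\tfrac12 p$ is itself equivalent to $\|\eta\|\leq 1$ for accretive $\eta$, since $2\operatorname{Re}(1+\eta)-(1+\eta)^*(1+\eta)=1-\eta^*\eta$. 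Hence $\kappa$ restricts to a bijection of $\mathfrak{r}_{A^{**}}\cap\mathrm{Ball}(A^{**})$ onto $\mathcal{G}:=\{\,y\in A^{**}:y^*y\leq\operatorname{Re}(y)\leq\tfrac12 p\,\}$ which carries $\mathfrak{r}_A\cap\mathrm{Ball}(A)$ onto $\mathcal{G}\cap A$. Since Theorem \ref{kapl} already gives weak* density of $\tfrac12\mathfrak{F}_A$ in $\tfrac12\mathfrak{F}_{A^{**}}$ (and $\tfrac12\mathfrak{F}_A\subseteq\mathfrak{r}_A\cap\mathrm{Ball}(A)$), the proposition comes down to showing that $\mathcal{G}\cap A$ is weak* dense in $\mathcal{G}$.

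This last step is the main obstacle, since $\kappa$ and $\kappa^{-1}$ are not weak* continuous on bounded sets, so one cannot simply apply $\kappa^{-1}$ to a weak* convergent net of approximants of $\kappa(\eta)$ drawn from $\tfrac12\mathfrak{F}_A$. I would handle it as in \cite{BRI}: replace the weak* topology by the strong* topology of a containing von Neumann algebra $B^{**}$ (for $B$ a $C^*$-cover of $A$), which changes nothing since on the bounded convex sets in play the two closures coincide; take $y_i\to\kappa(\eta)$ strong* with $y_i\in\tfrac12\mathfrak{F}_A$; apply a fixed bounded holomorphic ``truncation'' of $\kappa^{-1}$ assembled from resolvents $w\mapsto w(1+\gamma w)^{-1}$, which by the computations above (and the localization to $\mathrm{oa}(\cdot)$) preserves the $\mathfrak{F}$-condition, has uniformly boundedly invertible $1+\gamma y_i$, and is therefore strong* continuous along this net; and finally let the truncation parameter $\gamma\to 0$, using that $\kappa(\eta)$ already lies in the good region $\mathcal{G}$ so that the correction is norm-small there. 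The remaining points --- that the order inequalities and functional calculus above pass from the scalar case through $\mathrm{oa}(x)$, that with the truncation finely tuned the approximants stay in $\mathrm{Ball}(A)$, and that the two opening reductions compose correctly with the Cayley machinery --- are routine and parallel the associative case.
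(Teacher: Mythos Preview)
Your route via the Cayley transform is entirely different from the paper's proof, which is a two-line bipolar argument taken from \cite[Proposition 6.4]{BOZ}: one checks that the polar in $A^*$ of $\mathrm{Ball}(A)\cap\mathfrak{r}_A$ already annihilates $\mathrm{Ball}(A^{**})\cap\mathfrak{r}_{A^{**}}$, using only Theorem \ref{kapl} (weak* density of $\mathfrak{r}_A$ in $\mathfrak{r}_{A^{**}}$) together with the description of the real dual cone $\mathfrak{c}^{\Rdb}_{A^*}$. The bipolar theorem then gives the result at once, with no functional calculus, no strong* topology, and no truncation parameters.

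Your argument, by contrast, has a genuine gap precisely at the step you flag as ``the main obstacle.'' After the Cayley reduction you must show that $\mathcal{G}\cap A$ is weak* dense in $\mathcal{G}$, and you have approximants $y_i\in\tfrac12\mathfrak{F}_A$ converging strong* to $y_0=\kappa(\eta)\in\mathcal{G}$. The difficulty is that the $y_i$ need not lie in $\mathcal{G}$: the extra constraint $\operatorname{Re}(y_i)\le\tfrac12 p$ (equivalently $\|\kappa^{-1}(y_i)\|\le 1$) can fail, since elements of $\tfrac12\mathfrak{F}_A$ may have real part as large as $1$. Your proposed remedy --- a ``bounded holomorphic truncation of $\kappa^{-1}$ assembled from resolvents $w\mapsto w(1+\gamma w)^{-1}$'' --- does not achieve this. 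Any such resolvent composite (for instance $y\mapsto y(1-(1-\gamma)y)^{-1}$, which is what one gets by composing $\kappa^{-1}$ with $w\mapsto w(1+\gamma w)^{-1}$) does map $\tfrac12\mathfrak{F}_A$ into $\mathfrak{r}_A$ and is strong* continuous with uniformly bounded resolvents, but its range is only bounded by $1/\gamma$, not by $1$; letting $\gamma\to 0$ destroys the norm control exactly when you need it. The sentence ``with the truncation finely tuned the approximants stay in $\mathrm{Ball}(A)$'' is the missing argument, not a routine verification, and nothing in \cite{BRI} supplies it --- the relevant result in the associative case is in \cite{BOZ} and is proved by the bipolar method, not by functional calculus.
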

 \begin{proof}    We showed in Theorem 2.6 that ${\mathfrak r}_A$ is weak* dense in ${\mathfrak r}_{A^{**}}$.
The bipolar argument in \cite[Proposition 6.4]{BOZ} (but replacing appeals to results in that paper
by  appeals to the matching results in the present paper) shows that  ${\rm Ball}(A) \cap {\mathfrak r}_A$ is weak* dense in 
${\rm Ball}(A^{**}) \cap {\mathfrak r}_{A^{**}}$.
 \end{proof}

\begin{corollary}
If $A$ is a Jordan operator algebra with a countable Jordan cai  $(f_n)$, then $A$ has a countable partial cai in $\frac{1}{2} {\mathfrak F}_A.$  \end{corollary}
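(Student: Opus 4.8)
The plan is to reduce the problem to the $\sigma$-unitality of the maximal $C^*$-cover, and then to thin out to a sequence the partial cai in $\frac{1}{2}{\mathfrak F}_A$ that Theorem \ref{kapl} already provides.

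First I would invoke Lemma \ref{jcai}: having a J-cai, $A$ is approximately unital, and, writing $\rho : A \to C^*_{\rm max}(A)$ for the canonical inclusion, the von Neumann algebra $C^*_{\rm max}(A)^{**}$ has identity $p = 1_{A^{**}}$ with $\rho(f_n) \to p$ weak*. The key observation is then that $C^*_{\rm max}(A)$ is $\sigma$-unital. Put $h = \sum_{n \geq 1} 2^{-n} \rho(f_n)^* \rho(f_n)$, a positive norm-one element of $C^*_{\rm max}(A)$. If a state $\psi$ of $C^*_{\rm max}(A)$ satisfied $\psi(h) = 0$, then $\psi(\rho(f_n)^*\rho(f_n)) = 0$ for every $n$, so $\rho(f_n)$ kills the GNS vector of $\psi$ and hence $\psi(\rho(f_n)) = 0$ for all $n$; but the normal extension $\tilde\psi$ of $\psi$ to $C^*_{\rm max}(A)^{**}$ is weak*-continuous, so $\psi(\rho(f_n)) = \tilde\psi(\rho(f_n)) \to \tilde\psi(p) = 1$, a contradiction. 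Thus $h$ is strictly positive and $C^*_{\rm max}(A)$ has a countable cai $(u_m)_{m \geq 1}$.

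Next I would take, via Theorem \ref{kapl}, a partial cai $(e_\lambda)_{\lambda \in \Lambda}$ for $A$ with every $e_\lambda$ in $\frac{1}{2}{\mathfrak F}_A$; by the last sentence of Lemma \ref{jcai}, $(\rho(e_\lambda))$ is also a cai for $C^*_{\rm max}(A)$. For each $k \in \Ndb$, since $\Lambda$ is directed, choose $\lambda(k)$ with $\Vert \rho(e_{\lambda(k)}) u_j - u_j \Vert < 1/k$ and $\Vert u_j \rho(e_{\lambda(k)}) - u_j \Vert < 1/k$ for all $j \leq k$, and set $g_k = e_{\lambda(k)} \in \frac{1}{2}{\mathfrak F}_A$. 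A routine three-term estimate shows $(\rho(g_k))$ is a (sequential) cai for $C^*_{\rm max}(A)$: given $b$ and $\epsilon > 0$, fix $j_0$ with $\Vert u_{j_0} b - b \Vert < \epsilon$ and $\Vert b u_{j_0} - b \Vert < \epsilon$; then for $k \geq j_0$, $\Vert \rho(g_k) b - b \Vert \leq \Vert b - u_{j_0} b \Vert + \Vert \rho(g_k) u_{j_0} - u_{j_0}\Vert\,\Vert b \Vert + \epsilon \leq 2\epsilon + \Vert b \Vert / k$, and symmetrically on the other side. Finally, repeating verbatim the pushforward argument at the end of the proof of Lemma \ref{jcai} (apply the canonical $*$-homomorphism $C^*_{\rm max}(A) \to C^*_C(i(A))$ associated to any completely isometric Jordan embedding $i : A \to C$ into a $C^*$-algebra $C$), a cai for $C^*_{\rm max}(A)$ that lies in $\rho(A)$ is automatically a partial cai for $A$. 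Hence $(g_k)$ is a countable partial cai for $A$ contained in $\frac{1}{2}{\mathfrak F}_A$.

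The main obstacle, and the reason a little care is needed, is the reduction in the first paragraph: a bare J-cai need not be a cai for any $C^*$-cover, so one cannot directly run the usual diagonal extraction against the partial cai of Theorem \ref{kapl}; one first has to manufacture a genuine countable cai for $C^*_{\rm max}(A)$, which is precisely what the $\sigma$-unitality argument accomplishes. Everything after that is the standard convex-combination/pushforward machinery already used in Lemma \ref{jcai} and Theorem \ref{kapl}.
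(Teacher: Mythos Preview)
Your proof is correct, but it takes a more circuitous route than the paper's. The paper simply takes the partial cai $(e_t)$ in $\frac{1}{2}{\mathfrak F}_A$ from Theorem~\ref{kapl}, picks $t_n$ with $\Vert f_n e_{t_n} - f_n\Vert \vee \Vert e_{t_n} f_n - f_n\Vert < 2^{-n}$ (products in, say, $C^*_{\rm max}(A)$), and asserts that $(e_{t_n})$ is already a countable partial cai. Your concern that one ``cannot directly run the usual diagonal extraction'' because $(f_n)$ is only a J-cai is overstated: the direct extraction does work, though the verification is not the naive three-term estimate. Setting $x_n = e_{t_n} a - a$ and $\epsilon_n = a - f_n \circ a$, one computes in any $C^*$-cover
\[
x_n\Bigl(1 - \tfrac{1}{2} f_n\Bigr) \;=\; \tfrac{1}{2}(e_{t_n} f_n - f_n)\,a \;+\; (e_{t_n} - 1)\,\epsilon_n,
\]
and since $\Vert \tfrac{1}{2} f_n\Vert \leq \tfrac{1}{2}$ the factor $1 - \tfrac{1}{2} f_n$ is invertible in the unitization with inverse of norm at most $2$, giving $\Vert x_n\Vert \leq 2^{-n}\Vert a\Vert + 4\Vert\epsilon_n\Vert \to 0$. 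The right-sided estimate is symmetric, and then the pushforward via $C^*_{\rm max}(A)$ (exactly the step you use in your final paragraph) upgrades this to a partial cai. So the paper's two-line proof is valid, though its ``easy to see'' hides this little invertibility trick.

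Your detour through the $\sigma$-unitality of $C^*_{\rm max}(A)$ is a legitimate alternative and has the merit that every step is entirely standard once the strictly positive element $h = \sum 2^{-n}\rho(f_n)^*\rho(f_n)$ is in hand; it also makes the role of $C^*_{\rm max}(A)$ explicit rather than implicit. The cost is length and the introduction of machinery (strict positivity, GNS) not really needed for this statement. Both arguments ultimately rely on the same pushforward at the end.
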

\begin{proof}
By Theorem \ref{kapl}, $A$ has a partial cai	$(e_t)$ in $\frac{1}{2} {\mathfrak F}_A.$ Choose $t_n$ with $\Vert f_ne_{t_n}-f_n\Vert \vee \Vert e_{t_n}f_n-f_n\Vert <2^{-n}$.  It is easy to see that $(e_{t_n})$ is a countable partial cai in $\frac{1}{2} {\mathfrak F}_A.$
\end{proof}

\begin{proposition} \label{ununau}  If $A$ is a nonunital  approximately unital Jordan operator algebra 
then the unitization $A^1$ is well defined 
 up to completely isometric Jordan isomorphism; the matrix norms are
$$\Vert [ a_{ij} + \lambda_{ij} 1 ] \Vert
= \sup \{ \Vert [ a_{ij} \circ c+ \lambda_{ij} c \Vert_{M_n(A)}
: c \in {\rm Ball}(A) \} , \qquad a_{ij} \in A, \lambda_{ij} \in \Cdb.$$
\end{proposition}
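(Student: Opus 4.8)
The plan is to deduce the whole statement from the displayed matrix-norm formula, once we know that formula holds for \emph{every} realization of the unitization. Indeed, given two unital Jordan operator algebras $B_1$ and $B_2$, each containing $A$ completely isometrically as a codimension-one Jordan subalgebra with $1_{B_i} \notin A$, the map $\phi(a + \lambda 1_{B_1}) = a + \lambda 1_{B_2}$ is well defined (each element of $B_i$ is uniquely of the form $a + \lambda 1_{B_i}$), is a unital Jordan homomorphism (apply it to $(a + \lambda 1)^2 = a^2 + 2\lambda a + \lambda^2 1$, using $a^2 \in A$), and is completely isometric because, by the formula applied to $B_1$ and to $B_2$, the matrix norm of $[a_{ij} + \lambda_{ij} 1_{B_i}]$ depends only on the operator space structure of $A$ and its Jordan product. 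The Banach-space uniqueness of $A^1$ is already Corollary \ref{MeyerJ}, so the genuinely new content is the operator space (complete isometry) assertion; one cannot simply apply Proposition \ref{MeyerJ1} at each matrix level, since $M_n(A)$ is not a Jordan operator algebra in general.

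To prove the formula, fix a concrete unitization $B$ (a unital Jordan operator algebra with $A \subseteq B$ a codimension-one Jordan subalgebra and $1_B \notin A$); by Corollary \ref{MeyerJ} any two such are isometrically Jordan isomorphic, so the Banach-level facts below apply to each. I would pass to the bidual: by Theorem \ref{kapl}, $A$ is an $M$-ideal in $B$, and, as recorded in the proof of that theorem, $e := 1_{A^{**}}$ is a central projection in $B^{**}$ and multiplication by $e$ is the $M$-projection of $B^{**}$ onto $A^{**}$. Since $B = A + \Cdb 1_B$ and $B/A \cong \Cdb$, this gives $B^{**} = A^{**} \oplus^\infty \Cdb(1_B - e)$ completely isometrically, with $1_B = e + (1_B - e)$. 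As $B \hookrightarrow B^{**}$ completely isometrically, for $a_{ij} \in A$, $\lambda_{ij} \in \Cdb$ we obtain
$$\| [a_{ij} + \lambda_{ij} 1_B] \|_{M_n(B)} = \max \bigl( \| [a_{ij} + \lambda_{ij} e] \|_{M_n(A^{**})} , \, \| [\lambda_{ij}] \|_{M_n} \bigr) .$$

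It then remains to identify the right-hand side with $\sup\{ \| [a_{ij}\circ c + \lambda_{ij} c] \|_{M_n(A)} : c \in {\rm Ball}(A) \}$. Writing $x = [a_{ij} + \lambda_{ij} e]$, one has $[a_{ij}\circ c + \lambda_{ij} c] = \frac{1}{2}( x\, {\rm diag}(c) + {\rm diag}(c)\, x )$ in $M_n(A^{**})$, of norm $\leq \| x \|\,\| c \| \leq \| x \|$, so $\| x \| \geq \sup_c \| [a_{ij}\circ c + \lambda_{ij}c] \|$; conversely, taking a Jordan cai $(e_t) \subset {\rm Ball}(A)$, Lemma \ref{jcai} gives $e_t \to e$ weak* while $a_{ij}\circ e_t \to a_{ij}$ in norm, so $[a_{ij}\circ e_t + \lambda_{ij} e_t] \to [a_{ij} + \lambda_{ij} e]$ weak* in $M_n(A^{**})$, and weak* lower semicontinuity of the norm gives equality. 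Finally, to see that the maximum collapses to its first term, I would produce a state $\psi$ on $A^{**}$ (a unital contractive functional) with $\psi|_A = 0$: here ${\rm dist}(e, A) = 1$, the bound ${\rm dist}(1_{A^{**}}, A) \geq 1$ holding since $\| 1_{A^{**}} - a \| < 1$ would make $a$ invertible in the unitization of the associative operator algebra ${\rm oa}(a)$, forcing $1 \in {\rm oa}(a) \subseteq A$ and contradicting the non-unitality of $A$; so Hahn--Banach on $A^{**}/A$ gives such a $\psi$, which is completely contractive by Arveson's extension theorem. Applying ${\rm id}_{M_n} \otimes \psi$ to $[a_{ij} + \lambda_{ij} e]$ yields $\| [\lambda_{ij}] \|_{M_n} \leq \| [a_{ij} + \lambda_{ij} e] \|_{M_n(A^{**})}$, completing the proof of the formula, and with it the proposition.

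The main obstacle, as indicated, is the upgrade from isometric to \emph{completely} isometric uniqueness, i.e.\ pinning down the matrix norms, which Meyer's theorem and Proposition \ref{MeyerJ1} do not provide. Within that, the delicate point is the collapse of the maximum, equivalently that the scalar matrix $[\lambda_{ij}]$ is dominated in norm by $[a_{ij} + \lambda_{ij} e]$ inside $M_n(A^{**})$; this is precisely where the hypothesis that $A$ is nonunital enters (through ${\rm dist}(1_{A^{**}}, A) = 1$). The remaining steps parallel the associative operator algebra case as in \cite{BLM}.
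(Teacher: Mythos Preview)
Your proof is correct and takes a genuinely different route from the paper. The paper works concretely: it fixes a completely isometric Jordan representation $\pi : A \to B(H)$, reduces to the nondegenerate part $K = [\pi(A)H]$, and then uses that a partial cai satisfies $\pi(e_t) \to I_K$ and $\pi(e_t)^* \to I_K$ in SOT to compute $\Vert [\pi(a_{ij}) + \lambda_{ij} I_K]\Vert$ directly via vector states; the intrinsic sup-formula then forces all such representations to agree. Your argument instead passes to the bidual and exploits the structural fact (from the proof of Theorem~\ref{kapl}) that $e = 1_{A^{**}}$ is a central projection in $B^{**}$, giving the completely isometric splitting $B^{**} = A^{**} \oplus^\infty \Cdb(1_B - e)$; you then identify the $A^{**}$-piece with the sup-formula by weak* lower semicontinuity, and collapse the $\max$ by producing a state on $A^{**}$ annihilating $A$ (via ${\rm dist}(e,A) = 1$ and Hahn--Banach). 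What your approach buys is that the ``collapse of the max'' step is made fully explicit---in the paper this is folded into the reduction ``we may suppose $H = K$''---and the argument never touches a concrete Hilbert space. The paper's approach is more hands-on and avoids both the bidual machinery and the separate Hahn--Banach step. A minor remark: your invocation of Arveson's extension theorem for the complete contractivity of $\psi$ is unnecessary, since any bounded linear functional on an operator space is automatically completely bounded with $\Vert \psi \Vert_{\rm cb} = \Vert \psi \Vert$.
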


\begin{proof}    Suppose that  $\pi : A \to B(H)$ is a completely isometric Jordan
homomorphism.   Let $B$ be the $C^*$-subalgebra
of $B(H)$ generated by $\pi(A),$ let $K= \overline{BH}$, and let $e = P_K$ be 
the projection onto $K$.   It will be clear to $C^*$-algebraists, and  follows  easily
from e.g.\ \cite[Lemma 2.1.9]{BLM}, that 
$\Vert \pi(b) + \lambda I_H \Vert = \max \{ |\lambda| , \Vert \pi(b)_{| K} + \lambda I_K \Vert \}$ for $b \in B, \lambda \in \Cdb$,
and similarly at the matrix level.   
 Thus we may suppose that $H = K$.    Note that $I_K \notin B$, since if it were then
the identity $e$ of $\pi(A)^{**}$ is the identity
of $B^{**}$ by  Lemma \ref{jcai}, which is $I_K \in B$, so that $e \in B \cap \pi(A)^{**} = \pi(A)$, contradicting
that $A$ is nonunital.  
By  Lemma \ref{jcai},
for any  partial cai  $(e_t)$ for $A$, $(\pi(e_t))$  is a partial cai for $\pi(A)$ and hence it
(and also $(\pi(e_t)^*)$) is a cai for $B$
by Lemma \ref{jcai}.    Since $B$ acts nondegenerately on $H$ it is clear that 
 $\pi(e_t) \to I_H$ and $\pi(e_t)^* \to I_H$ strongly on $H$.   
It  is easy to see that
$\Vert [\pi(a_{ij})  + \lambda_{ij} I_H] \Vert$ equals
$$\sup \{ |\sum_{i,j} \langle 
(\pi(a_{ij}) + \lambda_{ij} I_H) \zeta_j, \eta_i \rangle | \}
 = \sup \{ \lim_t \, | \sum_{i,j} \,  
\langle (\pi(a_{ij}) \circ \pi(e_t) + \lambda_{ij}  \pi(e_t))
 \zeta_j, \eta_i \rangle | \},$$ supremum over $\zeta, \eta \in {\rm Ball}(H^{(n)})$.  
This is dominated by 
$$\sup_t  \Vert [\pi(a_{ij} \circ e_t + \lambda_{ij} e_t) ] \Vert   \leq \sup \Vert [ a_{ij} \circ c + \lambda_{ij} c ] \Vert
: c \in {\rm Ball}(A) \} .$$
In turn the latter equals 
$$\sup \Vert [ \pi(a_{ij}) \circ \pi(c) + \lambda_{ij} \pi(c)] \Vert
: c \in {\rm Ball}(A) \}
\leq \Vert [\pi(a_{ij})  + \lambda_{ij} I_H] \Vert .$$  
This proves the assertion.
\end{proof}  

\begin{remark} (1) \  By the proof one may replace $\circ c$ in the last statement with
$\circ e_t$, and take the supremum over all $t$.

\smallskip

(2) \ A similar argument, but replacing a term above by
$\sup \{ \lim_t \, | \sum_{i,j} 
\langle (\pi(a_{ij}) \pi(e_t) + \lambda_{ij}  \pi(e_t))
 \zeta_j, \eta_i \rangle | \}$, shows that
$$\Vert [ a_{ij} + \lambda_{ij} 1 ] \Vert
= \sup \{ \Vert [ a_{ij} c+ \lambda_{ij} c ] \Vert_{M_n(A)}
: c \in {\rm Ball}(A) \} $$
where the product $a_{ij} c$  is with respect to (any fixed) containing
$C^*$-algebra.
One also has similar formulae
with $a c$ replaced by $ca$.
\end{remark}

If $A$ is an approximately  unital Jordan operator algebra we define the $C^*$-envelope $C^*_e(A)$ to be the
$C^*$-algebra $D$ generated by $j(A)$ inside $(C^*_e(A^1),j)$, where $A^1$ is the unitization.

\begin{proposition} \label{3auce}  Let  $A$ be an approximately  unital Jordan
operator algebra, and let  $C^*_e(A)$ and $j$ be as defined above. Then $j_{\vert A}$ is a Jordan homomorphism onto a Jordan subalgebra of $C^*_e(A)$, and $C^*_e(A)$ has the following
universal property: Given any $C^*$-cover $(B,i)$
of $A$, there exists a (necessarily unique and surjective)
$*$-homomorphism $\theta  \colon B \rightarrow C^*_e(A)$ such that
$\theta \circ i = j_{\vert A}$.
\end{proposition}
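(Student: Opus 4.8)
The plan is to reduce the statement to the known universal property of $C^*_e(A^1)$ for the unital Jordan operator algebra $A^1$, which is already in hand from the discussion in Subsection \ref{univ} (via the injective envelope and \cite[Theorem 4.3.1]{BLM}). First I would observe that $j_{\vert A}$ is a Jordan homomorphism onto a Jordan subalgebra of $D = C^*_e(A)$: this is immediate since $j : A^1 \to C^*_e(A^1)$ is a (completely isometric, unital) Jordan homomorphism and $j_{\vert A}$ maps into the $C^*$-algebra $D$ generated by $j(A)$ by the very definition of $C^*_e(A)$ given just before the Proposition. So $(D, j_{\vert A})$ is a $C^*$-cover of $A$ in the sense defined after \eqref{abc}.

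Next, given an arbitrary $C^*$-cover $(B,i)$ of $A$, I would pass to unitizations. Since $A$ is approximately unital, $B$ need not be unital, so I would form $B^1$ (or rather work inside the unitization $C^*$-algebra $B + \Cdb 1$, using the Remark after Lemma \ref{jcai} that $B$ is unital iff $A$ is, hence here $B$ is nonunital and $1_{B^{1}} \notin B$). By Proposition \ref{ununau}, the unitization $A^1$ is well defined up to completely isometric Jordan isomorphism, and the inclusion $i : A \to B$ extends to a unital completely isometric Jordan homomorphism $i^1 : A^1 \to B^1$ (using that $i$ extends to a completely isometric map on $A^1$ whose image, together with $1$, generates $B^1$ — this is essentially the content of Proposition \ref{ununau} together with the matrix-norm formula there, since both $A^1$ and the unital algebra generated by $i(A)$ and $1_{B^1}$ carry the norms determined by that formula). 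Thus $(B^1, i^1)$ is a unital $C^*$-cover of $A^1$.

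Now I would invoke the universal property of $C^*_e(A^1)$: there is a (unique, surjective) unital $*$-homomorphism $\Theta : B^1 \to C^*_e(A^1)$ with $\Theta \circ i^1 = j$. Restricting $\Theta$ to the $C^*$-subalgebra $B \subseteq B^1$ generated by $i(A)$, its image is the $C^*$-subalgebra generated by $\Theta(i(A)) = j(A)$, which is precisely $D = C^*_e(A)$. Hence $\theta := \Theta_{\vert B} : B \to C^*_e(A)$ is a surjective $*$-homomorphism with $\theta \circ i = j_{\vert A}$, as required. Uniqueness is automatic: any $*$-homomorphism agreeing with $j_{\vert A}$ on $i(A)$ agrees on the $C^*$-algebra it generates, which is all of $B$.

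The main obstacle I expect is the bookkeeping in the second paragraph — namely verifying cleanly that $i$ genuinely extends to a \emph{unital completely isometric} Jordan homomorphism $i^1 : A^1 \to B^1$ onto the unital $C^*$-subalgebra of $B^1$ generated by $i(A)$ and $1$, and that this unital algebra really is (a copy of) $A^1$. This is where one must be careful that the nonunitality of $A$ forces $1_{B^1} \notin B$ (so the unitization doesn't collapse), for which one uses Lemma \ref{jcai} exactly as in the proof of Proposition \ref{ununau}; and one must match the two candidate norms on the unitization using the formula in Proposition \ref{ununau}. Once that identification is made, the rest is a routine transfer of the unital $C^*_e$ universal property, with uniqueness and surjectivity of $\theta$ coming for free from the generation hypothesis.
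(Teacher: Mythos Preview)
Your proposal is correct and follows essentially the same route as the paper: extend $i$ to a unital completely isometric Jordan homomorphism $i^1 : A^1 \to B^1$ via Proposition \ref{ununau}, invoke the universal property of $C^*_e(A^1)$ to obtain a $*$-homomorphism $B^1 \to C^*_e(A^1)$, and restrict to $B$. Your write-up is in fact more careful than the paper's terse argument, in that you explicitly flag the nonunitality of $B$ (via the Remark after Lemma \ref{jcai}) and spell out uniqueness and surjectivity.
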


\begin{proof}  Any
completely isometric Jordan
homomorphism $i : A \to B$  into a $C^*$-algebra $B$ generated by $i(A)$,
extends by the uniqueness of the unitization (see Proposition \ref{ununau}) to a unital
completely isometric Jordan
homomorphism $i^1 : A^1 \to B^1$.   If $\theta : B^1 \to C^*_e(A^1)$ is the $*$-homomorphism
coming from the universal property of $C^*_e(A^1)$ (see e.g.\ \cite[Theorem 4.3.1]{BLM}),
then  $\theta_{\vert B} : B \to D$ is a surjective  $*$-homomorphism with  $j_{\vert A} = \theta \circ i$.
It also follows that $j(A)$  is a Jordan subalgebra of $C^*_e(A^1)$, and $j_{\vert A}$ is an approximately unital Jordan isomorphism onto $j(A)$.  \end{proof}

\subsection{Cohen factorization for Jordan modules}
The Cohen factorization theorem is a crucial tool for Banach and operator algebras,
and their modules.   In this section we prove a variant that works for 
Jordan operator algebras and their `modules'.

 Let $A$ be a Jordan operator algebra.  A Banach space (resp.\ operator space) $X$ together with a
contractive  (resp.\ completely contractive) bilinear map $A \times X \to X$ is called a left {\em Jordan Banach  (resp.\ operator) $A$-premodule}.
If $A$ is approximately unital then we say that $X$ is nondegenerate
if $e_t x \to x$  for $x \in X$, where $(e_t)$ is
a cai for $A$ (in this section when we say `cai' we mean
`partial cai'. It will follow from the next theorem 
that if one cai for $A$ works here then so will any other cai).
Similar definitions hold in the `right premodule' case, and a {\em Jordan Banach  (resp.\ operator) $A$-prebimodule}
is both a left and a right   Jordan Banach  (resp.\ operator) $A$-premodule 
such that $a (xb) = (ax)b$ for all $a,b \in A, x \in X$.
We remark that this definition is not related to the classical notion of a  Jordan module due to 
Eilenberg (cf.\ \cite[p.\ 512]{Jac}) .
A good example to bear in mind is the case where $X = C^*_e(A)$ or $X = C^*_{\rm max}(A)$.

If $X$ is a nondegenerate Jordan Banach $A$-premodule 
(resp.\ $A$-prebimodule) then $X$ is a Jordan Banach $A^1$-premodule (resp.\ $A^1$-prebimodule) 
 for the natural unital `action'.
For example,
 if $b = a + \lambda 1
 \in A^1$, and $x \in X$, then 
$\Vert (e_t \circ a) x - ax \Vert \leq \Vert x\Vert \Vert e_t \circ a - a \Vert \to 0,$
 and so 
$$\Vert (e_t \circ b) x- bx \Vert \leq \Vert (e_t \circ a) x- ax \Vert + \Vert
\lambda (e_t x- x) \Vert \to 0.$$ 
Hence $\Vert b x\Vert = \lim_t \, \Vert  (e_t \circ b) x\Vert
\leq \Vert x\Vert \Vert e_t \circ b \Vert \leq \Vert x\Vert \Vert b \Vert.$
Similarly $\Vert x b \Vert \leq \Vert x\Vert \Vert b \Vert$  in the prebimodule case.  

 We say that such $X$ is 
a  (left) {\em Jordan Banach $A$-module} if further if $(a_1 a_2 \cdots a_m) x = a_1 (a_2( \cdots (a_m x) ) \cdots )$ for all $m \in \Ndb$ and
$a_1, \cdots , a_m \in A^1$ such that $a_1 a_2 \cdots a_m \in A^1$.   The latter product 
is the one on some fixed containing $C^*$-algebra, for example $C^*_{\rm max}(A^1)$.
In fact we shall only need the cases $m = 2, 3$ in
the results below.  
Similar notation holds in the bimodule case, or 
for Jordan operator $A$-modules and bimodules.

The condition $a (xb) = (ax)b$  often holds automatically:

\begin{proposition}  \label{ob} Suppose that $X$ is an operator space, $A$ is
an approximately unital Jordan operator algebra, and that there are 
completely contractive bilinear maps $A \times X \to X$ and $X \times A \to X$
which are nondegenerate in the sense that
 $e_t x \to x$ and $x e_t \to x$ for $x \in X$, where $(e_t)$ is
a cai for $A$.   Then $a (xb) = (ax)b$ for all $a,b \in A, x \in X$.
\end{proposition}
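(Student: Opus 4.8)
The plan is to reduce everything to the associative $C^*$-algebra setting by passing to second duals, where the module actions become normal (weak* continuous) bimodule actions of the von Neumann algebra generated by $A^{**}$, and then invoking the associative fact that a normal completely contractive bimodule action of a von Neumann algebra is automatically ``associative'' in the sense $a(xb) = (ax)b$. Concretely, let $B = C^*_{\rm max}(A)$ and let $p = 1_{B^{**}}$ be the identity coming from Lemma \ref{jcai}, so that $A^{**}$ sits inside $B^{**}$ with $p$ as unit. The two completely contractive bilinear maps extend canonically to separately weak* continuous completely contractive maps $A^{**} \times X^{**} \to X^{**}$ and $X^{**} \times A^{**} \to X^{**}$, and nondegeneracy together with $e_t \to p$ weak* gives $p \cdot \xi = \xi$ and $\xi \cdot p = \xi$ for all $\xi \in X^{**}$. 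Thus $X^{**}$ is a unital left and right premodule over the unital Jordan operator algebra $A^{**}$.

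Next I would use the universal associative algebra to linearize the Jordan action. Since the left action of $A^{**}$ on $X^{**}$ is completely contractive and unital, the Christensen--Sinclair/Haagerup tensor product machinery (as used in the proof of Theorem \ref{jbrs}) produces a unital completely contractive linear map $\Phi : A^{**} \hotimes X^{**} \to X^{**}$, and similarly a right-action version $\Psi : X^{**} \hotimes A^{**} \to X^{**}$; alternatively one may factor the left and right actions through oa${}_{\rm max}$ or $C^*_{\rm max}$ of the unitization to obtain genuine associative bimodule actions of a unital operator algebra (resp.\ $C^*$-algebra) $\mathcal{D} \supset A^{**}$ on $X^{**}$. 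The key point is that on the \emph{associative} algebra $\mathcal{D}$ generated by $A^{**}$, a completely contractive unital left module action and a completely contractive unital right module action that are compatible on the generating set must satisfy $d(xd') = (dx)d'$ for all $d,d' \in \mathcal{D}$, $x \in X^{**}$ — this is the standard operator-module fact (see e.g.\ 3.1.5 in \cite{BLM}), which one proves by representing $X^{**}$ on a Hilbert space so that the actions become honest operator multiplications.

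The main obstacle — and the step that needs the most care — is precisely getting from the \emph{Jordan} data to \emph{associative} bimodule data in a way that is compatible on both sides. A priori we are only given completely contractive bilinear maps, with no multiplicativity hypothesis at all: the only structure is contractivity, nondegeneracy, and the fact that $A$ is closed under squares. So I would argue: the left action makes $X^{**}$ a Jordan Banach $A^{**}$-premodule, but by the argument in the operator-module literature (or by an injective-envelope argument as in Theorem \ref{jbrs}), a \emph{completely} contractive nondegenerate left action of a Jordan operator algebra automatically extends to a completely contractive left module action of some associative operator algebra generated by $A^{**}$ inside $B(\mathcal{H})$ for a suitable $\mathcal{H}$; the same $\mathcal{H}$ can be used for the right action by a commutant/interpolation argument, forcing the left multiplications and right multiplications to live in mutually commuting copies of operators on $\mathcal{H}$, whence the associativity $a(xb)=(ax)b$ is immediate from associativity of composition of operators. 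Finally, restricting back from $A^{**} \times X^{**} \to X^{**}$ to $A \times X \to X$ (and using that $A \subset A^{**}$, $X \subset X^{**}$ weak* densely and the maps are the canonical normal extensions) gives $a(xb) = (ax)b$ for all $a,b \in A$, $x \in X$, as desired.
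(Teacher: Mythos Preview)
Your approach has a genuine gap at the crucial step. You propose to ``factor the left and right actions through ${\rm oa}_{\rm max}$ or $C^*_{\rm max}$ of the unitization to obtain genuine associative bimodule actions of a unital operator algebra $\mathcal{D} \supset A^{**}$ on $X^{**}$,'' and later to extend the left action to ``a completely contractive left module action of some associative operator algebra generated by $A^{**}$.'' But nothing in the hypotheses gives you this. You are handed only completely contractive bilinear maps with no multiplicativity whatsoever --- not even $a_1(a_2 x) = (a_1 \circ a_2)x$ --- so there is no mechanism by which the action of a product $a_1 a_2$ in an associative envelope would equal the composition of the individual actions. Your reference to 3.1.5 in \cite{BLM} concerns operator modules, which have associativity built into the definition; here that is exactly what is absent. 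The ``commutant/interpolation argument'' you invoke to get left and right actions onto the same Hilbert space in commuting positions is likewise not justified, and this is the heart of the matter.

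The paper's proof bypasses all of this in two lines by appealing to the \emph{oplication} theorem, \cite[Theorem 4.6.2]{BLM}: a nondegenerate completely contractive bilinear map $A \times X \to X$ (no algebra structure on $A$ is used, only the operator space structure and the cai) automatically factors as $a \cdot x = \theta(a)(x)$ for a linear complete contraction $\theta : A \to \mathcal{M}_\ell(X)$, and similarly the right action factors through $\pi : A \to \mathcal{M}_r(X)$. The abstract left and right multiplier algebras $\mathcal{M}_\ell(X)$ and $\mathcal{M}_r(X)$ of an operator space always commute inside $CB(X)$ (see 4.5.6 in \cite{BLM}), so $(ax)b = \pi(b)\theta(a)x = \theta(a)\pi(b)x = a(xb)$ immediately. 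The point you were reaching for --- that left and right ``multiplications'' live in commuting algebras --- is exactly what the multiplier algebra formalism provides for free, without ever needing to promote the Jordan data to associative module data.
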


\begin{proof}   The `actions'
 are oplications in the sense of \cite[Theorem 4.6.2]{BLM},
and by that theorem there are linear complete contractions
$\theta : A \to {\mathcal M}_l(X)$ and $\pi : A \to {\mathcal M}_r(X)$ such that $a x
= \theta(a)(x)$ and $x b = \pi(b) x$ for all $a,b \in A, x \in X$.
Since left and right multipliers commute (see 4.5.6 in \cite{BLM}), $(ax) b = a(xb)$ for such
$a,b, x$.  
 \end{proof}

\begin{remark}  The last proof shows that all `nondegenerate' Jordan operator prebimodules occur
via linear complete contractions
$\theta : A \to {\mathcal M}_l(X)$ and $\pi : A \to {\mathcal M}_r(X)$ such that $a x
= \theta(a)(x)$ and $x b = \pi(b) x$ for all $a,b \in A, x \in X$.
This should lead to a good theory of `dual Jordan operator bimodules'
similar to e.g.\ p.\ 183 in \cite{BLM}.
\end{remark}
\medskip

The following is a Jordan algebra version of the Cohen factorization
theorem:

\begin{theorem} \label{coh}  If $A$ is an
approximately unital Jordan operator algebra,
and if $X$ is a nondegenerate Jordan Banach $A$-module
(resp.\ $A$-bimodule), and if $b \in X$ then 
there exists an element $b_0 \in
X$ and an element $a \in {\mathfrak F}_A$ with $b = a b_0$ (resp.\ $b = a b_0 a$).
Moreover if $\Vert b \Vert < 1$ then $b_0$ and $a$ may be chosen
of norm $< 1$.  Also, $b_0$ may be chosen 
to be in the closure of $\{ a b  : a \in A \}$ (resp.\  $\{ a b a : a \in A \}$).   
\end{theorem}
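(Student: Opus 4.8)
The plan is to mimic the classical Cohen factorization argument (as in, e.g., the associative operator algebra case treated in the literature), adapting each step to the Jordan setting and keeping careful track of where the Jordan product differs from an associative one. The key observation is that although $X$ is only a Jordan $A$-module, we may pass to the unitization $A^1$, which is an associative-flavored object in the sense that $A^1$ sits inside a $C^*$-algebra $C = C^*_{\mathrm{max}}(A^1)$, and the module axioms for a Jordan Banach $A$-module guarantee that products of elements of $A^1$ which happen to land back in $A^1$ act as expected. So I would first fix a partial cai $(e_t)$ for $A$ in $\frac{1}{2}\mathfrak{F}_A$ (available by Theorem \ref{kapl}), and note that $X$ becomes a nondegenerate Jordan Banach $A^1$-(bi)module as in the discussion preceding Proposition \ref{ob}.

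First I would handle the left-module case. Fix $b \in X$ with $\Vert b \Vert < 1$, fix $0 < \epsilon < 1$, and choose a scalar sequence and a sequence $(u_n)$ from $\frac{1}{2}\mathfrak{F}_A$ (coming from the cai) so that the standard Cohen construction produces elements $a_n = $ (a convex-type combination built from $1$ and the $u_k$'s) lying in $\mathfrak{F}_{A^1}$, with $a_n$ invertible in $C$ and $a_n^{-1}$ controlled, and so that the sequence $a_n^{-1} b$ is Cauchy in $X$. Here the point is that the invertibility computation and the norm estimate $\Vert a_n^{-1}\Vert \le$ (geometric-type bound) take place in the commutative operator algebra generated by the relevant elements and by $1$; since the Jordan algebra generated by a single element and $1$ is associative (as recalled in Subsection \ref{MeyerRP}), and since all the $a_n$ are built as polynomials in finitely many mutually compatible elements, one can arrange them to lie in a single commutative operator algebra where the classical estimates apply verbatim. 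Setting $a = \lim_n a_n \in \mathfrak{F}_A$ (the limit lands in $A$, not just $A^1$, because the correction terms vanish in the limit, exactly as in the usual proof) and $b_0 = \lim_n a_n^{-1} b$, the module axiom $a_n(a_n^{-1} b) = (a_n a_n^{-1}) b = b$ — valid because $a_n a_n^{-1} = 1 \in A^1$ and $X$ is a Jordan Banach $A^1$-module, so the three-term associativity condition applies — passes to the limit to give $a b_0 = b$. The norm bounds $\Vert a \Vert < 1$, $\Vert b_0 \Vert < 1$ come along for the ride, and $b_0$ lies in the closed span of $\{ a' b : a' \in A\}$ since each $a_n^{-1} b$ does (expand $a_n^{-1}$ as a convergent series in $A^1$ and absorb the scalar term using nondegeneracy, $b = \lim_t e_t b$).

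For the bimodule case the target is $b = a b_0 a$. Here I would run essentially the same construction symmetrically: build a single sequence $(a_n)$ in $\mathfrak{F}_{A^1}$, invertible in $C$, and consider $b_n = a_n^{-1} b a_n^{-1}$ (which makes sense using the left and right actions and the prebimodule compatibility $a(xb) = (ax)b$); show $(b_n)$ is Cauchy and that the limits $a = \lim a_n \in \mathfrak{F}_A$, $b_0 = \lim b_n$ satisfy $a b_0 a = b$ via $a_n (a_n^{-1} b a_n^{-1}) a_n = b$, again justified by the Jordan-module axioms once one checks that $a_n \cdot a_n^{-1} = 1$ and $a_n^{-1}\cdot a_n = 1$ in $A^1$ so the relevant triple and higher products collapse correctly. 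The main obstacle — and the only place real care is needed — is exactly this bookkeeping: in a genuine Jordan module the identity $(a_1 a_2 \cdots a_m)x = a_1(a_2(\cdots(a_m x)))$ is only assumed when the associative product $a_1 \cdots a_m$ already lies in $A^1$, so I must ensure that every intermediate product appearing in the Cohen iteration and in the final collapse $a_n a_n^{-1} = 1$, $a_n a_n^{-1} b a_n^{-1} a_n = b$, etc., is of that special form (or factors through such forms), and that the commuting-multiplier structure from Proposition \ref{ob}/its remark is invoked to legitimize rearrangements of left and right actions. Once that is in place, the analytic core is identical to the classical Cohen theorem and needs no further comment.
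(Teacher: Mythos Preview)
Your plan is essentially the paper's: build $a_n = \sum_{k=1}^n 2^{-k} f_k + 2^{-n} \in {\mathfrak F}_{A^1}$ from a cai $(f_k) \subset \frac{1}{2}{\mathfrak F}_A$, invert via the Neumann lemma, set $x_n = a_n^{-1} b$ (resp.\ $a_n^{-1} b a_n^{-1}$), show $(x_n)$ is Cauchy, and pass to the limit. So the overall shape is right.

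Two points of care, though. First, your claim that ``all the $a_n$ \ldots lie in a single commutative operator algebra'' is false in general: the cai elements $f_k$ need not commute with one another, so ${\rm oa}(1, f_1, \ldots, f_n)$ is typically noncommutative. What you actually need (and what the paper uses) is much less: each $a_n$ individually satisfies $\| 1 - a_n \| \le 1 - 2^{-n} < 1$, so $a_n^{-1} = \sum_j (1-a_n)^j$ exists in the commutative algebra ${\rm oa}(1, a_n) \subset A^1$ with $\| a_n^{-1} \| \le 2^n$. No joint commutativity is invoked.

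Second, and more importantly, you correctly flag the bookkeeping with the Jordan module axiom, but you should know \emph{why} it succeeds rather than leave it as a hope. The Cauchy estimate hinges on the identity $a_{n+1}^{-1} - a_n^{-1} = 2^{-(n+1)} a_n^{-1}(1 - f_{n+1}) a_{n+1}^{-1}$ (computed in the containing associative $C^*$-algebra). A priori the triple product on the right need not lie in $A^1$, so the module axiom $(abc)x = a(b(cx))$ could fail to apply. The saving observation is that the left-hand side $a_{n+1}^{-1} - a_n^{-1}$ \emph{is} in $A^1$, so the triple product equals $2^{n+1}(a_{n+1}^{-1} - a_n^{-1}) \in A^1$ after all, and the axiom is legitimate. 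This is precisely the ``$m=3$'' case the paper singles out, and it is the crux of the Jordan adaptation; your write-up should make this explicit rather than defer it.
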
 \begin{proof}  We follow the usual 
Cohen method as in the proof of e.g.\ 4.4 and 4.8  of 
\cite{BOZ}.     
Suppose that $b \in X$ with $\Vert b \Vert < 1$.
Given any $\varepsilon>0$, let $a_0=1$. Choose $f_1 \in \frac{1}{2}{\mathfrak F}_A$ from 
the cai such that 
$$\Vert (b a_0^{-1}) (1-f_1)\Vert+\Vert(1-f_1) (a_0^{-1} b) \Vert< 2^{-2}\varepsilon.$$ 
Let $a_1=2^{-1}f_1+2^{-1},$ then $a_1\in {\mathfrak F}_{A^1}$.
By the Neumann lemma $a_1$ is invertible in oa$(1,a_1)$,
and has inverse in $A^1$ with
 $\Vert a_1^{-1}\Vert\leq 2.$ Similarly, choose  $f_2\in \frac{1}{2}{\mathfrak F}_A$ such that 
$$\Vert ( b a_1^{-1}) (1-f_2)\Vert+\Vert(1-f_2)(a_1^{-1} b) \Vert< 2^{-4}\varepsilon.$$
By induction, let $a_n=\sum_{k=1}^n \,  2^{-k}f_k+2^{-n}$.
We have $$\Vert 1-a_n\Vert=\Vert\sum_{k=1}^n2^{-k}(1-f_k)\Vert\leq \sum_{k=1}^n 2^{-k}=1-2^{-n}.$$
By the Neumann lemma $a_n$ is invertible in oa$(1,a_n)$,
and has inverse in $A^1$ with 
$\Vert a_n^{-1}\Vert\leq2^n.$   
 Choose $f_{n+1}\in \frac{1}{2}{\mathfrak F}_A$ such that 
$$\Vert ( b  a_n^{-1}) (1-f_{n+1})\Vert+\Vert(1-f_{n+1}) (a_n^{-1} b) \Vert< 2^{-2(n+1)}\varepsilon.$$
Note that  $a_{n+1}^{-1} - a_n^{-1} = a_n^{-1} (a_n - a_{n+1}) a_{n+1}^{-1}
= 2^{-n-1} a_n^{-1} (1-f_{n+1})  a_{n+1}^{-1}$, and similarly  $a_{n+1}^{-1} - a_n^{-1} = 2^{-n-1} a_{n+1}^{-1} (1-f_{n+1})  a_{n}^{-1}$.
Set $x_n=a_n^{-1}b$   (resp.\ $x_n=a_n^{-1}ba_n^{-1}$).   We continue in the bimodule case, the left module 
case is similar but easier.   We have  
\begin{align*} x_{n+1}-x_n &= a_{n+1}^{-1}b a_{n+1}^{-1}-a_n^{-1}ba_n^{-1} 
= a_{n+1}^{-1}b (a_{n+1}^{-1}-a_n^{-1}) + (a_{n+1}^{-1} - a_n^{-1}) b a_n^{-1}   \\
&= 2^{-n-1} (a_{n+1}^{-1}b (a_{n}^{-1} (1-f_{n+1})  a_{n+1}^{-1}) + (a_{n+1}^{-1} (1-f_{n+1})  a_{n}^{-1}) ba_n^{-1} ) . \end{align*} 
Because of the relations $x (abc) = ((xa)b)c$ and $(abc) x = a(b(cx))$,  $\Vert x_{n+1}-x_n\Vert$ is dominated by 
\begin{align*}
 & 2^{-n-1} (\Vert a_{n+1}^{-1} \Vert \Vert (b  a_n^{-1})  (1-f_{n+1}) \Vert \Vert  a_{n+1}^{-1}
\Vert + \Vert a_{n+1}^{-1}   \Vert \Vert (1-f_{n+1})  (a_{n}^{-1} b)
\Vert \Vert a_{n}^{-1} \Vert) \\
& \leq 2^{-n-1} (\Vert a_{n+1}^{-1} \Vert^2  + \Vert a_{n+1}^{-1} \Vert \Vert a_{n}^{-1} \Vert) 2^{-2(n+1)}\varepsilon
 \\
& \leq   2^{-3(n+1)} (2^{2(n+1)} + 2^{2n+1})  \varepsilon < 2^{-n}\varepsilon. \end{align*} 

Therefore, $\{x_n\}$ is a Cauchy sequence in $X$. Let $b_0=\lim_n x_n$ and $a=\sum_{k=1}^{+\infty} 2^{-k}f_k,$ then $a\in \frac{1}{2}{\mathfrak F}_A$. Hence,
$b=ab_0a$ since $b=a_n x_n a_n$ and $a_n\to a$ and $x_n\to b_0.$ 
Also, 
\begin{align*}
	\Vert x_n-b\Vert\leq \sum_{k=1}^n\Vert x_k-x_{k-1}\Vert\leq 2\varepsilon,
\end{align*}
so that $\Vert b-b_0\Vert\leq 2\varepsilon.$   Thus $\Vert b_0 \Vert \leq \Vert b \Vert + 2\varepsilon$, 
and this is $< 1$ if $2\varepsilon < 1 - \Vert b \Vert $.
Choose some $t>1$ such that $\Vert tb\Vert<1$. By the argument above, there exists $a\in \frac{1}{2} {\mathfrak F}_A$ and $b_0\in B$ of norm $< 1$ such that $tb=ab_0a$.  Let $a'=\frac{a}{\sqrt{t}}$, then $b=a'b_0a'.$ Then  $\Vert a'\Vert <1$ and $\Vert b_0\Vert<1.$
\end{proof}

\begin{corollary} If $A$ is an 
approximately unital Jordan operator subalgebra of a $C^*$-algebra $B$, 
and if $B$ is generated as a $C^*$-algebra by $A$, then if $b \in B$ 
there exists an element $b_0 \in
B$ and an element $a \in {\mathfrak F}_A$ with $b = a b_0 a$.
Moreover if $\Vert b \Vert < 1$ then $b_0$ and $a$ may be chosen
of norm $< 1$.  Also, $b_0$ may be chosen 
to be in the closure of $\{ a b a : a \in A \}$.
\end{corollary}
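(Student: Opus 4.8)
The plan is to derive this directly from Theorem \ref{coh} by recognizing $B$ as a nondegenerate Jordan operator (in particular Jordan Banach) $A$-bimodule. First I would fix a partial cai $(e_t)$ for $A$. Since by hypothesis $B$ is generated as a $C^*$-algebra by $A$, we have $B = C^*_B(A)$, and so Lemma \ref{jcai} tells us that $(e_t)$ is a cai for $B$; this is the one place where the hypothesis that $A$ generates $B$ genuinely enters. Next I would equip $B$ with the two module actions obtained by restricting the multiplication of $B$ to $A \times B$ and to $B \times A$. These are completely contractive bilinear maps, and the nondegeneracy requirements $e_t x \to x$ and $x e_t \to x$ for $x \in B$ are exactly the statement that $(e_t)$ is a cai for $B$, just established.

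Because the product in $B$ is associative, all the module and bimodule identities hold automatically: $a(xb) = (ax)b$ for $a, b \in A$ and $x \in B$ (or one may invoke Proposition \ref{ob}), and $(a_1 a_2 \cdots a_m) x = a_1(a_2(\cdots (a_m x)\cdots))$ together with its right-handed analogue for $a_1, \dots, a_m \in A^1$ with $a_1 a_2 \cdots a_m \in A^1$, where the natural unital $A^1$-action on $B$ is the one discussed before Theorem \ref{coh}. Hence $B$ is a nondegenerate Jordan Banach $A$-bimodule. Applying Theorem \ref{coh} in the bimodule case with $X = B$ and the given $b \in B$ then produces $a \in {\mathfrak F}_A$ and $b_0 \in B$ with $b = a b_0 a$; the same theorem supplies the norm estimates $\Vert a \Vert < 1$ and $\Vert b_0 \Vert < 1$ when $\Vert b \Vert < 1$, and the assertion that $b_0$ may be taken in the closure of $\{ a b a : a \in A \}$.

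I do not anticipate a genuine obstacle here: once the identification of $B$ as a nondegenerate Jordan Banach $A$-bimodule is in place, the corollary is a pure specialization of Theorem \ref{coh}. The only subtlety worth flagging in writing is making sure the nondegeneracy and the $A^1$-module axioms are verified against the precise definitions given before Theorem \ref{coh}, and that the appeal to Lemma \ref{jcai} is legitimate, i.e.\ that $A$ is approximately unital and $B$ is exactly the $C^*$-algebra it generates.
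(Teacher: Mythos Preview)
Your proposal is correct and is exactly the approach the paper takes; the paper's own proof is simply the one-line assertion that this follows immediately from Theorem~\ref{coh}, and what you have written spells out the verification that $B$ is a nondegenerate Jordan Banach $A$-bimodule (via Lemma~\ref{jcai} for nondegeneracy and associativity of the $B$-product for the module axioms) needed to invoke that theorem.
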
   

\begin{proof}  This follows immediately from the Cohen type Theorem \ref{coh}  above.  \end{proof}

There is a similar one-sided result
using the one-sided version of our Cohen factorization result.

 If $A$ is a Jordan operator subalgebra of a $C^*$-algebra $B$ then we say that  a net $(e_t)$ in 
Ball$(A)$  is 
a {\em left $B$-partial cai} for $A$ if  $e_t a \to a$   for all $a \in A$.  Here we are using the usual
product on $B$, which may not give an element in $A$, and may depend on $B$.
We then can factor any $b \in C^*_B(A)$ as $a b_0$ for $a, b_0$ as above,
using the one-sided Cohen factorization result above. 
We remark that by a modification of the proof of
 Lemma \ref{jcai} and Theorem \ref{kapl} one can show that 
the following are equivalent:
\begin{itemize} \item [(i)] $A$ has a left $B$-partial cai.
 \item [(ii)]   $A^{**}$ has a left identity $p$ of norm 1 with respect to  the usual  product
in $B^{**}$.    
\item [(iii)]   $A$ has a left $B$-partial cai in $\frac{1}{2} {\mathfrak F}_A$.
\end{itemize}  If these 
hold then $p$ is an open projection in $B^{**}$ in the sense 
of Akemann {\rm \cite{Ake2}} (that is, is the weak* limit of an increasing net in $B$).

\subsection{Jordan representations}   Following the (associative) operator algebra case we have:

\begin{lemma}\label{2deg}
Let $A$ be an approximately unital Jordan operator algebra and let  $\pi : A \to B(H)$ be a 
contractive Hilbert space Jordan  representation.  We let $P$ be the projection onto $K = [\pi(A)H]$. Then
$\pi(e_t)\to P$ in the weak* (and WOT) topology of $B(H)$ for any J-cai $(e_t)$ for $A$.   Moreover, for
$a \in A$ we have $\pi(a) \; = \; P\pi(a)P$, and the compression of $\pi$ to $K$ is
a contractive Hilbert space Jordan  representation.    Also, if $(e_t)$ is a partial cai for $A$, then 
 $\pi(e_t) \pi(a) \to \pi(a)$ and   $\pi(a) \pi(e_t)  \to \pi(a)$.
In particular, $\pi(e_t)_{|K} \to I_K$ SOT in $B(K)$.    
\end{lemma}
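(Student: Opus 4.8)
The plan is to imitate the proof of the associative case (cf.\ the analogous result in \cite{BLM}), the new wrinkle being that a Jordan representation respects only the Jordan product, so the one-sided identities $P\pi(a)=\pi(a)$ and $\pi(e_t)\pi(a)\to\pi(a)$ must be extracted from their Jordan counterparts, in the first case by a $2\times 2$ matrix argument and in the second by passing to a generated associative algebra.

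First I would move to the bidual. Since $A$ is approximately unital, Lemma \ref{jcai} gives an identity $p$ of norm $1$ for the Jordan Arens product on $A^{**}$, and every J-cai $(e_t)$ converges weak* to $p$. By the bidual extension fact noted at the end of Subsection \ref{gf} (with $M=B(H)$), $\pi$ extends to a weak* continuous contractive Jordan homomorphism $\tilde\pi:A^{**}\to B(H)$. Put $Q=\tilde\pi(p)$; since $\tilde\pi$ preserves squares and $p^2=p$ we get $Q^2=Q$, so $Q$ is an orthogonal projection (being an idempotent of norm $\le 1$). Weak* continuity of $\tilde\pi$ then gives $\pi(e_t)=\tilde\pi(e_t)\to\tilde\pi(p)=Q$ weak*, and hence WOT since the net is bounded.

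The crux is to identify $Q$ with $P$. Applying $\tilde\pi$ to $p\circ a=a$ yields $Q\circ\pi(a)=\pi(a)$, i.e.\ $Q\pi(a)+\pi(a)Q=2\pi(a)$ for $a\in A$; writing $\pi(a)$ in its $2\times 2$ matrix form relative to $Q$ and comparing entries (as in the discussion preceding (\ref{qox})) forces $\pi(a)=Q\pi(a)Q=Q\pi(a)=\pi(a)Q$. Hence $\pi(A)H\subseteq QH$, so $P\le Q$; conversely each $\pi(e_t)$ has range in $K=[\pi(A)H]$, so $P\pi(e_t)=\pi(e_t)$, and passing to the weak* limit gives $PQ=Q$, whence $QP=(PQ)^{*}=Q$ (as $Q=Q^{*}$), which together with $QP=P$ forces $P=Q$. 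This proves $\pi(e_t)\to P$ weak*/WOT and $\pi(a)=P\pi(a)P$, so $\pi(A)$ lies in the corner $PB(H)P$; composing $\pi$ with the $*$-isomorphism $PB(H)P\to B(K)$, $x\mapsto x|_K$, shows that $a\mapsto\pi(a)|_K$ is a contractive Jordan homomorphism into $B(K)$.

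For the partial-cai clauses the obstruction is that $e_t a\notin A$ in general, so one gets only $\pi(e_t)\circ\pi(a)\to\pi(a)$ for free; to get the one-sided limits I would route through $C^*_{\rm max}(A)$. With $\rho:A\to C^*_{\rm max}(A)$ the canonical embedding, Lemma \ref{jcai} says that $(\rho(e_t))$ is a cai for $C^*_{\rm max}(A)$ whenever $(e_t)$ is a partial cai for $A$, and the universal property of $C^*_{\rm max}(A)$ furnishes a $*$-homomorphism $\theta$ with $\theta\circ\rho=\pi$; then $\pi(e_t)\pi(a)=\theta(\rho(e_t)\rho(a))\to\theta(\rho(a))=\pi(a)$ in norm, and similarly $\pi(a)\pi(e_t)\to\pi(a)$. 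Finally, for a vector $\zeta=\pi(a)\eta$ with $a\in A$ and $\eta\in H$ the previous line gives $\pi(e_t)\zeta\to\zeta$; such vectors span a dense subspace of $K$ and $\|\pi(e_t)\|\le 1$, so a routine $\varepsilon/3$ argument yields $\pi(e_t)|_K\to I_K$ SOT. I expect the identification $\tilde\pi(p)=P$ (the $2\times 2$ matrix step) to be the main obstacle, with the passage to $C^*_{\rm max}(A)$ the second place where associativity has to be recovered by hand.
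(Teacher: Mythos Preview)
Your proof is correct and follows essentially the same approach as the paper: extend $\pi$ to $\tilde\pi$ on $A^{**}$, identify $\tilde\pi(p)$ with the projection onto $[\pi(A)H]$ via the (\ref{qox}) matrix argument, and handle the partial-cai assertions by passing through $C^*_{\rm max}(A)$. The only cosmetic differences are that the paper obtains the inequality $\tilde\pi(p)\le P$ by an orthogonality argument rather than your $P\pi(e_t)=\pi(e_t)$ trick, and does not spell out the $\varepsilon/3$ step for SOT convergence.
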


\begin{proof}
Let $A$ be an approximately unital Jordan operator algebra and let  $\pi : A \to B(H)$ be a 
contractive Hilbert space Jordan  representation.   If $(e_t)$ is a J-cai for $A$ then by the proof
of Lemma \ref{jcai} we have that $e_t \to p$ weak*, where $p$ is an identity for $A^{**}$.
The canonical weak*
continuous extension $\tilde{\pi} : A^{**} \to B(H)$ takes $p$ to a projection $P$ on $H$,
and $\pi(e_t) \to P$ WOT.    Note that $$\tilde{\pi}(p) \pi(a) + \pi(a)
\tilde{\pi}(p) = P \pi(a) + \pi(a) P = 2 \pi(a), \qquad a \in A, $$
so that as in the proof of  Lemma \ref{jcai} we have $P \pi(a) = \pi(a) P =  \pi(a)$ for $a \in A$.
We have $\pi(e_t) \pi(a) \to \tilde{\pi}(p) \pi(a) = \pi(a)$ WOT.     If $Q$ is
the projection onto $[\pi(A)H]$ it follows that $P Q = Q$, so $Q \leq P$.
If $\eta \perp \pi(A)H$ then $0 = \langle \pi(e_t) \zeta , \eta \rangle
\to  \langle P  \zeta , \eta \rangle$, so that $\eta \perp P(H)$.   Hence $P(H)
\subset [\pi(A)H]$ and so $P \leq Q$ and $P = Q$.     It is now evident that the compression of $\pi$ to $[\pi(A)H]$ is
a contractive Hilbert space Jordan  representation. 

Suppose that $\rho : A \to C^*_{{\rm max}}(A)$ is the canonical map.
If $(e_t)$ is a partial cai for $A$, then  $\rho(e_t) \rho(a) \to \rho(a)$ and $\rho(a) \rho(e_t) \to \rho(a)$.
If $\theta : D  = C^*_{\rm max}(A) \to B(H_\pi)$ is the $*$-homomorphism with
$\theta \circ \rho = \pi$ then $$\pi(e_t) \pi(a) =
\theta(\rho(e_t) \rho(a)) \to \theta(\rho(a)) = \pi(a).$$
Similarly $\pi(a) \pi(e_t)  \to \pi(a)$.
\end{proof}

Define a {\em nondegenerate Jordan representation} of a 
  Jordan operator algebra on $H$ to be a contractive Hilbert space Jordan  representation $\pi : A \to B(H)$ 
such that $\pi(A)H$ is dense in $H$.   By the last result the canonical weak*
continuous extension $\tilde{\pi} : A^{**} \to B(H)$ is unital iff $\pi$ is nondegenerate.  Indeed 
$\tilde{\pi}(1) = I_H$ iff $\pi(e_t) \to I_H$ weak* in $B(H)$ for any partial cai $(e_t)$ of 
$A$, that is
iff $\pi(e_t) \to I_H$ WOT.  

 Let $H, K, \pi$ be as in Lemma \ref{2deg}. If we regard $B(K)$ as a
subalgebra of $B(H)$ in the natural way (by identifying any $T$ in $B(K)$ with the map
$T\oplus 0$ in $B(K\oplus K^{\perp}) =B(H)$), then the
Jordan   homomorphism $\pi$ is valued in $B(K)$. Note that  $\pi$ is
nondegenerate when regarded as valued in $B(K)$, since $\pi(e_t) \pi(a) \to  \pi(a)$ WOT.  
As in the (associative) operator algebra case \cite{BLM},
this yields a
principle whereby to reduce a possibly degenerate Jordan  homomorphism to
a nondegenerate one.  One corollary is that for any approximately unital Jordan operator algebra
$A$, there exist a
Hilbert space $H$ and a nondegenerate completely isometric
Jordan homomorphism $\pi\colon A\to B(H)$.

As an application, we may see using Lemma \ref{jcai}
 that if $B$ is a $C^*$-cover of an
approximately unital Jordan operator algebra $A$,
and if $\pi\colon B\to B(H)\,$ is a
$*$-representation, then $\pi$ is nondegenerate if and only if its
restriction $\pi_{\vert A}$ is nondegenerate.  
For if $\pi$ is nondegenerate then $\pi(e_t) \to I$ WOT
where $(e_t)$ is a partial cai for $A$, since then $(e_t)$ is a
cai for $B$.

\subsection{Approximate identities and functionals}  \label{aifnl}  

Following \cite[Proposition 2.1.18]{BLM} we have:

\begin{lemma} \label{extunfu}  Let $A$ be an approximately unital Jordan operator algebra with a  partial cai $(e_t)$.
Denote the identity of $A^1$ by $1$
\begin{enumerate}
\item [(1)] If $\psi: A^1 \rightarrow \Cdb$ is a functional on $A^1$, then $\lim_t \psi(e_t)=\psi(1)$ if and only if $\|\psi\|=\|\psi_{\vert_A}\|$.

\item [(2)] Let $\varphi: A\rightarrow \Cdb$ be any functional on $A$. Then $\varphi$ uniquely extends to a functional on $A^1$ of the same norm.
\end{enumerate}
\end{lemma}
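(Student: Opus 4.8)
The plan is to prove (1) first and deduce (2) from it. For (2): given $\varphi$ on $A$, take a Hahn--Banach extension $\psi$ to $A^1$ with $\|\psi\| = \|\varphi\|$; then $\|\psi\| = \|\psi_{\vert A}\|$, so (1) gives $\psi(1) = \lim_t \psi(e_t) = \lim_t \varphi(e_t)$. The right-hand side depends only on $\varphi$, and by Lemma \ref{jcai} not even on the choice of partial cai; since a linear functional on $A^1 = A + \Cdb 1$ is determined by its restriction to $A$ and its value at $1$, the norm-preserving extension is therefore unique. So everything reduces to (1).

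For (1) the key structural input is Theorem \ref{kapl}: $A$ is an $M$-ideal in $A^1$, i.e.\ $A^{**}$ is an $M$-summand of $(A^1)^{**}$, the $M$-projection being $x \mapsto xe$ where $e$ is the identity of $A^{**}$ (Lemma \ref{jcai}), viewed as a central projection in $(A^1)^{**}$. Arguing as in the operator algebra case \cite[Proposition 2.1.18]{BLM} via standard $M$-ideal duality, one obtains an isometric $\ell^1$-splitting $(A^1)^* = W \oplus_1 A^\perp$, where $W$ is the range of the projection $P$ on $(A^1)^*$ given by $(P\psi)(x) = \tilde{\psi}(xe)$ (with $\tilde{\psi}$ the weak* continuous extension of $\psi$ to $(A^1)^{**}$), and where restriction to $A$ is an isometric isomorphism of $W$ onto $A^*$. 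Given $\psi \in (A^1)^*$, write $\psi = \psi_1 + \psi_2$ with $\psi_1 \in W$ and $\psi_2 \in A^\perp$, so that $\|\psi\| = \|\psi_1\| + \|\psi_2\|$; since $\psi_2 \in A^\perp$ we have $(\psi_1)_{\vert A} = \psi_{\vert A}$, and since $\psi_1 \in W$ we have $\|\psi_1\| = \|\psi_{\vert A}\|$. Hence $\|\psi\| = \|\psi_{\vert A}\|$ iff $\psi_2 = 0$, iff $\psi \in W$, iff $\tilde{\psi}(1-e) = 0$ (using that $1-e$ is a central projection with complementary summand $A^{**}$, together with weak* density of $A^1$ in $(A^1)^{**}$), iff $\psi(1) = \tilde{\psi}(e)$. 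Finally, any partial cai $(e_t)$ for $A$ converges weak* to $e$ by Lemma \ref{jcai}, so $\tilde{\psi}(e) = \lim_t \psi(e_t)$, and chaining the equivalences gives (1).

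It is worth recording that the forward direction of (1) also has a soft proof not using $M$-ideals: if $\lim_t \psi(e_t) = \psi(1)$ then $\psi(a + \lambda 1) = \lim_t \psi(a + \lambda e_t)$ for $a \in A, \lambda \in \Cdb$, and, representing $A^1$ in a unital $C^*$-algebra $B$ with the same unit (so that $(e_t)$ is a cai for $C^*_B(A) \supseteq A$ by Lemma \ref{jcai}), $\|a + \lambda e_t\| \leq \|(a + \lambda 1)e_t\| + \|a - a e_t\| \leq \|a + \lambda 1\| + \|a - a e_t\| \to \|a + \lambda 1\|$, whence $\|\psi\| \leq \|\psi_{\vert A}\|$ and so equality. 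The main obstacle is the converse: $\|\psi\| = \|\psi_{\vert A}\| \Rightarrow \psi(1) = \lim_t \psi(e_t)$. This is essentially a uniqueness-of-norm-preserving-extension statement and genuinely seems to need the $M$-ideal structure of $A$ in $A^1$ --- hence Theorem \ref{kapl} and the approximate-identity machinery behind it --- after which only the routine $M$-ideal bookkeeping above remains.
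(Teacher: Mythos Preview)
Your proof is correct, but the route differs from the paper's. For the hard (converse) direction of (1) the paper does \emph{not} invoke the $M$-ideal structure at all: instead it extends $\psi$ to a state-like functional on $C^*(A^1)$, writes this as a vector functional $\langle \pi(\cdot)\xi,\eta\rangle$ for a unital $*$-representation $\pi$, and then shows via a Cauchy--Schwarz argument that $\|\psi_{\vert A}\|=1$ forces $\eta$ into the cyclic subspace $[\pi(A)\xi]$; Lemma \ref{2deg} then gives $\psi(e_t)=\langle\pi(e_t)\xi,\eta\rangle\to\langle\xi,\eta\rangle=\psi(1)$. Your $M$-ideal approach is cleaner and more conceptual once Theorem \ref{kapl} is in hand, exploiting that the complementary summand $(1-e)(A^1)^{**}$ is one-dimensional so that vanishing at $1-e$ already characterizes membership in the $L$-summand $W$; the paper's representation-theoretic argument, on the other hand, is more self-contained and avoids appealing to general $M$-ideal duality. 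For the forward direction both proofs are essentially the same soft estimate; the paper uses $a\circ e_t+\lambda e_t$ and Proposition \ref{ununau}, while you use $ae_t+\lambda e_t$ in the ambient $C^*$-algebra---either works. Your derivation of (2) from (1) is fine and matches how the paper finishes (2) after reducing to the convergence of $\psi(e_t)$. One small correction: your remark that the converse ``genuinely seems to need the $M$-ideal structure'' is too strong---the paper's GNS-type argument gives an independent proof.
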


\begin{proof}
(1) \ Suppose that $\psi: A^1 \rightarrow \Cdb$ satisfies  $\lim_t \psi(e_t)=\psi(1).$ For any $a\in A$ and $\lambda\in \Cdb$, we have
 $\lim_t\psi(a  \circ e_t +\lambda e_t)=\psi(a+\lambda1)$, and so 
$$|\psi(a+\lambda1)|\leq \Vert\psi_{\vert_A}\Vert \lim_t\Vert a \circ e_t+\lambda e_t \Vert\leq \Vert\psi_{\vert_A}\Vert \sup_t\Vert
a \circ e_t+\lambda e_t \Vert =  \|\psi_{\vert_A}\| \|a+\lambda1\|.$$ 
(We have used Proposition \ref{ununau}.)   Hence
 $\|\psi\|=\|\psi_{\vert_A}\|.$   

Conversely, suppose that $\|\psi\|=\|\psi_{\vert_A}\|$, which we
may assume to be $1$.  We may extend $\psi$ to $C^*(A^1)$,
and then there exists a unital $*$-representation $\pi :
C^*(A^1) \to B(H)$ and vectors $\xi,\eta \in {\rm Ball}(H)$ with
$\psi(x)=\langle \pi(x)\xi,\eta\rangle$ for any $x\in A^1.$
Let $K=[\pi(A)\xi]$, and let $p$ be the projection onto $K$. For any $a\in A$, we have $\langle\pi(a)\xi,\eta\rangle=\langle p\pi(a)\xi,\eta\rangle$, and so $$|\langle \pi(a)\xi,\eta\rangle|=|\langle\pi(a)\xi,p\eta\rangle|\leq \|\pi(a)\xi\|\|p\eta\|\leq \|a\|\|p\eta\|.$$
This implies that $1 = \|\psi_{\vert_A}\|\leq \|p\eta\|$, so that 
$\eta \in K$.   By Lemma \ref{2deg} we have that $(\pi(e_t))$ 
converges WOT to the projection onto $[\pi(A) H]$, and so 
$$\psi(e_t)=\langle \pi(e_t)\xi,\eta\rangle \rightarrow \langle \xi, p \eta\rangle =\langle \xi, \eta \rangle =\psi(1).$$
 
(2) \ If $\varphi \in A^*$ then similarly to the above
there exists a nondegenerate $*$-representation $\pi :
C^*(A) \to B(H)$ and vectors $\xi,\eta \in {\rm Ball}(H)$ with
$\psi(x)=\langle \pi(x)\xi,\eta\rangle$ for any $x\in A$.
We have $\psi(e_t) = \langle \pi(e_t) \xi,\eta\rangle
\to \langle \xi,\eta\rangle$.   We may now finish as in
the proof of \cite[Proposition 2.1.18 (2)]{BLM}.  \end{proof}

Define a state on an approximately unital Jordan operator algebra to be a functional 
satisfying the conditions in the next result.

\begin{lemma} \label{eqsta}
For a norm $1$ functional $\varphi$ on an approximately unital Jordan operator algebra $A$, the following are equivalent:
\begin{enumerate}
	\item [(1)] $\varphi$ extends to a state on $A^1$.
	\item [(2)]  
	$\varphi(e_t)\rightarrow 1$ for every partial cai for $A$.
	\item [(3)]  
	$\varphi(e_t)\rightarrow 1$ for some partial cai for $A$.
	\item [(4)]  
	$\varphi(e)=1$ where $e$ is the identity of $A^{**}$.
	\item [(5)] 
	$\varphi(e_t)\rightarrow 1$ for every Jordan cai for $A$.
	\item [(6)] 
	$\varphi(e_t)\rightarrow 1$ for some Jordan cai for $A$.
	\end{enumerate}	
\end{lemma}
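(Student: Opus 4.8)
The plan is to prove the cyclic chain of implications $(1)\Rightarrow(2)\Rightarrow(3)\Rightarrow(4)\Rightarrow(1)$, and then fold in $(5)$ and $(6)$ by observing that every partial cai is a Jordan cai (so $(2)\Rightarrow(5)$ is free after reindexing the quantifiers, and $(6)\Rightarrow(3)$ follows from Lemma \ref{jcai}'s equivalence of the two notions of cai at the level of existence, together with the weak* convergence of any J-cai to $e$). Concretely: $(3)\Rightarrow(4)$ is immediate since any partial cai converges weak* to $e$ (Lemma \ref{jcai}), so $\varphi(e_t)\to\varphi(e)$; hence a limit of $1$ forces $\varphi(e)=1$. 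Similarly $(6)\Rightarrow(4)$, since by Lemma \ref{jcai} every J-cai also converges weak* to $e$. And $(4)\Rightarrow(5)$ and $(4)\Rightarrow(2)$ the same way: weak* convergence $e_t\to e$ of any (Jordan or partial) cai gives $\varphi(e_t)\to\varphi(e)=1$. Trivially $(2)\Rightarrow(3)$, $(5)\Rightarrow(6)$, and $(2)\Rightarrow(6)$ (a partial cai is a J-cai), so the only substantive remaining links are between $(1)$ and $(4)$.

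For $(1)\Rightarrow(4)$: if $\varphi$ extends to a state $\tilde\varphi$ on $A^1$, then $\tilde\varphi$ extends further to a weak* continuous state $\psi$ on $(A^1)^{**}$, and $\psi(1)=1$ where $1$ is the identity of $(A^1)^{**}$. Now $e$, the identity of $A^{**}$, is a projection in $(A^1)^{**}$ with $1-e$ also a projection; since $\psi$ is a state on the unital $C^*$-algebra containing $(A^1)^{**}$ (or just by Cauchy–Schwarz for the positive functional $\psi$), $|\psi(ea)|^2\le\psi(e)\psi(a^*a)$ etc., and $\psi(1-e)\ge 0$ together with $\psi(1)=1$ forces $\psi(e)=\psi(1)=1$ provided we know $\psi$ is genuinely positive on the containing $C^*$-algebra. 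The cleanest route: extend the state $\tilde\varphi$ on $A^1$ to a state $\hat\varphi$ on $C^*(A^1)$ (a $C^*$-algebra cover of $A^1$ with common unit), pass to the weak* continuous extension on $C^*(A^1)^{**}$, which is a state on a von Neumann algebra; then $e\le 1$ and $\hat\varphi(1)=1$, and since $\hat\varphi(1-e)\ge0$ and $\le\hat\varphi(1)=1$ we'd only get $\hat\varphi(e)\le1$ — to get equality we use that $\varphi=\tilde\varphi|_A$ has norm $1$, so for $a_t=e_t\in\tfrac12{\mathfrak F}_A$ converging weak* to $e$ we have $1=\|\varphi\|=\lim|\varphi(e_t)|$ along a subnet, hence $|\hat\varphi(e)|=1=\hat\varphi(1)$, which by the equality case of Cauchy–Schwarz in the GNS representation forces $\hat\varphi(e)=1$, i.e.\ $\varphi(e)=1$. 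Alternatively, and perhaps more transparently, invoke Lemma \ref{extunfu}(1): $\varphi$ has a norm-$1$ extension to $A^1$ (namely $\psi:=\tilde\varphi$) with $\|\psi\|=\|\psi|_A\|=1$, so by that lemma $\lim_t\psi(e_t)=\psi(1)$; but $e_t\to e$ weak* and $\psi$ is weak* continuous on $A^{**}\subset(A^1)^{**}$ after biduality, giving $\psi(e)=\psi(1)=1$, hence $\varphi(e)=1$.

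For $(4)\Rightarrow(1)$: given $\varphi(e)=1$ with $\|\varphi\|=1$, define $\psi$ on $A^1=A\oplus\Cdb1$ by $\psi(a+\lambda1)=\varphi(a)+\lambda$. We must check $\|\psi\|=1$, i.e.\ $\psi$ is a state on $A^1$. Extend $\varphi$ first to its weak* continuous version on $A^{**}$; then $\psi$ extends weak* continuously to $(A^1)^{**}=A^{**}\oplus\Cdb(1-e)$ by $\psi(\eta+\mu(1-e))=\tilde\varphi(\eta)+\mu$, which on the diagonal projection $e$ of $(A^1)^{**}$ gives $\psi(e)=\tilde\varphi(e)=\varphi(e)=1=\psi(1)$. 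So $\psi(1-e)=0$; since $1-e$ is a projection and $\psi(e)=1=\|\varphi\|$, a Cauchy–Schwarz / $M$-ideal argument (using that $A^{**}$ is an $M$-ideal in $(A^1)^{**}$, Theorem \ref{kapl}) shows $\|\psi\|=\|\psi|_{A^{**}}\|=\|\varphi\|=1$: indeed for the $M$-projection $P:\eta+\mu(1-e)\mapsto\eta$, we have $\|\psi\|=\max\{\|\psi\circ P\|,\|\psi\circ(I-P)\|\}$ is the wrong decomposition, so instead argue that $\psi=\tilde\varphi\circ P+\mu_0\cdot((1-e)$-component$)$ with the $(1-e)$-component functional being $\psi|_{\Cdb(1-e)}$ of norm $|\psi(1-e)|/\|1-e\|=0$; hence $\|\psi\|\le\|\tilde\varphi\|+0=1$, and $\ge\|\psi|_A\|=\|\varphi\|=1$.

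\smallskip

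The main obstacle I anticipate is the equality-case bookkeeping in $(1)\Leftrightarrow(4)$ — namely getting $\psi(e)=\psi(1)$ (not merely $\le$) and then converting $\psi(e)=1=\|\psi|_A\|$ into $\|\psi\|=1$ on $A^1$. The $M$-ideal structure from Theorem \ref{kapl} is the right tool: $A$ is an $M$-ideal in $A^1$, so every functional on $A^1$ decomposes as an $\ell^1$-sum of its restriction to $A$ and its restriction to the complementary one-dimensional space, and $\|\psi\|=\|\psi|_A\|+|\psi(1)-\psi(\text{component in }A)|$-type identity makes the norm computation automatic once $\psi(1-e)=0$ is known. The weak* continuity of the bidual extension and the fact (Lemma \ref{jcai}) that \emph{every} partial or Jordan cai converges weak* to $e$ are what make all the "$(\cdot)\Rightarrow(4)$" directions one-liners, so I would organize the write-up around first recording $(3)\Leftrightarrow(4)\Leftrightarrow(6)$ and $(2)\Leftrightarrow(5)$ via that weak* convergence, then proving $(1)\Leftrightarrow(4)$ via Lemma \ref{extunfu}(1) and Theorem \ref{kapl}, and finally noting the trivial implications that close the cycle.
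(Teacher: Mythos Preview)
Your proposal is correct and follows essentially the same route as the paper: the weak* convergence of every partial or Jordan cai to $e$ (Lemma \ref{jcai}) collapses all of $(2)$--$(6)$ into equivalence with $(4)$, and Lemma \ref{extunfu}(1) handles $(1)\Rightarrow(2)$. The only genuine difference is in closing the loop at $(4)\Rightarrow(1)$: the paper (tersely) routes this through Lemma \ref{extunfu}(2), whereas you construct the extension $\psi(a+\lambda 1)=\varphi(a)+\lambda$ directly and compute $\|\psi\|=1$ from the $M$-ideal decomposition $(A^1)^{**}=A^{**}\oplus^\infty\Cdb(1-e)$ of Theorem \ref{kapl}, which is a perfectly good alternative and arguably more transparent. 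One caution: your first Cauchy--Schwarz attempt at $(1)\Rightarrow(4)$ does not work as written (you cannot deduce $\lim|\varphi(e_t)|=1$ from $\|\varphi\|=1$ alone), but you correctly recognize this and switch to the Lemma \ref{extunfu}(1) argument, so the final proof is sound --- just prune the abandoned sub-arguments when you write it up.
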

\begin{proof}  That  (1) $\Rightarrow$ (2) follows from Lemma \ref{extunfu}.
That (3) implies (4), (6) $\Rightarrow$ (1),
and (4) $\Leftrightarrow$ (5) \ follows from the last assertion of Lemma \ref{jcai},
 that any Jordan cai for $A$ converges to $1_{A^{**}}$.
Clearly, (2) implies (3), and  (5) implies (6).   \end{proof}

\begin{corollary} \label{injov}  Let $A$ be an approximately unital Jordan operator algebra.   Then any injective 
envelope of $A^1$ is an injective 
envelope of $A$.   Moreover this may be taken to be a unital
$C^*$-algebra $I(A)$  
containing $C^*_e(A)$ as a $C^*$-subalgebra, and hence containing 
$A$ as a Jordan subalgebra.   Finally,  $C^*_e(A)$ is the ternary envelope ${\mathcal T}(A)$
of $A$ in the sense of {\rm \cite{Ham}} or 
{\rm \cite[Section 8.3]{BLM}}.  \end{corollary}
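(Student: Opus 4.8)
The plan is to treat the three assertions in turn, citing the associative template for the first and giving a direct argument for the last two.

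\emph{First assertion.} Since $A^1$ is a unital operator algebra, $I(A^1)$ is (completely isometrically) a unital $C^*$-algebra, say with unit $1$, by e.g.\ \cite[Corollary 4.2.8]{BLM}, and it contains $A \subseteq A^1$ completely isometrically as a Jordan subalgebra. To see that $I(A^1)$ is an injective envelope of $A$ one shows, exactly as in the operator algebra case (cf.\ the discussion around \cite[Corollary 4.3.6]{BLM}), that the inclusion $A \subseteq I(A^1)$ is rigid: any completely contractive $\Phi \colon I(A^1) \to I(A^1)$ with $\Phi|_A = \mathrm{id}_A$ is the identity. By the rigidity of $I(A^1)$ over $A^1$ it is enough to check that $\Phi(1) = 1$, for then $\Phi$ fixes $A^1 = A + \Cdb 1$; this last point is where approximate unitality of $A$ enters, via a partial cai $(e_t)$ for $A$ (which may be taken in $\frac12 {\mathfrak F}_A$ by Theorem \ref{kapl}, and which is a cai for the generated $C^*$-algebra by Lemma \ref{jcai}) together with the uniqueness of the unitization (Proposition \ref{ununau} and Corollary \ref{MeyerJ}). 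The argument here is word-for-word that of the associative case; I expect this $\Phi(1)=1$ step to be the only part requiring real care.

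\emph{The ``moreover''.} Granting the first assertion, $I(A) = I(A^1)$ may be taken to be the unital $C^*$-algebra above. It is standard that $C^*_e(A^1)$ sits inside $I(A^1)$ as a $C^*$-subalgebra (e.g.\ \cite[Theorem 4.3.1]{BLM}), and by definition $C^*_e(A)$ is the $C^*$-subalgebra of $C^*_e(A^1)$ generated by $A$; hence $A \subseteq C^*_e(A) \subseteq C^*_e(A^1) \subseteq I(A^1) = I(A)$, with $C^*_e(A)$ a $C^*$-subalgebra and $A$ a Jordan subalgebra of $I(A)$.

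\emph{Last assertion.} By definition (see \cite{Ham} or \cite[Section 8.3]{BLM}), $\mathcal{T}(A)$ is the sub-TRO of $I(A)$ generated by $A$; recall the triple product on a TRO of operators is $\{x,y,z\} = xy^*z$. Since $C^*_e(A)$ is a $C^*$-subalgebra of $I(A) = I(A^1)$ containing $A$, and a $C^*$-algebra is in particular a TRO, we get $\mathcal{T}(A) \subseteq C^*_e(A)$. For the reverse inclusion, let $(e_t)$ be a partial cai for $A$; by Lemma \ref{jcai} it is a cai for the $C^*$-algebra $C^*_e(A)$, so $e_t x \to x$ and $x e_t \to x$ in norm for every $x \in C^*_e(A)$. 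First, for $a \in A$ we have $a^* \in C^*_e(A)$, so $e_t a^* e_t \to a^*$ in norm; since $e_t a^* e_t = \{e_t, a, e_t\} \in \mathcal{T}(A)$, we get $a^* \in \mathcal{T}(A)$. Consequently, for $a \in A$ and $x,z \in \mathcal{T}(A)$, both $xaz = \{x, a^*, z\}$ and $xa^*z = \{x,a,z\}$ lie in $\mathcal{T}(A)$. It follows that $\mathcal{T}(A)$ is closed under multiplication: for $x,z \in \mathcal{T}(A) \subseteq C^*_e(A)$ we have $xz = \lim_t x e_t z$ in norm, and $x e_t z \in \mathcal{T}(A)$ by the previous sentence (taking $a = e_t$). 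Thus $\mathcal{T}(A)$ is a norm-closed subalgebra of $C^*_e(A)$ containing $A$ and $A^*$, hence containing every finite product of elements of $A \cup A^*$, hence containing their closed linear span, which is $C^*_e(A)$. Therefore $\mathcal{T}(A) = C^*_e(A)$. The main obstacle is the $\Phi(1)=1$ point above; once that is in hand the other two assertions are routine given the Jordan machinery, in particular the fact (Lemma \ref{jcai}) that a partial cai for $A$ is a genuine cai of the generated $C^*$-algebra.
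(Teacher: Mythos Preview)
Your proposal is correct and follows essentially the same outline as the paper. Two minor points of comparison are worth noting.

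For the first assertion, the paper is equally brief but pins down the precise substitution: it says the proof is ``just as in \cite[Corollary 4.2.8 (1) and (2)]{BLM}, but appealing to Lemma \ref{extunfu} above in place of the reference to 2.1.18 there.'' In other words, the $\Phi(1)=1$ step you correctly flag is handled via the unique norm-preserving functional extension property of $A \subset A^1$ (Lemma \ref{extunfu}), which is exactly the Jordan analogue of the BLM ingredient. Your citations of the partial cai and uniqueness of unitization are in the right neighborhood (indeed Lemma \ref{extunfu} is proved using the partial cai), but if you want to match the paper's precision you should cite Lemma \ref{extunfu} directly.

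For the last assertion your route is slightly different from the paper's and arguably cleaner. The paper argues that generators of $C^*_e(A)$, namely alternating products in $A$ and $A^*$, can be approximated by TRO-words $a_1 a_2^* a_3 \cdots a_{2n}^* a_{2n+1}$ by padding with the cai on the outside (invoking \cite[Lemma 2.1.6]{BLM}). You instead show $A^* \subset \mathcal{T}(A)$ via $e_t a^* e_t \to a^*$ and then that $\mathcal{T}(A)$ is multiplicatively closed via $xz = \lim_t x e_t z$, concluding $\mathcal{T}(A)$ is already a $C^*$-subalgebra containing $A$. Both arguments use the same engine (the partial cai is a genuine cai for $C^*_e(A)$, Lemma \ref{jcai}); yours packages it a bit more structurally.
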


\begin{proof}
This follows just as in \cite[Corollary 4.2.8 (1) and (2)]{BLM},
but appealing to Lemma \ref{extunfu} above in place of the
reference to 2.1.18 there.   Note that
$I(A^1)$ may be taken to be a unital
$C^*$-algebra containing $C^*_e(A^1)$ 
 as a unital $C^*$-subalgebra.   The proof of Proposition \ref{3auce} 
shows that the inclusion $A^1 \to  C^*_e(A^1)$ is a Jordan morphism,
so $I(A^1)$ contains  $C^*_e(A)$ as a $C^*$-subalgebra, 
and $A$ as a Jordan subalgebra.     For the last part note that products in $I(A)$ of the form 
$a_1 a_2^* a_3 \cdots a_{2n}^* a_{2n+1}$ for $a \in A$, lie in $C^*_e(A)$.
Conversely any finite product of alternating terms  in $A$ and $A^*$ 
may be viewed as a limit of such products beginning and ending with a term in $A$,
using  a partial cai  $(e_t)$ for $A$ and  \cite[Lemma 2.1.6]{BLM}.  So $C^*_e(A) = {\mathcal T}(A)$.
\end{proof}

It follows as in the introduction to \cite{BRord}  that states on $A$ are also the norm 1 functionals that extend to a 
state on any containing $C^*$-algebra generated by $A$.  

It follows from facts in Section 
\ref{MeyerRP} that for any Jordan operator algebra $A$,  $x \in {\mathfrak r}_A$  iff ${\rm Re}(\varphi(x)) \geq 0$ for all
states $\varphi$ of $A^1$.   Indeed, such $\varphi$
extend to states on $C^*(A^1)$.     

\subsection{Multiplier algebras}

Let $A$ be an approximately unital Jordan operator algebra and let $(C^*_e(A),j)$ be its
$C^*$-envelope.   Let $i : A \to B$ be a completely isometric Jordan morphism into a $C^*$-algebra.
Suppose that $a, b \in A$ and that $i(a) i(b) \in i(A)$.    If $\theta : C^*_B(i(A)) \to C^*_e(A)$ is 
the $*$-homomorphism coming from the universal property then $j(a) j(b) = \theta(i(a) i(b))  \in \theta(i(A)) = j(A)$,
and $$j^{-1}(j(a) j(b)) = j^{-1}(\theta(i(a) i(b))) = i^{-1}(i(a) i(b)).$$  
This shows that the `product' in $B$ of elements in $A$, if it falls in $A$,
 matches the product in $C^*_e(A)$.   
With this in mind we define the left multiplier algebra $LM(A)$ to be the set 
$\{\eta \in A^{**} : \eta A\subset A\},$  where the
product here is the one in $C^*_e(A)^{**}$.  We will soon see that this is in fact an (associative) algebra.
 We define the right multiplier algebra  $RM(A)$ and multiplier algebra $M(A)$ analogously.  If $A$ is unital then
these algebras are contained in $A$.

\begin{lemma} \label{pin}   Let $A$ be an approximately unital Jordan operator algebra.
If $p$ is a projection in $LM(A)$ then $p \in M(A)$.   More generally, the diagonal $\Delta(LM(A)) \subset M(A)$.   
\end{lemma}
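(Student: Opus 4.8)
The plan is to deduce the first assertion from the second: a projection $p \in LM(A)$ is an orthogonal projection in the von Neumann algebra $C^*_e(A)^{**}$, hence $p^* = p \in LM(A)$, so $p \in LM(A)\cap LM(A)^* = \Delta(LM(A))$. Thus it suffices to show that every $\eta \in \Delta(LM(A))$ satisfies $A\eta \subseteq A$; together with $\eta \in LM(A)$ this gives $\eta \in LM(A)\cap RM(A) = M(A)$.

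I would work inside the von Neumann algebra $B^{**}$ where $B = C^*_e(A)$, so that $A \subseteq B \subseteq B^{**}$, with $A^{**}$ the weak*-closure of $A$ and all products and adjoints unambiguous; at the outset one records that $LM(A)$ and the products used in its definition are independent of the particular $C^*$-cover, so this is legitimate. Fix a partial cai $(e_t)$ for $A$; by Lemma \ref{jcai} it is a cai for $B$. The first step is to show that $\eta$ is a two-sided multiplier of $B$. Since $\eta A \subseteq A$ and $\overline{AB} = B$ (because $e_t b \to b$ for $b \in B$), we get $\eta B = \eta\,\overline{AB} \subseteq \overline{(\eta A)B} \subseteq \overline{AB} = B$, and similarly $\eta^* B \subseteq B$; taking adjoints in the latter gives $B\eta = (\eta^* B)^* \subseteq B^* = B$. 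In particular $a\eta \in B$ for each $a \in A$, which is what will make a norm estimate available below.

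The core step: fix $a \in A$ and set $b_t = \eta e_t$. Since $\eta \in LM(A)$ and $e_t \in A$, we have $b_t \in A$, hence $a\circ b_t = \frac{1}{2}(ab_t + b_t a) \in A$ because $A$ is a Jordan subalgebra of $B$. Now $ab_t = (a\eta)e_t \to a\eta$ in norm (using $a\eta \in B$ and that $(e_t)$ is a cai for $B$), while $b_t a = \eta(e_t a) \to \eta a$ in norm (since $e_t a \to a$ and $\eta$ is bounded). Hence $ab_t + b_t a \to a\eta + \eta a$ in norm, and since $A$ is norm closed, $a\eta + \eta a \in A$. As $\eta a \in A$, it follows that $a\eta \in A$, as desired.

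The step I expect to be the main obstacle, and the reason a naive argument fails, is securing genuine norm convergence rather than mere weak* convergence. Using only $e_t \to 1_{A^{**}}$ weak* one obtains $a\eta + \eta a \in A^{**}$, which is already known (it equals $2a\circ\eta$) and therefore useless. The device that fixes this is establishing $a\eta \in C^*_e(A)$ first (via $\eta^* A \subseteq A$ and $\overline{AB} = B$, passing to adjoints), since that is exactly what upgrades $(a\eta)e_t \to a\eta$ to a norm limit; the remaining manipulations are routine Jordan-product bookkeeping.
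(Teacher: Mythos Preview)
Your proof is correct and shares the paper's key idea: both arguments hinge on first showing $a\eta \in C^*_e(A)$ by exploiting $\eta^* \in LM(A)$ (you via $B\eta = (\eta^* B)^* \subseteq B$, the paper via $ap = \lim_t a(p^* e_t)^* \in C^*_e(A)$). The only difference is in the closing step: the paper observes $ap = 2a\circ p - pa \in A^{\perp\perp}$ and intersects, using $A^{\perp\perp} \cap C^*_e(A) = A$, whereas you use $a\eta \in C^*_e(A)$ to upgrade $ab_t + b_t a \to a\eta + \eta a$ to norm convergence and land directly in $A$; this is a minor packaging difference rather than a genuinely different route.
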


\begin{proof}   See  \cite[Lemma 5.1]{BHN} for the operator algebra case. 
Let $(e_t)$ be a partial cai  for $A$. 
In $C^*_e(A)^{**}$ we have 
by \cite[Lemma 2.1.6]{BLM} that
 $$ap = \lim_t a e_t^* p = \lim_t \, a (p^*e_t)^*  \in C^*_e(A),  \qquad a \in A.$$  Also $a \circ p \in A^{\perp \perp}$ so 
$ap = 2 a \circ p - pa \in A^{\perp \perp}$.
So $ap \in A^{\perp \perp} \cap C^*_e(A) = A$, and hence $p \in M(A)$.   The same proof works if 
$p \in \Delta(LM(A))$.   \end{proof}

 \begin{theorem} \label{tlma}
Let $A$ be an approximately unital Jordan operator algebra and let
$B = C^*_e(A)$, with $A$ considered as a Jordan  subalgebra. 
Then $LM(A) = \{\eta\in B^{**} :  \eta A\subset A\}$.   This is  
completely isometrically isomorphic to the (associative) operator algebra ${\mathcal M}_\ell(A)$ of
operator space left multipliers of $A$
in the sense of e.g.\ {\rm \cite[Section 4.5]{BLM}}, and is completely isometrically isomorphic to 
a unital subalgebra of $CB(A)$.   Also, $\Vert T \Vert_{\rm cb} = \Vert T \Vert$ for 
$T \in LM(A)$ thought of as an operator on $A$.   Finally, for any nondegenerate completely isometric Jordan representation $\pi$ of $A$ on a Hilbert space $H$, the algebra $\{T\in B(H): T\pi(A)\subset \pi(A)\}$ is completely isometrically isomorphic to 
a unital subalgebra of $LM(A)$, and this isomorphism maps onto  $LM(A)$ if $\pi$ is a faithful nondegenerate  $*$-representation of $B$ (or a nondegenerate completely isometric  representation of ${\rm oa}_e(A)$). 
 \end{theorem}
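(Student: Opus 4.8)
The plan is to mimic the operator-algebra proof of the analogous statement (e.g.\ \cite[Theorem 5.2]{BHN} or the corresponding result in \cite[Section 4.5]{BLM}), pulling everything through the $C^*$-envelope $B = C^*_e(A)$, which by Proposition \ref{3auce} and Corollary \ref{injov} is available with all its universal properties, and whose second dual $B^{**}$ is the von Neumann algebra in which all the relevant products are computed. First I would establish $LM(A) = \{\eta \in B^{**} : \eta A \subset A\}$: this is really a matter of checking that the product of an element of $A^{**}$ with an element of $A$, computed in $B^{**}$, does not depend on which $C^*$-cover one uses, which is precisely the normalization observation made in the paragraph just before the definition of $LM(A)$ (using the $*$-homomorphism $\theta$ from any cover onto $C^*_e(A)$). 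Having fixed the ambient von Neumann algebra, I would next invoke the operator-space multiplier machinery: by \cite[Section 4.5]{BLM}, ${\mathcal M}_\ell(A)$ consists of those $T \in CB(A)$ for which the associated map is completely contractive in the appropriate matricial/Paulsen sense, and one knows abstractly that ${\mathcal M}_\ell(A)$ is a unital operator algebra with $\|T\|_{\rm cb} = \|T\|$ for all $T$; so the content is to identify ${\mathcal M}_\ell(A)$ with $\{\eta \in B^{**} : \eta A \subset A\}$.

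The identification proceeds in two directions. For $\eta \in B^{**}$ with $\eta A \subset A$, left multiplication $a \mapsto \eta a$ is completely bounded on $A$ (indeed a complete contraction after rescaling, by the Haagerup-type estimate / the $3 \times 3$ matrix trick used for ordinary operator-algebra multipliers), hence lies in ${\mathcal M}_\ell(A)$; the point worth a sentence is that although $A$ is only a Jordan subalgebra, the estimate takes place in the associative $C^*$-algebra $B$, so nothing new is needed. Conversely, given $T \in {\mathcal M}_\ell(A)$, the general theory (cf.\ \cite[4.5.2, 4.5.5]{BLM}, or the argument with oplications in Proposition \ref{ob}) produces $\eta$ in the left multiplier algebra of $A$ as an operator space, sitting inside $B^{**}$, with $Ta = \eta a$; since $Ta \in A$ we get $\eta A \subset A$. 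The equality $\|T\|_{\rm cb} = \|T\|$ and the fact that this is a complete isometry onto a unital subalgebra of $CB(A)$ then come for free from the operator-space multiplier theory. I should remark here that this already shows $LM(A)$ is an associative algebra, as promised earlier in the subsection. One subtlety specific to the Jordan setting, which Lemma \ref{pin} was set up to handle, is the interplay between one-sided and two-sided multipliers; I will keep \eqref{qox} and the $ap = 2a\circ p - pa$ device in reserve in case the passage between $\eta A \subset A$ and membership in $A^{\perp\perp}$ needs it, but I expect the bulk to go through verbatim from the associative case.

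For the final assertion, let $\pi : A \to B(H)$ be a nondegenerate completely isometric Jordan representation. The algebra $\mathcal{L} := \{T \in B(H) : T\pi(A) \subset \pi(A)\}$ maps into $LM(A)$ as follows: given $T \in \mathcal{L}$, the map $a \mapsto \pi^{-1}(T\pi(a))$ is in ${\mathcal M}_\ell(A) = LM(A)$ by the part already proved, and nondegeneracy of $\pi$ (via Lemma \ref{2deg}, so that $\pi(e_t) \to I_H$ WOT for a partial cai $(e_t)$) makes this map injective and unital, and completely isometric by the standard computation $\|[T_{ij}]\| = \sup \|[T_{ij}\pi(e_t)]\|$. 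When $\pi$ extends to a faithful nondegenerate $*$-representation of $B$ (equivalently a nondegenerate completely isometric representation of ${\rm oa}_e(A)$), one identifies $B(H) \supset B$ faithfully and $B^{**}$ sits inside $B(H)$ as the weak* closure, so every $\eta \in B^{**}$ with $\eta A \subset A$ is realized by an operator $T = \eta|_H \in \mathcal{L}$, giving surjectivity; again one must check this $T$ actually lands back in $B(H)$ and satisfies $T\pi(A) \subset \pi(A)$, which is immediate from $\eta A \subset A$ and weak* density. I expect the main obstacle to be bookkeeping rather than a genuine difficulty: namely being careful that all products — in $B^{**}$, in $C^*_{\rm max}(A)^{**}$, in $B(H)$ — are consistently the associative ones and that the Jordan-vs-associative identifications from Lemma \ref{jcai} and the pre-definition normalization paragraph are invoked at each transition; the genuinely Jordan-specific input is confined to those earlier lemmas, which we are entitled to assume.
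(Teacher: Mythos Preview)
Your overall strategy matches the paper's proof closely: both route through $B = C^*_e(A)$ and the injective-envelope realization of ${\mathcal M}_\ell(A)$, use the partial cai to pass between $A^{**}$, $B^{**}$, and $CB(A)$, and handle the representation part by transporting through $\tilde{\pi}$. Two places, however, need tightening.

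First, your account of $LM(A) = \{\eta \in B^{**} : \eta A \subset A\}$ is not quite right. The issue is not cover-independence of products (that was settled before the definition); rather, one must show that $\eta \in B^{**}$ with $\eta A \subset A$ actually lies in $A^{**}$. The paper does this in one line: $\eta e_t \in A$ for a partial cai $(e_t)$, and since $(e_t)$ is a cai for $B$ (Lemma \ref{jcai}) we have $\eta e_t \to \eta$ weak*, so $\eta \in A^{\perp\perp} = A^{**}$. Your paragraph as written does not supply this step.

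Second, and more substantively, in the surjectivity argument you write ``$B^{**}$ sits inside $B(H)$ as the weak* closure''. This is false for a general faithful nondegenerate $*$-representation $\pi$ of $B$: the weak* closure $\pi(B)''$ is $\tilde{\pi}(B^{**})$, which need not be injective on $B^{**}$. The paper avoids this by arguing in the other direction: given $T \in B(H)$ with $T\pi(A) \subset \pi(A)$, one uses the cai trick $T\pi(a)^* = \lim_t T\pi(e_t)\pi(a)^*$ (exactly the device from the ${\mathcal M}_\ell(A)$ paragraph) to get $T\pi(B) \subset \pi(B)$, hence $T \in LM(B)$; then the standard $C^*$-identification $LM(B) \cong \{\eta \in B^{**} : \eta B \subset B\}$ puts $T$ back into $\{\eta \in B^{**} : \eta A \subset A\} = LM(A)$. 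Your idea of ``restricting $\eta$ to $H$'' can be repaired along these lines, but as stated it relies on a false embedding.
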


\begin{proof}  
Obviously $LM(A) \subset \{\eta \in B^{\ast\ast} : \eta A\subset A\}.$  
Conversely, if $\eta$ is in the latter set then $\eta e_t\in A,$ where $(e_t)$ is partial cai for $A.$ Hence $\eta\in A^{\ast\ast}$,
since by Lemma \ref{jcai}  $(e_t)$ is a cai for $B$.   So $LM(A) = \{\eta\in B^{**} :  \eta A\subset A\}$.

  Recall from Corollary \ref{injov} that
$I(A)$ is a unital
$C^*$-algebra.   It follows from 4.4.13 and the 
proof of Theorem 4.5.5 in \cite{BLM}
that ${\mathcal M}_\ell(A)$ is completely
isometrically isomorphic to
$\{ T \in I(A) : T j(A) \subset j(A) \}$. 
Note that                                        
$$T j(a)^* = \lim_t T j(e_t) j(a)^* \in j(A) j(A)^* \subset C^*_e(A).$$
Hence $T \in LM(C^*_e(A))$, and we may view     
${\mathcal M}_\ell(A)$ as    
$\{ T \in LM(C^*_e(A)) : T j(A) \subset j(A) \}$.    
If $\eta \in C^*_e(A)^{**}$ and     
$\eta j(A) \subset j(A)$ then as in the last centered formula     
and the line after it, we have $\eta \in LM(C^*_e(A))$.   
So ${\mathcal M}_\ell(A) \cong
\{ \eta \in C^*_e(A)^{**} : T j(A) \subset j(A) \}$.     
Thus from the last paragraph
$LM(A) \cong {\mathcal M}_\ell(A)$.    
We remark that this may also be deduced from e.g.\ 8.4.1 in \cite{BLM}. It also follows that for any $u \in {\mathcal M}_\ell(A)$,
${\rm w^*lim}_t \, u(e_t)$ exists in $A^{**}$, and equals $\sigma(u)$ where 
$\sigma : {\mathcal M}_\ell(A) \to LM(A)$ is the isomorphism above.

The canonical map $L : LM(A) \to CB(A)$ is a completely contractive homomorphism.   On the other hand 
for $[\eta_{ij} ] \in M_n(LM(A))$  we have $$\Vert L(\eta_{ij}) ] \Vert_{M_n(CB(A))}
\geq \Vert [ \eta_{ij} e_t ] \Vert .$$    It follows by Alaoglu's theorem that in the weak* limit with $t$, 
$\Vert [ \eta_{ij} ] \Vert \leq \Vert L(\eta_{ij}) ] \Vert_{M_n(CB(A))}$.
Thus  $LM(A)$ is completely isometrically isomorphic to 
a unital subalgebra of $CB(A)$.     Note that $$\| [\eta_{ij} a_{kl} ] \|
= \lim_t \| [\eta_{ij} e_t  a_{kl} ] \| \leq \sup_t \| [\eta_{ij} e_t ] \|, \qquad
 [ a_{kl} ] \in {\rm Ball}(M_m(A)),$$ so the last supremum equals the cb norm 
of $[\eta_{ij}]$ thought of as an element of $M_n(CB(A))$.

Let $LM(\pi) = \{T \in B(H): T\pi(A)\subset \pi(A)\}$.   There is 
a canonical complete contraction $LM(\pi) \to {\mathcal M}_\ell(A)$.
Composing this with the map $\sigma : {\mathcal M}_\ell(A) \to LM(A)$ above gives a
homomorphism $\nu : LM(\pi) \to LM(A)$.    
The canonical weak* continuous extension 
$\tilde{\pi} : A^{**} \to B(H)$ is a completely contractive Jordan homomorphism,
and $$\tilde{\pi} ( \nu(T)) = {\rm w^*lim}_t \pi(\pi^{-1}(T \pi(e_{t}))) = T,  \qquad T \in LM(\pi),$$
by the  nondegeneracy of $\pi.$
It follows that $\nu$ is completely isometric.

If $\pi$ is a faithful nondegenerate  $*$-representation of $B$ or a nondegenerate completely isometric  representation of ${\rm oa}_e(A)$, and $T \in B(H)$ with 
$T \pi(A) \subset \pi(A)$ then as in the second paragraph of the proof above we have $T \pi(B) \subset \pi(B)$ or 
$T \pi({\rm oa}_e(A)) \subset \pi({\rm oa}_e(A))$.   Thus  in the first case we may identify 
$LM(\pi)$ with $\{ \eta \in B^{**} : \eta A \subset A \}$, which we saw above was $LM(A)$.
A similar argument works  in the second case. 
\end{proof}

\begin{definition} \label{defjma} If $A$ is an
approximately unital  Jordan operator algebra, the {\em Jordan multiplier algebra} of $A$ is
$$JM(A)=\{\eta\in A^{**}: \eta a+a\eta\in A, \forall a\in A\}.$$
This is a unital Jordan operator algebra in which $A$ is
an approximately unital Jordan ideal, follows by using the identity (\ref{abc})  
in the obvious computation). 
 \end{definition}

\begin{remark} \label{adefjma} Presumably there is also a variant of this definition
in terms of operators in $B(H)$, if $A \subset B(H)$ nondegenerately.
\end{remark}

Note that $A = JM(A)$ if $A$ is unital.
If a projection $p \in A^{**}$ is in 
$JM(A)$ then $p A p \subset A$.   This follows from the identity (\ref{aba}).
      Of course $M(A) \subset JM(A)$.  

 \section{Hereditary subalgebras, ideals, and open projections} 

\subsection{Hereditary subalgebras and open projections}  \label{HSA} 
Through this section $A$ is a Jordan operator algebra (possibly not approximately unital). Then $A^{**}$ is a Jordan operator algebra. We say that a projection in $A^{**}$ is {\em open} in $A^{**}$, or 
$A$-{\em open} for short,
if $p\in (pA^{**}p\cap A)^{\perp\perp}$.  That is, iff there is a net $(x_t)$ in $A$ with
$$x_t = p x_t p \to p \; \; {\rm weak}^* .$$
This is a derivative of Akemann's notion of open projections for $C^*$-algebras, 
a key part of his powerful variant of noncommutative topology (see e.g.\
\cite{Ake2,P}).    If $p$ is open in $A^{**}$ then clearly $$D=pA^{**}p\cap A =\{a\in A: a=pap \}$$ is a closed Jordan subalgebra of $A$, and  the Jordan subalgebra $D^{\perp \perp}$ of $A^{**}$ has identity $p$ (note $x_t \in D$).
By Lemma \ref{jcai} $D$ has a partial cai (even one in $\frac{1}{2} {\mathfrak F}_A$ by Theorem \ref{kapl}).    If $A$ is also
approximately unital then a projection $p$  in $A^{**}$ is $A$-closed if $p^\perp$ is $A$-open.  
 
We call such a Jordan subalgebra $D$ a {\em hereditary subalgebra} (or HSA for short) of $A$, and we say that $p$ is the {\em support projection} of $D$.  It follows from the above that  the support projection of a HSA is the weak* limit of any partial cai from the HSA.   
One consequence of this is that a projection in $A^{**}$ is open in $A^{**}$
iff it is open in $(A^1)^{**}$.

We remark that if $A$ is a JC*-algebra then the net $(x_t)$ above may be taken to be
positive in the definition of hereditary subalgebra, or of open projections, and in many of the results below one may provide `positivity proofs' as opposed to 
working with real positive elements.   We leave this case of the theory to the 
interested reader.
 
\begin{corollary} \label{iffsu}  For any Jordan operator algebra $A$, a projection $p \in A^{**}$ is  $A$-open iff
$p$ is the  support projection of a HSA in $A$.  \end{corollary}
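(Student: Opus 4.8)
The plan is to unwind the definitions of this subsection, together with the uniqueness of the support projection already recorded just above the statement; there is no genuinely hard step, so this is essentially a bookkeeping result.

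For the forward implication I would argue as follows. Suppose $p \in A^{**}$ is $A$-open. Then, exactly as observed in the paragraph preceding the corollary, $D = pA^{**}p \cap A = \{ a \in A : a = pap \}$ is a closed Jordan subalgebra of $A$, its bidual $D^{\perp\perp}$ has identity $p$ (witnessed by the net $(x_t)$ appearing in the definition of openness), and by Lemma \ref{jcai} applied to $D$ this subalgebra has a partial cai. Hence $D$ is, by definition, a HSA of $A$ with support projection $p$, which is all that is required.

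For the converse I would start from a HSA $D$ of $A$ with support projection $p$. By the definition of HSA in this subsection, $D = qA^{**}q \cap A$ for some $A$-open projection $q$, and $D^{\perp\perp}$ has identity $q$. By the remark recorded before the statement the support projection of $D$ equals the weak* limit of any partial cai from $D$, and any such cai converges weak* to the identity of $D^{\perp\perp}$ by Lemma \ref{jcai}; so the support projection of $D$ is $q$ itself, whence $p = q$ is $A$-open. This argument also takes care of the only point meriting attention, namely that ``the support projection of $D$'' is unambiguous: two $A$-open projections realizing $D$ as $qA^{**}q \cap A$ are both forced to be the identity of $D^{\perp\perp}$, hence equal. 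I do not anticipate any real obstacle; the entire content lies in invoking Lemma \ref{jcai} in the right place.
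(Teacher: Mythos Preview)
Your proposal is correct and matches the paper's own treatment: the corollary is stated without proof precisely because it follows immediately from the definitions and the remark preceding it, and your write-up simply spells out that unwinding. The one nontrivial point you flag (well-definedness of the support projection) is handled exactly as you indicate, via Lemma \ref{jcai}.
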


\begin{proposition}
\label{mpoc}
For any approximately unital Jordan operator algebra $A$, every
projection $p$ in the Jordan multiplier algebra $JM(A)$ 
(see {\rm \ref{defjma}}) is $A$-open and also $A$-closed.  
\end{proposition}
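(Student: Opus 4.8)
The plan is to establish the apparently stronger statement that \emph{every} projection $q$ in $JM(A)$ is $A$-open, and then to deduce both halves of the Proposition from it. First note that, $A$ being approximately unital, $A^{**}$ is unital; moreover $1_{A^{**}} \in JM(A)$, since $1_{A^{**}} \circ a = a \in A$ for all $a \in A$, and $JM(A)$ is a linear subspace of $A^{**}$ (it is even a unital Jordan operator algebra, by Definition \ref{defjma}). Hence $p^{\perp} := 1_{A^{**}} - p$ is again a projection lying in $JM(A)$. So, once we know every projection in $JM(A)$ is $A$-open, taking $q = p$ shows $p$ is $A$-open, and taking $q = p^{\perp}$ shows $p^{\perp}$ is $A$-open, which by definition says $p$ is $A$-closed.

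To prove the core claim, fix a projection $q \in JM(A)$ and a partial cai $(e_t)$ for $A$, and regard $A$ as a Jordan subalgebra of a $C^*$-algebra $B$ generated by $A$, so that $A^{**}$ sits (weak* homeomorphically) inside the von Neumann algebra $B^{**}$ and, by Lemma \ref{jcai}, the identity of $A^{**}$ is the identity of $B^{**}$. Set $x_t = q e_t q$, the product computed in $B^{**}$. I would then check two things about $x_t$. \emph{(i) $x_t \in A$ and $x_t = q x_t q$:} indeed $q x_t q = x_t$ is clear since $q$ is a projection, and by the identity (\ref{aba}) applied in the Jordan operator algebra $A^{**}$ we have $x_t = 2 (q \circ e_t) \circ q - q \circ e_t$; here $q \circ e_t \in A$ because $q \in JM(A)$ and $e_t \in A$, and then $(q \circ e_t) \circ q \in A$ for the same reason, so $x_t \in A$. (This is precisely the observation, recorded just before this Proposition, that $q A q \subset A$ for a projection $q \in JM(A)$, applied to the cai.) \emph{(ii) $x_t \to q$ weak*:} by Lemma \ref{jcai} (or Theorem \ref{kapl}) we have $e_t \to 1_{A^{**}} = 1_{B^{**}}$ weak*, and multiplication on $B^{**}$ is separately weak* continuous, so $x_t = q e_t q \to q \, 1_{B^{**}} \, q = q$ weak*. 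Putting (i) and (ii) together exhibits a net in $q A^{**} q \cap A$ converging weak* to $q$, i.e.\ $q \in (q A^{**} q \cap A)^{\perp\perp}$, which is the definition of $q$ being $A$-open.

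I do not expect a genuine obstacle here: the whole argument is the standard ``conjugate an approximate identity by the projection'' device, and its only non-formal inputs are the Jordan identity (\ref{aba}) --- needed to see that $q e_t q$ falls back inside $A$ --- and the separate weak* continuity of the product in a containing von Neumann algebra together with the convergence $e_t \to 1_{A^{**}}$. The one step worth stating explicitly, rather than an obstacle, is the remark in the first paragraph that $p^{\perp}$ again lies in $JM(A)$, which is immediate once one observes that $JM(A)$ is a unital linear subspace of $A^{**}$.
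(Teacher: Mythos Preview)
Your proof is correct and follows essentially the same approach as the paper: conjugate a partial cai by the projection, use identity (\ref{aba}) to see that $q e_t q \in A$, and invoke $e_t \to 1_{A^{**}}$ weak* together with separate weak* continuity of multiplication in $B^{**}$ to get $q e_t q \to q$. Your write-up is more explicit than the paper's (which simply notes $p e_t p \in A$ with weak* limit $p$, and that $p^{\perp} \in JM(A)$), but the underlying argument is identical.
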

\begin{proof}
Indeed, if $A$ is approximately unital and $(e_t)$ is a partial cai of $A$, then $pA^{**}p\cap A=pAp$ by Remark \ref{adefjma},
and $(pe_tp) \subset A$ has weak* limit $p$.  So $p$ is $A$-open. Similarly, $p^{\perp}$ is $A$-open since $p^{\perp}\in JM(A).$
\end{proof}

If $B$ is a $C^*$-algebra containing a Jordan operator algebra $A$
and $p$ in $A^{**}$ is  $A$-open then $p$ is 
open as a projection in $B^{**}$ (since it is the weak* limit of 
a net $(x_t)$ with $x_t = p x_t p$, see \cite{BHN}).
Unfortunately at the time of writing this paper we did not yet know if the converse of this,
Hay's theorem \cite{Hay}, was true 
for Jordan operator algebras $A$.  That is  we did not  know if 
 $p$ is
open in $B^{**}$, and 
$p \in A^{\perp \perp}$,  implied $p$ is  $A$-open.    Subsequently this obstacle was overcome and
will appear in \cite{BNj}.
This Jordan variant of Hay's theorem could be used to simplify the proofs of a few results in the present paper, namely one could in those places use the proofs from \cite{BHN,BRI, BRII}.
We have chosen to leave the original proofs since the proof of the Jordan variant of Hay's theorem is deep,
but mainly because the present proofs may be more relevant in categories that are further from $C^*$-algebras, 
such as certain classes of Banach algebras.
 
We recall a Jordan subalgebra $J$ of a Jordan operator algebra $A$ is an {\em inner ideal} (in the Jordan sense) if for any $b,c\in J$ and $a\in A$, then $bac+cab\in J$  (or equivalently, $bAb \subset J$ for all $b \in J$
as we said in the introduction where we defined HSA's).
  
\begin{proposition} \label{hsaii}
A subspace $D$ of a Jordan operator algebra $A$ is a HSA if and only if it is an approximately unital inner ideal in the Jordan sense.   In this case $D^{\perp\perp}=pA^{**}p$,  where $p$ is the support projection of the HSA.
Conversely if $p$ is a projection in $A^{**}$ and $E$ is a subspace of $A^{**}$ such  that $E^{\perp \perp} =
pA^{**}p$, then $E$ is a HSA and $p$ is its $A$-open support projection.
\end{proposition}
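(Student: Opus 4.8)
The plan is to exploit the already-established equivalence (Lemma \ref{jcai}) between having a J-cai, having a partial cai, and having an identity in the bidual, together with the characterization of HSA's in terms of open projections from the discussion preceding Corollary \ref{iffsu}. First I would prove the forward direction: if $D$ is a HSA of $A$ with support projection $p$, then by definition $D = pA^{**}p \cap A$ with $p$ $A$-open, $D$ is a closed Jordan subalgebra, and it carries a partial cai (indeed one in $\frac{1}{2}{\mathfrak F}_A$ by Theorem \ref{kapl}); so $D$ is approximately unital. For the inner ideal property, given $b, c \in D$ and $a \in A$ one has $b = pbp$, $c = pcp$, hence $bac + cab = p(bac+cab)p \in pA^{**}p$, and since $bac + cab \in A$ by (\ref{abc}) (as $b, c \in A$), we get $bac + cab \in pA^{**}p \cap A = D$. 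So $D$ is an approximately unital inner ideal. Along the way I would verify $D^{\perp\perp} = pA^{**}p$: the inclusion $\subseteq$ is clear since $a = pap$ for $a \in D$ passes to the weak* closure, and $\supseteq$ holds because $p$ being $A$-open means there is a net in $D$ converging weak* to $p$, so $p \in D^{\perp\perp}$, whence $pA^{**}p \subseteq D^{\perp\perp} p D^{\perp\perp} \subseteq D^{\perp\perp}$ using that $D^{\perp\perp}$ is a Jordan subalgebra of $A^{**}$ with identity $p$, together with (\ref{aba}) to get $pxp \in D^{\perp\perp}$ for $x \in A^{**}$.

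Next I would prove the converse direction. Suppose $D$ is an approximately unital inner ideal in $A$. Let $p$ be the weak* limit point of a partial cai $(e_t)$ for $D$; as in Lemma \ref{jcai}, $p$ is a projection in $A^{**}$ and is the identity of $D^{\perp\perp}$ (the Jordan Arens identity, which by Lemma \ref{jcai}(iv) coincides with the identity for the associative product in any containing $C^*$-bidual). I must show $D = pA^{**}p \cap A$ and that $p$ is $A$-open. Openness is immediate: $e_t = pe_tp \to p$ weak* with $e_t \in D \subseteq A$. For the equality, the inclusion $D \subseteq pA^{**}p \cap A$ follows because $e_t \circ a \to a$ for $a \in D$ forces (via (\ref{qox}), since $p$ is an identity for $D^{\perp\perp}$ and hence $pa = ap = a$ for $a \in D^{\perp\perp}$) that $a = pap$. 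For the reverse inclusion, take $a \in A$ with $a = pap$; then $a = \text{w*-}\lim_t \text{w*-}\lim_s (e_s a e_t)$ using $e_s \to p$, $e_t \to p$ weak* and separate weak* continuity of the Arens product, and each $e_s a e_t = \frac{1}{2}\bigl((e_s a) e_t + e_t (e_s a)\bigr) + \frac{1}{2}\bigl((e_s a)e_t - e_t(e_s a)\bigr)$ — this is where the inner ideal hypothesis enters: $e_s a e_t + e_t a e_s \in D$ since $e_s, e_t \in D$ and $a \in A$, but I need the one-sided product $e_s a e_t$ itself to lie in $D^{\perp\perp}$. I would handle this by the standard trick: $e_s a e_t \in D^{\perp\perp}$ because one can write it as a limit of $e_s e_u a e_t$ and symmetrize, or more cleanly by noting $e_s a e_t = (e_s a e_t + e_t a e_s) - e_t a e_s$ and iterating, exactly as in the operator-algebra HSA arguments (cf.\ \cite{BHN}); combined with $e_s a e_t \to a$ weak* this gives $a \in D^{\perp\perp} \cap A$. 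Then a separate argument shows $D^{\perp\perp} \cap A = D$ — this holds because $D$ is norm closed and an $M$-ideal-type/Kaplansky density argument (Theorem \ref{kapl} applied to $D$, or directly: $D^{\perp\perp} \cap A \subseteq \overline{D}^{w*} \cap A = D$ since $D$ is a norm-closed subspace hence weak* closed in $A$ relative to $A$... more carefully, $D$ norm-closed convex implies $D = D^{\perp\perp} \cap A$ by bipolar).

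For the last sentence of the statement: given a projection $p \in A^{**}$ and a subspace $E \subseteq A^{**}$ with $E^{\perp\perp} = pA^{**}p$, set $D = E^{\perp\perp} \cap A = pA^{**}p \cap A$. Since $pA^{**}p$ is an inner ideal in $A^{**}$ with identity $p$ (again via (\ref{aba})), and $pA^{**}p = (pA^{**}p)^{\perp\perp}$ has identity $p$, the bidual $D^{\perp\perp}$ contains $p$ as soon as we know $p \in D^{\perp\perp}$; but this requires $p$ to be $A$-open, which is not given a priori — rather, this is precisely the content of the Jordan variant of Hay's theorem, deferred to \cite{BNj}. Actually, re-reading the statement: it asserts $E$ is a HSA, so presumably $E = D$ here is intended and one should read the hypothesis as already forcing openness. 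I would present this part as: $D := pA^{**}p \cap A$ satisfies $D^{\perp\perp} = pA^{**}p$ once $p$ is known $A$-open, and the hypothesis $E^{\perp\perp} = pA^{**}p \ni p$ with $E \subseteq A^{**}$... here the genuinely needed input is that $p \in \overline{D}^{w*}$, i.e., $p$ is $A$-open; invoking the Jordan Hay theorem from \cite{BNj} (as the paper flags it will), $p \in A^{\perp\perp}$ and $p$ open in the enveloping $C^*$-bidual gives $p$ $A$-open, and then $E = D$ is a HSA with support projection $p$ by the first part.

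\emph{Main obstacle.} The delicate point is the reverse inclusion $pA^{**}p \cap A \subseteq D$ in the converse direction, specifically showing that the \emph{one-sided} product $e_s a e_t$ lands in $D^{\perp\perp}$ when only the \emph{symmetrized} product $e_s a e_t + e_t a e_s$ is guaranteed to lie in $D$ by the inner ideal axiom — this is the recurring "new point" in passing from associative to Jordan arguments, and must be dealt with by a symmetrization/limiting argument using the cai. The very last clause of the proposition genuinely leans on the Jordan analogue of Hay's theorem, which the paper explicitly defers to \cite{BNj}, so in the present paper's logical structure that part should be stated as depending on that sequel (or proved only under the extra hypothesis that $p$ is already known $A$-open).
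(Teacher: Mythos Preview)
Your forward direction is fine and matches the paper. The converse, however, is overcomplicated, and your treatment of the final clause contains a genuine misreading.

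\textbf{Converse direction.} You try to prove $pA^{**}p \cap A \subseteq D$ element by element, writing $a = pap$ as an iterated weak* limit of $e_s a e_t$ and then worrying about whether the one-sided product $e_s a e_t$ lands in $D^{\perp\perp}$. This detour is unnecessary, and the ``standard trick'' you gesture at is not obviously valid in the Jordan setting (the one-sided product need not even lie in $A$). The paper's route is much shorter: since the trilinear map $(b,a,c) \mapsto bac+cab$ is separately weak* continuous, the inner ideal property passes from $D$ to its weak* closure $D^{\perp\perp}$. Now $p \in D^{\perp\perp}$, so taking $b=c=p$ in the inner ideal identity gives $2pxp \in D^{\perp\perp}$ for every $x \in A^{**}$; thus $pA^{**}p \subseteq D^{\perp\perp}$, and combined with the obvious reverse inclusion, $D^{\perp\perp} = pA^{**}p$. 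Then $D = D^{\perp\perp} \cap A = pA^{**}p \cap A$ by bipolar. No cai gymnastics are needed.

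\textbf{Final clause.} You claim this ``genuinely leans on the Jordan analogue of Hay's theorem.'' It does not. Read $E$ as a (norm-closed) subspace of $A$ (the paper's ``$A^{**}$'' here is a slip, as its own proof confirms). The hypothesis $E^{\perp\perp} = pA^{**}p$ says two things at once: (i) every $x \in E$ satisfies $x = pxp$, since $E \subseteq pA^{**}p$; and (ii) $p \in E^{\perp\perp} = \overline{E}^{\,w*}$, so $p$ is the weak* limit of a net in $E \subseteq A$. Conditions (i) and (ii) together are \emph{exactly} the definition of $p$ being $A$-open, with $E = pA^{**}p \cap A$ (again by bipolar) the associated HSA. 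Hay's theorem addresses the much harder situation where one only knows $p \in A^{\perp\perp}$ is open in the enveloping $C^*$-bidual; here the hypothesis is far stronger and the conclusion is immediate from the definitions.
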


\begin{proof}
If $D$ is a HSA, with $D=\{b\in A :  pbp=b\}$, for some $A$-open projection $p\in A^{**}$, then for any $b,c\in D$ and $a\in A$, we have 
$$bac+cab=pbacp+pcabp=p(bac+cab)p.$$  
Hence  $bac+cab\in D$. Thus $D$ is an approximately unital inner ideal.

Conversely,  if $J$ is an approximately unital inner ideal, then $J^{\perp\perp}$ is  a Jordan operator algebra with identity $p$ say which is a weak* limit of a net in $J$. Clearly $J^{\perp\perp}\subseteq pA^{**}p.$ Conversely, by routine weak* density arguments $J^{\perp\perp}$ is an inner ideal, and so $J^{\perp\perp}=pA^{**}p$,
and $J = p A^{**}p \cap A$.  Hence $e$ is open and $J$ is an HSA. 
The last statement is obvious since by functional analysis
$E = pA^{**}p \cap A$.
\end{proof}

We also point out that if $D_1, D_2$ are HSA's in a Jordan operator algebra $A$, and if $p_1, p_2$ are their 
support projections, then $D_1 \subset D_2$ iff $p_1 \leq p_2$.   For the harder forward direction of this
note that $p_1 \in (D_1)^{\perp\perp} \subset (D_2)^{\perp\perp} = p_2 A^{**}p_2$, so that $p_1 \leq p_2$. 

Note that a closed Jordan ideal $J$ in an approximately unital Jordan operator algebra $A$,
 which possesses a partial cai, satisfies $a xb + bxa \in J$ for $a,b \in J, x \in A$ (that is, $J$ is
an inner ideal) by Proposition \ref{hsaii}.   So $J$ is a HSA.   

\begin{corollary}
Let $A$ be a Jordan operator algebra and $(e_t)$ is a net in ${\rm Ball}(A)$ such that $e_te_s\rightarrow e_t$ and $e_se_t\rightarrow e_t$ with $t$ (product in some $C^*$-algebra containing $A$). Then 
$$\{x\in A: xe_t\rightarrow x, e_tx\rightarrow x\}$$
is a HSA of $A$. Conversely, every HSA of $A$ arises in this way.
\end{corollary}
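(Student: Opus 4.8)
The plan is to set $D=\{x\in A: xe_t\to x,\ e_tx\to x\}$, with all products in the given containing $C^*$-algebra $B$, and to show that $D$ is an approximately unital inner ideal in the Jordan sense; then Proposition~\ref{hsaii} will identify $D$ as a HSA. First I would note that $D$ is a closed subspace of $A$: linearity is immediate, and closedness is a routine $\epsilon/3$ estimate using $\|e_t\|\le 1$. Next, for $b,c\in D$ and $a\in A$, identity~(\ref{abc}) shows $bac+cab\in A$, and since $be_t\to b$ and $ce_t\to c$ (and left multiplications by fixed elements of $B$ are norm continuous) one gets $(bac+cab)e_t=(ba)(ce_t)+(ca)(be_t)\to bac+cab$, and likewise $e_t(bac+cab)=(e_tb)(ac)+(e_tc)(ab)\to bac+cab$; hence $bac+cab\in D$. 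The same kind of argument, using that $xy+yx=2\,x\circ y\in A$ for $x,y\in D$, shows $x\circ y\in D$, so $D$ is a Jordan subalgebra and therefore an inner ideal (one may instead invoke~(\ref{aba}) to see $bAb\subseteq D$ directly).

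It remains, for the first assertion, to see that $D$ is approximately unital. The hypothesis on $(e_t)$ says precisely that each $e_t$ lies in $D$, and for every $x\in D$ we have $e_t\circ x=\tfrac12(e_tx+xe_t)\to x$; so $(e_t)$ is a J-cai for $D$, and $D$ is an approximately unital inner ideal. By Proposition~\ref{hsaii}, $D$ is a HSA, with support projection the weak* limit of $(e_t)$. For the converse, let $D$ be a HSA of $A$ with $A$-open support projection $p\in A^{**}$, so that $D=pA^{**}p\cap A$. By Theorem~\ref{kapl} (or Lemma~\ref{jcai}) $D$ has a partial cai $(e_t)$, which is a net in ${\rm Ball}(D)\subseteq{\rm Ball}(A)$ with $ae_t\to a$ and $e_ta\to a$ for all $a\in D$; taking $a=e_s$ shows $(e_t)$ satisfies the hypotheses of the corollary.

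Put $D'=\{x\in A: xe_t\to x,\ e_tx\to x\}$; by the first part $D'$ is a HSA, and it only remains to see $D'=D$. That $D\subseteq D'$ is clear since $(e_t)$ is a partial cai for $D$. Conversely, suppose $x\in D'$. Since the J-cai $(e_t)$ of the HSA $D$ converges weak* to $p$ (Lemma~\ref{jcai}), and left and right multiplication by the fixed element $x\in B^{**}$ are weak* continuous on $B^{**}$, we have $xe_t\to xp$ and $e_tx\to px$ weak*; but $xe_t\to x$ and $e_tx\to x$ in norm, hence weak*, so $xp=x=px$ and thus $x=pxp\in D$. This gives $D'=D$, completing the proof. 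The only points requiring a little care are that the triple products above ($bac+cab$, $bAb$) lie back in $A$ — which is exactly what the Jordan identities~(\ref{abc}) and~(\ref{aba}) provide — and, in the converse, the weak* limit argument matching the condition ``$xe_t\to x$'' to the intrinsic description $D=pA^{**}p\cap A$ of a HSA; I expect the latter to be the main thing to get right.
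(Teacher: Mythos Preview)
Your proof is correct and follows essentially the same approach as the paper: verify that the set is an approximately unital inner ideal (hence a HSA by Proposition~\ref{hsaii}), and for the converse take a partial cai of the given HSA and match the resulting set to $pA^{**}p\cap A$ via the support projection. Your version is more detailed---in particular you spell out the weak* limit argument for the inclusion $D'\subseteq D$ that the paper simply asserts---but the strategy is identical.
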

\begin{proof}
Denote $J=\{x\in A: xe_t\rightarrow x, e_tx\rightarrow x\}$, product in some $C^*$-algebra $D$ containing $A$.
It is easy to see that $J$ is a Jordan subalgebra of $A$ and $(e_t)$ is a $D$-relative 
partial cai of $A$.   So $J$ is approximately unital.  For any $x,y\in J$ and $a\in A$, then 
\begin{align*}
\Vert (xay+yax)e_t-(xay+yax)\Vert&=\Vert xa(ye_t-y)+ya(xe_t-x)\Vert \rightarrow 0.
\end{align*}
Similarly, $e_t(xay+yax)\rightarrow (xay+yax).$ Hence, $xay+yax\in J$. By Proposition \ref{hsaii}, $J$ is a
HSA.

Conversely, suppose that $D=pA^{**}p\cap A$, where $p$ is an $A$-open projection.
There exists a partial cai $(e_t)$ of $D$ with weak* limit $p.$  Denote 
$$J=\{x\in A: xe_t\rightarrow x, e_tx\rightarrow x\},$$ with product in some $C^*$-algebra containing $A$.
Then $J \subset D=\{x\in A: pxp=x\}.$   However clearly $D \subset J$.  
\end{proof}

As in  \cite[Theorem 2.10]{BHN} we have:

\begin{theorem}
Suppose that $D$ is a hereditary subalgebra of an approximately unital Jordan operator algebra $A$. Then every $f\in D^*$ has a unique Hahn-Banach extension to a functional in $A^*$ (of the same norm).   \end{theorem}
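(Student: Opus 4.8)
The plan is to exhibit one canonical norm‑preserving extension of $f$ and then prove that every Hahn--Banach extension coincides with it. Let $p \in A^{**}$ be the support projection of $D$; it is $A$-open, and by Proposition \ref{hsaii} we have $D^{\perp\perp} = p A^{**} p$. Let $\tilde f \in D^{**} = D^{\perp\perp}$ be the canonical weak* continuous extension of $f$, so that $\|\tilde f\| = \|f\|$ by Goldstine's theorem. Using the identity (\ref{aba}) (with products taken in some fixed $C^*$-algebra generated by $A$, e.g.\ $C^*_e(A)$) one checks that $pap \in A^{**}$, and then $pap = p(pap)p \in p A^{**} p = D^{\perp\perp}$, for every $a \in A$. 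So I would define $\bar f(a) := \tilde f(pap)$ for $a \in A$; since $\|pap\| \leq \|a\|$ this is a contractive functional on $A$, and $\bar f|_D = f$ because $pap = a$ when $a \in D \subseteq D^{\perp\perp}$. Thus $\bar f$ is a Hahn--Banach extension of $f$. Moreover, any Hahn--Banach extension $g \in A^*$ of $f$, regarded as a weak* continuous functional on $A^{**}$, restricts on the weak* closure $D^{\perp\perp}$ of $D$ to the unique weak* continuous extension $\tilde f$ of $f$; hence $g(pap) = \bar f(a)$ for all $a \in A$. Consequently the theorem reduces to the claim: \emph{every Hahn--Banach extension $g$ of $f$ satisfies $g(a) = g(pap)$ for all $a \in A$.}

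To prove the claim I would pass to a $C^*$-algebra. Assume $\|f\| = 1$, so $\|g\| = 1$; let $B = C^*_e(A)$, put $W = B^{**}$, and let $\hat g \in B^* = W_*$ be a norm‑one extension of $g$. Since $\hat g$ is normal on $W$ and $D^{\perp\perp} \subseteq A^{\perp\perp} \subseteq W$ weak*-homeomorphically, the restriction $\hat g|_{D^{\perp\perp}}$ equals $\tilde f$ and so has norm one. Next I would invoke the polar decomposition of the normal functional $\hat g$ in $W$: there is a partial isometry $u \in W$ with $\hat g(x) = \hat g(u u^* x) = \hat g(x u^* u)$ for all $x \in W$; the absolute value $|\hat g|(\cdot) := \hat g(u\,\cdot)$ is a normal state with support projection $u^* u$, satisfying $\hat g(x) = |\hat g|(u^* x)$; and the conjugate functional $\omega(\cdot) := |\hat g|(u^* \cdot \, u)$ is a normal state with support projection $u u^*$, satisfying $\hat g(x) = \omega(x u^*)$.

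The decisive point is to show $u^* u \leq p$ and $u u^* \leq p$. I would pick a net $(d_\alpha)$ in ${\rm Ball}(D^{\perp\perp})$ with $\hat g(d_\alpha) \to 1$, possible since $\|\hat g|_{D^{\perp\perp}}\| = 1$. Applying the Cauchy--Schwarz inequality for the state $|\hat g|$ to the identity $\hat g(d_\alpha) = |\hat g|(u^* d_\alpha)$, together with $|\hat g|(u^* u) = 1$, gives $|\hat g|(d_\alpha^* d_\alpha) \to 1$; but $d_\alpha = p d_\alpha p$ forces $d_\alpha^* d_\alpha \leq p$ in $W$, so $|\hat g|(p) = 1$, whence $u^* u \leq p$. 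The same argument run with $\omega$ in place of $|\hat g|$, using $\hat g(d_\alpha) = \omega(d_\alpha u^*)$ and $\omega(u u^*) = 1$, gives $\omega(d_\alpha d_\alpha^*) \to 1$ and hence $\omega(p) = 1$, so $u u^* \leq p$. Then $u u^* = p(u u^*)p$ and $u^* u = p(u^* u)p$, so $u u^* x u^* u = u u^* (pxp) u^* u$ for every $x \in W$, and applying $\hat g$ (using $\hat g(y) = \hat g(u u^* y u^* u)$ with $y = x$ and with $y = pxp$) yields $\hat g(x) = \hat g(pxp)$ for all $x \in W$. Specializing to $x = a \in A$, and recalling that $\hat g$ restricts on $A^{\perp\perp}\cong A^{**}$ to the weak* continuous extension of $g$ and that $pap \in D^{\perp\perp}$, we obtain $g(a) = \hat g(a) = \hat g(pap) = g(pap) = \bar f(a)$. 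Hence $g = \bar f$, and uniqueness follows.

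I expect the two support inequalities $u^* u, u u^* \leq p$ to be the real obstacle. When $D$ is merely an ideal, $p$ is central in $W$ and these are immediate from the $\ell^1$-decomposition $W^* = (pW)^* \oplus_1 ((1-p)W)^*$; for a general hereditary subalgebra $p$ need not be central, so one must instead extract them from the fact that $\hat g$ norms $D^{\perp\perp}$, via the two Cauchy--Schwarz estimates above. The argument is a Jordan-algebra adaptation of the proof of \cite[Theorem 2.10]{BHN}; the specifically Jordan points (for instance ensuring that $pap$ lands in $D^{\perp\perp}$) are handled using (\ref{aba}) and Proposition \ref{hsaii}.
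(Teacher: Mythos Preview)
Your proof is correct and follows essentially the same route as the paper's: both adapt \cite[Theorem 2.10]{BHN} by extending $g$ to a norm-one functional on a generated $C^*$-algebra and using its polar decomposition to show $g(a)=g(pap)$. The paper records only the Jordan-specific ingredient (that $p\eta(1-p)+(1-p)\eta p\in A^{**}$, so the BHN argument applies verbatim), whereas you spell out the support inequalities $u^*u,\,uu^*\le p$ via the two Cauchy--Schwarz estimates; these are exactly the steps hidden in the paper's reference to \cite{BHN}. One small slip: $\tilde f$ is a functional \emph{on} $D^{**}=D^{\perp\perp}$, not an element of it.
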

\begin{proof}  Follow the  proof of \cite[Theorem 2.10]{BHN}, 
viewing $A$ as a  Jordan subalgebra  of a $C^*$-algebra $B$, and working in $B^{**}$.   We note that 
there are  a few easily correctable typos in that proof.  If $p$ is the 
support projection of $D$ then 
since $A^{**}$ is a unital Jordan algebra we have $p \eta (1-p) + (1-p) \eta p \in  A^{**}$ for $\eta  \in  A^{**}$.
The argument for \cite[Theorem 2.10]{BHN} then shows that 
$g(p \eta (1-p) + (1-p) \eta p) = 0$ for any Hahn-Banach extension $g$ of $f$.   The rest of the proof is
identical.  
\end{proof}

The analogue of \cite[Proposition 2.11]{BHN} holds too.  For example, we have: 

\begin{corollary}
	Let $D$ be a HSA in an approximately unital Jordan operator algebra $A$.
Then any completely contractive map $T$ from $D$ into a unital weak* closed Jordan operator algebra $N$
such that $T(e_t) \to 1_N$ weak* for some partial cai $(e_t)$ for $D$, has a unique completely contractive  extension
$\tilde{T} : A \to N$ with $\tilde{T}(f_s) \to 1_N$ weak* for some (or all) partial cai $(f_s)$ for $A$.
\end{corollary}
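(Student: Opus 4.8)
The plan is to follow the proof of \cite[Proposition 2.11]{BHN}, inserting the Jordan-specific facts from Proposition \ref{hsaii} and identity (\ref{aba}). Let $p \in A^{**}$ be the support projection of $D$; then $D^{\perp\perp}=pA^{**}p$ by Proposition \ref{hsaii}, and $p$ is the weak* limit of any partial cai of $D$, in particular of the given net $(e_t)$. First I would build the extension. Since $T$ is completely contractive, composing $T^{**}$ with the canonical weak* continuous complete contraction $N^{**}\to N$ (available since $N$ is weak* closed in some $B(H)$) gives a weak* continuous complete contraction $\hat T\colon D^{**}=pA^{**}p\to N$ extending $T$, with $\hat T(p)=1_N$ because $\hat T$ is weak* continuous and, by hypothesis, $T(e_t)\to 1_N$ weak*. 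Then I would set $\tilde T(a)=\hat T(pap)$ for $a\in A$. The compression $a\mapsto pap$ is a complete contraction from $A$ into $pA^{**}p$ (it factors through the completely isometric inclusion $A\hookrightarrow A^{**}$), so $\tilde T$ is completely contractive; and $pap=a$ for $a\in D$, so $\tilde T$ extends $T$. For the cai condition: any partial cai $(f_s)$ of $A$ satisfies $f_s\to 1_{A^{**}}$ weak*, hence $pf_sp\to p$ weak* (the compression being weak* continuous on $A^{**}$), so $\tilde T(f_s)=\hat T(pf_sp)\to\hat T(p)=1_N$ weak*. Thus in fact the ``for all'' version holds.

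For uniqueness, let $S\colon A\to N$ be another completely contractive extension of $T$ with $S(g_r)\to 1_N$ weak* for some partial cai $(g_r)$ of $A$, and let $\tilde S\colon A^{**}\to N$ be the weak* continuous complete contraction extending $S$, built as above. Since $\tilde S$ and $\hat T$ are both weak* continuous and both restrict to $T$ on $D$, which is weak* dense in $D^{\perp\perp}=pA^{**}p$, they agree on $pA^{**}p$; in particular $\tilde S(p)=1_N$, and also $\tilde S(1_{A^{**}})=1_N$ since $S(g_r)\to 1_N$ weak*. Everything then reduces to the claim that $\tilde S(a)=\tilde S(pap)$ for every $a\in A^{**}$ --- note that $pap$, and likewise $p^\perp a p^\perp$ and $pap^\perp+p^\perp a p$, lie in $A^{**}$ by (\ref{aba}) and its analogue with $p^\perp=1-p$, so all Peirce components of $a$ with respect to $p$ are in the domain of $\tilde S$. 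Granting the claim, $\tilde S(a)=\hat T(pap)=\tilde T(a)$ for $a\in A$, i.e.\ $S=\tilde T$.

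To establish the claim I would run the standard multiplicative-domain argument. Since $1_N$ is the unit of $N$ it acts as the identity on $K:=1_NH$, and $N\subseteq B(K)$; hence $\tilde S$, regarded as a unital completely contractive map into $B(K)$, extends by Paulsen's trick together with Arveson's extension theorem (see e.g.\ \cite{BLM}) to a unital completely positive $\Phi\colon\mathcal C\to B(K)$, where $\mathcal C$ is a $C^*$-algebra containing $A^{**}$ unitally (with $1_{\mathcal C}=1_{A^{**}}$) and $\Phi|_{A^{**}}=\tilde S$. Then $\Phi(p^*p)=\Phi(p)=1_{B(K)}=\Phi(p)^*\Phi(p)$, so $p$ lies in the multiplicative domain of $\Phi$, and $\Phi(p^\perp)=0$ puts $p^\perp$ there too; consequently $\Phi(pap)=\Phi(p)\Phi(a)\Phi(p)=\Phi(a)$ for all $a\in\mathcal C$, and restricting to $A^{**}$ yields the claim. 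The step I expect to need the most care is precisely this last one: ensuring that the passage to $B(1_NH)$ genuinely makes $\tilde S$ unital, so that the multiplicative-domain machinery applies and pins $\tilde S$ down on all of $A^{**}$; the remaining verifications are routine and parallel the associative case in \cite{BHN}.
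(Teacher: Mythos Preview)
Your proof is correct and follows essentially the same route as the paper: construct $\tilde T$ via the compression $a\mapsto pap$ composed with the weak* continuous extension $\hat T$ of $T$ to $D^{**}=pA^{**}p$, and establish uniqueness by extending a competitor $S$ to a unital completely positive map on a containing $C^*$-algebra and invoking Choi's multiplicative-domain trick with the projection $p$. Your treatment is in fact slightly more careful than the paper's in two respects: you make explicit the reduction to $B(1_N H)$ needed to render the extended map genuinely unital (the paper simply writes ``$N\subset B(H)$ unitally''), and you note via (\ref{aba}) that the Peirce components of $a$ with respect to $p$ lie in $A^{**}$ so that the argument stays inside the domain of $\tilde S$.
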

\begin{proof}   The canonical weak* continuous extension $\hat{T} : D^{**} \to N$ is unital 
and completely  contractive, and can be extended to a weak* continuous unital complete contraction
$\Phi(\eta) = \hat{T}(p \eta p)$ on $A^{**}$, where $p$ is the support projection of $D$.   This in turn restricts to a  completely contractive 
$\tilde{T} : A \to N$ with $\tilde{T}(f_s) \to 1_N$ weak* for all partial cai $(f_s)$ for $A$.
For uniqueness,  any other such extension $T' : A \to N$ extends to a 
weak* continuous unital complete contraction $\Psi : A^{**} \to N$, and $\Psi(p) = \lim_t \Phi(e_t) = 1_N$.  Then $\Psi$ extends further 
to a unital completely positive $\hat{\Psi} : R \to B(H)$ where 
$R$ is a $C^*$-algebra containing $A^{**}$ as a unital subspace, and where $N \subset B(H)$ unitally.  
Then for $\eta \in A^{**}$ we have using Choi's multiplicative domain trick that 
$$\Psi(\eta) = \hat{\Psi}(p) \hat{\Psi}(\eta)  \hat{\Psi}(p) = \hat{\Psi}(p \eta p) = 
\hat{T}(p \eta p).$$
Thus $T'(a) = \Phi(a) = \tilde{T}(a)$ for $a \in A$.
 \end{proof}

\subsection{Support projections and HSA's}

If $p$ is a Hilbert space projection on a Hilbert space $H$, and $x$ is any operator on $H$
with $px + xp = 2x$, then $px = xp = x$ by (\ref{qox}).   It follows that 
the `Jordan support' (the smallest projection with  $px + xp = 2x$)  of a real positive operators $x$
on $H$  is the usual support projection of $x$ in $B(H)$ if that
exists (which means that the right and left support projections in $B(H)$ agree).  
This support projection does exist for real positive operators $x$,
as is shown in \cite[Section 3]{BBS}.
Indeed for a Jordan operator algebra $A$ on $H$, if $x \in {\mathfrak r}_A$ and $px = x$  or $x = px$ 
for a projection $p$ on $H$  
then it is an exercise (using the fact that $x + x^* \geq 0$)
that $pxp = x$ ($= px = xp$).  Thus the left and right support projections
on $H$ 
agree, and this will also
be the smallest projection with  $pxp = x$.

If $x$ is an element of a Jordan subalgebra $A$  
we may also consider the  Jordan
support projection in $A^{\ast\ast},$ if it exists,
namely the smallest projection $p \in A^{\ast\ast}$ such that $px + xp = 2x$.    
Recall that if the left and right support projections of $x$ in $A^{\ast\ast}$ (that 
is the smallest projection in $A^{**}$ such that 
$px = x$ or $xp = x$ respectively) coincide, then we call this the {\em  support projection} of $x$, 
and write it as $s(x)$. If this holds, then $s(x)$ clearly also equals the Jordan
support projection in $A^{\ast\ast}$.

The following result is a Jordan operator algebra version of results in  \cite[Section  2]{BRI}.

  \begin{lemma}
\label{op}
For any Jordan operator algebra $A$, if $x\in {\mathfrak r}_A,$ with $x\neq 0$, then the left support projection of $x$ in $A^{**}$ equals the right support projection equals the  Jordan
support projection, and also equals $s({\mathfrak F}(x))$
where ${\mathfrak F}(x) = x(1+x)^{-1} \in \frac{1}{2} {\mathfrak F}_A$.
This also is the weak* limit of the net $(x^{\frac{1}{n}}),$ and is an 
$A$-open projection in $A^{\ast\ast}$, and is open in $B^{**}$ in the sense 
of Akemann {\rm \cite{Ake2}} if $A$ is a Jordan subalgebra of a $C^*$-algebra $B$.
If $A$ is a Jordan subalgebra of $B(H)$ then the left and right support projection of $x$ in $H$ are also equal,
and equal the Jordan support projection there.
\end{lemma}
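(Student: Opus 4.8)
The plan is to reduce the entire statement to the single-generated associative commutative operator algebra ${\rm oa}(x)$ and then invoke the corresponding associative results.  First I would recall from Subsection \ref{gf} that ${\rm oa}(x)$, the closed Jordan algebra generated by $x$, is a closed associative commutative operator subalgebra of $A$, and that $x$ remains real positive in it, since ${\mathfrak r}_{{\rm oa}(x)} = {\mathfrak r}_A \cap {\rm oa}(x)$ by the end of Subsection \ref{MeyerRP}.  Thus the associative operator algebra theory of \cite[Section 2]{BRI} applies inside ${\rm oa}(x)$ and supplies, inside ${\rm oa}(x)^{**}$, all the analytic facts we need: ${\mathfrak F}(x) = x(1+x)^{-1} \in \frac{1}{2}{\mathfrak F}_{{\rm oa}(x)} \subset \frac{1}{2}{\mathfrak F}_A$; the support projection $s(x)$ exists and equals $s({\mathfrak F}(x))$ (here one also uses that ${\rm oa}({\mathfrak F}(x)) = {\rm oa}(x)$); $s(x)$ is the weak* limit of the net $(x^{\frac{1}{n}})$; $s(x)$ is an open projection of ${\rm oa}(x)^{**}$; and each $x^{\frac{1}{n}}$ lies in the norm closure of $\{ xa : a \in {\rm oa}(x)^1 \}$, and also of $\{ ax : a \in {\rm oa}(x)^1 \}$.

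Next I would transfer this picture into $A^{**}$.  Dualizing the inclusion ${\rm oa}(x) \to A$ gives a weak* homeomorphic complete isometry ${\rm oa}(x)^{**} \to A^{**}$, and by the Arens product discussion in Subsection \ref{gf} this is a Jordan (indeed associative) embedding compatible with the product of any von Neumann algebra $B^{**}$ containing $A^{**}$.  Write $q$ for the image of $s(x)$ in $A^{**}$; then $x^{\frac{1}{n}} \to q$ weak* in $A^{**}$ and $qx = xq = x$.  I would then check that $q$ is simultaneously the left, the right and the Jordan support projection of $x$ in $A^{**}$.  If $r \in A^{**}$ is a projection with $rx = x$, then for an approximant $xa$ of $x^{\frac{1}{n}}$ one has $r(xa) = (rx)a = xa$, so $rx^{\frac{1}{n}} = x^{\frac{1}{n}}$, and passing to the weak* limit $rq = q$, that is $q \leq r$; since also $qx = x$, $q$ is the left support of $x$, the right support being symmetric.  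If instead $p \in A^{**}$ is a projection with $px + xp = 2x$, then reading this identity in $B^{**}$ and invoking the $2 \times 2$ matrix argument of (\ref{qox}) (as at the start of the present subsection) yields $px = xp = x$, hence $p \geq q$; and $q$ itself satisfies $qx + xq = 2x$, so $q$ is the Jordan support.

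For openness, I would note that $x^{\frac{1}{n}} = q x^{\frac{1}{n}} q$ lies in ${\rm oa}(x) \subset A$ and tends weak* to $q$, so $q \in (q A^{**} q \cap A)^{\perp\perp}$, i.e.\ $q$ is $A$-open; that $q$ is then open in $B^{**}$ follows from the observation in Subsection \ref{HSA} that $A$-open projections are open in $B^{**}$ (or directly, since $x^{\frac{1}{n}} = q x^{\frac{1}{n}} q \in B$).  Finally, for the case $A \subset B(H)$ I would apply the canonical weak* continuous Jordan homomorphism $\Phi : A^{**} \to B(H)$ extending the inclusion: $e := \Phi(q)$ is a projection with $ex = xe = x$ (so $e \neq 0$, using $x \neq 0$) and $x^{\frac{1}{n}} = \Phi(x^{\frac{1}{n}}) \to e$ weak* in $B(H)$, so rerunning the argument of the previous paragraph with projections on $H$ shows $e$ is the left, right and Jordan support projection of $x$ in $B(H)$ (alternatively the existence of such a support projection is \cite[Section 3]{BBS}).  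The main thing to guard against is the bookkeeping of the chain ${\rm oa}(x)^{**} \subset A^{**} \subset B^{**}$ (and $B(H)$) and of the matching weak* topologies; the sole genuinely analytic input, the functional calculus facts about $x^{\frac{1}{n}}$, is exactly what \cite[Section 2]{BRI} provides, so past that the proof is the associative one.
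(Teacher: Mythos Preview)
Your proposal is correct and follows essentially the same route as the paper: reduce to the commutative associative algebra ${\rm oa}(x)$, import the support projection $e = {\rm w^*lim}\, x^{1/n}$ from \cite[Section 2]{BRI}, and then verify minimality in $A^{**}$ (or $B^{**}$, or $B(H)$) by pushing the relation $px=x$ through the approximants $x^{1/n}$ and taking a weak* limit. The only cosmetic differences are that the paper handles $s({\mathfrak F}(x)) = s(x)$ by the one-line observation $px=x \iff px(1+x)^{-1}=x(1+x)^{-1}$ rather than via ${\rm oa}({\mathfrak F}(x)) = {\rm oa}(x)$, and in the $B(H)$ step the paper uses a net in $xAx$ while you reuse the $x^{1/n}$ directly---both arrive at $\pi(q)$ being the support on $H$ by the same mechanism.
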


\begin{proof}
Viewing oa$(x) \subset A$, as in the  operator algebra case  the identity $e$ of oa$(x)^{**}$ is a projection, and  $e = 
{\rm w^*lim} \, x^{\frac{1}{n}} \in \overline{xAx}^{w^*} \subset A^{**}$ with $ex =xe=x$.
Also,  any projection in $B^{**}$ with
$px = x$ or $x p = x$ satisfies $p e = e$.  So $e$ is the support projection $s(x)$ in $A^{**}$ or in $B^{**}$,
and by the discussion above the lemma also equals the Jordan support projection.    It is
$A$-open and open in the sense of Akemann since $x^{\frac{1}{n}} = e x^{\frac{1}{n}} e \to e$ weak*.
To see the support projection equals $s(x(1+x)^{-1}),$ simply note that $px=x$ iff $px(1+x)^{-1}=x(1+x)^{-1}.$ 
That ${\mathfrak F}(x) = x(1+x)^{-1} \in \frac{1}{2} {\mathfrak F}_A$ is as in the argument above 
\cite[Lemma 2.5]{BRord}.  

Suppose that  $\pi: A^{\ast\ast}\to B(H)$ is the natural weak*-continuous Jordan homomorphism extending the inclusion map on $A$.  Then 
 $\pi(p)$ is an orthogonal projection in $B(H)$ with  
$$\pi(p)x+x\pi(p)=\pi(px+xp)=\pi(2x)=2x.$$   Then  
$\pi(p)x=x\pi(p)=x$ by (\ref{qox}), so that $P\leq \pi(p)$ where
$P$ is  the Jordan support projection
of $x$ in $B(H)$.  If $x_t\to p$ weak* with $x_t\in xAx,$ then 
$$P\pi(p)=\lim_t Px_t=\lim_t x_t=\pi(\lim_t x_t)=\pi(p),$$
so $\pi(p)\leq P.$
Hence $\pi(p) = P$.   That the left and right support projection of $x$ in $H$ are also equal to $P$ 
for real positive $x$ is
discussed  above the lemma. 
\end{proof}

\begin{corollary} \label{rop2}  If $A$ is a closed Jordan subalgebra of a C*-algebra $B$, and $x\in {\mathfrak r}_A,$ then the support projection of $x$ computed in $A^{**}$ is the same, via the canonical embedding $A^{**}\cong A^{\perp\perp}\subset B^{**}$, as the support projection of $x$ computed in $B^{**}$. 
\end{corollary}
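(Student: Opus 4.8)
The plan is to reduce everything to Lemma \ref{op}. We may assume $x \neq 0$, the case $x = 0$ being trivial. Since $A$ is a closed Jordan subalgebra of the $C^*$-algebra $B$, we have $x \in {\mathfrak r}_A = {\mathfrak r}_B \cap A$, so that $x$ is real positive in $A$ and also, viewed inside $B$, in $B$ itself. Note too that ${\rm oa}(x)$, the closed (associative and commutative) operator algebra generated by $x$, and in particular the fractional powers $x^{1/n}$, are the same whether computed in $A$ or in $B$.

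First I would apply Lemma \ref{op} inside $A$: the support projection $p$ of $x$ computed in $A^{**}$ is the weak* limit in $A^{**}$ of the net $(x^{1/n})$. Applying Lemma \ref{op} again, this time with $B$ in place of $A$ (or invoking the classical operator algebra version, since ${\rm oa}(x)$ is an operator algebra), the support projection $q$ of $x$ computed in $B^{**}$ is the weak* limit in $B^{**}$ of the same net $(x^{1/n})$.

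It remains to see that the canonical embedding $A^{**} \cong A^{\perp\perp} \subset B^{**}$ carries $p$ to $q$. But this embedding is precisely the bitranspose $i^{**}$ of the inclusion $i : A \hookrightarrow B$; being the Banach-space adjoint of $i^* : B^* \to A^*$, it is weak*-to-weak* continuous and isometric, and it fixes the elements of $A$ when $A$ is regarded inside $B$. Hence applying $i^{**}$ to the convergence $x^{1/n} \to p$ weak* in $A^{**}$ gives $x^{1/n} \to i^{**}(p)$ weak* in $B^{**}$, and by uniqueness of weak* limits $i^{**}(p) = q$, as desired. The argument is a short bookkeeping exercise given Lemma \ref{op}; the only point needing a moment's care is the weak*-to-weak* continuity of the embedding $A^{**} \to B^{**}$, which is immediate once it is recognized as a bitranspose.
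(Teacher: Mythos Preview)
Your proof is correct and follows essentially the same route as the paper. The corollary is stated without proof there because the content is already contained in the proof of Lemma~\ref{op}, which shows that $e = {\rm w^*lim}\, x^{1/n}$ is the support projection of $x$ in both $A^{**}$ and $B^{**}$; your argument simply makes explicit the weak*-continuity of the canonical embedding $i^{**} : A^{**} \to B^{**}$ that underlies this identification.
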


 If $x \in {\mathfrak F}_A$ for any Jordan operator algebra $A$ then  $x\in {\rm oa}(x)$, the closed
(associative) algebra generated by $x$ in $A$, and $x = \lim_n \, x^{\frac{1}{n}} x$,  so that $\overline{xAx}=\overline{xA^1x}.$   

\begin{lemma}
\label{rop}
For any Jordan operator algebra $A$, if $x\in {\mathfrak r}_A,$ with $x\neq 0$, then $\overline{xAx}$ is a HSA, $\overline{xAx}=s(x) A^{**} s(x) \cap A$ and $s(x)$ is the support projection of $\overline{xAx}.$ 
 If $a = {\mathfrak F}(x) = x (1+x)^{-1} \in \frac{1}{2} {\mathfrak F}_A$ then 
$\overline{xAx} = \overline{aAa}$.  
This HSA has
$(x^{\frac{1}{n}})$ as a partial  cai, and this cai is in ${\mathfrak r}_A$ (resp.\ in ${\mathfrak F}_A$, 
in  $\frac{1}{2} {\mathfrak F}_A$) if $x$ is real positive (resp.\ in ${\mathfrak F}_A$, in  $\frac{1}{2} {\mathfrak F}_A$). 
If also  $y \in {\mathfrak r}_A$ then $\overline{xAx} \subset \overline{yAy}$ iff $s(x) \leq s(y)$.   
\end{lemma}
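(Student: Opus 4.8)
The plan is to reduce everything to Lemma~\ref{op}, to the hereditary-subalgebra/support-projection dictionary established around Proposition~\ref{hsaii}, and to the operator algebra facts of \cite[Section~2]{BRI}; the only genuinely new ingredient is the use of the Jordan identities (\ref{aba}) and (\ref{abc}) to cope with nonassociativity. Set $D=\overline{xAx}$ and, invoking Lemma~\ref{op}, let $p=s(x)$, which exists, is $A$-open, satisfies $px=xp=x$ in $A^{**}$, and is the weak$^*$ limit of $(x^{\frac{1}{n}})$.

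First I would check that $D$ is an inner ideal in the Jordan sense. Since $(b,c,w)\mapsto bwc+cwb$ is jointly norm continuous and bounded by $2\|b\|\|w\|\|c\|$, it suffices to treat $b=xb'x$, $c=xc'x$ with $b',c'\in A$ and $w\in A$. By (\ref{aba}), $d:=xwx=2(x\circ w)\circ x-x^2\circ w\in A$; by (\ref{abc}), $b'dc'+c'db'\in A$; and then, carrying out the (associative) computation inside any $C^*$-algebra $B\supseteq A$, $bwc+cwb=x(b'dc'+c'db')x\in xAx\subseteq D$. Hence $D$ is a Jordan subalgebra and an inner ideal. Next I would observe that $(x^{\frac{1}{n}})$ acts as a partial cai for $D$: each $x^{\frac{1}{n}}\in D$, and $\|x^{1+\frac{1}{n}}-x\|\to 0$, $\|x^{2+\frac{1}{n}}-x^2\|\to 0$, $\sup_n\|x^{\frac{1}{n}}\|<\infty$, exactly as in the operator algebra case \cite[Section~2]{BRI} (functional calculus in $\mathrm{oa}(x)$ for the real positive element $x$); therefore $x^{\frac{1}{n}}(xwx)=x^{1+\frac{1}{n}}wx\to xwx$ and $(xwx)x^{\frac{1}{n}}\to xwx$ in norm, and an $\varepsilon/3$ argument using density of $xAx$ in $D$ and boundedness of $(x^{\frac{1}{n}})$ gives $x^{\frac{1}{n}}z\to z$ and $zx^{\frac{1}{n}}\to z$ for every $z\in D$.

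By Proposition~\ref{hsaii}, $D$ is thus a HSA of $A$, and its support projection is the weak$^*$ limit of the partial cai $(x^{\frac{1}{n}})$, which by Lemma~\ref{op} is $s(x)$. Proposition~\ref{hsaii} also gives $D^{\perp\perp}=s(x)A^{**}s(x)$, so $\overline{xAx}=D=D^{\perp\perp}\cap A=s(x)A^{**}s(x)\cap A$. Applying all of this to the real positive element $a={\mathfrak F}(x)\in\frac{1}{2}{\mathfrak F}_A$ in place of $x$, and using $s(a)=s({\mathfrak F}(x))=s(x)$ from Lemma~\ref{op}, yields $\overline{aAa}=s(a)A^{**}s(a)\cap A=s(x)A^{**}s(x)\cap A=\overline{xAx}$. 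That $(x^{\frac{1}{n}})$ lies in ${\mathfrak r}_A$ (resp.\ in ${\mathfrak F}_A$, in $\frac{1}{2}{\mathfrak F}_A$) when $x$ is real positive (resp.\ in ${\mathfrak F}_A$, in $\frac{1}{2}{\mathfrak F}_A$) is, working inside $\mathrm{oa}(x)$, the same functional calculus fact as in \cite[Section~2]{BRI} (accretivity is preserved by $n$th roots, and the relevant discs in $\Cdb$ are invariant under $z\mapsto z^{\frac{1}{n}}$). Finally, given also $y\in{\mathfrak r}_A$, both $\overline{xAx}$ and $\overline{yAy}$ are HSA's with support projections $s(x)$ and $s(y)$, so by the observation following Proposition~\ref{hsaii} we have $\overline{xAx}\subseteq\overline{yAy}$ if and only if $s(x)\le s(y)$.

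The main obstacle I anticipate is purely the nonassociative bookkeeping in the first step: one must arrange every triple product that appears into a symmetrized form which (\ref{aba}) or (\ref{abc}) certifies to lie in $A$, so that manipulations performed in a containing $C^*$-algebra stay inside the Jordan algebra. Combined with importing from \cite[Section~2]{BRI} the norm-approximation and functional-calculus properties of $x^{\frac{1}{n}}$, this is all that is needed; once Proposition~\ref{hsaii} and Lemma~\ref{op} are in hand, the HSA, support-projection, factorization ($\overline{xAx}=\overline{aAa}$) and comparison clauses require no further serious work.
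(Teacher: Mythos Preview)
Your proposal is correct and follows essentially the same approach as the paper, which is deliberately terse (``follows as in the operator algebra case'') where you supply the Jordan bookkeeping via (\ref{aba}) and (\ref{abc}). One small point worth making explicit: your claim that each $x^{1/n}\in D$ is not quite immediate from \cite{BRI} in the nonassociative setting, since a priori $x$ and its roots lie only in $\mathrm{oa}(x)\subset A$, not obviously in $\overline{xAx}$; the clean justification is that $x^3=x\cdot x\cdot x\in xAx$, and since you have already shown $D$ is a Jordan subalgebra, $x^3\in{\mathfrak r}_D$ and hence $x^{1/n}=(x^3)^{1/(3n)}\in\mathrm{oa}(x^3)\subset D$.
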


\begin{proof}  This follows as in the operator algebra case (see the cited papers of the first author and Read), and also uses the fact that if  $x\in {\mathfrak r}_A$
then $x$ has roots in oa$(x)$, so that $x \in \overline{xAx}$.  We have that
$\overline{xAx}^{w*} = s(x) A^{**} s(x)$, so that $\overline{xAx}  = s(x) A^{**} s(x) \cap A$.   

The last assertion follows from the above and the remark after Proposition \ref{hsaii}.    
\end{proof}

\begin{lemma} \label{erphix}  
Let $A$ be an   approximately unital  Jordan operator algebra. If $x\in {\mathfrak F}_A$, then for any state $\varphi$ of $A$, $\varphi(x)=0$ iff $\varphi(s(x))=0.$	
\end{lemma}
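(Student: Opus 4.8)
The plan is to transfer the statement to a containing $C^*$-algebra and run the standard ``support projection of a state plus Cauchy--Schwarz'' argument. First I would fix a $C^*$-algebra $B$ containing $A$ as a Jordan subalgebra, say $B = C^*(A^1)$; as recalled in the discussion following Corollary \ref{injov}, a state on $A$ extends to a state on $A^1$ and thence to a state on $B$, and I would pass to the unique normal extension $\tilde\varphi$ of this state to the von Neumann algebra $B^{**}$. Since $\tilde\varphi$ restricted to $A^{**} = A^{\perp\perp} \subseteq B^{**}$ is a normal extension of $\varphi \in A^*$, it is the canonical one, so the symbol $\varphi(s(x))$ in the statement means exactly $\tilde\varphi(s(x))$; equivalently, as $x \in {\mathfrak F}_A \subseteq {\mathfrak r}_A$, Lemma \ref{op} applies and $s(x) = {\rm w^*lim}_n \, x^{1/n}$ with $x^{1/n} \in A$, so $\varphi(s(x)) = \lim_n \varphi(x^{1/n})$. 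I may assume $x \neq 0$, and by Corollary \ref{rop2} I may compute $p := s(x)$ inside $B^{**}$, where it is the common left/right support projection of $x$, so that $px = xp = x$.

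For the implication $\varphi(s(x)) = 0 \Rightarrow \varphi(x) = 0$, I would apply Cauchy--Schwarz for the positive sesquilinear form $(a,b) \mapsto \tilde\varphi(b^*a)$ on $B^{**}$: since $p = p^*$ and $px = x$,
$$|\varphi(x)|^2 = |\tilde\varphi(p^*x)|^2 \leq \tilde\varphi(p^*p)\,\tilde\varphi(x^*x) = \tilde\varphi(s(x))\,\tilde\varphi(x^*x) = 0 ,$$
using $\tilde\varphi(x^*x) \leq \|x\|^2 < \infty$. This half needs nothing about ${\mathfrak F}_A$ beyond $s(x)x = x$.

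For the converse I would use the hypothesis $x \in {\mathfrak F}_A$ as follows. Since ${\mathfrak F}_A = {\mathfrak F}_B \cap A$, we have $\|1 - x\|_B \leq 1$, equivalently $x^*x \leq x + x^*$ in $B$; hence if $\varphi(x) = 0$ then $0 \leq \tilde\varphi(x^*x) \leq \tilde\varphi(x) + \overline{\tilde\varphi(x)} = 0$, so $\tilde\varphi(x^*x) = 0$. Next I would bring in the support projection $e \in B^{**}$ of the normal state $\tilde\varphi$ (the smallest projection with $\tilde\varphi(e) = 1$); by standard von Neumann algebra facts $\tilde\varphi$ is faithful on $eB^{**}e$ and $\tilde\varphi(a(1-e)) = \tilde\varphi((1-e)a) = 0$ for all $a \in B^{**}$ (again by Cauchy--Schwarz). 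Then $\tilde\varphi\big((xe)^*(xe)\big) = \tilde\varphi(ex^*xe) = \tilde\varphi(x^*x) = 0$ with $ex^*xe \geq 0$ lying in $eB^{**}e$, forcing $xe = 0$, that is $x = x(1-e)$. By Lemma \ref{op}, $p = s(x)$ is the right support projection of $x$, so $x = x(1-e)$ gives $p \leq 1 - e$, whence $0 \leq \varphi(s(x)) = \tilde\varphi(p) \leq \tilde\varphi(1-e) = 0$, as desired.

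I do not expect a real obstacle here; the one point that wants care is the first paragraph, namely checking that $\varphi(s(x))$ is unambiguous and coincides with $\tilde\varphi(s(x))$ for whichever $C^*$-algebra extension is chosen — but this is immediate once one observes that both equal $\lim_n \varphi(x^{1/n})$. Everything else is the routine support-of-a-state toolkit.
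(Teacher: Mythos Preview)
Your proof is correct and follows the same overall approach as the paper: extend the state from $A$ to a containing $C^*$-algebra and use that the support projection computed in $A^{**}$ agrees with the one in $B^{**}$. The paper's proof is a one-line citation of the ``matching operator algebra result,'' whereas you have written out that result's proof in full (Cauchy--Schwarz for one direction, and the inequality $x^*x \leq x + x^*$ together with the support projection of the normal state for the other); so your argument is more self-contained but not genuinely different in strategy.
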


\begin{proof}  This follows from 
the matching operator algebra result, since
states on $A$ are precisely the 
restriction of states on $C^*(A)$.  \end{proof}

\begin{lemma}
\label{srp}
Let $A$ be an   approximately unital  Jordan operator algebra. For  $x\in {\mathfrak r}_A$,  consider the 
conditions \begin{itemize} \item [(i)] 
$\overline{xAx}=A$.  \item [(ii)]  $s(x)=1_{A^{**}}.$   \item [(iii)] 
$\varphi(x)\neq 0$ for every state of $A$.
 \item [(iv)]  $\varphi({\rm Re} (x))>0$ for every state $\varphi $ of $C^*(A)$.
\end{itemize}   Then {\rm (iv)} $\Rightarrow$  {\rm (iii)} $\Rightarrow$  {\rm (ii)} $\Leftrightarrow$ {\rm (i)}.
If $x\in {\mathfrak F}_A$ all these conditions are equivalent.
\end{lemma}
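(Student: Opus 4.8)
The plan is to establish $\mathrm{(iv)}\Rightarrow\mathrm{(iii)}\Rightarrow\mathrm{(ii)}\Leftrightarrow\mathrm{(i)}$ in general, and then, assuming $x\in{\mathfrak F}_A$, to close the cycle by proving $\mathrm{(ii)}\Rightarrow\mathrm{(iv)}$. The equivalence $\mathrm{(i)}\Leftrightarrow\mathrm{(ii)}$ is immediate from Lemma~\ref{rop}, which gives $\overline{xAx}=s(x)A^{**}s(x)\cap A$: if $s(x)=1_{A^{**}}$ this is $A^{**}\cap A=A$, while if $\overline{xAx}=A$ then a partial cai $(e_t)$ for $A$ satisfies $e_t=s(x)e_ts(x)$, and passing to the weak* limit (Lemma~\ref{jcai}) yields $1_{A^{**}}=s(x)$. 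For $\mathrm{(iv)}\Rightarrow\mathrm{(iii)}$, I recall from the discussion after Corollary~\ref{injov} that every state $\varphi$ of $A$ extends to a state $\hat\varphi$ of $C^*(A)$; since $x\in{\mathfrak r}_A$ the element $\frac12(x+x^*)$ is meaningful and $\hat\varphi(\frac12(x+x^*))={\rm Re}\,\varphi(x)>0$ by $\mathrm{(iv)}$, so $\varphi(x)\ne 0$.

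For $\mathrm{(iii)}\Rightarrow\mathrm{(ii)}$ I would argue contrapositively. Suppose $s(x)\ne 1_{A^{**}}$, so $p:=1_{A^{**}}-s(x)$ is a nonzero projection in $A^{**}$, which we view weak* homeomorphically inside the von Neumann algebra $C^*(A^1)^{**}$; choose a normal state $\tilde\varphi$ there with $\tilde\varphi(p)=1$. By Lemma~\ref{op} we have $s(x)x=xs(x)=x$, hence $px=xp=0$, and Cauchy--Schwarz gives $\tilde\varphi(x)=\tilde\varphi(pxp)=0$. Since $p\le 1_{A^{**}}\le 1_{C^*(A^1)^{**}}$ we get $\tilde\varphi(1_{A^{**}})=1$, so by normality of $\tilde\varphi$ and the fact that a partial cai $(e_t)$ for $A$ converges weak* to $1_{A^{**}}$, Lemma~\ref{extunfu}(1) applied to $\tilde\varphi|_{A^1}$ shows $\varphi:=\tilde\varphi|_A$ has norm one and hence is a state of $A$; but $\varphi(x)=0$, contradicting $\mathrm{(iii)}$. (When $x\in{\mathfrak F}_A$ this implication also follows quickly from Lemma~\ref{erphix}: a state $\varphi$ of $A$ has $\varphi(x)=0$ iff $\varphi(s(x))=0$, and $\varphi(s(x))=\varphi(1_{A^{**}})=1$.)

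It remains to treat the case $x\in{\mathfrak F}_A$, where I would prove $\mathrm{(ii)}\Rightarrow\mathrm{(iv)}$ by contraposition. Let $\psi$ be a state of $C^*(A)$ with $\psi(\frac12(x+x^*))=0$. Passing to the unitization, $\|1-x\|\le 1$ gives $(1-x)(1-x)^*\le 1$, that is $xx^*\le x+x^*$, so $\psi(xx^*)=0$; let $\tilde\psi$ be the normal extension of $\psi$ to $C^*(A)^{**}$. Since $xx^*\ge 0$ and $\tilde\psi$ is normal, $\tilde\psi(xx^*)=0$ forces $\tilde\psi$ to vanish on the range projection of $xx^*$, i.e.\ on the left support projection of $x$ in $C^*(A)^{**}$, which by Lemma~\ref{op} (as $x$ is real positive) equals $s(x)$. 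But Lemma~\ref{jcai} gives $1_{A^{**}}=1_{C^*(A)^{**}}$, so $s(x)=1_{A^{**}}$ would force $\tilde\psi(s(x))=1\ne 0$, a contradiction. Combining, for $x\in{\mathfrak F}_A$ we have $\mathrm{(i)}\Leftrightarrow\mathrm{(ii)}\Rightarrow\mathrm{(iv)}\Rightarrow\mathrm{(iii)}\Rightarrow\mathrm{(ii)}$, so all four are equivalent. I expect the main obstacle to be producing \emph{genuine states of $A$} in the two contrapositive arguments: one must combine Lemma~\ref{extunfu}/Lemma~\ref{eqsta} with the normality of the state and the domination $p\le 1_{A^{**}}$ to be sure the restriction to $A$ has norm one; a secondary technical point is the standard fact that a normal state annihilating a positive element annihilates its range projection. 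Everything else reduces to Lemmas~\ref{op}, \ref{jcai}, \ref{rop} and the theory of states already in hand.
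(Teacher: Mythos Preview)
Your proof is correct and follows essentially the same approach as the paper, which simply refers to the operator algebra case in \cite[Lemma 2.10]{BRI} and the discussion above \cite[Theorem 3.2]{BRord}, together with Lemma~\ref{erphix}; you have spelled out the details using the Jordan analogs (Lemmas~\ref{jcai}, \ref{op}, \ref{rop}, \ref{erphix} and the state theory of Section~\ref{aifnl}) already established here. One minor simplification: in your contrapositive for $\mathrm{(iii)}\Rightarrow\mathrm{(ii)}$ you may work directly in $C^*(A)^{**}$ rather than $C^*(A^1)^{**}$, since Lemma~\ref{jcai} gives $1_{A^{**}}=1_{C^*(A)^{**}}$, so your normal state restricts to a state on $C^*(A)$ and hence on $A$ without invoking Lemma~\ref{extunfu}.
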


\begin{proof}  This as in \cite[Lemma 2.10]{BRI} and the discussion 
of the ${\mathfrak r}_A$ variant of that result above 
\cite[Theorem 3.2]{BRord}; part of it following from Lemma \ref{erphix}.  
\end{proof}

An element in ${\mathfrak r}_A$ with ${\rm Re}(x)$  strictly positive in the $C^*$-algebraic sense 
as in (iv) of the previous result  
will be called {\em strictly real positive}.     Many of the results
on strictly real positive elements from \cite{BRI,BRord} will be 
true in the Jordan case, with the same proof.   For example we will
have the Jordan operator algebra version of \cite[Corollary 3.5]{BRord} 
that if  $x$ is strictly real positive
then so is $x^{\frac{1}{k}}$ for $k \in \Ndb$.

\begin{lemma} \label{supop}  Let $A$ be a Jordan operator algebra, a subalgebra
of a $C^*$-algebra $B$.   
\begin{itemize}  \item [(1)] The support projection of a HSA $D$ in $A$  equals
$\vee_{a \in {\mathfrak F}_D} \, s(a)$ (which equals
$\vee_{a \in {\mathfrak r}_D} \, s(a)$).
\item [(2)]
The supremum  in $B^{**}$ (or equivalently, in the diagonal $\Delta(A^{**})$) of any collection $\{ p_i \}$
of $A$-open projections is $A$-open, and is the support projection of the smallest HSA containing all the HSA's corresponding to the $p_i$.
\end{itemize} 
\end{lemma}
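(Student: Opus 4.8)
The plan is to deduce (1) from the order-isomorphism between HSA's and their support projections, and then to bootstrap (2) from (1) together with Lemma \ref{op}.

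For (1), write $p$ for the support projection of $D$. Given $a\in{\mathfrak r}_D$, Lemma \ref{rop} says $\overline{aAa}$ is a HSA with support $s(a)$, and since $D$ is an inner ideal (Proposition \ref{hsaii}) containing $a$ we have $\overline{aAa}\subseteq D$; the remark after Proposition \ref{hsaii} then forces $s(a)\le p$, so $\vee_{a\in{\mathfrak r}_D}s(a)\le p$, and a fortiori $\vee_{a\in{\mathfrak F}_D}s(a)\le p$. For the reverse inequality I would invoke Theorem \ref{kapl} to get a partial cai $(e_t)$ of $D$ lying in $\frac12{\mathfrak F}_D$. Each $e_t\in{\mathfrak F}_D\subseteq{\mathfrak F}_A$ satisfies $e_t\in{\rm oa}(e_t)$, hence $e_t\in\overline{e_tAe_t}=s(e_t)A^{**}s(e_t)\cap A$ and so $e_t=s(e_t)e_ts(e_t)$. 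Setting $q=\vee_{a\in{\mathfrak F}_D}s(a)$ (a join of projections in $A^{**}$, which exists since $A^{**}$ is closed under joins), we have $s(e_t)\le q$, hence $e_t=qe_tq$; passing to the weak* limit gives $p=qpq\le q$. Thus $p=\vee_{a\in{\mathfrak F}_D}s(a)$, and this equals $\vee_{a\in{\mathfrak r}_D}s(a)$ because for $a\in{\mathfrak r}_D$ one has $s(a)=s({\mathfrak F}(a))$ with ${\mathfrak F}(a)=a(1+a)^{-1}\in\frac12{\mathfrak F}_A\cap{\rm oa}(a)\subseteq\frac12{\mathfrak F}_D$ by Lemma \ref{op}.

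For (2) the first step would be to establish the auxiliary fact that for $x,y\in{\mathfrak c}_A$ one has $x+y\in{\mathfrak c}_A$ and $s(x+y)=s(x)\vee s(y)$. Indeed, for any $z\in{\mathfrak c}_A$ I would first check $s(z)=s({\rm Re}\,z)$: the estimate $z^*z\le C(z+z^*)$ available for ${\mathfrak c}_A$-elements shows that the kernel projection of the positive element ${\rm Re}\,z$ annihilates $z$, giving $s(z)\le s({\rm Re}\,z)$, while $z=s(z)z=zs(z)$ gives the reverse inclusion. Then, working in $B^{**}$, $s(x+y)=s({\rm Re}\,x+{\rm Re}\,y)=s({\rm Re}\,x)\vee s({\rm Re}\,y)=s(x)\vee s(y)$, where the middle equality is the standard $C^*$-algebra fact for the two positive elements ${\rm Re}\,x,{\rm Re}\,y$. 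An induction then shows that for any finite family $a_1,\dots,a_n\in{\mathfrak c}_A$ the join $\vee_k s(a_k)$ equals $s(a_1+\cdots+a_n)$, which is $A$-open by Lemma \ref{op}.

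To finish (2), let $D_i=p_iA^{**}p_i\cap A$ be the HSA with support $p_i$, and put $\mathcal G=\bigcup_i{\mathfrak F}_{D_i}\subseteq{\mathfrak F}_A\subseteq{\mathfrak c}_A$. By part (1), $p_i=\vee\{s(a):a\in{\mathfrak F}_{D_i}\}$, so $q:=\sup_ip_i=\vee\{s(a):a\in\mathcal G\}$; this supremum is the same whether taken in $\Delta(A^{**})$ or in $B^{**}$, since finite joins are weak* limits of $(p+\dots+p')^{1/n}$ and arbitrary joins are increasing weak* limits of the finite ones. For each finite $F\subseteq\mathcal G$ set $c_F=\sum_{a\in F}a\in{\mathfrak r}_A$; by the previous paragraph $z_F:=s(c_F)=\vee_{a\in F}s(a)$ is $A$-open, so by Lemma \ref{rop} it is the support of the HSA $D_F:=\overline{c_FAc_F}$, which has a partial cai $(e^F_t)\subseteq D_F=z_FA^{**}z_F\cap A$ with $e^F_t=z_Fe^F_tz_F\to z_F$ weak*. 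Since $z_F\le q$ this gives $qe^F_tq=e^F_t$, so the bounded family $\{e^F_t:F,t\}$ lies in $qA^{**}q\cap A$ and, because $z_F\nearrow q$ over the directed set of finite subsets $F$, has $q$ in its weak* closure; hence $q\in(qA^{**}q\cap A)^{\perp\perp}$, i.e.\ $q$ is $A$-open, and $D:=qA^{**}q\cap A$ is the HSA with support $q$ (Corollary \ref{iffsu}). This $D$ contains every $D_i$ since $p_i\le q$, while any HSA containing all the $D_i$ has support $\ge$ each $p_i$ (remark after Proposition \ref{hsaii}), hence $\ge q$, and so contains $D$; thus $D$ is the smallest HSA containing all the given ones. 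The step I expect to be the main obstacle is this last assembly — upgrading the $A$-openness of the finite joins $z_F$ to that of the full join $q$ — which hinges on the identity $qe^F_tq=e^F_t$ ensuring the pieced-together net actually sits inside the candidate HSA; a secondary subtlety, forced by non-selfadjointness, is that $s(z)=s({\rm Re}\,z)$ holds only for $z\in{\mathfrak c}_A$ and not for all real positive $z$, which is why one must work throughout with elements of ${\mathfrak F}$.
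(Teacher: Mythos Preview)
Your proof is correct, but it runs in the opposite direction from the paper's and relies on a different organizing idea.

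The paper proves (2) first, and constructively: it takes $C$ to be the convex hull of $\bigcup_i {\mathfrak F}_{D_i}$, defines $D$ as the closure of $\{aAa : a\in C\}$, and then verifies by hand that $D$ is an approximately unital inner ideal (hence an HSA) whose support projection is $\vee_{a\in C}\, s(a)=\vee_i\, p_i$. Item (1) then drops out as the singleton case $|I|=1$. The key technical ingredient, $s(a_1)\vee s(a_2)=s\bigl(\tfrac12(a_1+a_2)\bigr)$, is imported from \cite[Proposition 2.14]{BRI}.

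You instead prove (1) directly from the existence of a partial cai in $\tfrac12{\mathfrak F}_D$ (Theorem \ref{kapl}), and then attack (2) at the level of projections: you show finite joins $z_F=\vee_{a\in F}\, s(a)$ are $A$-open because $z_F=s(c_F)$ for a single element $c_F\in{\mathfrak c}_A$, and then pass to the full join $q$ by a diagonal weak*-closure argument using nets witnessing the openness of each $z_F$. Your self-contained proof of $s(x+y)=s(x)\vee s(y)$ via the identity $s(z)=s({\rm Re}\,z)$ for $z\in{\mathfrak c}_A$ is a nice alternative to citing \cite{BRI}, and your observation that this identity fails for general $z\in{\mathfrak r}_A$ is correct and worth recording.

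What each approach buys: the paper's explicit description of $D$ as $\overline{\{aAa:a\in C\}}$ feeds directly into later results (e.g.\ Theorem \ref{Ididnt} and Corollary \ref{Ahasc2}), while your projection-level argument is cleaner if one only wants the lattice statement and does not need the concrete form of the HSA.
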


\begin{proof}  
Let $\{ D_i : i \in I \}$ be a collection of HSA's in a 
Jordan operator algebra $A$.   Let $C$ be the convex hull
of $\cup_{i \in I} \, {\mathfrak F}_{D_i}$, which is
a subset of ${\mathfrak F}_{A}$.   Let $D$ be the
closure of $\{ a A a : a \in C \}$.   Since  any $a \in {\mathfrak F}_{D_i}$ has a cube root in 
${\mathfrak F}_{D_i}$, 
${\mathfrak F}_{D_i}$ and $C$ are subsets of $ \{ a A a : a \in C \} \subset D$.
  We show that $D$ is a subspace.  If $a_1, a_2 \in C$ then $a = \frac{1}{2}(a_1 + a_2) \in C$.
We have $s(a_1) \vee s(a_2) = s(a)$, since this is true
with respect to oa$(A)$ (this follows for example from 
\cite[Proposition 2.14]{BRI} and Corollary 2.6), and 
$A^{**}$ is closed under meets 
and joins.   Hence $a_1 A a_1 + a_2 A a_2 \subset 
s(a) A^{**} s(a) \cap A = \overline{aAa}$.   So  $D$ is a subspace. 
Moreover
$D$ is an inner ideal since $aba A aba \subset aAa$ for 
$a \in C, b \in A$.   

For any finite set $F = \{ a_1 , \cdots ,
a_n \} \subset \{ a A a : a \in C \}$, a similar argument shows
that there exists $a \in C$ with 
$F \subset a A a$.   Hence $a^{\frac{1}{n}} a_k \to a_k$
and $a_k a^{\frac{1}{n}} \to a_k$ for all $k$.  
It follows that $D$ is approximately unital, and is a HSA.

Clearly $D$ is the smallest HSA containing all the $D_i$,
since any HSA containing all the $D_i$ would contain $C$
and $\{ a A a : a \in C \}$.
If $p$ is the support projection of $D$ then 
$f = \vee_{a \in C} \, s(a) \leq p$.   Conversely, 
if $a \in C$ then $aAa \subset fA^{**} f$.
Hence $D$ and $D^{\perp \perp} = p A^{**} p$ are contained in 
$fA^{**} f$, so that $p \leq f$ and $p = f$.     Of course $D = p A^{**} p \cap A$.

In particular, when $I$ is singleton we see that the
support projection of a HSA $D$ equals
$\vee_{a \in {\mathfrak F}_D} \, s(a)$.
This proves (1) (using also the fact from Lemma \ref{op}
that $s(a) = s({\mathfrak F}(a))$ for $a \in {\mathfrak r}_D$).

For (2), if $p_i$ is the support projection of
$D_i$ above then $r = \vee_{i \in I} p_i \leq p$ clearly.
On the other hand, if $a \in C$ is a convex combination
of elements of ${\mathfrak F}_{D_{i_j}}$ for $j = 1, \cdots, m$,
then $r a r = a$, so that $s(a) \leq r$.
 This implies by the above that $p \leq r$ and $p = r$.
So suprema in $B^{**}$ of collections of $A$-open projections are $A$-open.
The last assertion is clear from the above.  
\end{proof}

\begin{remark} The intersection of two inner ideals in a Jordan operator algebra $A$ is a inner ideal, and is the largest inner ideal contained in the two (this is not true with `inner ideals' replaced by HSA's, not even in the associative 
 operator algebra case where this would correspond to a false statement about right ideals with left 
approximate identities--see
\cite[Section 5.4]{BZ}).  
\end{remark}

 As in the operator algebra case, 
we may use ${\mathfrak r}_A$ and ${\mathfrak F}_A$ somewhat 
interchangably in most of the next several results.
This is because of facts like: if $a \in {\mathfrak r}_A$ then a Jordan subalgebra
of $A$ contains $a$ iff it contains $x = a (1+a)^{-1}
\in \frac{1}{2}  {\mathfrak F}_A$.    Indeed $x \in {\rm oa}(a)$ and 
since $x + xa = a$ we have $a = x(1-x)^{-1} \in {\rm oa}(x)$  as in the proof of
\cite[Lemma 2.5]{BRord} (the power series for $(1-x)^{-1}$ converges by the Neumann
lemma since $\Vert x \Vert < 1$, as follows from \cite[Lemma 2.5]{BRord} with $A$ 
replaced by ${\rm oa}(a)$).      Also, 
$\overline{xAx} = \overline{aAa}$ by Lemma \ref{rop}.     

\begin{lemma}
\label{smallestH}  For any Jordan operator algebra $A$,  if
$E \subset {\mathfrak r}_A$  then the smallest hereditary  subalgebra of $A$ containing $E$ is
$pA^{**} p \cap A$ where $p = \vee_{x \in E} \, s(x)$.   
\end{lemma}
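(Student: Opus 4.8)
The plan is to reduce this to the machinery already assembled in Lemma \ref{supop}. Given $E \subset {\mathfrak r}_A$, for each $x \in E$ pass to ${\mathfrak F}(x) = x(1+x)^{-1} \in \frac{1}{2}{\mathfrak F}_A$; by Lemma \ref{op} we have $s(x) = s({\mathfrak F}(x))$, and by the discussion preceding this lemma (or directly from Lemma \ref{rop}) a Jordan subalgebra of $A$ contains $x$ if and only if it contains ${\mathfrak F}(x)$, and $\overline{xAx} = \overline{{\mathfrak F}(x) A {\mathfrak F}(x)}$. So the collection of HSA's $\{ D_x := \overline{xAx} : x \in E \}$ is exactly the collection $\{ \overline{{\mathfrak F}(x) A {\mathfrak F}(x)} : x \in E \}$, each with support projection $s(x)$ by Lemma \ref{rop}.

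Next I would apply Lemma \ref{supop}(2) to the family $\{ D_x : x \in E\}$: the supremum $p = \vee_{x \in E} s(x)$ of the $A$-open support projections is $A$-open, and is the support projection of the smallest HSA $D$ containing all the $D_x$, with $D = p A^{**} p \cap A$ by the last lines of that proof. It remains to check that ``the smallest HSA containing all the $D_x$'' coincides with ``the smallest HSA containing $E$.'' One direction is immediate: any HSA containing $E$ contains each $x$, hence contains $\overline{xAx} = D_x$ (a HSA $D'$ satisfies $aD'a \subset D'$, and if $x \in D'$ then $xAx \subset xD'x \subset D'$ — wait, more carefully, $xAx$ need not lie in $D'$ unless $D'$ is an inner ideal in $A$, which it is by Proposition \ref{hsaii}), so $D_x \subset D'$; thus $D' \supseteq D$. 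For the reverse, $D$ itself is a HSA containing each $D_x$ and hence containing each $x \in E$ (since $x \in \overline{xAx}$, as $x$ has roots in $\mathrm{oa}(x)$ by the remarks before Lemma \ref{smallestH}), so $D$ is among the HSA's containing $E$; being the smallest containing the larger collection $\{D_x\}$, it is the smallest containing $E$.

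The main obstacle — really the only point requiring care — is the inner-ideal bookkeeping in the first direction above: one must invoke Proposition \ref{hsaii} to know that any HSA $D'$ of $A$ is an inner ideal, so that $x \in D'$ forces $xAx \subset D'$ and hence $\overline{xAx} \subset D'$. Once that is in hand, everything else is a direct citation of Lemma \ref{supop} and the ${\mathfrak F}(x)$-vs-$x$ interchangeability discussion. I would write the proof in two or three sentences: identify $D_x = \overline{xAx}$ with support $s(x)$; quote Lemma \ref{supop} to get that $D = pA^{**}p \cap A$ with $p = \vee_x s(x)$ is the smallest HSA containing $\{D_x\}$; and observe via Proposition \ref{hsaii} and $x \in \overline{xAx}$ that this is the same as the smallest HSA containing $E$.
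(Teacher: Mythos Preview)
Your proposal is correct and follows essentially the same route as the paper: both invoke Lemma \ref{supop} to see that $p=\vee_{x\in E}s(x)$ is $A$-open with associated HSA $pA^{**}p\cap A$, and both then argue minimality via the order correspondence between HSA's and their support projections. The only cosmetic difference is that the paper argues the converse direction directly at the level of projections (any HSA $D\supset E$ has support projection $q\ge s(x)$ for each $x$, hence $q\ge p$, so $pA^{**}p\cap A\subset D$), whereas you pass through the intermediate HSA's $D_x=\overline{xAx}$ and use Proposition \ref{hsaii}; these are equivalent via the remark after Proposition \ref{hsaii}.
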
 \begin{proof}  
 By Lemma  
\ref{supop}, $pA^{**} p \cap A$ is a
 hereditary subalgebra of $A$, and it  contains $(a_i)$.   Conversely if
$D$ is a hereditary subalgebra of $A$ containing $(a_i)$ then $D^{\perp \perp}$ contains $p$ by the usual
argument, so  $pA^{**} p \subset D^{\perp \perp}$  and
$pA^{**} p \cap A \subset D^{\perp \perp} \cap A = D$.
\end{proof}

As in \cite{BRI} (above Proposition 2.14 there), the correspondence between  a HSA $D$ and its support projection is a bijective 
order embedding from the lattice of HSA's of a Jordan operator algebra $A$ and the lattice of $A$-open projections
in $A^{**}$ (see e.g.\ \cite{N} for a JB-algebra variant of this).    Write $Q(A)$ for the quasistate space of $A$, that is the set of states multiplied by numbers in $[0,1]$.    In the next several results we will be using facts from Section
\ref{aifnl}, namely that states on an approximately unital Jordan subalgebra $A$ are restrictions of states on 
a containing $C^*$-algebra $B$.

\begin{theorem} \label{wsfac}
Suppose that $A$ is an approximately unital  Jordan subalgebra of a $C^*$-algebra $B.$ If $p$ is a nontrivial projection in $A^{\perp\perp}\cong A^{\ast\ast},$ then the following are equivalent:	
\begin{itemize}
\item [(i)] $p$ is open in $B^{\ast\ast}$.
\item [(ii)]	The set $F_p= \{ \varphi \in Q(A): \varphi(p)=0 \}$ is a weak* closed face in $Q(A)$ containing $0$.
\item [(iii)] $p$ is lower semicontinous on $Q(A).$
\end{itemize}
These all hold for $A$-open projections in $A^{**}$, and for such projections
$F_p = Q(A) \cap D^\perp$ where $D$ is the HSA in $A$ supported by $p$. 
\end{theorem}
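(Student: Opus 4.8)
The plan is to push everything down to the classical $C^*$-algebra statements for $B$ by means of the restriction map on quasistates. Write $r : Q(B) \to Q(A)$, $r(\psi) = \psi|_A$; this is weak* continuous, and by facts from Subsection \ref{aifnl} it maps $Q(B)$ onto $Q(A)$, so (as $Q(B)$ is weak* compact and $Q(A)$ weak* Hausdorff) $r$ is a continuous surjection of compact Hausdorff spaces. Since the embedding $A^{**} \cong A^{\perp\perp} \hookrightarrow B^{**}$ is the adjoint of the restriction $B^* \to A^*$, one has $\langle p, \psi\rangle = \langle p, \psi|_A\rangle$ for $\psi \in B^*$; hence the function $\hat p_A : \varphi \mapsto \varphi(p)$ on $Q(A)$ satisfies $\hat p_A \circ r = \hat p_B$ on $Q(B)$, and $r^{-1}(F_p) = F_p^B := \{\psi \in Q(B) : \psi(p) = 0\}$. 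Moreover every $\varphi \in Q(A)$ is a nonnegative multiple of the restriction of a state of $B$, which is a normal positive functional on $B^{**}$; since $0 \le p$ this forces $0 \le \hat p_A \le 1$, and from this it is immediate (with no openness hypothesis) that $F_p$ is a face of $Q(A)$ containing $0$. So (ii) says precisely that $F_p$ is weak* closed.

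The next step is to transfer the topological conditions across $r$. Because $r$ is a continuous surjection of compact Hausdorff spaces: $F_p$ is weak* closed in $Q(A)$ if and only if $r^{-1}(F_p) = F_p^B$ is weak* closed in $Q(B)$ (forward by continuity of $r$; conversely $r^{-1}(F_p)$ closed is compact, so $r(r^{-1}(F_p)) = F_p$ is compact, hence closed); and $\hat p_A$ is lower semicontinuous on $Q(A)$ if and only if $\hat p_B = \hat p_A \circ r$ is lower semicontinuous on $Q(B)$ (forward, composition of an lsc map with a continuous one; conversely, for each $c$ the set $\{\hat p_B \le c\}$ is compact, so $\{\hat p_A \le c\} = r(\{\hat p_B \le c\})$ is compact, hence closed). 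Now I invoke the classical theory of Akemann (\cite{Ake2}, see also \cite{P}): for the projection $p$ in $B^{**}$, the statements ``$p$ is open in $B^{**}$'', ``$\hat p_B$ is lower semicontinuous on $Q(B)$'', and ``$F_p^B$ is weak* closed'' are equivalent. Chaining: (i) $\Leftrightarrow$ [$F_p^B$ weak* closed] $\Leftrightarrow$ [$F_p$ weak* closed] $\Leftrightarrow$ (ii), and (i) $\Leftrightarrow$ [$\hat p_B$ lsc] $\Leftrightarrow$ [$\hat p_A$ lsc] $=$ (iii). This gives the equivalence of (i)--(iii).

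For the final assertions, suppose $p$ is $A$-open. Then $p$ is open in $B^{**}$ (recalled earlier in this section), so (i), and hence (ii) and (iii), hold. Let $D = pA^{**}p \cap A$ be the HSA supported by $p$, and pick a partial cai $(e_\alpha)$ of $D$ (which exists by Lemma \ref{jcai}), so $e_\alpha \to p$ weak* in $A^{**}$. If $\varphi \in Q(A) \cap D^\perp$, then $\varphi(p) = \lim_\alpha \varphi(e_\alpha) = 0$, so $\varphi \in F_p$. Conversely, if $\varphi \in F_p$, scale a state extension to obtain a positive functional $\bar\varphi$ on $B$ with $\bar\varphi = \varphi$ on $A$ and $\bar\varphi(p) = \varphi(p) = 0$; for $d \in D$ we have $d = pdp$ in $B^{**}$, so Cauchy--Schwarz for $\bar\varphi$ (on $B^{**}$) gives $|\varphi(d)|^2 = |\bar\varphi(p\,(dp))|^2 \le \bar\varphi(p)\,\bar\varphi(p d^* d p) = 0$, whence $\varphi(d) = 0$. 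Thus $F_p = Q(A) \cap D^\perp$.

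I do not expect a deep obstacle here: the mathematical substance is imported from $C^*$-algebra theory, and the only real care needed is the functional-analytic bookkeeping of the first paragraph --- that $\hat p_A \circ r = \hat p_B$, that $\hat p_A$ is $[0,1]$-valued (so $F_p$ is automatically a face containing $0$), and that $r^{-1}(F_p) = F_p^B$ --- together with the routine compactness arguments used to move the closedness and lower-semicontinuity conditions between $Q(B)$ and $Q(A)$.
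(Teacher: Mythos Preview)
Your proof is correct and follows essentially the same route as the paper's: reduce the equivalence of (i)--(iii) to the known $C^*$-algebra statements for $B$ by pushing through the restriction map on quasistates, and handle the final identity $F_p = Q(A)\cap D^\perp$ via Cauchy--Schwarz after extending to a positive functional on $B$. The paper simply cites \cite[Theorem~4.1]{BHN} for the first part, whereas you have spelled out explicitly the compactness/quotient-map argument transferring weak* closedness and lower semicontinuity across the surjection $r:Q(B)\to Q(A)$; your observation that $F_p$ is automatically a face containing $0$ (so that (ii) reduces to closedness) is a nice clarification.
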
  

\begin{proof}  The first assertions are just as in \cite[Theorem 4.1]{BHN} (and the remark above that result), using the remark above the present 
theorem and fact that $A$-open projections are open with respect to a containing $C^*$-algebra.
For the last assertion,  if $p$ is $A$-open then $Q(A) \cap D^\perp \subset F_p$ since a cai for $D$ converges weak* to $p$.   Conversely if $\varphi \in F_p$ then by the fact above the theorem  $\varphi$ extends to a positive functional on a $C^*$-cover of $A$.
One may assume that this is a state,
 and use the Cauchy-Schwarz inequality to see that $|\varphi(x) | = |\varphi(px)| \leq 
\varphi(p)^{\frac{1}{2}} \varphi(x^*x)^{\frac{1}{2}} = 0$ for $x \in D .$
\end{proof}

If $A$ is unital then there is a similar result and proof using the state space $S(A)$ in place of $Q(A)$.  

\begin{proposition} \label{orderfac} Let $A$ be an approximately unital Jordan operator algebra   The correspondence
$p \mapsto F_p$ is a one-to-one order reversing embedding from the $A$-open projections into the lattice of 
weak* closed  faces of $Q(A)$ containing $0$, thus $p_1 \leq p_2$ iff $F_{p_2} \subset F_{p_1}$
for  $A$-open projections $p_1, p_2$ in $A^{**}$.   Similarly there is a 
one-to-one order reversing embedding $D \mapsto F_p$ from the HSA's  in $A$ 
into the  lattice of  faces of $Q(A)$ above, where $p$ is the support projection of the HSA $D$.  \end{proposition}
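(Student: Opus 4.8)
The plan is to deduce the proposition from Theorem \ref{wsfac} (in particular the description $F_p = Q(A) \cap D^\perp$ given there for $A$-open $p$ with HSA $D$), from the bijective order correspondence between HSA's and $A$-open projections recalled just above Theorem \ref{wsfac}, and from Lemma \ref{supop}(1). Since $D \mapsto p$ (support projection) is an order isomorphism of the HSA's of $A$ onto the $A$-open projections of $A^{**}$, and $D_1 \subseteq D_2$ iff $p_1 \leq p_2$ (the remark after Proposition \ref{hsaii}), the HSA form of the statement will follow from the projection form, so I would only prove the latter. Fix a $C^*$-algebra $B \supseteq A$, and write $p_i$ for the support projection of the HSA $D_i$, so that $F_{p_i} = Q(A) \cap D_i^\perp$ by the last part of Theorem \ref{wsfac} and each $F_p$ is a weak* closed face of $Q(A)$ containing $0$. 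It then remains to show that for $A$-open $p_1, p_2 \in A^{**}$ one has $p_1 \leq p_2$ iff $F_{p_2} \subseteq F_{p_1}$; injectivity of $p \mapsto F_p$ (and of $D \mapsto F_p$) is then immediate.

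The forward implication is easy: a quasistate $\varphi$ is, by the discussion before Theorem \ref{wsfac}, the restriction of a positive functional on a $C^*$-cover of $A$, hence is positive on the von Neumann algebra $\Delta(A^{**})$; so if $p_1 \leq p_2$ and $\varphi \in F_{p_2}$ then $0 \leq \varphi(p_1) \leq \varphi(p_2) = 0$, giving $\varphi \in F_{p_1}$. For the reverse implication I would argue the contrapositive, producing a quasistate in $F_{p_2} \setminus F_{p_1}$. Assume $p_1 \not\leq p_2$. By Lemma \ref{supop}(1), $p_1 = \bigvee_{a \in {\mathfrak r}_{D_1}} s(a)$, so there is $a \in {\mathfrak r}_{D_1}$ with $s(a) \not\leq p_2$; since $a \in D_1 = p_1 A^{**} p_1 \cap A$ we also have $s(a) \leq p_1$. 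As $s(a)$ is a projection, $s(a) \not\leq p_2$ is equivalent to $(1-p_2)\,s(a)\,(1-p_2) \neq 0$, and this element is positive in $B^{**}$ (it equals $TT^*$ with $T = (1-p_2)s(a)$), so there is a \emph{normal} state $\psi$ of $B^{**}$ with $\psi\big((1-p_2)\,s(a)\,(1-p_2)\big) > 0$. Put $\tilde\varphi = \psi\big((1-p_2)(\cdot)(1-p_2)\big)$, a weak* continuous positive functional on $B^{**}$ of norm $\psi(1-p_2) \in (0,1]$, and let $\varphi = \tilde\varphi|_A$; then $\varphi$ is a quasistate of $A$ whose unique weak* continuous extension to $A^{**}$ is $\tilde\varphi|_{A^{**}}$. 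Now $\varphi(p_2) = \tilde\varphi(p_2) = 0$, so $\varphi \in F_{p_2}$; while $\varphi(s(a)) = \tilde\varphi(s(a)) > 0$ and $0 \leq s(a) \leq p_1$ in $\Delta(A^{**})$, so that $\varphi(p_1) \geq \varphi(s(a)) > 0$ and hence $\varphi \notin F_{p_1}$. Thus $F_{p_2} \not\subseteq F_{p_1}$.

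Putting the two implications together gives $p_1 \leq p_2 \iff F_{p_2} \subseteq F_{p_1}$ for $A$-open projections, which is the asserted order-reversing embedding; the HSA version follows by composing with the order isomorphism $D \mapsto p$. I expect the only real work to be in the reverse implication — the construction of the separating quasistate — where the points needing care are the use of Lemma \ref{supop}(1) to split $p_1$ off of $p_2$, and the bookkeeping with a \emph{normal} state $\psi$, so that the functional $\varphi$ built from it is a genuine quasistate and its values on the projections $p_1, p_2, s(a) \in A^{**}$ are controlled via weak* continuity.
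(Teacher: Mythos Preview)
Your argument is correct. The forward implication is essentially the same as the paper's (the paper phrases it via the Cauchy--Schwarz step from the end of Theorem \ref{wsfac}, you via monotonicity of an extended positive functional on projections; these are equivalent one-liners).

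For the reverse implication the two proofs genuinely diverge. The paper does not construct a separating quasistate at all: it lifts the inclusion $F_{p_1} \subset F_{p_2}$ from $Q(A)$ to the corresponding face inclusion in $Q(B)$ (via the argument preceding \cite[Proposition~2.14]{BRI}) and then simply invokes the known $C^*$-algebra correspondence between open projections and faces of $Q(B)$ to conclude $p_2 \leq p_1$. Your route is more self-contained: you manufacture a witness $\varphi \in F_{p_2} \setminus F_{p_1}$ by compressing a normal state of $B^{**}$ by $1-p_2$. This avoids citing the external $C^*$-result, at the cost of a little bookkeeping with normality. One small simplification: the detour through Lemma~\ref{supop}(1) and the element $a$ is unnecessary---since $p_1 \not\leq p_2$ already gives $(1-p_2)p_1(1-p_2) \neq 0$, you can run the identical construction with $p_1$ in place of $s(a)$ and drop that lemma entirely.
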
   \begin{proof}  
Indeed an argument similar to the last part of the last proof shows that 
$F_{p_1} \subset F_{p_2}$ if $p_2 \leq p_1$.   Conversely, if $F_{p_1} \subset F_{p_2}$, then by the argument 
above \cite[Proposition 2.14]{BRI} this implies a similar inclusion but in $Q(B)$, which by the $C^*$-theory gives
$p_2 \leq p_1$.  The last assertion follows from the first and the bijection between HSA's and their support projections.
\end{proof}

\begin{corollary}
\label{ishsa}
Let $A$ be any Jordan operator algebra (not necessarily with an identity or approximate identity.) Suppose that $(x_k)$ is a sequence in ${\mathfrak F}_A,$ and that $\alpha_k\in (0,1]$ and $\sum_{k=1}^{\infty} \, \alpha_k=1$. Then the HSA generated by all the $\overline{x_kAx_k}$ equals $\overline{zAz}$	where $z = \sum_{k=1}^{\infty} \, \alpha_k
x_k \in {\mathfrak F}_A$.   Equivalently (by  Lemma {\rm \ref{supop}}), $\vee_k \, s(x_k)=s(z).$
\end{corollary}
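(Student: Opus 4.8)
The plan is to reduce the whole statement to the single projection identity $\vee_k \, s(x_k) = s(z)$, which is precisely the ``Equivalently'' clause. First I would check that $z$ is well defined and lies in ${\mathfrak F}_A$: since $x_k \in {\mathfrak F}_A$ forces $\|x_k\| \le 2$, the series $\sum_k \alpha_k x_k$ converges absolutely, and since $1 - \sum_{k=1}^N \alpha_k x_k = (1-\beta_N)1 + \sum_{k=1}^N \alpha_k(1-x_k)$ (with $\beta_N = \sum_{k \le N}\alpha_k$) is a convex combination of elements of norm $\le 1$, passing to the limit gives $\|1-z\| \le 1$, i.e. $z \in {\mathfrak F}_A$. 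The case $z = 0$ is trivial (then $z + z^* = \sum_k \alpha_k(x_k+x_k^*) = 0$ forces each $x_k + x_k^* = 0$, hence $x_k^* x_k \le x_k + x_k^* = 0$ and $x_k = 0$), so I would assume $z \ne 0$ and discard the (harmless) zero terms, so that each $x_k \ne 0$. By Lemma~\ref{rop}, $\overline{zAz}$ is the HSA with support projection $s(z)$ and each $\overline{x_kAx_k}$ is the HSA with support projection $s(x_k)$; these are $A$-open by Lemma~\ref{op}, so by Lemma~\ref{supop}(2) the HSA generated by all of them has support projection $\vee_k \, s(x_k)$. Since a HSA is determined by its support projection (the remark following Proposition~\ref{hsaii}), the corollary is exactly the assertion $\vee_k \, s(x_k) = s(z)$. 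Throughout I would fix a $C^*$-cover $B$ of $A$ and work in $B^{**}$, which computes all the relevant support projections correctly by Corollary~\ref{rop2}.

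For the inequality $\vee_k \, s(x_k) \le s(z)$, I would set $p = s(z)$. By Lemma~\ref{op}, $pz = zp = z$, hence $(1-p)z = z(1-p) = 0$, and likewise $(1-p)z^* = z^*(1-p) = 0$, so $(1-p)(z+z^*)(1-p) = 0$. Now $z + z^* = \sum_k \alpha_k(x_k + x_k^*)$ converges in norm (each $\|x_k + x_k^*\| \le 4$), so the left-hand side is a convergent series of positive operators $\alpha_k(1-p)(x_k+x_k^*)(1-p) \ge 0$ summing to $0$, and each term must vanish: $(1-p)(x_k + x_k^*)(1-p) = 0$. Here I invoke the characterization $x_k^* x_k \le x_k + x_k^*$ of membership in ${\mathfrak F}_A$ (see Section~\ref{MeyerRP}) to get $0 \le (1-p)x_k^* x_k (1-p) \le (1-p)(x_k+x_k^*)(1-p) = 0$, whence $x_k(1-p) = 0$, i.e. $x_k p = x_k$. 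Since by Lemma~\ref{op} the right support projection of $x_k$ equals $s(x_k)$, this gives $s(x_k) \le p = s(z)$, hence $\vee_k \, s(x_k) \le s(z)$.

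For the reverse inequality I would put $q = \vee_k \, s(x_k)$, an $A$-open projection in $A^{**}$ by Lemma~\ref{supop}(2). From $s(x_k) \le q$ and Lemma~\ref{op} one has $q x_k q = s(x_k) x_k s(x_k) = x_k$ for every $k$. Applying the continuous map $\eta \mapsto q \eta q$ to the norm-convergent series $z = \sum_k \alpha_k x_k$ yields $qzq = \sum_k \alpha_k q x_k q = \sum_k \alpha_k x_k = z$, hence $qz = zq = z$, and so $s(z) \le q = \vee_k \, s(x_k)$ by Lemma~\ref{op} again. Combining the two inequalities gives $s(z) = \vee_k \, s(x_k)$, which as explained above is equivalent to the asserted equality of HSA's.

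The only genuinely delicate step — the \emph{main obstacle} — is the first inequality: because each $x_k$ is merely real positive and not positive, one cannot directly assert that the compression $(1-p)x_k(1-p)$ is positive. The correct route is to pass first to the self-adjoint ``real parts'' $x_k + x_k^* \ge 0$, deduce that their compressions by $1-p$ vanish from the series identity for $z + z^*$, and only then use the inequality $x_k^* x_k \le x_k + x_k^*$ (available precisely because $x_k \in {\mathfrak F}_A$) to conclude $x_k(1-p) = 0$. Everything else is bookkeeping: matching up the left/right/Jordan support projections via Lemmas~\ref{op} and \ref{supop} and Corollary~\ref{rop2}, and justifying the interchange of $q(\cdot)q$ with the infinite sum, which is legitimate by norm convergence since $\|x_k\| \le 2$ and $\sum_k \alpha_k < \infty$.
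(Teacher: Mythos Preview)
Your proof is correct and takes a genuinely different route from the paper's. The paper reduces to the unital case (using $\overline{xAx} = \overline{xA^1x}$), then invokes the state-space face machinery: by Lemma~\ref{erphix} a state $\varphi$ satisfies $\varphi(z)=0$ iff $\varphi(x_k)=0$ for all $k$, so $F_{s(z)} = \bigcap_k F_{s(x_k)}$, and the order-reversing injection $p \mapsto F_p$ of Proposition~\ref{orderfac} yields $s(x_k) \le s(z)$; for the reverse it simply observes $z \in \sum_k \overline{x_kAx_k} \subset D$, hence $s(z) \le \vee_k s(x_k)$. Your argument bypasses the face theory entirely and works directly in $B^{**}$: compressing the positive identity $z+z^* = \sum_k \alpha_k(x_k+x_k^*)$ by $1-s(z)$ and using $x_k^*x_k \le x_k+x_k^*$ to kill the individual terms is more elementary and self-contained, and avoids the detour through states. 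The paper's approach has the virtue of tying the result into the noncommutative-topology picture (faces of $Q(A)$), which is thematically useful in Section~3; yours is shorter and needs no unitization. One small wording point: in your second inequality you should argue $qx_k = x_k$ and $x_kq = x_k$ directly (from $s(x_k) \le q$) and then pass to the limit to get $qz = zq = z$, rather than going via $qzq = z$ and then asserting $qz = zq = z$---the latter implication is not automatic from $qzq = z$ alone, though your intended argument is sound.
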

\begin{proof}   This follows similarly to the operator algebra case in \cite[Proposition 2,14]{BRI}.
 If $x \in {\mathfrak F}_A$ then  $\overline{xAx}=\overline{xA^1x}$ as we said above Lemma \ref{rop}.  So we may assume that $A$ is unital.
As in  the operator algebra case $F_{s(z)} = \cap_{k=1}^{\infty} \, F_{s(x_k)}$, which implies by
the lattice isomorphisms above the corollary that $\vee_k s(x_k) \geq s(z),$ and that the smallest 
HSA $D$ containing all the $\overline{x_k A x_k}$ contains $\overline{zAz}$.   Conversely, $z \in \sum_k \overline{x_k A x_k}
\subset D$, so that $s(z) \leq \vee_k s(x_k)$, and so we have equality.
\end{proof}

\begin{theorem}  \label{Ididnt}   \begin{itemize}
\item [(1)]  If $A$ is an associative operator algebra then the HSA's  (resp.\ right ideals with left contractive
approximate identities) in $A$ 
are precisely the sets of form  $\overline{EAE}$  (resp.\ $\overline{EA}$) for some  $E \subset {\mathfrak r}_A$.
The latter set is the smallest HSA (resp.\ right ideal with left 
approximate identity) of $A$ containing $E$.  
\item [(2)]  If $A$ is a Jordan operator algebra then the HSA's in $A$ 
are precisely the sets of form  $\overline{\{ x A x : x \in {\rm conv}(E) \}}$  for some  $E \subset {\mathfrak r}_A$. 
The latter set equals $\overline{\{ x a y + ya x : x, y \in {\rm conv}(E) , a \in A \}}$, and 
 is the smallest HSA of $A$ containing $E$  (c.f.\ Lemma {\rm \ref{smallestH}}).  
 \end{itemize} 
\end{theorem}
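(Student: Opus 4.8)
The plan is to read the theorem off Lemma \ref{supop}, Lemma \ref{rop} and Proposition \ref{hsaii}, together with the facts that ${\mathfrak r}_D = {\mathfrak r}_A \cap D$ for a Jordan subalgebra $D$ of $A$, and that a HSA always contains a partial cai lying in $\tfrac12 {\mathfrak F}$ of itself (Theorem \ref{kapl}). Part (1) is the associative case, and is already contained in the cited works: the characterization of the right ideals with a left cai as the sets $\overline{EA}$, and of the HSA's as the sets $\overline{EAE}$, for $E \subseteq {\mathfrak r}_A$, is in \cite{BRI} (see also \cite{BHN}, and cf.\ \cite[Proposition 2.14]{BRI}); I would simply cite this and say no more about (1).

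For (2), fix $E \subseteq {\mathfrak r}_A$, put $C := {\rm conv}(E)$ (still contained in ${\mathfrak r}_A$, since ${\mathfrak r}_A$ is a convex cone), and set $D := \overline{\{\, xAx : x \in C\,\}}$. The first step is to note that $D$ is a HSA. This is precisely what the proof of Lemma \ref{supop} establishes: running that argument with the present $C$ in the role of the convex hull appearing there, one sees that $D$ is a subspace (the one nontrivial input being $s(\tfrac12(a_1+a_2)) = s(a_1) \vee s(a_2)$ for $a_1, a_2 \in C$, which follows from \cite[Proposition 2.14]{BRI} transported to $A^{**}$ via Corollary \ref{rop2}, the fact that $A^{**}$ is closed under joins of projections, and $\overline{aAa} = s(a) A^{**} s(a) \cap A$ from Lemma \ref{rop}), that $D$ is an inner ideal (since $(xbx) A (xbx) \subseteq xAx$ for $x \in C$, $b \in A$), and that $D$ is approximately unital (any finite subset of $\{xAx : x \in C\}$ sits inside a single $\overline{xAx}$, $x \in C$, which carries the partial cai $(x^{1/n})$ of Lemma \ref{rop}); hence $D$ is a HSA by Proposition \ref{hsaii}. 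Moreover $E \subseteq \bigcup_{x \in E} \overline{xAx} \subseteq D$, since $x \in \overline{xAx}$ by Lemma \ref{rop}. Finally $D$ is the smallest HSA of $A$ containing $E$: if $D'$ is any such HSA, with support projection $p'$, then $C \subseteq D'$ (as $D'$ is a subspace), so $xAx \subseteq p' A^{**} p' \cap A = D'$ for every $x \in C$, whence $D = \overline{\{xAx : x \in C\}} \subseteq D'$. (In particular $D$ agrees with the HSA of Lemma \ref{smallestH}.)

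Conversely, every HSA $D$ of $A$ arises this way: I would take $E := {\mathfrak r}_D = {\mathfrak r}_A \cap D \subseteq {\mathfrak r}_A$, which is convex, so ${\rm conv}(E) = E$. By the previous step, $\overline{\{xAx : x \in {\mathfrak r}_D\}}$ is the smallest HSA of $A$ containing ${\mathfrak r}_D$, hence is contained in $D$. For the reverse inclusion, $D$ has a partial cai $(e_t)$ inside $\tfrac12 {\mathfrak F}_D \subseteq {\mathfrak r}_D$ (Theorem \ref{kapl}), and for $d \in D$ one has $\| e_t d e_t - d \| \leq \| d e_t - d \| + \| e_t d - d \| \to 0$ with $e_t d e_t \in e_t A e_t \subseteq \{xAx : x \in {\mathfrak r}_D\}$; so $d \in \overline{\{xAx : x \in {\mathfrak r}_D\}}$, giving $D \subseteq \overline{\{xAx : x \in {\mathfrak r}_D\}}$. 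Thus $D = \overline{\{xAx : x \in {\rm conv}({\mathfrak r}_D)\}}$, which is of the required form.

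For the identity $D = \overline{\{\, xay + yax : x,y \in C, a \in A\,\}}$ (with $C = {\rm conv}(E)$): the inclusion $\supseteq$ is immediate since $D$ is an inner ideal (Proposition \ref{hsaii}) and $C \subseteq D$, so each $xay + yax$ lies in $D$. For $\subseteq$, given $x \in C$ and $a \in A$, the element $xax$ equals $x(\tfrac{a}{2})x + x(\tfrac{a}{2})x$, which has the displayed form with $y = x$ and $\tfrac{a}{2} \in A$; hence $\{xAx : x \in C\} \subseteq \{xay + yax : x,y \in C, a \in A\}$, and passing to closures gives $D \subseteq \overline{\{xay + yax : x,y\in C, a \in A\}}$. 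The only genuinely substantive ingredient in all of this is that $\overline{\{xAx : x \in {\rm conv}(E)\}}$ is an approximately unital inner ideal, and that is exactly the content of (the proof of) Lemma \ref{supop}; so I expect no real obstacle here beyond carefully reusing that lemma, with the remaining steps being routine bookkeeping with support projections.
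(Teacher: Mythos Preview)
Your proposal is correct and follows essentially the same route as the paper. Both arguments reduce the core step (that $\overline{\{xAx:x\in C\}}$ is an approximately unital inner ideal) to the support-projection identity $s(a_1)\vee s(a_2)=s(\tfrac12(a_1+a_2))$ and the machinery of Lemma~\ref{rop} and Proposition~\ref{hsaii}; you invoke the proof of Lemma~\ref{supop} for this, while the paper redoes the argument citing Corollary~\ref{ishsa}, and for the converse the paper implicitly takes $E$ to be a real positive cai (as in its proof of~(1)) where you take $E=\mathfrak r_D$, but these are equivalent. Your derivation of the equality with $\overline{\{xay+yax:x,y\in C,\,a\in A\}}$ is in fact slightly cleaner than the paper's: you use directly that $C\subset D$ and that $D$ is an inner ideal, whereas the paper routes through elements of the form $xax\,b\,ycy+ycy\,b\,xax$ and then approximates using $x\in\overline{xAx}$. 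One small caution worth flagging (present in the paper as well): the support-projection identity $s(a_1)\vee s(a_2)=s(\tfrac12(a_1+a_2))$ as stated in \cite[Proposition~2.14]{BRI} and Corollary~\ref{ishsa} is for $\mathfrak F_A$, and can fail for general $\mathfrak r_A$ (e.g.\ $a_1=e_{11}+ie_{22}$, $a_2=-ie_{22}$ in $M_2$); the paper handles this in~(1) by first reducing to $E\subset\tfrac12\mathfrak F_A$ via $\mathfrak F(x)=x(1+x)^{-1}$, and the same reduction is tacitly intended in~(2), so you should note this reduction explicitly when you cite Lemma~\ref{supop}.
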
 

\begin{proof}  We had thought that 
(1) was explicitly in \cite{BRI}, and said  essentially this in e.g.\ \cite{Bsan} after the proof of Corollary 7.10.
Also the result follows from the method of proof we have used several times for different results in our earlier papers, e.g.\ 
\cite[Theorem 7.1]{BOZ}.   Indeed one direction
is obvious by taking $E$ to be a real positive cai for the HSA or right ideal.    For the other direction,
we may assume that $E \subset \frac{1}{2}{\mathfrak F}_A$ by the argument in the first lines of the just mentioned
proof.
Note that $D = \overline{EAE}$  (resp.\ $\overline{EA}$) satisfies $DAD \subset D$ (resp.\ is a right ideal).
For any finite subset $F \subset E$ if $a_F$ is the average of the elements in $F$
then $F \subset \overline{a_F A a_F}$ (resp.\ $F \subset \overline{a_F A}$) since $s(a_F) = \vee_{x \in F} \,
s(x)$ by the operator algebra variant of Corollary
\ref{ishsa}.    
 By a standard argument in our work 
(e.g.\ seen in Lemma \ref{supop}), it is easy to see that $(a_F^{\frac{1}{n}})$ 
will serve as the approximate identity we seek.   Or we can find the latter by the method in the next paragraph.  
The last assertion is fairly obvious.  

(2) \ If $x, y \in{\rm conv}(E)$ and $a, b \in A$ then   $xax + yay \in \overline{z A z}$ where $z = 
\frac{1}{2}(x+y)$ by  Corollary
\ref{ishsa}, and $xax b xax \in D$.   So  $D = \overline{\{ x A x : x \in {\rm conv}(E) \}}$ is a closed inner ideal of $A$.
It follows by the remark before
Proposition \ref{hsaii} that if $x, y  \in {\rm conv}(E)$ then $xax b ycy + ycy b xax \in D$.   Since $x \in \overline{xAx}$ and
$y \in \overline{yAy}$ we have $D = \overline{\{ x a y + ya x : x, y \in {\rm conv}(E) , a \in A \}}$. 
If $F, a_F$ are as in the proof of (1) then $a_F \in {\rm conv}(E)$, and 
$F  \subset \overline{a_F A a_F} \subset D$.     The  
HSA $\overline{a_F A a_F}$ has a J-cai
in $D$, so that there exists $d_{\epsilon,F} \in {\rm Ball}(D)$ such that 
$\| d_{\epsilon,F} x + x d_{\epsilon,F} - 2 x \| < 
\epsilon$ for all $x \in F$.   Hence $D$ has 
$(d_{\epsilon,F})$ as a J-cai.   Again the final assertion is  obvious. 
 \end{proof}

\begin{theorem}
\label{ppn}
Let $A$ be a Jordan operator algebra (not necessarily with an identity or approximate identity.) The HSA's in $A$ are precisely the closures of unions of an increasing net of HSA's of the form $\overline{xAx}$ for $x\in {\mathfrak r}_A$ 
(or equivalently, by an assertion in Lemma {\rm  \ref{rop}} for $x\in {\mathfrak F}_A$).  \end{theorem}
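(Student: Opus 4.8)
The plan is to prove the two inclusions separately, using throughout the bijective order-correspondence (recalled just above the theorem, and built into Proposition~\ref{hsaii}) between HSA's of $A$ and $A$-open projections of $A^{**}$, together with Lemma~\ref{supop}. First I would dispose of the parenthetical equivalence: by Lemma~\ref{rop}, $\overline{xAx} = \overline{aAa}$ with $a = {\mathfrak F}(x) = x(1+x)^{-1} \in \frac{1}{2}{\mathfrak F}_A$ whenever $x \in {\mathfrak r}_A$, so the collections $\{ \overline{xAx} : x \in {\mathfrak r}_A \}$ and $\{ \overline{xAx} : x \in {\mathfrak F}_A \}$ coincide and it suffices to argue with ${\mathfrak F}_A$.

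For the ``$\Leftarrow$'' direction I would show that if $(D_i)$ is \emph{any} increasing net of HSA's of $A$, with support projections $(p_i)$, then $D := \overline{\bigcup_i D_i}$ is a HSA of $A$ with support projection $\bigvee_i p_i$. Let $E$ be the smallest HSA containing all the $D_i$: by Lemma~\ref{supop}(2) this exists and its support projection is $\bigvee_i p_i$, and from the construction in the proof of Lemma~\ref{supop} one has $E = \overline{\{ aAa : a \in C \}}$, where $C$ is the convex hull of $\bigcup_i {\mathfrak F}_{D_i}$. Since the net is increasing the sets ${\mathfrak F}_{D_i}$ increase, hence $C = \bigcup_i {\mathfrak F}_{D_i}$; and because each $D_i$ is a HSA we have $aAa \subset D_i$ for every $a \in D_i \supset {\mathfrak F}_{D_i}$, so $\{ aAa : a \in C \} \subset \bigcup_i D_i \subset E$. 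Taking closures gives $E = \overline{\bigcup_i D_i} = D$. In particular, when each $D_i = \overline{x_i A x_i}$ with $x_i \in {\mathfrak F}_A$, this settles one inclusion of the theorem.

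For ``$\Rightarrow$'', given a HSA $D$ of $A$ with support projection $p$, I would exhibit an explicit increasing net of HSA's of the stated form. Index by the finite subsets $F \subset {\mathfrak F}_D$, directed by inclusion; for $F = \{ x_1, \dots, x_n \}$ put $z_F = \frac{1}{n}\sum_{k=1}^n x_k \in {\mathfrak F}_D \subset {\mathfrak F}_A$. Then $\overline{z_F A z_F}$ is a HSA of $A$ of the required shape, contained in $D$ (since $z_F \in D$ and $D$ is an inner ideal), and $s(z_F) = \bigvee_{x \in F} s(x)$ by Corollary~\ref{ishsa}; so $F \subset F'$ forces $s(z_F) \le s(z_{F'})$, i.e.\ $\overline{z_F A z_F} \subset \overline{z_{F'} A z_{F'}}$ by Lemma~\ref{rop}, and $(\overline{z_F A z_F})_F$ is an increasing net. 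By the direction already established, $\tilde D := \overline{\bigcup_F \overline{z_F A z_F}}$ is a HSA of $A$ with support projection $\bigvee_F s(z_F) = \bigvee_{x \in {\mathfrak F}_D} s(x)$, which equals $p$ by Lemma~\ref{supop}(1). Since $\tilde D \subset D$ and both are HSA's with support projection $p$, hence both equal $p A^{**} p \cap A$ by Proposition~\ref{hsaii}, we conclude $\tilde D = D$, as required.

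I do not expect a serious obstacle; the only mildly delicate point is the identification, in the ``$\Leftarrow$'' step, of the closure of the union of an increasing net of HSA's with the \emph{smallest} HSA containing the net, which is what lets Lemma~\ref{supop} apply and pin down the support projection --- everything else is bookkeeping with the HSA/open-projection dictionary. A cosmetic alternative for the ``$\Rightarrow$'' step is to index the net directly by $x \in {\mathfrak F}_D$ with the preorder $x \preceq y$ defined by $s(x) \le s(y)$ (directed because $s(\frac{1}{2}(x+y)) = s(x) \vee s(y)$, as in the proof of Lemma~\ref{supop}), thereby avoiding the finite-subset device.
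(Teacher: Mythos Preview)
Your proof is correct and follows essentially the same approach as the paper: for the ``$\Rightarrow$'' direction the paper also indexes by finite subsets $F \subset {\mathfrak F}_D$ and uses the averages $a_F$ (your $z_F$), then invokes Lemma~\ref{supop}. The paper's proof is much terser and treats the ``$\Leftarrow$'' direction as implicit in Lemma~\ref{supop}; your explicit identification of the closure of an increasing union of HSA's with the smallest HSA containing them (via $C = \bigcup_i {\mathfrak F}_{D_i}$ when the net is increasing) is a useful clarification of exactly that point.
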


\begin{proof}  Suppose that $D$ is a HSA.
The set of 
HSA's $\overline{a_F A a_F}$ as in the last proof, indexed by the finite subsets $F$ 
of ${\mathfrak F}_D$, is an increasing net.
Lemma \ref{supop} shows that the closure of the union 
of these HSA's is $D$.  
\end{proof}

A HSA $D$ is called ${\mathfrak F}$-{\em principal} if $D=\overline{xAx}$ for some $x\in {\mathfrak F}_A$.  By an assertion
in Lemma \ref{rop} we can also allow $x\in {\mathfrak r}_A$ here.
   Corollary \ref{ishsa} says that
the HSA generated by a countable number of ${\mathfrak F}$-principal HSA's is ${\mathfrak F}$-principal.

\begin{theorem}
\label{spp}
Let $A$ be any Jordan operator algebra (not necessarily with an identity or approximate identity.) 
Every separable HSA or HSA with a 
countable cai is ${\mathfrak F}$-principal.   
\end{theorem}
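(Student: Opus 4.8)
The plan is to reduce both cases to producing a countable set $E\subseteq{\mathfrak F}_D$ whose support projections have supremum equal to the support projection $p$ of $D$, and then to quote Corollary \ref{ishsa}. Throughout write $D=pA^{**}p\cap A$, and assume $D\neq(0)$, the zero case being trivial (take $x=0$). Note that, $D$ being a Jordan subalgebra of $A$, we have ${\mathfrak F}_D={\mathfrak F}_A\cap D\subseteq{\mathfrak F}_A$, and likewise $\frac{1}{2}{\mathfrak F}_D\subseteq{\mathfrak F}_D$. Also, for any $x\in{\mathfrak F}_D$ we have $pxp=x$, hence $px=xp=x$, so $s(x)\leq p$.

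Suppose first that $D$ has a countable cai. By the Corollary following Proposition \ref{Kap} (applied with $D$ in place of $A$), $D$ has a countable partial cai $(x_n)$ lying in $\frac{1}{2}{\mathfrak F}_D\subseteq{\mathfrak F}_A$. Put $q:=\vee_n\,s(x_n)$; then $q\leq p$ by the preceding paragraph. Conversely, since $s(x_n)\leq q$ we have $qx_n=x_nq=x_n$ for every $n$; passing to the weak* limit, and using that $x_n\to p$ weak* (the support projection of a HSA being the weak* limit of any of its partial cai's), we get $qp=pq=p$, i.e. $p\leq q$. Hence $q=p$.

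Now suppose instead that $D$ is separable, so ${\mathfrak F}_D$ is a separable metric space; fix a countable dense subset $(x_n)$ of ${\mathfrak F}_D$ and again set $q:=\vee_n\,s(x_n)\leq p$. For any $x\in{\mathfrak F}_D$, since $s(x_n)\leq q$ we have $qx_nq=x_n$ for all $n$, and since $\eta\mapsto q\eta q$ is a norm contraction on $A^{**}$, writing $x$ as a norm limit of a subsequence of the $x_n$ yields $qxq=x$; this forces $qx=q(qxq)=qxq=x$ and similarly $xq=x$, so $s(x)\leq q$. Thus $q\geq\vee_{x\in{\mathfrak F}_D}\,s(x)$, which by Lemma \ref{supop}(1) equals $p$; so $q=p$ again.

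In either case put $z:=\sum_{n=1}^{\infty}2^{-n}x_n$. By Corollary \ref{ishsa}, $z\in{\mathfrak F}_A$, $s(z)=\vee_n\,s(x_n)=q=p$, and the HSA generated by the family $\{\,\overline{x_nAx_n}\,\}$ equals $\overline{zAz}$. Since each $x_n\in{\mathfrak F}_A$ has roots in ${\rm oa}(x_n)$, so that $x_n\in\overline{x_nAx_n}$ (as noted above Lemma \ref{rop}), that HSA is exactly the smallest HSA of $A$ containing $\{x_n\}$, which by Lemma \ref{smallestH} is $qA^{**}q\cap A=pA^{**}p\cap A=D$. Therefore $D=\overline{zAz}$ is ${\mathfrak F}$-principal. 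The only steps needing genuine care are the weak*-limit identification $q=p$ in the countable-cai case and the density-plus-contraction argument in the separable case; granting the lattice theory of HSA's and open projections built up above, the remainder is bookkeeping.
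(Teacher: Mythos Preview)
Your proof is correct. For the countable-cai case you follow the paper's line (countable partial cai in $\frac{1}{2}{\mathfrak F}_D$, then Corollary~\ref{ishsa}), only making explicit the identity $\vee_n s(x_n)=p$ that the paper leaves implicit in the phrase ``$D$ is generated by the HSA's $\overline{e_nAe_n}$.'' Your final identification $\overline{zAz}=D$ via Lemma~\ref{smallestH} is fine, though you could also read it off directly from $s(z)=p$ and Lemma~\ref{rop}.

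For the separable case you take a genuinely different route. The paper simply observes that any separable approximately unital Jordan operator algebra has a countable cai, thereby reducing immediately to the first case. You instead pick a countable dense subset of ${\mathfrak F}_D$, use norm continuity of $\eta\mapsto q\eta q$ to show every $x\in{\mathfrak F}_D$ satisfies $qxq=x$, and then invoke Lemma~\ref{supop}(1) to conclude $q=p$. The paper's reduction is shorter but leans on an unstated standard fact; your argument is more self-contained and stays entirely within the lattice machinery already developed. Both end at the same application of Corollary~\ref{ishsa}.
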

\begin{proof} If $D$ is a  HSA with a
countable cai, then $D$ has a countable partial cai $(e_n) \subset
\frac{1}{2} {\mathfrak F}_D$.  Also $D$ is generated by the 
HSA's $\overline{e_n A e_n}$, and so 
$D$ is ${\mathfrak F}$-principal by the last result.    For the separable case, note that
any separable approximately unital Jordan operator algebra has a countable cai. 
\end{proof}

\begin{corollary}
\label{sop} If $A$ is a 
separable Jordan operator algebra, then the 
$A$-open projections in $A^{**}$ are  precisely the $s(x)$ for $x\in {\mathfrak r}_A.$	
\end{corollary}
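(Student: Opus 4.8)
The plan is to deduce this directly from the structure theory already in place, in particular Theorem \ref{spp} together with the bijection (Corollary \ref{iffsu}) between HSA's of $A$ and $A$-open projections in $A^{**}$. There are two inclusions to verify, and only one of them requires any work; in fact the argument will be essentially the same as in the associative operator algebra case.

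First, the easy inclusion. If $x \in {\mathfrak r}_A$ with $x \neq 0$, then by Lemma \ref{rop} (or Lemma \ref{op}) the set $\overline{xAx}$ is a HSA of $A$ with support projection $s(x)$, and in particular $s(x)$ is $A$-open. The degenerate case $x = 0$ gives $s(0) = 0$, which is trivially an $A$-open projection. Hence every $s(x)$ with $x \in {\mathfrak r}_A$ is an $A$-open projection in $A^{**}$, and this direction uses neither separability nor any new idea.

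For the reverse inclusion, suppose $p \in A^{**}$ is $A$-open. By Corollary \ref{iffsu}, $p$ is the support projection of some HSA $D$ of $A$. Since $A$ is separable, its closed subspace $D$ is separable, so Theorem \ref{spp} applies and $D$ is ${\mathfrak F}$-principal: there is $x \in {\mathfrak F}_A \subset {\mathfrak r}_A$ with $D = \overline{xAx}$. By Lemma \ref{rop} the support projection of $\overline{xAx}$ is exactly $s(x)$, and since the support projection of a HSA is uniquely determined (it is the weak* limit of any partial cai from the HSA), we conclude $p = s(x)$. This shows every $A$-open projection has the required form, completing the proof.

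I expect the only point requiring care is purely bookkeeping: checking that the notion of ``support projection of a HSA'' invoked via Corollary \ref{iffsu} coincides on $\overline{xAx}$ with the element $s(x)$ produced by Lemma \ref{rop} (which is precisely the content of that lemma), and separately disposing of the trivial projection $p = 0$ by taking $x = 0$. Beyond the separable-implies-${\mathfrak F}$-principal step, which is Theorem \ref{spp}, no genuinely new argument is needed.
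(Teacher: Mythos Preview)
Your proof is correct and follows essentially the same approach as the paper: both arguments reduce to the observation that separability of $A$ forces every HSA to be separable, so Theorem \ref{spp} applies and every $A$-open projection is $s(x)$ for some $x \in {\mathfrak F}_A$, while the reverse inclusion is immediate from Lemma \ref{rop} (or \ref{op}). The paper's proof is terser and additionally cites Corollary \ref{rop2} (to confirm $s(x)$ computed in $A^{**}$ agrees with that in any $C^*$-cover), but this is a minor bookkeeping point and your argument is otherwise identical in substance.
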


\begin{proof}   If $A$ is separable then so is any HSA.  
So the result follows from  Theorem \ref{spp},  
Lemma \ref{rop}, and Corollary \ref{rop2}.
\end{proof}

\begin{theorem}
\label{cpc}
Let $A$ be any approximately unital Jordan operator algebra. 
 The following are equivalent
\begin{enumerate}
\item [(i)] $A$ has a countable Jordan cai.
\item [(ii)] There exists $x\in {\mathfrak r}_A$ such that $A=\overline{xAx}.$
\item [(iii)] There is an element $x$ in ${\mathfrak r}_A$ with $s(x)=
1_{A^{**}}.$
 \item [(iv)] $A$ has a strictly real positive element in ${\mathfrak r}_A$.
\end{enumerate}	If $A$ is separable then these all hold.
\end{theorem}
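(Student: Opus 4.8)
The plan is to derive the four equivalences, and the separable assertion, by assembling results already established, with essentially no new argument beyond bookkeeping; the one substantial ingredient is Theorem~\ref{spp}. The device used throughout is that one may pass freely between $x\in{\mathfrak r}_A$ and ${\mathfrak F}(x)=x(1+x)^{-1}\in\frac12{\mathfrak F}_A$: Lemma~\ref{rop} gives $\overline{xAx}=\overline{{\mathfrak F}(x)A{\mathfrak F}(x)}$, and Lemma~\ref{op} gives $s({\mathfrak F}(x))=s(x)$. Hence in (ii)--(iv) it suffices to exhibit the required element inside ${\mathfrak F}_A$, where Lemma~\ref{srp} applies in its strong form (all of its conditions (i)--(iv) equivalent).

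First I would treat (i)$\Leftrightarrow$(ii). For (i)$\Rightarrow$(ii): an approximately unital Jordan operator algebra is a hereditary subalgebra of itself (it is trivially an inner ideal in itself, with a J-cai), so if it has a countable Jordan cai then Theorem~\ref{spp} shows $A$ is ${\mathfrak F}$-principal, i.e.\ $A=\overline{xAx}$ for some $x\in{\mathfrak F}_A\subset{\mathfrak r}_A$. A concrete alternative: upgrade the countable Jordan cai to a countable partial cai $(e_n)\subset\frac12{\mathfrak F}_A$ by the corollary following Proposition~\ref{Kap}, note $\vee_n s(e_n)=1_{A^{**}}$ using $e_n=s(e_n)e_ns(e_n)$, $e_n\to 1_{A^{**}}$ weak*, and the weak*-continuity of multiplication by a fixed element, and then apply Corollary~\ref{ishsa} to get $A=\overline{zAz}$ with $z=\sum_n 2^{-n}e_n$. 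For (ii)$\Rightarrow$(i): by Lemma~\ref{rop} the sequence $(x^{1/n})_{n\in\Ndb}$ is a partial cai for $\overline{xAx}=A$, hence in particular a countable Jordan cai.

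Next, (ii)$\Leftrightarrow$(iii) is exactly the equivalence (i)$\Leftrightarrow$(ii) of Lemma~\ref{srp}. For (iv)$\Rightarrow$(iii): by definition a strictly real positive $x\in{\mathfrak r}_A$ satisfies condition (iv) of Lemma~\ref{srp}, which via the implications (iv)$\Rightarrow$(iii)$\Rightarrow$(ii) there yields $s(x)=1_{A^{**}}$. For (iii)$\Rightarrow$(iv): given $x\in{\mathfrak r}_A$ with $s(x)=1_{A^{**}}$, set $y={\mathfrak F}(x)\in{\mathfrak F}_A$; then $s(y)=1_{A^{**}}$ by Lemma~\ref{op}, and since $y\in{\mathfrak F}_A$ the last sentence of Lemma~\ref{srp} gives $\varphi({\rm Re}(y))>0$ for every state $\varphi$ of $C^*(A)$, so $y$ is strictly real positive. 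Finally, if $A$ is separable, a standard extraction from any partial cai, indexed to handle a countable dense subset of $A$ (as in the proof of Theorem~\ref{spp}), produces a countable partial cai, hence a countable Jordan cai, so (i) holds.

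I do not anticipate a real obstacle, since every implication reduces to a quoted lemma. The points needing slight care are (iii)$\Rightarrow$(iv), where one must first replace $x$ by ${\mathfrak F}(x)$ before invoking the ${\mathfrak F}_A$-version of Lemma~\ref{srp}, and the join identity $\vee_n s(e_n)=1_{A^{**}}$ in the concrete route for (i)$\Rightarrow$(ii); both are short.
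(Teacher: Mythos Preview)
Your proposal is correct and follows essentially the same approach as the paper's proof. The paper is terser: it cites Lemma~\ref{srp} together with ``the reasoning for \cite[Theorem 3.2]{BRord}'' for the equivalence of (ii)--(iv), uses the sequence $(x^{1/k})$ for (ii)$\Rightarrow$(i), and invokes Theorem~\ref{spp} with $D=A$ for (i)$\Rightarrow$(ii) and the separable case; your explicit use of the ${\mathfrak F}$-transform to handle (iii)$\Rightarrow$(iv) is exactly what that external citation is hiding, and your alternative concrete route for (i)$\Rightarrow$(ii) via Corollary~\ref{ishsa} is a harmless bonus.
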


\begin{proof}   The equivalence  of (ii), (iii), and (iv) comes from Lemma \ref{srp}
and the reasoning for 
\cite[Theorem 3.2]{BRord}.   These imply (i) since 
(a scaling of) $(x^{\frac{1}{k}})$ is a countable partial cai.
The rest follows from Theorem \ref{spp} applied to $A = D$.
\end{proof}

\begin{remark} We remark again that one may replace
${\mathfrak r}_A$ by ${\mathfrak F}_A$ in the last several results.
\end{remark}

 \begin{theorem}
An approximately unital Jordan operator algebra with no countable Jordan cai, has nontrivial HSA's.
\end{theorem}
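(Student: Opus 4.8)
The plan is to argue by contraposition: I will show that if \emph{every} HSA of $A$ is trivial (that is, equals $(0)$ or $A$), then $A$ must have a countable Jordan cai, which contradicts the hypothesis.

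First I would produce a nonzero real positive element of $A$. Since $A$ is approximately unital (and nonzero; if $A = (0)$ the conclusion is vacuous), Theorem \ref{kapl} supplies a partial cai lying in $\frac{1}{2}{\mathfrak F}_A \subseteq {\mathfrak r}_A$, so there is some $x \in {\mathfrak r}_A$ with $x \neq 0$. Next, by Lemma \ref{rop} the set $\overline{xAx}$ is a HSA of $A$, and since $x$ has roots in ${\rm oa}(x)$ we have $x \in \overline{xAx}$, so this HSA is nonzero. Under the standing assumption that all HSA's are trivial, it must therefore equal $A$; that is, $A = \overline{xAx}$. This is exactly condition (ii) of Theorem \ref{cpc}, so that theorem gives a countable Jordan cai for $A$, the desired contradiction. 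Hence $A$ has a nontrivial HSA.

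I do not expect a serious obstacle: the real content is already packaged in Theorem \ref{cpc} (the equivalence of possessing a countable Jordan cai with $A = \overline{xAx}$ for some $x \in {\mathfrak r}_A$, equivalently $s(x) = 1_{A^{**}}$ for some such $x$), and the only elementary point to check is that a nonzero element of ${\mathfrak r}_A$ generates a nonzero HSA, which is immediate from $x \in \overline{xAx}$ as recorded in Lemma \ref{rop}. Equivalently, one could phrase the argument directly without contraposition: if no nontrivial HSA existed, then $\overline{xAx} = A$, and hence $s(x) = 1_{A^{**}}$, for every nonzero $x \in {\mathfrak r}_A$ (such $x$ exist as above), which is condition (iii) of Theorem \ref{cpc} and again yields a countable Jordan cai.
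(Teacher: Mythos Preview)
Your proof is correct and follows essentially the same approach as the paper's: both pick a nonzero $x$ in ${\mathfrak r}_A$ (or ${\mathfrak F}_A$), observe that $\overline{xAx}$ is a nonzero HSA, and invoke Theorem~\ref{cpc} to see that if this HSA equals $A$ then $A$ would have a countable Jordan cai. The only difference is cosmetic: the paper argues directly (no countable cai forces $\overline{xAx} \neq A$), while you phrase it as a contraposition.
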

\begin{proof}
If $A$ has no countable  cai then by Theorem  \ref{cpc} 
for any nonzero $x\in {\mathfrak F}_A$, we have $A \neq \overline{xAx}$.
The latter is a nontrivial HSA in $A$.
\end{proof}

\subsection{M-ideals}   

\begin{theorem} \label{mid}  
Let $A$ be an approximately unital Jordan operator algebra.
\begin{itemize}
\item [(1)] The $M$-ideals in $A$ are the complete 
$M$-ideals. These are exactly the 
closed Jordan ideals in $A$ which are approximately unital.  
\item [(2)] The $M$-summands in $A$ are the complete $M$-summands. These are exactly the sets  $Ae$ for a 
projection $e$  in $JM(A)$   (or equivalently in $M(A)$) such that $e$ commutes with all
elements in $A$).    If $A$ is unital then these are the closed Jordan ideals in $A$ which possess a Jordan
 identity of norm $1$.  
\item [(3)] The right $M$-ideals in $A$ are of the 
form $J = p A^{\ast\ast} \cap A$, 
where $p$ is a projection in $M(A^{**})$ with $J^{\perp \perp} = p A^{\ast\ast}$.
Each right $M$-ideal in $A$ is a Jordan subalgebra with a left $C^*_e(A)$-partial  cai.
\item [(4)] The right $M$-summands in $A$ are exactly the sets $pA$ for an idempotent contraction $p\in M(A).$  
 \end{itemize}
\end{theorem}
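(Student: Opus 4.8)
The plan is to carry the associative operator algebra arguments of \cite[Section~4.8]{BLM}, \cite{BHN} and \cite{BOZ} over to the Jordan setting, using as working tools: the fact that $A^{**}$ is a \emph{unital} Jordan operator algebra, realized as a Jordan subalgebra of the von Neumann algebra $B^{**}$ with $B = C^*_e(A)$; the identities (\ref{qox}), (\ref{aba}), (\ref{abc}); the identification ${\mathcal M}_\ell(A)\cong LM(A)$ of Theorem \ref{tlma} together with Lemma \ref{pin}; and Lemma \ref{jcai}. Every decomposition that arises will be exhibited as an honest ``corner'' decomposition of $B^{**}$ (with a \emph{central} projection, for the two-sided statements) or a column decomposition inside $B^{**}$ (for the one-sided statements); this is what simultaneously delivers the ``$M$ equals complete $M$'' assertions. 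The genuinely Jordan point, and the main obstacle, is the failure of the associative relations $exe = ex = xe$: for a general projection $e$ one knows only $exe\in A^{**}$ (by (\ref{aba})), so one must repeatedly use (\ref{qox}) to pass from $e\circ x = exe$ to $ex = xe = exe$, and Lemma \ref{pin} to pass from a one-sided multiplier projection to an element of $M(A)$, before the projections produced by the Banach-- and operator-space theory can be recognized as giving honest Jordan ideals or subalgebras of $A$.

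\emph{Part (1).} Run the proof of the matching operator algebra fact (e.g.\ \cite[Section~4.8]{BLM}, \cite{BOZ}) that $M$-ideals, complete $M$-ideals, and approximately unital closed two-sided ideals coincide. If $J$ is an $M$-ideal in $A$, then $J^{\perp\perp}$ is an $M$-summand of $A^{**}$; using the extremality of $1$ in ${\rm Ball}(A^{**})$ (it is extreme in ${\rm Ball}(B^{**})$) and the Jordan identities (\ref{qox}), (\ref{aba}) in place of the associative computations in the cited proofs, one identifies the associated $M$-projection on $A^{**}$ with $\eta\mapsto e\eta e$ for a central projection $e\in A^{**}$, so $J^{\perp\perp} = eA^{**}e = A^{**}e$. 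Then $J = A^{**}e\cap A$ is a closed Jordan ideal of $A$, since for $a\in A$, $j\in J$ centrality of $e$ and (\ref{qox}) give $e(a\circ j) = (a\circ j)e = a\circ j\in A$; and $J$ is approximately unital by Lemma \ref{jcai}, as $J^{\perp\perp} = A^{**}e$ has Jordan identity $e$ of norm one. Conversely, if $J$ is an approximately unital closed Jordan ideal, then $J^{\perp\perp}$ is a weak* closed Jordan ideal of $A^{**}$ (multiplication in $B^{**}$ is separately weak* continuous) with a norm-one identity $e$, the weak* limit of a partial cai of $J$. The $2\times 2$ matrix picture with respect to $e$ forces $J^{\perp\perp}\subset eB^{**}e$, while (\ref{aba}) and the ideal property give $eA^{**}e\subset J^{\perp\perp}$, so $J^{\perp\perp} = eA^{**}e$; feeding $e\circ x\in eA^{**}e$ back through (\ref{qox}) shows $e$ is central. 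Hence $A^{**} = A^{**}e\oplus^{\infty}A^{**}(1-e)$, even completely isometrically, as complementary corners of $B^{**}$, so $J$ is a complete $M$-ideal; and complete $M$-ideals are trivially $M$-ideals.

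\emph{Part (2).} An $M$-summand $J$ of $A$ is an $M$-ideal, so by Part (1) $J = A^{**}e\cap A$ for a central projection $e\in A^{**}$; since the $M$-projection now carries $A$ into $J$ we get $eA\subset A$, i.e.\ $e\in LM(A)$, hence $e\in M(A)$ by Lemma \ref{pin}, and $e$ commutes with $A$ by (\ref{qox}); then $J = Ae$. Conversely, if $e\in JM(A)$ is a projection commuting with all of $A$, then $ea+ae = 2ea\in A$, so $eA\subset A$, hence $e\in LM(A)\subset M(A)$, and $A = Ae\oplus^{\infty}A(1-e)$ completely isometrically (complementary corners of $B^{**}$), so $Ae$ is a complete $M$-summand. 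This proves the description and the ``equivalently in $M(A)$'' clause. When $A$ is unital, $M(A)\subset A$, and via the $2\times 2$ picture one identifies the $Ae$ with the closed Jordan ideals of $A$ possessing a Jordan identity of norm one: given such an identity $f$ (automatically a projection lying in the ideal), the relation $f\circ(f\circ a) = f\circ a$ forces $f$ to commute with $A$, whence the ideal equals $fA = Af$.

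\emph{Parts (3) and (4).} These are operator space statements; the inputs are Theorem \ref{tlma} and the theory of right $M$-projections in \cite[Section~4.8]{BLM} (see also \cite{BHN} for the operator algebra analogue). If $J$ is a right $M$-ideal in $A$, then $J^{\perp\perp}$ is a right $M$-summand of $A^{**}$, so corresponds to an orthogonal projection $p$ in ${\mathcal M}_\ell(A^{**}) = LM(A^{**})$, which lies in $M(A^{**})$ by Lemma \ref{pin}, with $J^{\perp\perp} = pA^{**}$; thus $J = pA^{**}\cap A = \{a\in A : pa = a\}$. For $a,b\in J$ we have $b = pb$, so $ba = p(ba)$, and $ab = (pa)b = p(ab)$, whence $p(a\circ b) = a\circ b\in A$; so $J$ is a Jordan subalgebra of $A$, and since $J^{\perp\perp} = pA^{**}$ has left identity $p$, the one-sided version of Lemma \ref{jcai} (the equivalences for left $B$-partial cai's recorded earlier) yields a left $C^*_e(A)$-partial cai for $J$. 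For (4), a right $M$-summand of $A$ is a right $M$-ideal whose right $M$-projection maps $A$ into $A$; this projection being ``left multiplication by an orthogonal projection $p$'', we get $pA\subset A$, so $p\in LM(A)\subset M(A)$ and $J = pA$. Conversely, for an idempotent contraction $p\in M(A)$ (necessarily an orthogonal projection in $B^{**}$), the map $x\mapsto(px,\,x - px)$ is a complete isometry of $A$ into the column space $M_{2,1}(A)$, since $x^*x = x^*px + x^*(1-p)x$ in $B^{**}$, and $pA\subset A$; so $pA$ is a (complete) right $M$-summand. Throughout (3) and (4), Lemma \ref{pin} is precisely what upgrades the one-sided multiplier $p$ to an element of $M(A)$, so that $pA\subset A$ and the column decomposition stays inside $A$.
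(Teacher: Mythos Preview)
Your proposal is correct and follows essentially the same approach as the paper: identify $\mathcal{M}_\ell(A)\cong LM(A)$ via Theorem \ref{tlma}, upgrade one-sided multiplier projections to $M(A)$ via Lemma \ref{pin}, and recognize two-sided $M$-projections as Jordan-multiplication by a central projection. The only real difference is the ordering --- the paper proves (4), (3), (2), (1) and deduces (1) from (2), whereas you prove (1) first and deduce (2) from it --- and that the paper spells out in its proof of (2) the state/Cauchy--Schwarz argument (adapted from \cite[Theorem 4.8.5(2)]{BLM}: set $z=P(1)$, extend states to a containing $C^*$-algebra, and use $|\tilde\psi(a(1-z))|^2\le\tilde\psi(aa^*)\psi(1-z)=0$ to force $P(a)=z\circ a$ and $z$ central) which you defer to ``the cited proofs'' in your part (1); this is precisely the step where the Jordan-specific work lies, so it would be worth making explicit.
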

\begin{proof}
(4) \ By 4.5.15 in \cite{BLM}, the left $M$-projections are the 
projections in the left multiplier algebra ${\mathcal M}_\ell(A)$ of
 \cite{BLM}. Hence, the right $M$-summands in $A$ are exactly the sets $pA$ for an idempotent contraction $p\in M_l(A) = LM(A).$ 
So $p$ may be regarded as a projection in $LM(A)$.   However by 
Lemma \ref{pin} any projection in $LM(A)$ is in $M(A)$.

(3) \ If $J$ is a right $M$-ideal then $J^{**}=J^{\perp\perp}=\overline{J}^{w^*}$ is a right $M$-summand. Hence by (4), $J^{\perp\perp}=pA^{\ast\ast}$,
where $p$ is a projection in $M(A^{**})$.
Thus $J=J^{\perp\perp}\cap A=pA^{\ast\ast}\cap A.$ 
It follows that $J$ is a  Jordan subalgebra.
Note that if $(e_t) \subset J$ with $e_t \to p$ weak*, then 
$e_t x \to p x = x$ (products in $C^*_e(A)^{**}$)
for all $x \in J$.   Thus as in Lemma \ref{jcai} a convex combination 
of the $e_t$ are a  left partial cai for $J$.   

(2) \ If $e$ is a projection in $M(A)$ commuting with $A$, then since
$ea = eae \in A$ we 
see that left multiplication by $e$ is in the algebra $M_l(A)$ mentioned above, 
and $eA$ is a right $M$-summand by (4).
Similarly $eA = Ae$ is a left
 $M$-summand by the left-handed version of (4).
  So $eA$ is a complete $M$-summand by 
\cite[Proposition 4.8.4 (2)]{BLM}. 

Conversely, suppose that $P$ is an $M$-projection on $A.$ First suppose that $A$ is unital.
Set $z=P(1)$ and follow the proof of \cite[Theorem 4.8.5 (2)]{BLM},
to see that $z$ is Hermitian in $A$ and $z^2=z$, so that
 $z$ is a projection in $A.$  That argument goes on to 
show that if $\varphi$ is any state with $P^*(\varphi)\neq 0,$ and if $\psi=\frac{P^*(\varphi)}{\Vert P^*(\varphi)\Vert}$, then $\psi$ is a state on $A.$ 
As we said earlier, we can extend $\psi$ to a state $\tilde{\psi}$ on some 
$C^*$-algebra generated by $A.$  As in the argument we are following
 we obtain, for any $a\in A,$ that 
$\vert\tilde{\psi}(a(1-z))\vert^2\leq \tilde{\psi}(aa^*)\psi(1-z)=0$, 
so that
 $\tilde{\psi}(a(1-z))=0.$ Similarly, $\tilde{\psi}((1-z)a)=0.$ Hence, $\varphi(P(a(1-z)+(1-z)a))=0.$ Since this holds for any state, we have
 $P(a(1-z)+(1-z)a)=0.$ Therefore, $(1-z) \circ A\subset (I-P)(A)$.
By symmetry we have $z\circ A\subset P(A).$ If $a\in A,$ then $$a=\frac{az+za}{2}+\frac{a(1-z)+(1-z)a}{2},$$ so that $P(a)=\frac{az+za}{2} = z \circ a$.   That $P$ is idempotent yields the formula
$zaz = z \circ a$.   So $z$ is central in $A$, and  $az=za=zaz,$ and $P(A) = zA$.

Next, if $A$ is not unital consider the $M$-projection $P^{\ast\ast}$ on $A^{\ast\ast}$.  By the unital case $P^{\ast\ast}(\eta) = z \eta = \eta z  = z \eta z$ for all $\eta \in A^{**}$, for a central projection $z \in A^{**}$.   We have
$z a + az = 2P(a) \in A$ for $a \in A$, so that $z \in M(A)$, and $P(A) = zA$.  

Finally suppose that $J$ is a closed Jordan ideal in $A$ which possesses a Jordan identity $e$ 
of norm $1$.   Then  $ex = x = xe$ for all $x \in J$, as in the proof of Lemma \ref{jcai}.
Also $eAe \subset J = eJe \subset eAe$.   So $J = eAe$.  
Also $J = e \circ A$, so $e a + ae   = eae +ae$, and so $ea = eae$.  Similarly $ae = eae = ea$, so 
$e$ is central in $A$.   The rest is clear.
 
(1) \ If $J$ is an approximately unital closed Jordan ideal in $A,$ 
then $J^{\perp \perp}$ is by the usual 
approximation argument a unital weak* closed Jordan ideal in $A^{**}$.
So by  (2) we have $J^{\perp \perp}$ is the $M$-summand $p A^{**}$ for a central projection $p\in A^{\ast\ast}$.
So $J$ is an $M$-ideal. Conversely, if $J$ is  an $M$-ideal, then $J^{\perp\perp}$ is an $M$-summand in $A^{\ast\ast}.$ By (2), there exist a central projection $e\in A^{\ast\ast}$ such that $J^{\perp\perp}=eA^{\ast\ast}$  and $e 
\in M(A^{**})$.
Note that $e\in J^{\perp\perp}.$   By a routine argument similar to the associative case,
$J$ is a Jordan ideal with partial cai.
\end{proof}

\begin{corollary} \label{nona}
	A subspace $D$  in a Jordan operator algebra $A$ is an approximately unital closed Jordan ideal in $A$ iff there exists some open central projection $p$ in $A^{\ast\ast}$,  
such that $D=pA^{\ast\ast}p \cap A$. 
\end{corollary}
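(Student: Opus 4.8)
The plan is to prove the two implications separately. The forward direction combines the fact that an approximately unital Jordan ideal is automatically a HSA (via identity~(\ref{abc}) and Proposition~\ref{hsaii}) with a short argument showing that its support projection is central; the converse is essentially immediate from the definition of an $A$-open projection together with identity~(\ref{qox}).

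For the forward direction, suppose $D$ is an approximately unital closed Jordan ideal of $A$. First I would verify that $D$ is an inner ideal in the Jordan sense: given $a,b\in D$ and $x\in A$, apply~(\ref{abc}) with $(a,b,c)$ replaced by $(a,x,b)$ to get $axb+bxa = 2[(a\circ x)\circ b + a\circ(x\circ b)] - 2(a\circ b)\circ x$; since $D$ is a Jordan ideal, $a\circ x$, $x\circ b$ and $a\circ b$ all lie in $D$, and then so do the three displayed Jordan products, so $axb+bxa\in D$. Being both an inner ideal and approximately unital, $D$ is a HSA by Proposition~\ref{hsaii}, with $A$-open support projection $p$ satisfying $D^{\perp\perp}=pA^{**}p$ and $D=pA^{**}p\cap A$. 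It then remains to see that $p$ is central in $A^{**}$. Taking a partial cai $(e_t)$ of $D$ we have $e_t\to p$ weak*, and $e_t\circ a\in D$ for $a\in A$ (Jordan ideal), so in the weak* limit $p\circ a\in D^{\perp\perp}=pA^{**}p$; writing $p\circ a=p(p\circ a)p$ and using $p^2=p$ collapses this to $p\circ a=pap$ for all $a\in A$. Since $\eta\mapsto p\circ\eta$ and $\eta\mapsto p\eta p$ are both weak* continuous and agree on the weak* dense subspace $A$, we get $p\circ\eta=p\eta p$ for all $\eta\in A^{**}$, which by~(\ref{qox}) says precisely that $p$ is a central projection. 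This yields $D=pA^{**}p\cap A$ with $p$ open and central.

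For the converse, suppose $p$ is an open central projection in $A^{**}$ and $D=pA^{**}p\cap A$. Since $p$ is $A$-open, $D$ is the HSA it supports, hence a closed, approximately unital Jordan subalgebra of $A$ (using Corollary~\ref{iffsu} and Lemma~\ref{jcai}); the only remaining point is that $D$ is a Jordan ideal. Fix $a\in A$ and $d\in D$. From $d\in pA^{**}p$ and centrality of $p$ one has $pd=dp=d$ and $pa=ap$, and a one-line computation in a containing von Neumann algebra then gives $p(ad)p=ad$ and $p(da)p=da$, hence $p(a\circ d)p=a\circ d$, i.e.\ $a\circ d\in pA^{**}p$. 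Since also $a\circ d\in A$, we conclude $a\circ d\in pA^{**}p\cap A=D$, so $A\circ D\subset D$ and $D$ is an approximately unital closed Jordan ideal of $A$.

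I expect the only non-routine step to be the verification of centrality of $p$ in the forward direction --- specifically, passing from ``$p\circ a\in pA^{**}p$ for every $a\in A$'' to ``$p$ is central in $A^{**}$'' via~(\ref{qox}) and separate weak* continuity of the relevant maps. Everything else is bookkeeping with Proposition~\ref{hsaii}, the definition of $A$-open projections, and the elementary identities already recorded in the paper. (One could alternatively deduce the forward direction by observing that $D^{\perp\perp}$ is a weak* closed Jordan ideal of $A^{**}$ with a Jordan identity and invoking the argument of Theorem~\ref{mid}(2), but the direct route above avoids any approximate-unitality hypothesis on $A$ itself.)
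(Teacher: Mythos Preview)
Your proof is correct and takes a somewhat different route from the paper's. The paper handles the forward direction by passing to $A^1$ and invoking the proof of Theorem~\ref{mid}(1), which in turn rests on the $M$-summand classification in Theorem~\ref{mid}(2): $D^{\perp\perp}$ is shown to be a unital weak*-closed Jordan ideal of $(A^1)^{**}$, hence equals $p(A^1)^{**}$ for a central projection $p$. Your argument instead stays inside $A$, first recognising $D$ as a HSA via~(\ref{abc}) and Proposition~\ref{hsaii}, and then proving centrality of the support projection directly from $e_t\circ a\in D$ and the computation $p(p\circ a)p=pap$ together with~(\ref{qox}). Your route is more self-contained and, as you note, does not require approximate unitality of $A$; the paper's route ties the result to the $M$-ideal picture and so makes the connection with Theorem~\ref{mid} explicit. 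Both converses are essentially the same short verification that centrality of $p$ forces $A\circ D\subset D$.
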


\begin{proof}   If  $D$ is an approximately unital  closed  Jordan ideal in
$A$ then it is an approximately unital  closed  Jordan ideal in
$A^1$.
The proof of  Theorem \ref{mid}  (1) 
shows that $D^{\perp\perp} 
= p A^{\ast\ast}$, for a projection $p$ in $D^{\perp\perp}
\subset A^{\ast\ast}$ 
(the weak* limit of a cai for $D$).  Also $p$ is central in
$(A^1)^{**}$, hence in $A^{\ast\ast}$.  
Clearly $D=pA^{\ast\ast}p \cap A$, so $p$ 
is open, and  $D$ is a HSA.

Conversely, if $p$ is an open central projection in $A^{\ast\ast}$,
then $p$ is  an open central projection in $(A^1)^{\ast\ast}$.
Since $p \eta = \eta p = p \eta p \in A^{\ast\ast}$ for $\eta \in A^{\ast\ast}$, 
we have 
$D =pA^{\ast\ast}p \cap A$
is a HSA  and in particular is approximately unital.   It is easy to see that $D$ is a closed Jordan ideal since $p$ is central.  
\end{proof}  

 \begin{proposition}  \label{isJq}
If $J$ is an approximately unital closed two-sided Jordan ideal in  a
 Jordan operator algebra $A,$ 
then $A/J$ is (completely isometrically isomorphic to)
a Jordan operator algebra.  \end{proposition}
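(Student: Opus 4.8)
The plan is to realize $A/J$ as a corner $(1-p)A$ of $A^{**}$ for a suitable central projection $p$, using the $M$-ideal theory already in hand. First I would invoke Theorem \ref{mid}(1): since $J$ is an approximately unital closed Jordan ideal it is a (complete) $M$-ideal in $A$, so $J^{\perp\perp}$ is a complete $M$-summand of $A^{**}$, and by the proof of that theorem (or by Corollary \ref{nona}) there is a central projection $p \in A^{**}$ with $J^{\perp\perp} = pA^{**} = pA^{**}p$ and $J = pA^{**}p \cap A$; write $A^{**} = pA^{**} \oplus (1-p)A^{**}$ for the associated ($M$-summand) decomposition. Fix a $C^*$-algebra $B$ containing $A$ as a Jordan subalgebra, so that $A^{**}$ is a weak* closed Jordan subalgebra of the von Neumann algebra $B^{**}$.

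Next I would check that $N := (1-p)A^{**}$ is a Jordan operator algebra. It is weak* closed, being the complementary $M$-summand to $pA^{**}$, and for $\eta,\nu \in A^{**}$ the centrality of $p$ in $A^{**}$ (so $\eta(1-p) = (1-p)\eta$) gives
\[ ((1-p)\eta) \circ ((1-p)\nu) \; = \; \tfrac{1}{2}(1-p)(\eta\nu + \nu\eta) \; = \; (1-p)(\eta\circ\nu) \in N, \]
so $N$ is a norm closed Jordan subalgebra of the von Neumann algebra $B^{**}$, hence a Jordan operator algebra. The same computation shows that $q : A^{**} \to N,\ q(\eta) = (1-p)\eta = (1-p)\eta(1-p)$, is a Jordan homomorphism; it is completely contractive since on $A^{**}$ it coincides with the compression $\eta \mapsto (1-p)\eta(1-p)$ on $B^{**}$, and its kernel is $pA^{**} = J^{\perp\perp}$. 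Restricting to $A$, the map $q|_A : A \to N$ is a Jordan homomorphism with kernel $A \cap J^{\perp\perp} = A \cap pA^{**} = pA^{**}p \cap A = J$ (an $a \in A$ with $a = p\eta$ satisfies $a = pa = pap$), and with image the Jordan subalgebra $(1-p)A = \{(1-p)a : a \in A\}$ of $N$; since $A/J$ is complete and $q|_A$ induces an isometry of $A/J$ onto $(1-p)A$ (see the next paragraph), $(1-p)A$ is closed, hence is itself a Jordan operator algebra.

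It remains to see that the induced Jordan isomorphism $A/J \to (1-p)A$ is completely isometric. Here I would use that $J$ is a \emph{complete} $M$-ideal: the canonical embedding $A/J \hookrightarrow (A/J)^{**} = A^{**}/J^{\perp\perp}$ is a complete isometry, and since $J^{\perp\perp} = pA^{**}$ is a complete $M$-summand, the projection $q$ induces a complete isometry of $A^{**}/J^{\perp\perp}$ onto the complementary summand $N$. Composing, the map $A \to A/J \hookrightarrow A^{**}/J^{\perp\perp} \cong N$ is just $a \mapsto (1-p)a$, with image $(1-p)A$; hence $A/J$ is completely isometrically isomorphic to the Jordan operator algebra $(1-p)A$. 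The only point that requires care is exactly this last step — obtaining a \emph{complete} isometry rather than a mere isometry — which is why the identification of $M$-ideals with complete $M$-ideals in Theorem \ref{mid}(1) is essential; the only mild subtlety in the rest is that $p$ is central in $A^{**}$ but need not be central in $B^{**}$, which is why the Jordan-product computation above is carried out with both factors drawn from $A^{**}$.
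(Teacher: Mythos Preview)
Your proof is correct and follows essentially the same route as the paper's: both identify the central projection $p \in A^{**}$ with $J^{\perp\perp} = pA^{**}$, embed $A/J$ into $(1-p)A^{**}$ via $a+J \mapsto (1-p)a$, and verify this is a completely isometric Jordan homomorphism. The paper's version is terser --- it first reduces to the unital case and covers the complete isometry with a one-line ``by graduate functional analysis $A/J \subset A^{**}/J^{\perp\perp} = e^\perp A^{**}$'' --- whereas you spell out the complete $M$-summand argument and are explicit about why $(1-p)A^{**}$ is closed under the Jordan product (noting correctly that $p$ need not be central in $B^{**}$); but the underlying argument is the same.
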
  

  \begin{proof}  Since $A/J \subset A^1/J$ we may assume that
$A$ is unital.
By graduate functional analysis 
$$A/J \subset A^{**}/ J^{\perp \perp} =  A^{**}/ eA^{**} = e^\perp A^{**} ,$$
where $e$ is the central support projection of $J$.  The ensuing embedding 
$A/J \subset e^\perp A^{**}$ is the map $\theta(a + J ) = e^\perp \, a$.   Note that 
$\frac{1}{2}(ab + ba) + A$ maps to $\frac{1}{2} e^\perp (ab + ba) = \theta(a) \circ \theta(b)$.
So $\theta$ is a completely isometric Jordan homomorphism into the 
Jordan operator algebra $A^{**}$, so $A/J$ is completely isometrically isomorphic to
a Jordan operator algebra.
\end{proof}

Clearly any  approximately unital Jordan operator algebra $A$ is an $M$-ideal in its unitization, or in $JM(A)$. As in \cite[Proposition 6.1]{BRI} we have:
  
\begin{proposition} \label{qsof}
If $J$ is a closed Jordan ideal in a Jordan operator algebra $A,$ and if $J$ is approximately unital, then $q({\mathfrak F}_A)={\mathfrak F}_{A/J},$ where $q:A\to A/J$ is the quotient map.
\end{proposition}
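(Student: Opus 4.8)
The plan is to prove the two inclusions $q(\mathfrak{F}_A)\subseteq\mathfrak{F}_{A/J}$ and $\mathfrak{F}_{A/J}\subseteq q(\mathfrak{F}_A)$ separately, working throughout inside unitizations. First I would record the set-up. Since $A$ is a Jordan ideal of $A^1$, one checks at once that $J$ is again an approximately unital closed Jordan ideal of $A^1$ (it is closed, $A^1\circ J\subseteq A\circ J+\Cdb J\subseteq J$, and it has a partial cai intrinsically by Lemma~\ref{jcai}). By Proposition~\ref{isJq} the quotient $A/J$ is a Jordan operator algebra, and by Corollary~\ref{MeyerJ} the unitization $(A/J)^1$ may be identified with $A^1/J$ in such a way that the unital extension $q^1\colon A^1\to (A/J)^1$ of $q$ (Proposition~\ref{MeyerJ1}) becomes the canonical quotient map $A^1\to A^1/J$; under this identification $\mathfrak{F}_{A/J}=\mathfrak{F}_{A^1/J}\cap (A/J)$.

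For the easy inclusion: $q^1$ is a Banach space quotient map, hence contractive, so for $a\in\mathfrak{F}_A\subseteq\mathfrak{F}_{A^1}$ we get $\Vert 1-q(a)\Vert=\Vert q^1(1-a)\Vert\le\Vert 1-a\Vert\le 1$, i.e.\ $q(a)\in\mathfrak{F}_{A/J}$.

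For the reverse inclusion, fix $b\in\mathfrak{F}_{A/J}$, so $\Vert 1-b\Vert\le 1$ in $A^1/J$, and choose any $x\in A$ with $q(x)=b$ (possible since $q$ is surjective). Then $q^1(1-x)=1-b$, so by the description of the quotient norm, $\mathrm{dist}_{A^1}(1-x,J)=\Vert 1-b\Vert_{A^1/J}\le 1$. Now I invoke the key structural input: by Theorem~\ref{mid}(1), $J$ is an $M$-ideal of $A^1$, and $M$-ideals of a Banach space are proximinal. Hence there is $j\in J$ with $\Vert (1-x)-j\Vert=\mathrm{dist}_{A^1}(1-x,J)\le 1$. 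Since $x$ and $j$ both lie in $A$, the element $a:=x+j$ lies in $A$, satisfies $\Vert 1-a\Vert\le 1$ (so $a\in\mathfrak{F}_A$), and $q(a)=q(x)+q(j)=b+0=b$. Thus $b\in q(\mathfrak{F}_A)$, which completes the argument.

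The main obstacle is exactly the need for an \emph{exact} lift in the last step: a general quotient map lifts only elements of the open ball, yielding approximants $x_n+j_n$ with $\Vert 1-(x_n+j_n)\Vert\le 1+\varepsilon_n$, and since $\mathfrak{F}_A$ is closed but not norm compact one cannot pass to a limit for free. This is bypassed by proximinality of $M$-ideals, which is available here precisely because Theorem~\ref{mid}(1) identifies $J$ as an $M$-ideal. A self-contained alternative would be a hands-on argument using a nearly positive partial cai of $J$ (Theorem~\ref{kapl}) together with weak\textsuperscript{*} limits in $A^{**}$, exploiting that $J^{\perp\perp}$ is a central summand of $A^{**}$ (Corollary~\ref{nona}); but the $M$-ideal route is the shortest. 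One small point to verify along the way is the identification $(A/J)^1=A^1/J$ above, where the case in which $A/J$ happens already to be unital is handled exactly as in the proof of Corollary~\ref{MeyerJ}.
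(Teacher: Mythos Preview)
Your proof is correct and follows essentially the same route as the paper: the easy inclusion via the contractive unital extension $q^1$, and the reverse inclusion by invoking Theorem~\ref{mid}(1) to see $J$ is an $M$-ideal in $A^1$ and then using proximinality of $M$-ideals to lift exactly. The paper compresses the second step into a reference to the proof of \cite[Proposition 6.1]{BRI}, which is precisely the proximinality argument you spell out; your handling of the identification $(A/J)^1\cong A^1/J$ via Proposition~\ref{isJq} and Corollary~\ref{MeyerJ} is the right way to make that step rigorous.
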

\begin{proof}  By Propositions  \ref{isJq} and \ref{MeyerJ1}   
we can extend $q$ to a contractive unital Jordan homomorphism
from $A^1$ to a unitization of $A/J$, and then it is
easy to see that $q({\mathfrak F}_A) \subset {\mathfrak F}_{A/J}$.

For the reverse inclusion note that $J$ is an $M$-ideal in $A^1$ by Theorem
\ref{mid} (1).  We may then proceed as in the proof of
\cite[Proposition 6.1]{BRI}.  \end{proof}
 
 Under the conditions of the last proposition we can also show that $q({\mathfrak r}_A)={\mathfrak r}_{A/J}$
by using the method of \cite[Corollary 8.10]{BOZ}.    This will be presented in 
\cite{ZWdraft}. In the next section we will discuss 
the role of  real positivity and ${\mathfrak r}_A$ further.

\section[More on real positivity]{More on real positivity  in Jordan operator algebras}

The ${\mathfrak r}$-{\em ordering} is simply the order $\preccurlyeq$ induced by the above closed cone; that is
$b \preccurlyeq a$ iff $a - b \in {\mathfrak r}_A$.  If $A$ is a Jordan subalgebra of a Jordan operator algebra $B$, we mentioned earlier that ${\mathfrak r}_A \subset {\mathfrak r}_B$.
If $A, B$ are
approximately unital  Jordan subalgebras of $B(H)$ then it follows   from the fact that $A = {\mathfrak r}_A - {\mathfrak r}_A$  (see Theorem \ref{havin}) and similarly for $B$,  
that $A \subset B$ iff ${\mathfrak r}_A \subset {\mathfrak r}_B$.   As in \cite[Section 8]{BRI},
${\mathfrak r}_A$ contains no idempotents which are not orthogonal projections,
and no nonunitary isometries.  
In \cite{BRII} it is shown that $\overline{{\mathfrak c}_A} = {\mathfrak r}_A$.
Also ${\mathfrak r}_A$ contains no nonzero elements with square zero.  Indeed if $(a+ib)^2 = a^2 -b^2
+ i (ab+ba) = 0$ with $a \geq 0$ and $b = b^*$ then $a^2 = b^2$ so that $a$ and $b$ commute.
Hence $ab = 0$ and $a^4  = a^2 b^2 = 0$.   So $a = b = 0$.

\begin{theorem} \label{havin}  Let  $A$ be a Jordan  operator algebra which generates a $C^*$-algebra
$B$, and let ${\mathcal U}_A$ denote the open unit ball $\{ a \in A : \Vert a \Vert < 1 \}$.  The following are equivalent:
\begin{itemize} \item [(1)]   $A$ is approximately unital.
 \item [(2)]  For any positive $b \in {\mathcal U}_B$ there exists  $a \in {\mathfrak r}_A$
with $b \preccurlyeq a$.
 \item [(2')]  Same as {\rm (2)}, but also $a \in \frac{1}{2}  {\mathfrak F}_A$ and nearly positive.
\item [(3)]   For any pair 
$x, y \in {\mathcal U}_A$ there exist   nearly positive
$a \in \frac{1}{2}  {\mathfrak F}_A$
with $x \preccurlyeq a$ and $y \preccurlyeq a$.
\item [(4)]    For any $b \in {\mathcal U}_A$  there exist   nearly positive
$a \in \frac{1}{2}  {\mathfrak F}_A$
with $-a \preccurlyeq b \preccurlyeq a$. 
\item [(5)] For any $b \in {\mathcal U}_A$  there exist 
$x, y \in  \frac{1}{2}  {\mathfrak F}_A$ 
with $b = x-y$.   
\item [(6)]  ${\mathfrak r}_A$ is a generating cone (that is, $A = {\mathfrak r}_A - {\mathfrak r}_A$). 
\item [(7)]  $A = {\mathfrak c}_A - {\mathfrak c}_A$. 
\end{itemize}  \end{theorem}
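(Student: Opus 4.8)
The plan is to run the cycle of implications used for the associative operator algebra analogue of this result (see \cite[Theorem 3.3]{BRord}, and also \cite[Section 8]{BRI}, \cite{BRII}, \cite{BOZ}), and to check that every ingredient in that argument is by now available in the Jordan setting. The relevant dictionary is: Lemma \ref{jcai} identifies (1) with the statement that $A^{\ast\ast}$ has an identity $p$ of norm one, which is $1_{C^*_B(A)^{\ast\ast}} = 1_{B^{\ast\ast}}$ since $A$ generates $B$; Theorem \ref{kapl} and the Remark following it supply a \emph{nearly positive} partial cai lying in $\tfrac{1}{2}\mathfrak{F}_A$, together with the weak* density of $\mathfrak{F}_A$ in $\mathfrak{F}_{A^{\ast\ast}}$ and of $\mathfrak{r}_A$ in $\mathfrak{r}_{A^{\ast\ast}}$; Proposition \ref{Kap} gives the Kaplansky density statement for the contractions in $\mathfrak{r}_A$; and the hereditary subalgebra calculus of Section 3 (Theorem \ref{Ididnt}, Lemmas \ref{supop}, \ref{op}, \ref{rop}, and Corollary \ref{rop2}) provides the rest. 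I would arrange the logical loop so that (1) is the hub: prove $(1)\Rightarrow(2')$, $(1)\Rightarrow(3)$, and $(1)\Rightarrow(5)$ as the single substantive input; then $(2')\Rightarrow(2)$, $(2')\Rightarrow(4)$, $(3)\Rightarrow(4)$, $(4)\Rightarrow(6)$, $(5)\Rightarrow(6)$, $(5)\Rightarrow(7)$, $(7)\Rightarrow(6)$; and finally close with $(6)\Rightarrow(1)$ and $(2)\Rightarrow(1)$.

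Most of these arrows are routine order-theoretic bookkeeping. For example $(2')\Rightarrow(4)$: given $b\in\mathcal{U}_A$, apply $(2')$ to the positive element $|\mathrm{Re}(b)|\in\mathcal{U}_B$ to get a nearly positive $a\in\tfrac{1}{2}\mathfrak{F}_A$ with $\mathrm{Re}(a)\ge|\mathrm{Re}(b)|\ge\pm\,\mathrm{Re}(b)$, that is $-a\preccurlyeq b\preccurlyeq a$; then $(4)\Rightarrow(6)$ is immediate from $b=\tfrac{1}{2}((a+b)-(a-b))$ with $a\pm b\in\mathfrak{r}_A$, since $\mathcal{U}_A$ spans $A$. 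The implications $(5)\Rightarrow(7)$, $(5)\Rightarrow(6)$, $(7)\Rightarrow(6)$ are simple rescalings inside $\mathbb{R}_+\mathfrak{F}_A$, using $\mathfrak{c}_A=\mathbb{R}_+\mathfrak{F}_A$ and $\tfrac{1}{2}\mathfrak{F}_A\subset\mathfrak{c}_A\subset\mathfrak{r}_A$. For the return arrow $(6)\Rightarrow(1)$: (6) forces the linear span of $\mathfrak{r}_A$ to be all of $A$, so by Theorem \ref{Ididnt}(2) the smallest hereditary subalgebra of $A$ containing $\mathfrak{r}_A$, being a subspace containing $\mathfrak{r}_A$, equals $A$; but every HSA is approximately unital, so $A$ is approximately unital. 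Finally $(2)\Rightarrow(1)$: take an increasing approximate identity $(b_i)$ of $B$ lying in $\mathcal{U}_B$, dominate each $b_i$ by some $a_i\in\mathfrak{r}_A$, and replace $a_i$ by $x_i=\mathfrak{F}(a_i)=a_i(1+a_i)^{-1}\in\tfrac{1}{2}\mathfrak{F}_A$; then $s(x_i)=s(a_i)\ge s(b_i)$ (the support projection of an accretive element agreeing with that of its real part), and since $s(b_i)\nearrow 1_{B^{\ast\ast}}$ Lemma \ref{supop} gives that the $A$-open projection $\bigvee_i s(x_i)$ equals $1_{B^{\ast\ast}}$, which therefore lies in $A^{\perp\perp}$; so $A^{\ast\ast}$ is unital and (1) holds by Lemma \ref{jcai}.

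The genuinely substantive point, and the expected main obstacle, is the family of implications $(1)\Rightarrow(2')$, $(1)\Rightarrow(3)$, $(1)\Rightarrow(5)$: these are the Jordan incarnation of Read's theorem and its refinements, and amount to showing that an approximately unital Jordan operator algebra admits, for any finite collection of elements of $\mathcal{U}_A$ (or of positive elements of $\mathcal{U}_B$), a single nearly positive dominating or decomposing element of $\tfrac{1}{2}\mathfrak{F}_A$. Starting from a nearly positive partial cai $(e_t)\subset\tfrac{1}{2}\mathfrak{F}_A$ (also a cai for $B$ by Lemma \ref{jcai}) and the given data, one runs the iterative Read/Cohen-type construction of \cite{Bnpi} and the proof of \cite[Theorem 3.3]{BRord} inside $A^{\ast\ast}$ --- a Jordan operator algebra with identity $1=1_{B^{\ast\ast}}$ --- to manufacture a nearly positive $a\in\tfrac{1}{2}\mathfrak{F}_{A^{\ast\ast}}$ with the required domination, and then pushes $a$ back into $\tfrac{1}{2}\mathfrak{F}_A$ via a bipolar argument in the spirit of \cite[Proposition 6.4]{BOZ}, using Theorem \ref{kapl} and Proposition \ref{Kap}. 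The only points needing new attention relative to the associative case are that the construction manipulates products and $n$th roots of single elements $x$ --- all of which lie in the associative operator subalgebra $\mathrm{oa}(x)\subset A$, so the classical estimates transfer verbatim --- together with occasional three-fold products $xax$, which are kept inside $A$ and bounded using the identities (\ref{abc}) and (\ref{aba}). Once this bookkeeping is in place I expect the argument to reduce, as repeatedly elsewhere in this paper, to citing the operator algebra proof.
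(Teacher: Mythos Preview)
Your proposal is correct and follows essentially the same route as the paper, which likewise defers everything to the associative argument in \cite[Theorem 2.1]{BRord} and singles out $(6)\Rightarrow(1)$ and $(1)\Rightarrow(5)$ as the only steps needing Jordan-specific care; your packaging of $(6)\Rightarrow(1)$ via Theorem~\ref{Ididnt} is just a repackaging of the paper's direct appeal to closure of $A^{**}$ under joins together with Lemma~\ref{jcai}. One small imprecision: in your $(2)\Rightarrow(1)$ you invoke ``the support projection of an accretive element agreeing with that of its real part,'' but $s(a)=s(\mathrm{Re}\,a)$ can fail for $a\in\mathfrak{r}_A$ (take $a=ih$ with $h=h^*\neq 0$); fortunately you only need the inequality $s(a)\ge s(\mathrm{Re}\,a)\ge s(b)$, which does hold since $pa=ap=a$ forces $p\,\mathrm{Re}(a)=\mathrm{Re}(a)$, so the argument survives.
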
 

\begin{proof}   This is identical to the proof of \cite[Theorem 2.1]{BRord} except in the proof that  (6) implies (1), and (1) 
implies (5).   For the former, we use the
fact mentioned above that $A^{**}$ is closed under meets and joins of projections. Then note that the product
$p x$ in that part of  the proof in \cite{BRord} initially is in $B^{**}$, but then we see $px = x$ and can appeal
to Lemma \ref{jcai}.      For    (1) 
implies (5) one can proceed as in \cite[Theorem 6.1]{BOZ},
but appealing to Theorem \ref{kapl} (we remark that there is a typo in the proof of \cite[Theorem 6.1]{BOZ},
the reference cited there should be replaced by e.g.\ \cite[Theorem 5.2]{BOZ}).
\end{proof}  

For the next results let $A_H$ be the closure of the set $\{ aAa : a \in {\mathfrak F}_A \}$. 
We shall show that $A_H$ is the largest approximately unital Jordan subalgebra of $A$.

\begin{corollary} \label{Ahasc2}  For any Jordan operator algebra $A$,
the largest approximately unital Jordan subalgebra of $A$ is
$${\mathfrak r}_A - {\mathfrak r}_A = {\mathfrak c}_A - {\mathfrak c}_A.$$
In particular these spaces are  closed and form a HSA of $A$.

If $A$ is a  weak* closed Jordan operator
algebra then this largest approximately unital Jordan subalgebra is $q A q$ where $q$ is the largest projection in $A$.
This is weak* closed.
 \end{corollary}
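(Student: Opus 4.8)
The plan is to prove that $A_H$ is the largest approximately unital Jordan subalgebra of $A$, identify it with ${\mathfrak r}_A - {\mathfrak r}_A$ and ${\mathfrak c}_A - {\mathfrak c}_A$, and then handle the weak* closed case. First I would note that ${\mathfrak F}_A$ is convex, so ${\rm conv}({\mathfrak F}_A) = {\mathfrak F}_A$, and hence Theorem \ref{Ididnt}(2), applied with $E = {\mathfrak F}_A \subset {\mathfrak r}_A$, shows at once that $A_H = \overline{\{ x A x : x \in {\mathfrak F}_A \}}$ is a HSA of $A$; in particular it is norm closed and approximately unital. This already disposes of the assertion that $A_H$ is closed and forms a HSA.

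Next I would establish the inclusion ${\mathfrak r}_A \subset A_H$. Given $x \in {\mathfrak r}_A$ with $x \neq 0$, Lemma \ref{rop} gives that $\overline{xAx}$ is a HSA containing $x$, and that $\overline{xAx} = \overline{aAa}$ for $a = {\mathfrak F}(x) = x(1+x)^{-1} \in \frac{1}{2} {\mathfrak F}_A$; since $aAa \subset \{ b A b : b \in {\mathfrak F}_A \}$ we get $\overline{xAx} = \overline{aAa} \subset A_H$, so $x \in A_H$. As $A_H$ is a closed subspace, ${\mathfrak r}_A - {\mathfrak r}_A \subset A_H$, and since ${\mathfrak c}_A \subset {\mathfrak r}_A$ also ${\mathfrak c}_A - {\mathfrak c}_A \subset A_H$. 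For the reverse inclusions, $A_H$ is approximately unital, so by the equivalence of (1), (6), (7) in Theorem \ref{havin} we have $A_H = {\mathfrak r}_{A_H} - {\mathfrak r}_{A_H} = {\mathfrak c}_{A_H} - {\mathfrak c}_{A_H}$; and ${\mathfrak r}_{A_H} = {\mathfrak r}_A \cap A_H \subset {\mathfrak r}_A$, ${\mathfrak c}_{A_H} \subset {\mathfrak c}_A$, so $A_H \subset {\mathfrak r}_A - {\mathfrak r}_A$ and $A_H \subset {\mathfrak c}_A - {\mathfrak c}_A$. Thus $A_H = {\mathfrak r}_A - {\mathfrak r}_A = {\mathfrak c}_A - {\mathfrak c}_A$. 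The same reasoning shows $A_H$ is the largest approximately unital Jordan subalgebra: if $D$ is any such subalgebra then $D = {\mathfrak r}_D - {\mathfrak r}_D$ by Theorem \ref{havin}, while ${\mathfrak r}_D = {\mathfrak r}_A \cap D \subset {\mathfrak r}_A \subset A_H$, whence $D \subset A_H$.

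For the weak* closed case, let $q$ be the largest projection in $A$; this exists because every projection in $A$ lies in $\Delta(A)$, a JW*-algebra, which is closed under joins, so $q$ may be taken to be the join in $\Delta(A) \subset A$ of all projections of $A$. First I would check that $qAq$ is a weak* closed Jordan subalgebra of $A$: it is contained in $A$ by identity (\ref{aba}); it equals $\{ x \in A : qx = x \} \cap \{ x \in A : xq = x \}$, an intersection of weak* closed subsets of the weak* closed algebra $A$, hence is weak* closed and so norm closed; it is closed under squares since $qx^2q = (qx)(xq) = x^2$ for $x \in qAq$; and it is unital with identity $q$, in particular approximately unital. Hence $qAq \subset A_H$ by the previous paragraph. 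Conversely, for $a \in {\mathfrak F}_A$ the support projection $s(a)$ is the weak* limit of the sequence $(a^{\frac{1}{n}}) \subset {\rm oa}(a) \subset A$, so $s(a) \in A$ because $A$ is weak* closed, whence $s(a) \leq q$; using $s(a) a = a s(a) = a$ and (\ref{aba}) we get $aAa \subset s(a) A s(a) \subset qAq$, and taking the norm closure gives $A_H \subset qAq$. Therefore $A_H = qAq$, which as noted is weak* closed.

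Apart from routine bookkeeping, the step that requires real care is the weak* closed case: one must know that $s(a)$ genuinely lies in $A$ (not merely in $A^{**}$), which is exactly where weak* closedness of $A$ and the containment ${\rm oa}(a) \subset A$ are used, and that $qAq$ is weak* closed; the remainder is an assembly of Theorems \ref{Ididnt} and \ref{havin}, Lemma \ref{rop}, and the fact that ${\mathfrak r}_D = {\mathfrak r}_A \cap D$ for a Jordan subalgebra $D$.
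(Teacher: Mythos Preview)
Your proof is correct and follows essentially the same strategy as the paper: both use Theorem \ref{Ididnt}(2) with $E = {\mathfrak F}_A$ to see that $A_H$ is a HSA, then Theorem \ref{havin} to identify $A_H$ with ${\mathfrak r}_A - {\mathfrak r}_A = {\mathfrak c}_A - {\mathfrak c}_A$ and to prove maximality. The main difference is packaging: the paper defers the ``largest approximately unital subalgebra'' step and the identities ${\mathfrak r}_A = {\mathfrak r}_{A_H}$, ${\mathfrak F}_A = {\mathfrak F}_{A_H}$ to \cite[Theorem 4.2 and Corollary 4.3]{BRII}, and the weak* closed case to \cite[Corollary 2.2]{BRord}, whereas you spell these out directly via ${\mathfrak r}_A \subset A_H$ (through Lemma \ref{rop}) and the explicit argument with $s(a) \in A$ and $q$. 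Your self-contained treatment of the weak* case, using that $s(a) = \textnormal{w*-}\lim a^{1/n} \in A$ forces $s(a) \leq q$, is exactly the argument underlying the cited reference.
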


\begin{proof}      The proof of Lemma \ref{supop} (see also
 Theorem  \ref{Ididnt} (2) with $E = {\mathfrak F}_A$)
yields $A_H$ is the HSA $pA^{**} p \cap A$ where $p = \vee_{a \in {\mathfrak F}_A} \, s(a)$
is $A$-open.     
Similarly, $A_H$ is the closure of the set $\{ aAa : a \in 
{\mathfrak r}_A \}$.  As in the proof of \cite[Theorem 4.2 and Corollary 4.3]{BRII} we have that 
$A_H$ is the largest approximately unital Jordan subalgebra of $A$ and ${\mathfrak F}_A
= {\mathfrak F}_{A_H}$ and ${\mathfrak r}_A
= {\mathfrak r}_{A_H}$.   By Theorem \ref{havin} we have 
$A_H = {\mathfrak r}_{A_H} - {\mathfrak r}_{A_H} = {\mathfrak r}_A - {\mathfrak r}_A$,
and similarly $A_H = {\mathfrak c}_{A_H} - {\mathfrak c}_{A_H} = {\mathfrak c}_A - {\mathfrak c}_A$.

The final assertion follows just as in \cite[Corollary 2.2]{BRord}. 
 \end{proof}  

As in \cite[Lemma 2.3]{BRord}, and with the same proof we have:

\begin{lemma} \label{mnah}  Let $A$ be any Jordan operator algebra.  Then   for every $n \in \Ndb$, 
$$M_n(A_H) = M_n(A)_H \; , \; \; \; \; \;  {\mathfrak r}_{M_n(A)} = {\mathfrak r}_{M_n(A_H)}
 \; , \; \; \; \; \;  {\mathfrak F}_{M_n(A)} = {\mathfrak F}_{M_n(A_H)}.$$  
\end{lemma}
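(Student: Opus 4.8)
The plan is to follow the proof of \cite[Lemma 2.3]{BRord} essentially verbatim, feeding in the Jordan results assembled above in place of their associative counterparts; no genuinely new idea is required. Fix a $C^*$-cover $B$ of $A$, so that $M_n(B)$ is a $C^*$-cover of $M_n(A)$; all of ${\mathfrak c}$, ${\mathfrak r}$, ${\mathfrak F}$ and the operation $(\cdot)_H$ below are to be computed inside it. Recall from Theorem \ref{havin} and Corollary \ref{Ahasc2} that $A_H$ is the largest approximately unital Jordan subalgebra of $A$, that it is the HSA with $A$-open support projection $p = \vee_{x \in {\mathfrak r}_A}\, s(x) \in A^{**}$ (Lemma \ref{supop}(1)), and that ${\mathfrak r}_{A_H} = {\mathfrak r}_A$ and ${\mathfrak F}_{A_H} = {\mathfrak F}_A$. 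Set $P = \mathrm{diag}(p, \dots, p) \in M_n(A^{**}) = M_n(A)^{**}$.

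First I would show $M_n(A_H) \subseteq M_n(A)_H$. Choosing a partial cai $(e_t) \subseteq \tfrac12 {\mathfrak F}_{A_H}$ for $A_H$ (Theorem \ref{kapl}), the diagonal matrices $\mathrm{diag}(e_t, \dots, e_t)$ form a partial cai for $M_n(A_H)$ lying in $\tfrac12 {\mathfrak F}_{M_n(A)}$; and since $A_H$ is a Jordan inner ideal of $A$ (Proposition \ref{hsaii}), $e_t c e_t \in A_H$ for each $c \in A_H$, so $\mathrm{diag}(e_t)\, [c_{ij}]\, \mathrm{diag}(e_t) \in M_n(A_H)$ and this converges in norm to $[c_{ij}]$ for every $[c_{ij}] \in M_n(A_H)$. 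Thus $M_n(A_H)$ is an approximately unital Jordan subalgebra of $M_n(A)$, hence is contained in the largest such, $M_n(A)_H$, by Corollary \ref{Ahasc2}.

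For the reverse inclusion, Corollary \ref{Ahasc2} gives $M_n(A)_H = {\mathfrak r}_{M_n(A)} - {\mathfrak r}_{M_n(A)}$; using in addition the Jordan/matrix analogue of \cite{BRII} that ${\mathfrak c}_{M_n(A)}$ is dense in ${\mathfrak r}_{M_n(A)}$, that ${\mathfrak c}_{M_n(A)} = \Rdb_+ {\mathfrak F}_{M_n(A)}$, and that $M_n(A_H)$ is norm closed, it is enough to prove ${\mathfrak F}_{M_n(A)} \subseteq M_n(A_H)$. So let $a = [a_{ij}] \in {\mathfrak F}_{M_n(A)}$. Then $a$ lies in ${\mathfrak c}_{M_n(B)}$, hence is real positive in the $C^*$-algebra $M_n(B)$ with a two-sided support projection $s(a) \in M_n(B)^{**}$ satisfying $s(a) = s(a + a^*)$ (the support of a ${\mathfrak c}$-element of a $C^*$-algebra equals that of its real part; cf.\ \cite[Section 3]{BBS}). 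Now $a + a^* = [\,a_{ij} + a_{ji}^*\,] \geq 0$, and a positive matrix over a $C^*$-algebra has support dominated by the diagonal matrix built from the supports of its diagonal entries, so $s(a + a^*) \leq \mathrm{diag}(s(a_{11} + a_{11}^*), \dots, s(a_{nn} + a_{nn}^*))$. Each $a_{ii}$, being a diagonal entry of $a \in {\mathfrak F}_{M_n(A)}$, lies in ${\mathfrak F}_A \subseteq {\mathfrak r}_A$, so $s(a_{ii} + a_{ii}^*) = s(a_{ii})$, and $s(a_{ii}) \leq p$ in $B^{**}$ by Lemma \ref{supop}(1) together with Corollary \ref{rop2} (which identifies the support projection computed in $A^{**}$ with the one in $B^{**}$). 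Hence $s(a) \leq P$, so $a = s(a)\, a\, s(a) = P a P$, whence $a \in P M_n(B)^{**} P \cap M_n(A) = M_n(p A^{**} p \cap A) = M_n(A_H)$. This proves ${\mathfrak F}_{M_n(A)} \subseteq M_n(A_H)$, hence $M_n(A)_H \subseteq M_n(A_H)$, and therefore $M_n(A)_H = M_n(A_H)$.

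The two remaining equalities then follow formally: applying the general identities ${\mathfrak r}_X = {\mathfrak r}_{X_H}$ and ${\mathfrak F}_X = {\mathfrak F}_{X_H}$ (recorded in the proof of Corollary \ref{Ahasc2}) with $X = M_n(A)$ gives ${\mathfrak r}_{M_n(A)} = {\mathfrak r}_{M_n(A)_H} = {\mathfrak r}_{M_n(A_H)}$, and likewise ${\mathfrak F}_{M_n(A)} = {\mathfrak F}_{M_n(A_H)}$. The step I expect to be the main obstacle is the support-projection bookkeeping in the third paragraph --- namely $s(a + a^*) = s(a)$ for the ${\mathfrak c}$-element $a$, the identification $s(a_{ii} + a_{ii}^*) = s(a_{ii})$, and the matching (Corollary \ref{rop2}) of support projections computed in $A^{**}$ and in $B^{**}$; a secondary point requiring a word of care is that one works throughout inside the fixed $C^*$-cover $M_n(B)$, where the cited structural results for $(\cdot)_H$, ${\mathfrak r}$ and ${\mathfrak F}$ are to be applied.
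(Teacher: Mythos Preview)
Your proposal is correct and follows precisely the approach the paper indicates: the paper simply writes ``As in \cite[Lemma 2.3]{BRord}, and with the same proof,'' and your elaboration is a faithful Jordan adaptation of that argument. The support-projection bookkeeping you flag as the potential obstacle is indeed the crux, and your handling of it (diagonal entries of $a \in {\mathfrak F}_{M_n(A)}$ lie in ${\mathfrak F}_A$ via compression, $s(a)=s(a+a^*)$ for ${\mathfrak c}$-elements, and the domination of the support of a positive matrix by the diagonal of entrywise supports) is sound.
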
  

 If $S \subset {\mathfrak r}_A$, for a Jordan  operator algebra
$A$,  and if $xy = yx$
for all $x, y \in S$, write joa$(S)$ for the smallest closed Jordan subalgebra of $A$ containing $S$.

\begin{proposition} \label{commt}  If $S$ is any subset of ${\mathfrak r}_A$ for a Jordan  operator algebra
$A$,  
then ${\rm joa}(S)$ is approximately unital.
\end{proposition}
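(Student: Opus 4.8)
The plan is to establish the statement for finite subsets $F \subseteq S$ first, by dominating each such $F$ by a single real positive element, and then to pass to arbitrary $S$ by a routine manufacture of an approximate identity. (I read ${\rm joa}(S)$ as the smallest closed Jordan subalgebra of $A$ containing $S$; the argument below uses nothing about commutativity of $S$.)

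First I would arrange that $S \subseteq \frac{1}{2}{\mathfrak F}_A$. This is harmless: for $x \in {\mathfrak r}_A$ one has ${\mathfrak F}(x) = x(1+x)^{-1} \in {\rm oa}(x) \cap \frac{1}{2}{\mathfrak F}_A$, and conversely $x = {\mathfrak F}(x)(1-{\mathfrak F}(x))^{-1} \in {\rm oa}({\mathfrak F}(x))$ by the Neumann lemma (since $\Vert {\mathfrak F}(x)\Vert < 1$, as in the discussion above Lemma~\ref{smallestH}), so replacing each $x \in S$ by ${\mathfrak F}(x)$ does not change ${\rm joa}(S)$. Now fix a finite $F = \{x_1,\dots,x_m\} \subseteq S$ and set $z_F = \frac{1}{m}\sum_{k} x_k \in \frac{1}{2}{\mathfrak F}_A$. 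By Corollary~\ref{ishsa}, $s(z_F) = \bigvee_k s(x_k)$, so $s(x_k) \le s(z_F)$ for each $k$, and hence $\overline{x_k A x_k} \subseteq \overline{z_F A z_F}$ by the last assertion of Lemma~\ref{rop}. Since $x_k \in \overline{x_k A x_k}$ (its roots lie in ${\rm oa}(x_k)$, as in the proof of Lemma~\ref{rop}) and $\overline{z_F A z_F}$ is a closed Jordan subalgebra of $A$ --- indeed a HSA by Lemma~\ref{rop} --- it follows that ${\rm joa}(F) \subseteq \overline{z_F A z_F}$.

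By Lemma~\ref{rop} the net $(z_F^{1/n})_n$ is a partial cai for $\overline{z_F A z_F}$, so for every $w$ in that HSA, in particular for every $w \in {\rm joa}(F)$, we have $z_F^{1/n} w \to w$ and $w z_F^{1/n} \to w$ in norm (products computed in a containing $C^*$-algebra), whence $z_F^{1/n} \circ w \to w$ in norm. As $z_F^{1/n} \in {\rm oa}(z_F) \subseteq {\rm joa}(F)$ and these are contractions, $(z_F^{1/n})_n$ is a J-cai for ${\rm joa}(F)$, so ${\rm joa}(F)$ is approximately unital by Lemma~\ref{jcai}. Finally, ${\rm joa}(S)$ is the norm closure of the increasing union of the subalgebras ${\rm joa}(F)$, $F$ ranging over the finite subsets of $S$; given $w_1,\dots,w_r \in {\rm joa}(S)$ and $\varepsilon > 0$, I would pick a single finite $F$ with elements $w'_l \in {\rm joa}(F)$ satisfying $\Vert w_l - w'_l\Vert < \varepsilon/2$, and then an element $e$ of a J-cai for ${\rm joa}(F)$ with $\Vert e \circ w'_l - w'_l\Vert < \varepsilon/2$ for all $l$; since $\Vert e \circ (w_l - w'_l)\Vert \le \Vert w_l - w'_l\Vert$, this yields $\Vert e \circ w_l - w_l\Vert < \varepsilon$. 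Indexing such $e$ by the finite subsets of ${\rm joa}(S)$ and by $\varepsilon$ in the usual way gives a J-cai for ${\rm joa}(S)$, so ${\rm joa}(S)$ is approximately unital by Lemma~\ref{jcai}.

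The step I would watch most carefully is the inclusion ${\rm joa}(F) \subseteq \overline{z_F A z_F}$: it depends on the facts that the support projection of an average of elements of $\frac12{\mathfrak F}_A$ dominates those of the summands (Corollary~\ref{ishsa}) and that $s(x) \le s(y)$ is equivalent to $\overline{xAx} \subseteq \overline{yAy}$ (Lemma~\ref{rop}). Once that is in place, the remainder is bookkeeping with approximate identities of precisely the kind already used in the proofs of Lemma~\ref{jcai}, Theorem~\ref{kapl}, and Theorem~\ref{Ididnt}(2).
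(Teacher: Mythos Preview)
Your proof is correct, but the paper's argument is much shorter and takes a genuinely different route. The paper simply sets $C = {\rm joa}(S)$ and invokes Corollary~\ref{Ahasc2}: since ${\mathfrak r}_C = C \cap {\mathfrak r}_A$, each $x \in S$ lies in ${\mathfrak r}_C = {\mathfrak r}_{C_H} \subset C_H$, and as $C_H$ is a closed Jordan subalgebra of $C$ containing $S$, minimality forces $C \subset C_H \subset C$, so $C = C_H$ is approximately unital. Thus the paper's proof is a two-line application of the existence of a \emph{largest} approximately unital subalgebra, whereas you build a J-cai explicitly by hand via averages $z_F$ and their roots, using Corollary~\ref{ishsa} and Lemma~\ref{rop} in place of Corollary~\ref{Ahasc2}. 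Your approach has the virtue of being constructive---it actually exhibits the approximate identity---and, as you note, makes transparent that commutativity of $S$ is irrelevant (the paper's proof does not use it either, despite the hypothesis appearing in the definition of ${\rm joa}(S)$ just above the proposition). The paper's approach, on the other hand, shows how the result drops out immediately once one has the structural fact $A_H = {\mathfrak r}_A - {\mathfrak r}_A$.
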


\begin{proof}   Let $C = {\rm joa}(S)$.  Then  ${\mathfrak r}_C = C \cap {\mathfrak r}_A$.
If $x \in S$ then
$x \in {\mathfrak r}_C = {\mathfrak r}_{C_H} 
\subset C_H.$  So $C \subset  C_H \subset C,$    since 
$C_H$ is a Jordan  operator algebra containing $S$.
Hence $C = C_H$, which is
approximately unital.   
\end{proof} 

\begin{lemma} \label{Font}   For any operator algebra $A$, the  ${\mathfrak F}$-transform 
${\mathfrak F}(x) = 1 - (x+1)^{-1} = x (x+1)^{-1}$  maps ${\mathfrak r}_{A}$ bijectively onto the set of elements of $\frac{1}{2}{\mathfrak F}_{A}$
of norm $< 1$.   Thus ${\mathfrak F}({\mathfrak r}_A) = {\mathcal U}_A \cap \frac{1}{2}{\mathfrak F}_{A}$.  \end{lemma}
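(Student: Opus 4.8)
The plan is to exhibit an explicit inverse for $\mathfrak F$; since every $x$ in a (Jordan) operator algebra $A$ generates, together with $1$, a commutative associative operator subalgebra ${\rm oa}(1,x)\subseteq A^1$, all the manipulations take place in an associative commutative setting, and the statement is in essence \cite[Lemma 2.5]{BRord} transported to the present context. The candidate inverse is $g(y)=y(1-y)^{-1}=(1-y)^{-1}-1$, defined for $y\in A$ with $\Vert y\Vert<1$: then $1-y$ is invertible in ${\rm oa}(1,y)\subseteq A^1$ by the Neumann series, and $g(y)=(1-y)^{-1}-1\in{\rm oa}(y)\subseteq A$.

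First I would verify that $\mathfrak F$ maps $\mathfrak r_A$ into $\mathcal U_A\cap\frac{1}{2}\mathfrak F_A$. For $x\in\mathfrak r_A$ the element $1+x$ is invertible with inverse in ${\rm oa}(1,x)\subseteq A^1$, so $\mathfrak F(x)=x(1+x)^{-1}=1-(1+x)^{-1}\in{\rm oa}(x)\subseteq A$; that $\mathfrak F(x)\in\frac{1}{2}\mathfrak F_A$ with $\Vert\mathfrak F(x)\Vert<1$ has already been recorded above (cf.\ the discussion surrounding \cite[Lemma 2.5]{BRord} and the proof of Lemma~\ref{op}), and may also be seen from the short computation that, working in a containing $C^*$-algebra and putting $z=1+x$ (so ${\rm Re}(z)\geq 1$), the inequality $\Vert 1-2\mathfrak F(x)\Vert=\Vert 2z^{-1}-1\Vert\leq 1$ is, after conjugating by $z$, equivalent to ${\rm Re}(z)\geq 1$. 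Next, the algebraic identities $1-\mathfrak F(x)=(1+x)^{-1}$ (for $x\in\mathfrak r_A$) and $1+g(y)=(1-y)^{-1}$ (for $\Vert y\Vert<1$) immediately yield $g(\mathfrak F(x))=x$ and $\mathfrak F(g(y))=y$. Finally I would check that $g$ carries $\mathcal U_A\cap\frac{1}{2}\mathfrak F_A$ into $\mathfrak r_A$: again working in a containing $C^*$-algebra and conjugating by the invertible element $1-y$, the condition $g(y)+g(y)^*\geq 0$ becomes $y+y^*\geq 2y^*y$, and expanding $(1-2y)^*(1-2y)\leq 1$ shows that this same inequality is exactly the assertion $y\in\frac{1}{2}\mathfrak F_A$. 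Hence $g(y)\in\mathfrak r_A$, and $\mathfrak F$ and $g$ are mutually inverse bijections between $\mathfrak r_A$ and $\mathcal U_A\cap\frac{1}{2}\mathfrak F_A$, as claimed.

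I do not anticipate a genuine obstacle; the argument is elementary once the $\mathfrak F$-transform facts of \cite{BRord} are in hand. The only points requiring care are that all the inverses occurring live inside the commutative associative subalgebras ${\rm oa}(1,x)$, resp.\ ${\rm oa}(1,y)$ — so that $\mathfrak F(x)$ and $g(y)$ genuinely lie in $A$ even though $(1+x)^{-1}$ and $(1-y)^{-1}$ need not — and that the $C^*$-algebraic inequalities above really do encode the intrinsic membership conditions $x\in\mathfrak r_A$ and $y\in\frac{1}{2}\mathfrak F_A$, which is legitimate because, as explained in Section~\ref{MeyerRP}, neither $\mathfrak r_A$ nor $\mathfrak F_A$ depends on the choice of containing $C^*$-algebra.
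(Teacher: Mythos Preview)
Your proposal is correct and takes essentially the same approach as the paper, which simply refers back to the discussion above Lemma~\ref{smallestH} (where the inverse $y\mapsto y(1-y)^{-1}$ and the relevant facts from \cite[Lemma~2.5]{BRord} are already recorded). You have merely spelled out in full the computations that the paper leaves to those references; the key points---that everything takes place in the commutative associative algebra ${\rm oa}(1,x)$, that $\mathfrak F(x)$ and $g(y)$ land in $A$ rather than just $A^1$, and that the $C^*$-inequalities encode the intrinsic conditions---are exactly the ones the paper relies on.
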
 

\begin{proof}  This follows from part of the discussion above Lemma
\ref{smallestH}. 
 \end{proof} 

We recall that the positive part of the  open unit ball of a $C^*$-algebra
is a directed set, and indeed is a  net which is a positive cai for $B$ (see e.g.\ \cite{P}).  As in 
 \cite[Proposition 2.6 and Corollary 2.7]{BRord}, and with the same proofs we have:

\begin{proposition} \label{isnet}  If $A$ is an approximately unital  Jordan operator algebra, then
${\mathcal U}_A \cap \frac{1}{2} {\mathfrak F}_A$ is a directed set in the $\preccurlyeq$ ordering,
and with this ordering ${\mathcal U}_A \cap \frac{1}{2} {\mathfrak F}_A$  is an increasing 
partial cai for $A$.  \end{proposition}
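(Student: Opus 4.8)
The plan is to follow the proofs of \cite[Proposition 2.6 and Corollary 2.7]{BRord}. The only point needing attention is that $A$ is merely a Jordan operator algebra; but every product entering the argument lives either in a commutative algebra ${\rm oa}(x)$ generated by a single element, or in a $C^*$-algebra $B$ generated by $A$, for which ${\mathfrak r}_A = {\mathfrak r}_B \cap A$ and ${\mathfrak F}_A = {\mathfrak F}_B \cap A$ by the facts in Subsection \ref{MeyerRP}; so everything transfers. I would first prove that ${\mathcal U}_A \cap \frac{1}{2}{\mathfrak F}_A$ is directed, and then deduce the cai property.

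\emph{Directedness.} Given $x, y \in {\mathcal U}_A \cap \frac{1}{2}{\mathfrak F}_A$, since $\Vert x\Vert, \Vert y\Vert < 1$ pick $t > 1$ with $\Vert tx\Vert < 1$ and $\Vert ty\Vert < 1$, and apply Theorem \ref{havin}(3) to the pair $tx, ty \in {\mathcal U}_A$ to obtain a (nearly positive) $a \in \frac{1}{2}{\mathfrak F}_A$ with $tx \preccurlyeq a$ and $ty \preccurlyeq a$. Then $z = \frac{1}{t}a$ has $\Vert z\Vert \le \tfrac{1}{t} < 1$; it lies in $\frac{1}{2}{\mathfrak F}_A$, because $2a \in {\mathfrak F}_A$ and $\tfrac{1}{t}{\mathfrak F}_A \subseteq {\mathfrak F}_A$ (indeed $\Vert 1 - \tfrac{1}{t}w\Vert \le (1 - \tfrac{1}{t}) + \tfrac{1}{t}\Vert 1 - w\Vert \le 1$ for $w \in {\mathfrak F}_A$); and, ${\mathfrak r}_A$ being a cone, $x = \tfrac{1}{t}(tx) \preccurlyeq z$ and likewise $y \preccurlyeq z$. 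Hence ${\mathcal U}_A \cap \frac{1}{2}{\mathfrak F}_A$ is upward directed by $\preccurlyeq$, and, indexed by itself, the assignment $\lambda \mapsto \lambda$ is an increasing net.

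\emph{The cai property.} By Theorem \ref{kapl} there is a partial cai for $A$ in $\frac{1}{2}{\mathfrak F}_A$; a routine rescaling (replacing a member $g$ by $\tfrac{n}{n+1}g$) yields a partial cai $(f_s)$ lying inside ${\mathcal U}_A \cap \frac{1}{2}{\mathfrak F}_A$. Working in a $C^*$-algebra $B$ generated by $A$: for $f \in \frac{1}{2}{\mathfrak F}_A$, the inequality $\Vert 1 - 2f\Vert \le 1$ gives $f^*f \le {\rm Re}(f)$, hence $(1-f)^*(1-f) \le 1 - {\rm Re}(f)$, so $\Vert a - fa\Vert^2 \le \Vert a^*(1 - {\rm Re}(f))a\Vert$ for all $a \in A$. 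Thus if $e \preccurlyeq \lambda$ in ${\mathcal U}_A \cap \frac{1}{2}{\mathfrak F}_A$, then ${\rm Re}(e) \le {\rm Re}(\lambda)$, so $a^*(1 - {\rm Re}(\lambda))a \le a^*(1 - {\rm Re}(e))a$, whence
\[
\Vert a - \lambda a\Vert^2 \le \Vert a^*(1 - {\rm Re}(\lambda))a\Vert \le \Vert a^*(1 - {\rm Re}(e))a\Vert \le \tfrac{\Vert a\Vert}{2}\bigl(\Vert a - ea\Vert + \Vert a - e^*a\Vert\bigr) .
\]
Applying this with $e = f_s$, and recalling that $(f_s)$, hence also $(f_s^*)$, is a cai for $B$ by Lemma \ref{jcai}, we see that $\Vert a - f_s a\Vert, \Vert a - f_s^* a\Vert \to 0$, so $\Vert a - \lambda a\Vert \to 0$ along the directed set. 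The symmetric computation, starting from $ff^* \le {\rm Re}(f)$, gives $\Vert a - a\lambda\Vert \to 0$. Hence the increasing net $({\mathcal U}_A \cap \frac{1}{2}{\mathfrak F}_A, \preccurlyeq)$ is a partial cai for $A$.

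The step I expect to require the most care is the directedness argument: because $A$ need not be associative, one cannot simply run the usual $C^*$-algebra recipe of taking $z$ to be the ${\mathfrak F}$-transform of ${\mathfrak F}^{-1}(x) + {\mathfrak F}^{-1}(y)$ — that would need monotonicity of the ${\mathfrak F}$-transform for the $\preccurlyeq$-ordering, which is delicate even in the associative case. Routing instead through Theorem \ref{havin}(3), whose proof already accounts for the non-associativity, bypasses this. The remaining verifications (the $C^*$-level inequalities, the rescalings preserving ${\mathfrak F}_A$, and that $(f_s^*)$ is a cai for $B$) are routine.
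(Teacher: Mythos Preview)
Your proposal is correct and follows essentially the route the paper indicates (namely, the proofs of \cite[Proposition 2.6 and Corollary 2.7]{BRord} carried over verbatim, with all products interpreted in a generating $C^*$-algebra). Your directedness step via Theorem~\ref{havin}(3) together with the scaling trick, and the cai step via the $C^*$-inequality $(1-f)^*(1-f)\le 1-{\rm Re}(f)$ for $f\in\frac{1}{2}{\mathfrak F}_A$, are exactly the kinds of arguments used in \cite{BRord}; your closing worry about the ${\mathfrak F}$-transform is unnecessary, since the associative proof already routes through the analogue of Theorem~\ref{havin}(3) rather than through monotonicity of ${\mathfrak F}$.
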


\begin{corollary}  \label{proz} Let $A$ be an
approximately unital  Jordan operator algebra, and $B$ a $C^*$-algebra generated by $A$.
If $b \in B_+$ with $\Vert b \Vert < 1$ then 
there is an increasing partial cai for $A$ in $\frac{1}{2} {\mathfrak F}_A$, every term of which dominates $b$
(where `increasing' and `dominates'
are in the $\preccurlyeq$ ordering).
\end{corollary}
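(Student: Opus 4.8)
The plan is to assemble the two preceding results, following the proof of \cite[Corollary 2.7]{BRord} essentially verbatim (this is the point of the remark preceding Proposition~\ref{isnet}). First I would apply Theorem~\ref{havin}, in the form of condition~(2'), to the positive element $b \in {\mathcal U}_B$: this yields a nearly positive $a_0 \in \frac{1}{2} {\mathfrak F}_A$ with $b \preccurlyeq a_0$.

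Then I would set $\Lambda = \{\, a \in \frac{1}{2} {\mathfrak F}_A : a_0 \preccurlyeq a \,\}$, ordered by $\preccurlyeq$, and view $\Lambda$ as a net indexed by itself. The claim to establish is that $\Lambda$ is directed and is an increasing partial cai for $A$; granting this we are done, since every member of $\Lambda$ dominates $a_0$, hence dominates $b$. Directedness is shown as in Proposition~\ref{isnet}: given $a_1, a_2 \in \Lambda$ one uses the argument behind Theorem~\ref{havin}(3), which rests on $A^{**}$ being closed under joins of projections, to produce a nearly positive $a_3 \in \frac{1}{2} {\mathfrak F}_A$ dominating $a_1$, $a_2$ and $a_0$ simultaneously, so that $a_3 \in \Lambda$ and $a_3 \succcurlyeq a_1, a_2$. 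That $\Lambda$ is a partial cai follows because it is cofinal in the directed set ${\mathcal U}_A \cap \frac{1}{2} {\mathfrak F}_A$ of Proposition~\ref{isnet}, in the sense that every element of the latter is $\preccurlyeq$ some element of $\Lambda$ (again by the domination argument just used); hence the join of the support projections $s(a)$ over $a \in \Lambda$ is still $1_{A^{**}}$, and an increasing net in $\frac{1}{2} {\mathfrak F}_A$ whose weak* limit in $A^{**}$ is $1_{A^{**}}$ is automatically a partial cai by the argument in the proof of Lemma~\ref{jcai} (and Theorem~\ref{kapl}).

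The only step that is not entirely routine is the directedness of $\Lambda$: one must dominate an arbitrary pair drawn from $\frac{1}{2} {\mathfrak F}_A$, together with the fixed $a_0$, by a single nearly positive element remaining inside $\frac{1}{2} {\mathfrak F}_A$. This is precisely the content of Theorem~\ref{havin}(2')--(3), whose proof in the Jordan setting was already reduced to the operator-algebra case of \cite{BRord} together with closure of $A^{**}$ under joins of projections; so no genuinely new ingredient is needed here, and the proof is ``the same'' as the one for associative operator algebras, as asserted before Proposition~\ref{isnet}.
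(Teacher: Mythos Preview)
Your proposal is correct and follows the same approach the paper intends (the paper gives no explicit proof, simply pointing to \cite[Corollary 2.7]{BRord}): obtain $a_0 \in \frac{1}{2}\mathfrak{F}_A$ dominating $b$ via Theorem~\ref{havin}(2'), then pass to the upward-closed set above $a_0$ inside the directed partial cai of Proposition~\ref{isnet}. One small cleanup: take $\Lambda = \{a \in \mathcal{U}_A \cap \frac{1}{2}\mathfrak{F}_A : a_0 \preccurlyeq a\}$ rather than all of $\frac{1}{2}\mathfrak{F}_A$, so that $\Lambda$ is literally a cofinal subset of the directed set in Proposition~\ref{isnet}; then directedness and the partial cai property are immediate from cofinality, and you avoid the detour through support projections and weak* limits (your argument there is fine, just longer than needed). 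Note that the $a_0$ produced by Theorem~\ref{havin}(2') does lie in $\mathcal{U}_A$, since it arises via the $\mathfrak{F}$-transform (Lemma~\ref{Font}).
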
 

\begin{remark}
	Any  Jordan operator algebra $A$ with a countable cai, and in particular
any separable approximately unital Jordan operator algebra $A$, has a commuting  partial cai which is increasing
(for the $\preccurlyeq$ ordering), and 
also in $\frac{1}{2} {\mathfrak F}_A$
and nearly positive.   Namely, by Theorem \ref{cpc}  
  we have $A = \overline{xAx}$ for some $x \in  \frac{1}{2} {\mathfrak F}_A$, and $(x^{\frac{1}{n}})$ is a commuting  partial cai which is increasing by \cite[Proposition
4.7]{BBS}.   
\end{remark}

A ($\Cdb$-)linear map $T : A \to B$ between Jordan operator algebras is {\em real positive} if
 $T({\mathfrak r}_A)
\subset {\mathfrak r}_B$.
We say that $T$ is {\em real completely positive} or  RCP  if the $n$th matrix amplifications $T_n$ are each  real positive.
  It is clear from properties of ${\mathfrak r}_A$ mentioned earlier, that restrictions of real positive
(resp.\ RCP) maps to Jordan subalgebras  (or to unital operator subspaces) are again  real positive
(resp.\ RCP).

\begin{corollary}   \label{iscb}  Let $T : A \to B$ be a linear  map between 
approximately unital  Jordan  operator algebras,  and suppose that  $T$ is 
real positive (resp.\ RCP).   Then $T$ is bounded (resp.\ completely bounded).  
Moreover $T$ extends to a well defined positive map $\tilde{T} : 
A + A^* \to B + B^*  : a + b^* \mapsto T(a) + T(b)^*$. 
\end{corollary}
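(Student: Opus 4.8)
The plan is to follow the operator algebra treatment (as in \cite{BRord}, cf.\ also \cite[Section 7]{Bsan}), with Theorem \ref{havin} playing the role of the corresponding operator algebra fact and with one small Jordan-specific observation about the behaviour of $T$ on $\Delta(A)$. The first task is the scalar case: \emph{a real positive functional $\psi$ on an approximately unital Jordan operator algebra $A$ is bounded}. Here one uses that $\frac{1}{2}{\mathfrak F}_A$ is a norm-bounded closed set (since ${\mathfrak F}_A = \{a : \Vert 1-a\Vert \leq 1\}$) contained in ${\mathfrak r}_A$, and is stable under the combinations $\sum_{n} 2^{-n} z_n$ with $z_n \in \frac{1}{2}{\mathfrak F}_A$ (a routine norm estimate). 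If ${\rm Re}\,\psi$ were unbounded on $\frac{1}{2}{\mathfrak F}_A$ (equivalently, since $\psi$ is real positive, $\sup = +\infty$ there), pick $z_n \in \frac{1}{2}{\mathfrak F}_A$ with ${\rm Re}\,\psi(z_n) \geq 4^n$, put $z = \sum_n 2^{-n} z_n \in \frac{1}{2}{\mathfrak F}_A$, and note $z - \sum_{k=1}^N 2^{-k} z_k = \sum_{k>N} 2^{-k}z_k$ lies in the closed cone ${\mathfrak r}_A$, whence ${\rm Re}\,\psi(z) \geq 2^{-n}{\rm Re}\,\psi(z_n) \geq 2^n$ for every $n$ --- a contradiction. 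So ${\rm Re}\,\psi$ is bounded on $\frac{1}{2}{\mathfrak F}_A$; combining this with the decomposition $b = x - y$, $x,y \in \frac{1}{2}{\mathfrak F}_A$, valid for $b$ in the open unit ball of $A$ by Theorem \ref{havin}(5), and applying it to $e^{i\theta}b$, yields $\Vert\psi\Vert < \infty$.

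Next I would deduce that $T$ is bounded. For any state $\varphi$ of a $C^*$-algebra generated by $B$, the functional $\varphi_{|B}\circ T$ is real positive on $A$, hence bounded by the previous step; these functionals are pointwise bounded on $A$ (by $\Vert T(\cdot)\Vert$), so by the uniform boundedness principle they are uniformly bounded, say by $K$. Since states of a $C^*$-algebra norm it up to a factor $2$, $\Vert T(a)\Vert \leq 2\sup_\varphi |\varphi(T(a))| = 2\sup_\varphi |(\varphi_{|B}\circ T)(a)| \leq 2K\Vert a\Vert$, so $T$ is bounded. For the complete boundedness assertion (when $T$ is RCP), with $T$ now known bounded I would pass to the bitranspose $T^{**}: A^{**}\to B^{**}$. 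Using that ${\mathfrak r}_A$, and at each matrix level ${\mathfrak r}_{M_n(A)}$, is weak* dense in its bidual counterpart (Theorem \ref{kapl} and Proposition \ref{Kap}, with their matricial versions), together with the weak* closedness of ${\mathfrak r}_{B^{**}}$ and of each ${\mathfrak r}_{M_n(B^{**})}$, one checks $T^{**}$ is again real positive (resp.\ RCP). Now $A^{**}$ is a \emph{unital} Jordan operator algebra, and on a unital Jordan operator algebra the bound is immediate: if $\Vert c\Vert \leq 1$ then $1 \pm c \in {\mathfrak F}_{A^{**}} \subset {\mathfrak r}_{A^{**}}$, so $T^{**}(1) \pm T^{**}(c) \in {\mathfrak r}_{B^{**}}$, which forces $\Vert {\rm Re}(e^{i\theta}T^{**}(c))\Vert \leq \Vert T^{**}(1)\Vert$ for all $\theta$, i.e.\ the numerical radius of $T^{**}(c)$ in a generated $C^*$-algebra is at most $\Vert T^{**}(1)\Vert$, hence $\Vert T^{**}(c)\Vert \leq 2\Vert T^{**}(1)\Vert$. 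Applying this to $(T^{**})_n$ on the unital $M_n(A^{**})$ gives $\Vert T^{**}\Vert_{\rm cb} \leq 2\Vert T^{**}(1)\Vert$, and so $\Vert T\Vert_{\rm cb} \leq \Vert T^{**}\Vert_{\rm cb} < \infty$.

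Finally, for the positive extension: the formula $\tilde{T}(a+b^*) = T(a) + T(b)^*$ is well defined exactly when $T(a^*) = T(a)^*$ for $a \in \Delta(A) = A\cap A^*$, and this holds automatically from real positivity. Indeed if $a = a^* \in \Delta(A)$ then $(ia)+(ia)^* = 0 \geq 0$, so $\pm ia \in {\mathfrak r}_A$, hence $\pm iT(a) \in {\mathfrak r}_B$, forcing $T(a) - T(a)^* = 0$; the general $a \in \Delta(A)$ follows by splitting into real and imaginary parts. Positivity of $\tilde{T}$ is then clear: every positive element of $A + A^*$ has the form $c + c^*$ with $c \in {\mathfrak r}_A$ (symmetrize $a + b^* = a^* + b$), and $\tilde{T}(c+c^*) = T(c) + T(c)^* \geq 0$ since $T(c) \in {\mathfrak r}_B$; in the RCP case the same computation at each matrix level shows $\tilde{T}$ is completely positive. (Boundedness of $\tilde{T}$, not needed for the statement, follows from its positivity, as $A + A^*$ has an approximate order unit $e_t + e_t^*$ for a partial cai $(e_t)$ of $A$.)

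The step I expect to be the main obstacle is the boundedness bootstrap in the first two paragraphs: a priori a real positive map respects no norm bound at all, and ${\mathfrak r}_B$ is \emph{not} a normal cone, so one genuinely needs the geometric-series device together with Theorem \ref{havin} to get started. Once boundedness of $T$ is in hand, the reduction of the cb statement to the unital algebra $A^{**}$ and the construction of $\tilde{T}$ are routine, modulo checking the matricial weak* density used above, which should go through exactly as in Proposition \ref{Kap}.
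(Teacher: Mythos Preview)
Your proof is correct and follows essentially the same route as the paper's, which simply defers to \cite[Corollary 2.9]{BRord} and the proof of \cite[Theorem 2.5]{BBS}: obtain boundedness, pass to $T^{**}$ via Theorem~\ref{kapl}, extend to a positive map on the operator system $A^{**}+(A^{**})^*$, and read off (complete) boundedness and the extension from that. Your version unpacks these citations and is slightly more direct in places---the geometric-series/uniform-boundedness argument for boundedness, the numerical-radius bound on the unital $A^{**}$ for the cb estimate, and the explicit verification that $T$ is $*$-preserving on $\Delta(A)$---but the underlying ideas match.
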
 

\begin{proof}  This is as in \cite[Corollary 2.9]{BRord}.   Note that $T^{**}$ is real positive (using
Theorem \ref{kapl}), and hence by the proof of  \cite[Theorem 2.5]{BBS}   it extends to a positive 
map on an operator system.   Indeed it is completely positive, hence completely bounded, in the matrix
normed case.   Then restrict to  $A + A^*$. \end{proof}

\begin{remark}  \label{aiscb}   Similar results hold on unital operator spaces.  With the same proof idea any real positive
 linear  map on a unital operator space $A$ extends to a well defined positive map on $A + A^*$. 
 It is easy to see that a unital contractive 
linear map on a unital operator space is real positive (this follows 
e.g.\ from the fact that ${\mathfrak r}_A = \overline{\Rdb_+ (1 + {\rm Ball}(A))}$ in
this case). 
As in the well known  lemma  of Smith, if $A$ is an operator space or operator system or approximately unital Jordan  operator algebra, then $u :A\to M_n$ is real $n$-positive if and only if it is RCP.    Indeed if  $u$ is real $n$-positive then by the fact at the start of
this remark (or Corollary   \ref{iscb}) applied to $u_n$, $u$ can be extended to an $n$-positive map  from $A+A^*$ 
(or the sum of the biduals) into $M_n$, and this is completely positive by  the selfadjoint theory \cite[Theorem 6.1]{Pau}. Thus $u$ is RCP.  
In particular,  real positive maps into the scalars are RCP, and as usual it follows from this that 
real positive linear maps into a commutative C*-algebra $C(K)$ are 
RCP.    Indeed if $[a_{ij} + a_{ji}^*] \geq 0$, then 
$[T(a_{ij})(x) + \overline{T(a_{ji})(x)}] \geq 0$ for all $x \in K$.
Thus $[T(a_{ij}) + T(a_{ij})^*] \geq 0$ as desired.
Some of the last mentioned results were found by Da Zhang, a few of these together with the first author.
\end{remark}
\begin{theorem}[Extension and Stinespring Dilation for RCP Maps] 
\label{db9} If $T:A\to B(H)$ is a linear map on an approximately unital Jordan operator algebra,  and if $B$ is a $C^*$-algebra containing $A$, then $T$ is RCP iff $T$ has a completely positive extension $\tilde{T}:B\to B(H)$. This is equivalent to being able to write $T$ as the restriction to $A$ of  $V^*\pi(\cdot)V$ for a $*$-representation $\pi: B\to B(K)$, and an operator  $V: H\to K$. Moreover, this can be done with $\norm{T}=\norm{T}_{cb}=\norm{V}^2$, and this equals $\norm{T(1)}$ if $A$ is unital. 
\end{theorem}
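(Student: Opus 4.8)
The plan is to establish the equivalence of three conditions: \textbf{(i)} $T$ is RCP; \textbf{(ii)} $T$ has a completely positive extension $\tilde{T}\colon B \to B(H)$; \textbf{(iii)} there is a $*$-representation $\pi \colon B \to B(K)$ and a bounded $V\colon H \to K$ with $T = (V^* \pi(\cdot) V)|_A$. Here (iii)$\Rightarrow$(ii) is trivial since $b \mapsto V^* \pi(b) V$ is completely positive on $B$; (ii)$\Rightarrow$(iii) is Stinespring's dilation theorem applied to $\tilde{T}$ (in the nonunital case one passes to $B^1$ or uses the standard nonunital version; see e.g.\ \cite{Pau}), which moreover produces the \emph{minimal} dilation, for which $\|V\|^2 = \|\tilde{T}\|_{\mathrm{cb}} = \|\tilde{T}\|$; and (ii)$\Rightarrow$(i) holds because a completely positive $\tilde{T}$ is selfadjoint and $n$-positive for every $n$, so $[\,a_{ij}+a_{ji}^*\,] \geq 0$ in $M_n(B)$ forces $[\,\tilde{T}(a_{ij}) + \tilde{T}(a_{ji})^*\,] = \tilde{T}_n([\,a_{ij}+a_{ji}^*\,]) \geq 0$; restricting to $M_n(A)$ and using ${\mathfrak r}_A = {\mathfrak r}_B \cap A$ from Section \ref{MeyerRP} shows each $T_n$ is real positive.

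The substantive step is (i)$\Rightarrow$(ii). First I would pass to the bidual: $T^{**}\colon A^{**} \to B(H)$ is again RCP (as in the proof of Corollary \ref{iscb}, using Theorem \ref{kapl}), and $A^{**}$ is a \emph{unital} Jordan operator algebra. By Remark \ref{aiscb} (equivalently, the reasoning in the proof of Corollary \ref{iscb} via \cite{BBS}), $T^{**}$ extends to a completely positive map $\Phi$ on the unital operator system $A^{**} + (A^{**})^*$. Writing $e = 1_{A^{**}}$, a projection in $B^{**}$ with $e\eta e = \eta$ for all $\eta \in A^{**}$, the space $A^{**} + (A^{**})^*$ is a unital operator subsystem of the corner $e B^{**} e$; Arveson's extension theorem (see e.g.\ \cite{Pau}) thus yields a completely positive $\hat{\Phi}\colon e B^{**} e \to B(H)$ extending $\Phi$. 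Now set $\tilde{T}(b) = \hat{\Phi}(e \hat{b} e)$ for $b \in B$, where $\hat{b}$ is the image of $b$ in $B^{**}$; this is completely positive as a composition of the compression $b \mapsto e\hat{b}e$ with $\hat{\Phi}$. For $a \in A$, since $e$ is the identity of the Jordan algebra $A^{**}$ we have $e \hat{a} e = \hat{a}$ (as in the proof of Lemma \ref{jcai}, using (\ref{qox})), so $\tilde{T}(a) = \Phi(a) = T^{**}(a) = T(a)$. Hence $\tilde{T}$ is the desired completely positive extension of $T$ to all of $B$.

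For the norm equalities, I would first record that for the completely positive map $\Phi$ on the \emph{unital} operator system $A^{**}+(A^{**})^*$ one has $\|\Phi\|_{\mathrm{cb}} = \|\Phi(e)\|$, whence $\|T^{**}\|_{\mathrm{cb}} \le \|\Phi\|_{\mathrm{cb}} = \|T^{**}(e)\| \le \|T^{**}\| \le \|T^{**}\|_{\mathrm{cb}}$; since $\|T^{**}\| = \|T\|$ and $\|T^{**}\|_{\mathrm{cb}} = \|T\|_{\mathrm{cb}}$ this gives $\|T\| = \|T\|_{\mathrm{cb}} = \|T^{**}(e)\|$. The extension above satisfies $\|\tilde{T}\| \le \|\hat{\Phi}\| = \|\hat{\Phi}(e)\| = \|T^{**}(e)\| = \|T\|$ and also $\|\tilde{T}\| \ge \|\tilde{T}|_A\| = \|T\|$, so $\|\tilde{T}\| = \|\tilde{T}\|_{\mathrm{cb}} = \|T\| = \|T\|_{\mathrm{cb}}$; the minimal Stinespring dilation of $\tilde{T}$ then has $\|V\|^2 = \|\tilde{T}\| = \|T\| = \|T\|_{\mathrm{cb}}$. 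Finally, when $A$ is unital its identity coincides with $e = 1_{A^{**}}$, so $\|T\| = \|T^{**}(e)\| = \|T(1)\|$, which is the last assertion.

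I expect the main obstacle to be precisely the nonunital bookkeeping in (i)$\Rightarrow$(ii): producing a completely positive extension to \emph{all} of $B$, rather than merely to $A + A^*$, and with \emph{exactly} the norm $\|T\|$ rather than just a bound. This is what forces the detour through the corner $e B^{**} e$ of the bidual together with Arveson's theorem and the identity (\ref{qox}); everything else is routine once Corollary \ref{iscb} (hence \cite{BBS}) and the classical Stinespring and Arveson theorems are in hand.
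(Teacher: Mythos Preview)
Your proposal is correct and follows essentially the same route the paper sketches: pass to the bidual, use the argument of Corollary \ref{iscb} (via \cite{BBS}) to extend $T^{**}$ to a completely positive map on the operator system $A^{**}+(A^{**})^*$, and then finish by the standard Arveson/Stinespring machinery. The paper's proof is deliberately terse, simply pointing to \cite{BBS,BRI} after recording that first step; you have correctly supplied the remaining details, including the passage through the corner $eB^{**}e$ (needed so that the operator system is genuinely unital before invoking Arveson) and the bookkeeping for $\|T\|=\|T\|_{\mathrm{cb}}=\|V\|^2$.
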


\begin{proof}   The structure of this proof follows 
the analogous results in \cite{BBS,BRI}.   We merely begin the proof: as in the proof of Corollary  \ref{iscb}, $T$ is completely bounded
and $T^{**}$ extends to a completely positive map  $A^{**} + (A^{**})^* \to B(H)$.    
\end{proof}

An $\Rdb$-linear $\varphi : A \to \Rdb$  is said to be 
{\em real positive} if $\varphi({\mathfrak r}_A) \subset [0,\infty)$.   By the usual trick, for any $\Rdb$-linear $\varphi : A \to \Rdb$, there
is a unique $\Cdb$-linear $\tilde{\varphi} : A \to \Cdb$  with Re $\, \tilde{\varphi} = \varphi$,
and clearly $\varphi$ is real positive (resp.\ bounded) iff $\tilde{\varphi}$ is real positive (resp.\ bounded). 

As in \cite[Corollary 2.8]{BRord}, and with the same proof we have:

\begin{corollary}  \label{pr} Let $A$ be an 
approximately unital  Jordan operator algebra, and $B$ a $C^*$-algebra generated by $A$.
Then every 
real positive $\varphi : A \to \Rdb$ extends to a real positive real functional on $B$.  Also, $\varphi$ is bounded.  
\end{corollary}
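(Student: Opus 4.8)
The plan is to reduce to the $\Cdb$-linear case and then follow the argument of \cite[Corollary 2.8]{BRord}. By the usual device, let $\psi = \tilde{\varphi} : A \to \Cdb$ be the unique $\Cdb$-linear functional with $\mathrm{Re}\, \psi = \varphi$; then $\psi$ is real positive iff $\varphi$ is, and $\psi$ is bounded iff $\varphi$ is. Since $\Cdb$ is an approximately unital Jordan operator algebra, Corollary \ref{iscb} (or Remark \ref{aiscb}, which moreover gives that $\psi$ is RCP) applies to $\psi$ and shows that $\psi$, and hence $\varphi$, is bounded; this settles the last assertion.

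For the extension, first I would pass to the bidual. By Theorem \ref{kapl} the set ${\mathfrak r}_A$ is weak* dense in ${\mathfrak r}_{A^{**}}$, so the (weak* continuous, bounded) extension $\psi^{**} : A^{**} \to \Cdb$ is again real positive, and $A^{**}$ is a unital Jordan operator algebra. By Remark \ref{aiscb} applied to the unital operator space $A^{**}$, the map $\psi^{**}$ extends to a well-defined positive functional $\Phi$ on the unital operator system $A^{**} + (A^{**})^*$. Now view $A^{**}$ as a Jordan subalgebra of the von Neumann algebra $B^{**}$ as in Subsection \ref{gf}; since $B$ is generated by $A$ we have $B = C^*_B(A)$, so the identity of $A^{**}$ is $1_{B^{**}}$ by Lemma \ref{jcai}, and hence $A^{**} + (A^{**})^*$ is a unital operator subsystem of $B^{**}$. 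By the Krein extension theorem for positive functionals on operator systems (see e.g.\ \cite{Pau} or \cite{BLM}), $\Phi$ extends to a positive functional $\hat{\Phi}$ on $B^{**}$; put $\Psi_0 = \hat{\Phi}|_B$, a positive functional on $B$ with $\Psi_0|_A = \psi$.

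Finally I would set $\Psi = \mathrm{Re}\,\Psi_0$. This is an $\Rdb$-valued functional on $B$ with $\Psi|_A = \mathrm{Re}\, \psi|_A = \varphi$, and it is real positive: for $x \in {\mathfrak r}_B$ we have $x + x^* \geq 0$ by the definition of ${\mathfrak r}_B$ recalled in Subsection \ref{MeyerRP}, so $\Psi(x) = \frac{1}{2}\,\Psi_0(x + x^*) \geq 0$ since $\Psi_0$ is positive. Thus $\Psi$ is the desired real positive real functional on $B$ extending $\varphi$. The only points requiring a little care are that real positivity transfers from $\varphi$ to $\psi$ and then to $\psi^{**}$ (the last step using Theorem \ref{kapl}), and that the positive functional produced on the operator system $A^{**}+(A^{**})^*$ descends to a positive functional on $B$ that still restricts to $\psi$ on $A$; I do not expect either to be a genuine obstacle, the argument being essentially identical to the associative case.
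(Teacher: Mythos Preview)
Your proof is correct and is essentially the same approach the paper has in mind: the paper merely says ``as in \cite[Corollary 2.8]{BRord}, and with the same proof,'' and you have unpacked exactly that argument, substituting the appropriate Jordan-algebra ingredients (Corollary \ref{iscb}, Theorem \ref{kapl}, Lemma \ref{jcai}, Remark \ref{aiscb}) for their associative counterparts. Your care in passing to the bidual so that $A^{**}+(A^{**})^*$ is a genuine unital operator system inside $B^{**}$ before invoking Krein extension is precisely the point needed to make the argument go through in the non-unital case.
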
 

 We will write ${\mathfrak c}^{\Rdb}_{A^*}$ for the real dual cone of ${\mathfrak r}_A$, the 
set of continuous $\Rdb$-linear $\varphi : A \to \Rdb$ such that $\varphi({\mathfrak r}_A) \subset [0,\infty)$.
Since $\overline{{\mathfrak c}_A} = {\mathfrak r}_A$, we see that ${\mathfrak c}^{\Rdb}_{A^*}$ 
 is also the real dual cone of ${\mathfrak c}_A$.    It is a   proper cone;
 for if  $\rho, -\rho \in {\mathfrak c}^{\Rdb}_{A^*}$ then
$\rho(a) = 0$ for all $a \in {\mathfrak r}_A$.  Hence $\rho = 0$
by the fact above that the norm closure of ${\mathfrak r}_A - {\mathfrak r}_A$
is $A$.

\begin{lemma}  \label{dualc}    Suppose that $A$ is an approximately unital Jordan operator algebra.
 The real dual cone ${\mathfrak c}^{\Rdb}_{A^*}$  equals 
$\{ t \, {\rm Re}(\psi) : \psi \in S(A) , \, t \in [0,\infty) \}$. 
It also equals the set of restrictions to $A$ of the real parts of
positive functionals on any $C^*$-algebra containing 
(a copy of) $A$ as a closed Jordan subalgebra.   The prepolar of ${\mathfrak c}^{\Rdb}_{A^*}$, which equals its
real predual cone,  is  ${\mathfrak r}_{A}$;  
and  the polar of ${\mathfrak c}^{\Rdb}_{A^*}$,  which equals its
real dual cone,  is  ${\mathfrak r}_{A^{**}}$.   Thus the second dual cone of 
${\mathfrak r}_A$ is ${\mathfrak r}_{A^{**}}$, and hence 
 ${\mathfrak r}_A$ is weak* dense in ${\mathfrak r}_{A^{**}}$. \end{lemma}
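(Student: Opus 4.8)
The plan is to prove the four displayed assertions in sequence. The real content is the first (the description of ${\mathfrak c}^{\Rdb}_{A^*}$ as nonnegative multiples of real parts of states); the $C^*$-restriction description follows from the same argument, and the two bipolar statements — including the weak* density — come out at the end via the bipolar theorem.

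First I would fix a $C^*$-algebra $B = C^*(A)$ generated by $A$ and prove ${\mathfrak c}^{\Rdb}_{A^*} = \{ t\,{\rm Re}(\psi) : \psi \in S(A),\ t \in [0,\infty)\}$. Given $\rho \in {\mathfrak c}^{\Rdb}_{A^*}$, Corollary \ref{pr} extends $\rho$ to a bounded $\Rdb$-linear $\hat\rho : B \to \Rdb$ with $\hat\rho({\mathfrak r}_B) \subseteq [0,\infty)$. For self-adjoint $c \in B$ we have $ic + (ic)^* = 0$, so $\pm ic \in {\mathfrak r}_B$ and hence $\hat\rho(ic) = 0$; therefore $\sigma(b) := \hat\rho(b) - i\,\hat\rho(ib)$ is a $\Cdb$-linear functional on $B$ with ${\rm Re}\,\sigma = \hat\rho$, and $\sigma(b^*b) = \hat\rho(b^*b) \geq 0$ shows $\sigma$ is positive. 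Write $\sigma = t\,\sigma_0$ with $t = \|\sigma\| \geq 0$ and $\sigma_0$ a state of $B$ (if $\sigma = 0$ then $\rho = 0$). Since any partial cai $(e_s)$ for $A$ is a cai for $B$ by Lemma \ref{jcai}, we get $\sigma_0(e_s) \to 1$, so $\psi := \sigma_0|_A$ has norm $1$ and, by Lemma \ref{eqsta}, is a state of $A$; thus $\rho = {\rm Re}(\sigma|_A) = t\,{\rm Re}(\psi)$. Conversely, a state $\psi$ of $A$ is the restriction of a state $\sigma_0$ of $B$ (Section \ref{aifnl}), and for $x \in {\mathfrak r}_A \subseteq {\mathfrak r}_B$ we have $x + x^* \geq 0$, whence $t\,{\rm Re}\,\psi(x) = \tfrac{t}{2}\,\sigma_0(x + x^*) \geq 0$, so $t\,{\rm Re}(\psi) \in {\mathfrak c}^{\Rdb}_{A^*}$. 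For the second description: if $B'$ is any $C^*$-algebra containing $A$ as a closed Jordan subalgebra and $\tau$ is positive on $B'$, then for $x \in {\mathfrak r}_A = {\mathfrak r}_{B'} \cap A$ we get ${\rm Re}\,\tau(x) = \tfrac12\tau(x+x^*) \geq 0$, so ${\rm Re}(\tau|_A) \in {\mathfrak c}^{\Rdb}_{A^*}$; conversely, applying the argument above to $C^*_{B'}(A)$ gives a positive $\sigma$ on $C^*_{B'}(A)$ with ${\rm Re}(\sigma|_A) = \rho$, and any norm-preserving Hahn--Banach extension of $\sigma$ to $B'$ is again positive with real part restricting to $\rho$ on $A$.

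Next I would treat the polar statements. The prepolar and the real predual cone of a cone coincide, and ${\mathfrak r}_A$ lies in the predual cone of ${\mathfrak c}^{\Rdb}_{A^*}$ by definition of the latter; conversely, if $a \in A$ satisfies $\rho(a) \geq 0$ for all $\rho \in {\mathfrak c}^{\Rdb}_{A^*}$, then by the first step $\sigma_0(a + a^*) = 2\,{\rm Re}\,\sigma_0(a) \geq 0$ for every state $\sigma_0$ of $B$, so $a + a^* \geq 0$ and $a \in {\mathfrak r}_B \cap A = {\mathfrak r}_A$ (this is also immediate from the bipolar theorem, since ${\mathfrak r}_A$ is a norm-closed convex cone). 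For the dual cone of ${\mathfrak c}^{\Rdb}_{A^*}$ computed in $A^{**}$, identify $A^{**}$ with $A^{\perp\perp} \subseteq B^{**}$ as in Section \ref{gf}. For $\psi = \sigma_0|_A$ with $\sigma_0 \in S(B)$, the weak* continuous extension of ${\rm Re}\,\psi$ to $A^{**}$ is the restriction to $A^{\perp\perp}$ of ${\rm Re}\,\sigma_0^{**}$, where $\sigma_0^{**}$ is a normal state of $B^{**}$; as $\sigma_0$ ranges over $S(B)$ these exhaust the normal states of $B^{**}$. Since normal states determine positivity in the von Neumann algebra $B^{**}$, the dual cone in question equals $\{\eta \in A^{\perp\perp} : \eta + \eta^* \geq 0\} = {\mathfrak r}_{B^{**}} \cap A^{**} = {\mathfrak r}_{A^{**}}$. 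This dual cone is precisely the second dual cone of ${\mathfrak r}_A$, so by the bipolar theorem it is the weak* closure of ${\mathfrak r}_A$ in $A^{**}$, which gives that ${\mathfrak r}_A$ is weak* dense in ${\mathfrak r}_{A^{**}}$ (reproving that part of Theorem \ref{kapl}).

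The step I expect to be the main obstacle is this last identification: verifying that the weak* continuous states of $A^{**}$ are exactly the restrictions to $A^{\perp\perp}$ of the normal states of $B^{**}$, and then invoking that normal states detect positivity in $B^{**}$. The other point that needs care is the bookkeeping in the first step relating states of $A$, states of $B$, and cai's, so as to conclude that $\sigma_0|_A$ is genuinely a state of $A$ of norm one.
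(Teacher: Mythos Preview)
Your proof is correct and follows the natural route; the paper itself omits the proof of this lemma (it is implicitly ``as in \cite{BRord}''), and your argument is essentially the one that is intended there. The one mild point of difference is in the polar computation in $A^{**}$: you argue directly via normal states of $B^{**}$ detecting positivity, whereas the paper already has Theorem~\ref{kapl} in hand, so one can alternatively say that by the bipolar theorem the real dual cone of ${\mathfrak c}^{\Rdb}_{A^*}$ is the weak* closure of ${\mathfrak r}_A$, and this equals ${\mathfrak r}_{A^{**}}$ since ${\mathfrak r}_{A^{**}}$ is weak* closed and Theorem~\ref{kapl} gives the density. You note this yourself (``reproving that part of Theorem~\ref{kapl}''), so this is only a matter of emphasis.

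One small bookkeeping remark on the second description: when you extend the positive $\sigma$ from $C^*_{B'}(A)$ to an arbitrary containing $C^*$-algebra $B'$, the assertion that a norm-preserving Hahn--Banach extension remains positive uses the standard fact that positive functionals on a $C^*$-subalgebra extend positively with the same norm; this is routine but worth a word, since $C^*_{B'}(A)$ need not share a unit with $B'$. Alternatively, once you have the first description ${\mathfrak c}^{\Rdb}_{A^*}=\{t\,{\rm Re}(\psi):\psi\in S(A)\}$ together with the fact from Section~\ref{aifnl} that states of $A$ extend to states of any containing $C^*$-algebra, the second description is immediate.
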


One may also develop {\em real states} on Jordan operator algebras analogously to \cite[Section 2.3]{BRord}.   See e.g.\ \cite{ZWdraft}.

Following on Kadison's Jordan algebraic Banach--Stone theorem for
$C^*$-algebras \cite{Kad1}, many authors have proved variants for objects
of `Jordan type'.  
 The following  variant on the main result of Arazy and Solel \cite{AS}
is a `Banach--Stone type theorem for Jordan operator algebras'.

\begin{proposition} \label{bsoa}    Suppose that $T : A \to B$ is an isometric surjection between approximately unital 
Jordan operator algebras.  Then $T$ is  real 
positive if and only if $T$ is a Jordan  algebra homomorphism.
If these hold and in addition $T$ is completely isometric 
then $T$ is real completely positive.  
 \end{proposition}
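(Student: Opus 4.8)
The plan is to prove the two implications separately, reducing the substantive one, via a bidual argument driven by the real positivity hypothesis, to the Banach--Stone theorem for \emph{unital} Jordan operator algebras, which I would borrow from \cite{AS}.

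For the easy implication, suppose $T$ is a Jordan homomorphism. I would first extend $T$ to a unital isometric Jordan isomorphism $T^1\colon A^1\to B^1$: this is Proposition~\ref{MeyerJ1} when $A$ is nonunital, while if $A$ is unital one notes directly that $T(1_A)$ is a norm-one Jordan identity of $B$, hence equals $1_B$. Then for any state $\psi$ of $B^1$, the functional $\psi\circ T^1$ is a unital contraction on $A^1$, i.e.\ a state, so ${\rm Re}\,\psi(T(x))={\rm Re}\,(\psi\circ T^1)(x)\geq 0$ for $x\in{\mathfrak r}_A$; since $x\in{\mathfrak r}_A$ iff ${\rm Re}\,\varphi(x)\geq 0$ for all states $\varphi$ of $A^1$, this gives $T(x)\in{\mathfrak r}_B$, so $T$ is real positive.

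For the converse, suppose $T$ is real positive. I would pass to $T^{**}\colon A^{**}\to B^{**}$, a weak* continuous isometric surjection which is again real positive: any $\eta\in{\mathfrak r}_{A^{**}}$ is a weak* limit of a net in ${\mathfrak r}_A$ (weak* dense by Theorem~\ref{kapl}), and ${\mathfrak r}_{B^{**}}$ is weak* closed (Lemma~\ref{dualc}). Writing $e_A, e_B$ for the identities of the unital Jordan operator algebras $A^{**}, B^{**}$, recall $e_A=1_{C^*_e(A)^{**}}$ and similarly for $e_B$ (Lemma~\ref{jcai}). The crucial claim is that $T^{**}(e_A)=e_B$. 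To prove it: in $C^*_e(A)^{**}$ every $a$ in the unit ball of $A^{**}$ satisfies $a+a^*\leq 2e_A$, i.e.\ $a\preccurlyeq e_A$; since $T^{**}$ is isometric, surjective and $\preccurlyeq$-monotone, every $b$ in the unit ball of $B^{**}$ satisfies $b\preccurlyeq T^{**}(e_A)$, and taking $b=e_B$ gives $T^{**}(e_A)+T^{**}(e_A)^*\geq 2e_B$ in $C^*_e(B)^{**}$; as $\|T^{**}(e_A)\|=1$ this forces $T^{**}(e_A)+T^{**}(e_A)^*=2e_B$, and writing $T^{**}(e_A)=e_B+ik$ with $k$ selfadjoint, $1=\|e_B+ik\|^2=1+\|k\|^2$ gives $k=0$. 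Now $T^{**}$ is a unital isometric surjection between unital Jordan operator algebras, so by \cite{AS} it is a Jordan isomorphism, and restricting to $A$ finishes the converse. The hard part will be exactly this last step: the unital Banach--Stone theorem for Jordan operator algebras is the deep input, and the role of the real positivity hypothesis is precisely to deliver the normalization $T^{**}(e_A)=e_B$ that makes it applicable.

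Finally, suppose in addition $T$ is completely isometric. Then, by the above, $T$ is a completely isometric Jordan isomorphism, hence so is its unital extension $T^1\colon A^1\to B^1$ (Proposition~\ref{ununau}, as in the proof of Proposition~\ref{3auce}). Using the universal property of the $C^*$-envelope (Proposition~\ref{3auce}) for the $C^*$-cover $(C^*_e(B), j_B\circ T)$ of $A$ and, symmetrically, for $(C^*_e(A), j_A\circ T^{-1})$ of $B$, I would obtain mutually inverse surjective $*$-homomorphisms between $C^*_e(A)$ and $C^*_e(B)$; so, via the canonical embeddings, $T$ is the restriction to $A$ of a $*$-isomorphism $\Phi\colon C^*_e(A)\to C^*_e(B)$. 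Since a $*$-isomorphism of $C^*$-algebras is $*$-preserving and completely positive, its amplifications carry ${\mathfrak r}_{M_n(C^*_e(A))}$ into ${\mathfrak r}_{M_n(C^*_e(B))}$; restricting to $M_n(A)$ shows each $T_n$ carries ${\mathfrak r}_{M_n(A)}$ into ${\mathfrak r}_{M_n(B)}$, i.e.\ $T$ is real completely positive.
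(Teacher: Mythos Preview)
Your proof is correct.  The first two parts match the paper's approach closely: for the easy direction the paper likewise extends to unitizations and invokes the fact (Remark~\ref{aiscb}) that unital contractions are real positive; for the converse the paper also passes to $T^{**}$ and reduces to the unital case, though it cites \cite[Proposition~6.6]{BNpac} as a black box whereas you unpack the unital-normalization step $T^{**}(e_A)=e_B$ explicitly and then appeal to \cite{AS}.  Your order argument for this normalization is a nice and self-contained addition.

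For the final assertion you take a genuinely different route.  The paper observes that the unital completely isometric extension $T^1\colon A^1\to B^1$ extends further, by Arveson's lemma \cite[Lemma~1.3.6]{BLM}, to a unital completely contractive (hence completely positive) map on $A^1+(A^1)^*$, from which real complete positivity is immediate.  You instead promote $T$ to a $*$-isomorphism $\Phi$ of the $C^*$-envelopes via the universal property (Proposition~\ref{3auce}) and then restrict.  Both work; the paper's argument is lighter (it avoids the envelope machinery and the mutual-inverse bookkeeping), while yours has the virtue of exhibiting $T$ as the restriction of an honest $*$-isomorphism, which is a slightly stronger structural conclusion.  Note that your sentence about $T^1$ being a completely isometric Jordan isomorphism is not actually used in your subsequent $C^*$-envelope argument, since Proposition~\ref{3auce} applies directly to $T$ on the approximately unital algebras.
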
  

\begin{proof}  If $T : A \to B$ is an isometric surjective
Jordan algebra homomorphism between  
unital Jordan operator algebras, 
then $T$ is unital hence real positive by a fact 
in  Remark  \ref{aiscb}.  If $A$ and $B$ are 
nonunital (possibly non-approximately unital) 
then $T$ extends to a unital isometric surjective
Jordan algebra homomorphism between the unitizations, hence is real positive again
by the unital case.  

Conversely, suppose that  $T$ is real positive.  Again 
we may assume that $A$ and $B$ are  unital, 
since by the usual arguments $T^{**}$ is
a real positive  isometric surjection between unital 
Jordan operator algebras.  Then the result follows from 
\cite[Proposition 6.6]{BNpac}.
If $T$ is a completely isometric surjective
Jordan algebra homomorphism then by Proposition \ref{ununau},
$T$ extends to a unital completely isometric surjection between the unitizations, which then
extends by Arveson's lemma
e.g.\ \cite[Lemma 1.3.6]{BLM}
to a unital completely contractive, hence completely positive, map on $A+A^*$.
So  $T$ is real completely positive.
\end{proof}

We close with a 
final 
Banach--Stone type theorem.

\begin{proposition} \label{bsjoa}    Suppose that $T : A \to B$ is a  completely isometric surjection between approximately unital
Jordan operator algebras.  Then there exists a 
completely isometric surjective Jordan  algebra homomorphism
$\pi : A \to B$, and a unitary $u$ with $u, u^* \in M(B)$ with $T = u \pi(\cdot)$.   
 \end{proposition}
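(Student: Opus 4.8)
The plan is to pass to second duals and reduce to the unital case, then combine the previous Banach--Stone result (Proposition \ref{bsoa}) with the structure of completely isometric surjections between unital operator algebras. First I would observe that $T^{**} : A^{**} \to B^{**}$ is a completely isometric surjection between unital (weak* closed) Jordan operator algebras, by the standard weak* extension machinery recalled at the end of Subsection \ref{gf}; moreover $T^{**}$ is weak* continuous. So it suffices to prove the statement for $T^{**}$ and then restrict: if $T^{**} = u\,\pi_0(\cdot)$ with $\pi_0 : A^{**}\to B^{**}$ a completely isometric surjective Jordan homomorphism and $u$ a unitary in $B^{**}$, one would need to check that $u \in M(B)$ and that $\pi_0$ restricts to a map $A \to B$; this is where one uses that $u = T^{**}(1_{A^{**}})$ lies in the multiplier algebra (since $1_{A^{**}}$ is the identity of $A^{**}$, which sits in $JM(A)$, so $u A \subseteq T^{**}(A^{**}) \cap (\text{stuff in } B)$, forcing $u \in M(B)$ via Lemma \ref{tlma}/Lemma \ref{pin}-type arguments), and then $\pi_0 = u^* T^{**}(\cdot)$ maps $A$ into $u^*B = B$.

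So assume now $A, B$ are unital. The key classical input is the operator-algebra Banach--Stone theorem (e.g.\ 4.5.13 or 8.3.13 in \cite{BLM}): a completely isometric surjection between unital operator algebras is of the form $u\,\pi(\cdot)$ with $\pi$ a completely isometric homomorphism and $u$ a unitary. The obstacle is that $A, B$ are only Jordan operator algebras, not associative ones, so I cannot invoke that theorem directly. The route I would take is: set $u = T(1_A)$. As in the proof of Kadison's theorem and its descendants, one shows $u$ is a unitary in $\Delta(B) = B \cap B^*$ — indeed $u$ is an extreme point of $\mathrm{Ball}(B^{**})$ hence a unitary in the containing von Neumann algebra $C^*_e(B)^{**}$, and since $u$ and $u^*$ act as multipliers (as above) we get $u \in M(B)$ with $u^* \in M(B)$. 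Then $\pi := u^* T(\cdot)$ is a unital complete isometry of $A$ onto $B$ (using that $u$ is a unitary multiplier so $u^* B = B$), hence unital and completely positive on $A + A^*$ by Arveson's lemma \cite[Lemma 1.3.6]{BLM}. In particular $\pi$ is real completely positive, so by Proposition \ref{bsoa} (or rather its proof via \cite[Proposition 6.6]{BNpac}), $\pi$ is a Jordan algebra homomorphism. This gives $T = u\,\pi(\cdot)$ with the stated properties.

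The main obstacle, as flagged, is the identification of $u = T(1)$ as a \emph{unitary multiplier} rather than merely a unitary in some enveloping $C^*$-algebra — i.e.\ proving $u, u^* \in M(B)$. The cleanest way I see is: $u = T^{**}(1_{A^{**}})$ is an extreme point of $\mathrm{Ball}(B^{**})$ (image of an extreme point under a surjective linear isometry), and extreme points of the ball of a unital operator algebra's bidual are precisely the unitaries in its diagonal (by the operator-algebra theory, applied in $C^*_e(B)^{**}$); so $u$ is unitary in the von Neumann algebra $\Delta(B^{**}) = \Delta(C^*_e(B)^{**})$. Then, since $\pi = u^* T(\cdot)$ is surjective onto $B$ and $T$ is onto $B$, one gets $uB = u(u^*B) \subseteq B$ and similarly $u^*B \subseteq B$, and because $u$ is in the diagonal one also gets $Bu, Bu^* \subseteq B$; combined with the weak* limit of a partial cai argument (as in Lemma \ref{pin}) this places $u, u^* \in M(B)$. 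A small additional check, needed to run Proposition \ref{bsoa}, is that $\pi$ is genuinely completely positive as a map $A + A^* \to B + B^*$: this follows from Arveson's lemma once $\pi$ is known to be a unital complete isometry between unital operator spaces, which it is. Everything else is routine bookkeeping with the weak* extensions and the unitization.
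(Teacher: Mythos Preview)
Your proposal has two genuine gaps, both in the crucial step of showing that $u$ is a unitary \emph{multiplier} and that $\pi$ restricts to a map $A\to B$.

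First, the extreme-point argument is incorrect as stated. You claim that extreme points of ${\rm Ball}(B^{**})$ are unitaries in the diagonal. This is false: even in a von Neumann algebra the extreme points of the closed unit ball are the partial isometries $v$ with $(1-vv^*)M(1-v^*v)=0$, which need not be unitary (e.g.\ isometries in $B(H)$ for infinite-dimensional $H$), and for a non-selfadjoint (Jordan) operator algebra the situation is worse since an extreme point of ${\rm Ball}(B^{**})$ need not even be extreme in the ball of the containing von Neumann algebra $C^*_e(B)^{**}$. So from $u = T^{**}(1)$ being extreme you cannot conclude $u$ is unitary. The paper sidesteps this entirely in the unital case by invoking the Arazy--Solel Banach--Stone theorem \cite[Corollary 2.8]{AS}, which already delivers $T = u\pi$ with $u$ unitary and $u,u^*\in B$.

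Second, and more seriously, your restriction argument from the bidual is circular. Granting the unital case for $T^{**}$, you have $T(a)=u\pi_0(a)\in B$ for $a\in A$, with $u\in B^{**}$ unitary and $\pi_0:A^{**}\to B^{**}$ a Jordan isomorphism. To conclude $\pi_0(a)\in B$ you need $u^*B\subset B$, i.e.\ $u^*\in LM(B)$; but your argument for this (``$\pi$ is surjective onto $B$, hence $uB=u(u^*B)\subset B$'') presupposes $\pi_0(A)=u^*B$ equals $B$, which is exactly what is at stake. The paper breaks this circle by \emph{also} extending $T$ to a surjective complete isometry $\rho:C^*_e(A)\to C^*_e(B)$ between the $C^*$-envelopes (via the ternary envelope, Corollary \ref{injov}) and applying the $C^*$-algebraic Banach--Stone theorem to write $\rho=v\theta$ with $v\in M(C^*_e(B))$ unitary and $\theta$ a $*$-isomorphism. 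One then shows $u=v$ by comparing weak* limits on a Hilbert space, so that $\pi_0(a)=\theta(a)=u^*T(a)\in C^*_e(B)$, whence $\pi_0(A)\subset B^{**}\cap C^*_e(B)=B$. Only after this does the inclusion $uB\subset B$ follow and Lemma \ref{pin} give $u,u^*\in M(B)$. The $C^*$-envelope step is thus not optional decoration; it is what supplies the missing ``$u^*$ lives somewhere that multiplies $B$ back into something intersectable with $B^{**}$''.
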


  \begin{proof}  
If $A, B$ are unital then this follows from the Banach--Stone type theorem
\cite[Corollary 2.8]{AS}, and indeed this works with the word `completely' dropped. 

In the general case,  
we may extend $T$  to a completely isometric surjection $\rho : 
C^*_e(A) \to C^*_e(B)$ between $C^*$-envelopes. 
This follows by the universal property of the ternary envelope of any 
the fact that this ternary envelope is the $C^*$-envelope (see e.g.\ Corollary \ref{injov}).
 By a well known Banach--Stone  theorem for $C^*$-algebras
(see e.g.\ \cite[Theorem 6.1]{B}), $\rho = v \theta$ for a $*$-isomorphism
$\theta : C^*_e(A) \to C^*_e(B)$ and a unitary  $v \in M(C^*_e(B))$.   
So $T(a) = v \theta(a)$ for $a \in A$.  Also  
by the unital case in the last paragraph 
$T^{**} = u \pi(\cdot)$ for a 
unitary $u \in B^{**}$ with $u^* \in B^{**}$,
and a unital completely isometric 
surjective Jordan homomorphism
$\pi : A^{**} \to B^{**}$.   We have $T(a) = u \pi(a) = v \theta(a)$
 for $a \in A$.  Also we have that $\pi$ is weak* continuous.  
 Represent $C^*_e(B)$ on a Hilbert space nondegenerately in such 
a way that $C^*_e(B)^{**} \subset B(K)$ as a von Neumann algebra.  Then 
we may regard $v \in M(C^*_e(B)) \subset B(K)$ too, and  if $(e_t)$ is a partial cai
for $A$ then $\pi(e_t) \to I_K$ weak* (since $1_{B^{**}} = 1_{C^*_e(B)^{**}} = I_K$).
For $\xi \in K$ we have $T(e_t) \xi = u \pi(e_t) \xi \to u \xi$, and
$T(e_t) \xi = v \theta(e_t) \xi \to v \xi$  
(in WOT).
So $u = v$ and $\pi(a) = \theta(a)$ in $B(K)$, for $a \in A$.   Note that $u \pi(A) = T(A) = B$, so 
$$\pi(A) = u^* B \subset
B^{**} \cap C^*_e(B) = B.$$
Similarly $uB    \subset
B^{**} \cap C^*_e(B) = B,$ so $B \subset u^* B$, and we have $uB = B = \pi(B)$.   
By Lemma \ref{pin}, $u$ and $u^*$ are in $M(B)$.  
\end{proof}  

With a little more work one should be able to remove 
the word `completely' in the last proposition.  

We end 
by mentioning a few of what seem to us to be the most important open questions related to the approach of our paper.

  \begin{enumerate}
\item  [(1)]  Is there a purely internal (not mentioning containing algebras or injective envelopes)
characterization of Jordan operator algebras?
  
\item   [(2)] Is the quotient of a Jordan operator algebra by a general closed ideal 
a Jordan operator algebra?

\item   [(3)] Is the unitization of a general Jordan operator algebra unique
 up to completely isometric isomorphism?  We showed in Corollary \ref{MeyerJ} that it is 
 unique up to isometric isomorphism; see also Proposition \ref{ununau} for the 
approximately unital case.   
 
\end{enumerate}

{\em Acknowledgements.}
  We thank the referees for their good suggestions, and  thank Matt Neal for several conversations and insights that were helpful.

%The style of the following references should be used in all documents.

\end{document}